\begin{document} 

\newcommand{\bbA}{{\bf A}}
	\newcommand{\cbbA}{{\check \bbA}}
	\newcommand{\mbbA}{{\mathcal A}}
	\newcommand{\bbbA}{{\bar \mbbA}}
	\newcommand{\hbbA}{{\widehat \bbA}}
	\newcommand{\tbbA}{{\tilde \bbA}}
	\newcommand{\bba}{{\bf a}}
	\newcommand{\bbB}{{\bf B}}
	\newcommand{\bbC}{{\bf C}}
	\newcommand{\bbc}{{\bf c}}
	\newcommand{\bbD}{{\bf D}}
	\newcommand{\bbd}{{\bf d}}
	\newcommand{\bbe}{{\bf e}}
	\newcommand{\bbE}{{\bf E}}
	\newcommand{\rE}{{\rm E}}
	\newcommand{\bbf}{{\bf f}}
	\newcommand{\bbF}{{\bf F}}
	\newcommand{\bbP}{{P}}
	\newcommand{\bbg}{{\bf g}}
	\newcommand{\bbG}{{G}}
	\newcommand{\bbK}{{\bf K}}
	\newcommand{\bbH}{{H}}
	\newcommand{\bbh}{{\bf h}}
	\newcommand{\bbw}{{\bf w}}
	\newcommand{\bbI}{{\bf I}}
	\newcommand{\bbi}{{\bf i}}
	\newcommand{\bbj}{{\bf j}}
	\newcommand{\bbJ}{{\bf J}}
	\newcommand{\bbk}{{\bf k}}
	\newcommand{\bbl}{{\bf 1}}
	\newcommand{\bbM}{{\bf M}}
	\newcommand{\bbm}{{\bf m}}
	\newcommand{\bbN}{{\bf N}}
	\newcommand{\bbn}{{\bf n}}
	\newcommand{\bbQ}{{Q}}
	\newcommand{\bbq}{{\bf q}}
	\newcommand{\bbO}{{\bf O}}
	\newcommand{\bbR}{{\bf R}}
	\newcommand{\bbr}{{\bf r}}
	\newcommand{\bbs}{{\bf s}}
	\newcommand{\cbbs}{{\check \bbs}}
	\newcommand{\hbbs}{{\hat \bbs}}
	\newcommand{\tbbs}{{\tilde \bbs}}
	\newcommand{\bbS}{{\bf S}}
	\newcommand{\cbbS}{{\check \bbS}}
	\newcommand{\wB}{{\widehat B}}
	\newcommand{\tbbS}{\widetilde{\bf S}}
	\newcommand{\hbbS}{\widehat{\bf S}}
	\newcommand{\obbS}{\overline{\bf S}}
	\newcommand{\bbt}{{\bf t}}
	\newcommand{\bbT}{{\bf T}}
	\newcommand{\hbbT}{\widetilde{\bf T}}
	\newcommand{\tbbT}{\widetilde{\bf T}}
	\newcommand{\obT}{{\overline{\bf T}}}
	\newcommand{\bbU}{{\bf U}}
	\newcommand{\bbu}{{\bf u}}
	\newcommand{\bbV}{{\bf V}}
	\newcommand{\bbv}{{\bf v}}
	\newcommand{\tbbv}{\widetilde{\bf v}}
	\newcommand{\bbW}{{W}}
	\newcommand{\tbbW}{\widetilde{\bf W}}
	\newcommand{\hbbW}{\widehat{\bf W}}
	\newcommand{\bbX}{{X}}
	\newcommand{\tbby}{\tilde {\bf y}}
	\newcommand{\tbbX}{\widetilde {\bf X}}
	\newcommand{\hbbX}{\widehat {\bf X}}
	\newcommand{\bbx}{{\bf x}}
	\newcommand{\obbx}{{\overline{\bf x}}}
	\newcommand{\tbbx}{{\widetilde{\bf x}}}
	\newcommand{\hbbx}{{\widehat{\bf x}}}
	\newcommand{\obby}{{\overline{\bf y}}}
	\newcommand{\hbbY}{{\widehat{\bf Y}}}
	\newcommand{\tbbY}{{\widetilde{\bf Y}}}
	\newcommand{\bbY}{{\bf Y}}
	\newcommand{\bby}{{\bf y}}
	\newcommand{\bbZ}{{\bf Z}}
	\newcommand{\bbz}{{\bf z}}
	\newcommand{\bbb}{{\bf b}}
	\newcommand{\cD}{{\cal D}}
	\newcommand{\bbL}{{\bf L}}
	\newcommand{\bxi} {\boldsymbol  \xi}
	\newcommand{\bbeta} {\boldsymbol  \eta}
	\newcommand{\utm}{\underline{ \tilde m}}
	\newcommand{\um}{\underline{m}}
	\newcommand{\la}{\langle}
	\newcommand{\ra}{\rangle}
	\newcommand{\bla}{\big{\langle}}
	\newcommand{\bra}{\big{\rangle}}
	\newcommand{\Bla}{\Big{\langle}}
	\newcommand{\Bra}{\Big{\rangle}}
	
	\newcommand{\rdd}{\textcolor{red}}
	\newcommand{\bll}{\textcolor{blue}}
	
	\newcommand{\md}{\mbox{d}}
	\newcommand{\non}{\nonumber\\}
	\newcommand{\tr}{{\rm tr}}
	\newcommand{\Tr}{{\rm Tr}}
	\newcommand{\E}{{\mathbb{E}}}
	\newcommand{\rP}{{\mathbb{P}}}
	\newcommand{\bqa}{\begin{eqnarray}}
		\newcommand{\eqa}{\end{eqnarray}}
	
	\newcommand{\bqn}{\begin{eqnarray*}}
		\newcommand{\eqn}{\end{eqnarray*}}
		
	\theoremstyle{plain}
	\newtheorem{thm}{Theorem}[section]
	\newtheorem{corollary}[thm]{Corollary}
	\newtheorem{defin}[thm]{Definition}
	\newtheorem{prop}[thm]{Proposition}
	\newtheorem{remark}[thm]{Remark}
	\newtheorem{lemma}[thm]{Lemma}
	\newtheorem{assumption}[thm]{Assumption}
          \allowdisplaybreaks[4]


\newpage

\begin{center}
\large\bf
Spectral Statistics of Sample Block Correlation Matrices
\end{center}

\vspace{0.5cm}
\renewcommand{\thefootnote}{\fnsymbol{footnote}}
\hspace{5ex}	
\begin{center}
 \begin{minipage}[t]{0.35\textwidth}
\begin{center}
Zhigang Bao\footnotemark[1]  \\
\footnotesize {Hong Kong University of Science and Technology}\\
{\it mazgbao@ust.hk}
\end{center}
\end{minipage}
\hspace{8ex}
\begin{minipage}[t]{0.35\textwidth}
\begin{center}
Jiang Hu\footnotemark[2]  \\ 
\footnotesize {Northeast Normal University}\\
{\it huj156@nenu.edu.cn}
\end{center}
\end{minipage}
\end{center}
\vspace{0.2cm}
\begin{center}
 \begin{minipage}[t]{0.35\textwidth}
\begin{center}
Xiaocong Xu\footnotemark[3]  \\
\footnotesize {Hong Kong University of Science and Technology}\\
{\it xxuay@connect.ust.hk}
\end{center}
\end{minipage}
\hspace{8ex}
\begin{minipage}[t]{0.35\textwidth}
\begin{center}
Xiaozhuo Zhang\footnotemark[4]  \\ 
\footnotesize {Northeast Normal University}\\
{\it zhangxz722@nenu.edu.cn}
\end{center}
\end{minipage}
\end{center}

\footnotetext[1]{Supported by  Hong Kong RGC grant GRF 16301520 and 16305421}
\footnotetext[2]{Supported by NSFC No. 12171078 and 11971097}
\footnotetext[3]{Supported by  Hong Kong RGC grant GRF 16300618 and 16301519 }
\footnotetext[4]{Supported by NSFC No. 12171078}
\vspace{0.8cm}
\begin{center}
 \begin{minipage}{0.8\textwidth}\footnotesize{
Abstract: A fundamental concept in multivariate statistics, sample correlation matrix,  is often used to infer the correlation/dependence structure among random variables, when the population mean and covariance are unknown. A natural block extension of it, {\it sample block correlation matrix}, is proposed to take on the same role, when random variables are generalized to random sub-vectors.  In this paper, we establish a spectral theory of the sample block correlation matrices and apply it to group independent test and related problem, under the high-dimensional setting. More specifically, we consider a  random vector of dimension $p$, consisting of $k$ sub-vectors of dimension $p_t$'s,  where $p_t$'s  can vary from $1$ to order $p$. Our primary goal is to investigate the dependence of the $k$ sub-vectors. We construct a random matrix model called sample block correlation matrix based on $n$ samples for this purpose. The spectral statistics of the sample block correlation matrix include the classical Wilks' statistic and Schott's statistic as special cases.  It turns out that the spectral statistics do not depend on the unknown population mean and covariance, under the null hypothesis that the sub-vectors are independent. Further,  the limiting behavior  of the spectral statistics can be described with the aid of the Free Probability Theory. Specifically, under three different settings of possibly $n$-dependent $k$ and $p_t$'s, we show that the empirical spectral distribution of the sample block correlation matrix converges to the free Poisson binomial distribution, free Poisson distribution (Marchenko-Pastur law) and free Gaussian distribution (semicircle law), respectively. We then further derive the CLTs for the linear spectral statistics of the block correlation matrix  under general setting.  Our results are established under general distribution assumption on the random vector. It turns out that the CLTs are universal and do not depend on the $4$-th cumulants of the vector components, due to a self-normalizing effect of the correlation type matrices. Based on our theory, real data analysis on stock return data and gene data are also conducted.}
\end{minipage}
\end{center}


\thispagestyle{headings}
\section{Introduction and main results}\label{Sec. Intro and main}
\subsection{Main problem and matrix model}
Assume that $\mathbf{y} = (\mathbf{y}'_1, \mathbf{y}'_2, \dots , \mathbf{y}'_k)'$ is a $p$-dimensional random (column) vector, in which the sub-vector $\mathbf{y}_t$ possesses dimension $p_t$ for $t \in [\![ k ]\!]$, such that $\sum_{t=1}^k p_t = p$, where $ [\![ k ]\!] := \{1,2,\dots,k\}$. Denote by $\mathbf{\mu}_t=\mathbb{E}(\mathbf{y}_t)$ the mean vector, by $\Sigma_{ij}=\text{Cov}(\mathbf{y}_i, \mathbf{y}_j)$ the cross covariance matrix, by $\mu = \mathbb{E}(\mathbf{y})$ and $\Sigma = \text{Cov}(\mathbf{y}, \mathbf{y})$ the mean and covariance of the full vector. A fundamental hypothesis testing problem is
\begin{align}
	\mathbf{H}_0: \text{$\mathbf{y}_t$'s are independent}, \hspace{5ex} \text{v.s.} \hspace{5ex} \mathbf{H}_1: \text{ not } \mathbf{H}_0. \label{the test problem}
\end{align}
In case $p_t=1$ for all $t$, i.e., $\mathbf{y}_t$'s are scalars, the problem boils down to the simplest complete independence test.  To this end, we draw $N$ observations of $\mathbf{y}$, namely $\mathbf{y}(1), \dots, \mathbf{y}(N)$. In addition, the $i$th sub-vector of $\mathbf{y}(j)$, i.e., the $j$th sample of $\mathbf{y}_t$,  will be denoted by $\mathbf{y}_t(j)$,  for all $t \in [\![ k ]\!]$ and $j \in [\![ N ]\!]$. Hence, collecting all the observations, we can construct the data matrices,
 $$Y := (\mathbf{y}(1), \dots, \mathbf{y}(N)), \quad Y_t := (\mathbf{y}_t(1), \dots, \mathbf{y}_t(N)), \quad t \in [\![ k ]\!].$$ 
 In this work, we will consider the problem (\ref{the test problem}) under high dimensional setting, namely,  $p\equiv p(N)$ is comparably large as $N$ or even much larger than $N$. Our $p_t$'s can vary from $1$ to order $p$, and $p_t$'s may be of different orders. Our default setting is that the mean vector $\mathbf{\mu}=\mathbb{E}(\mathbf{y})$ and the covariance $\Sigma=\text{Cov}(\mathbf{y}, \mathbf{y})$ are unknown.

In the classical low dimensional case,  when $p$ is fixed and $N$ is large (and thus $p_t$'s and $k$ are all fixed), assuming the Gaussian population, the hypothesis testing problem (\ref{the test problem}) dates back to  \cite{wilks1935independence}, \cite{Hotelling}. Especially, in the classical case, the Wilks' statistic is asymptotically $\chi^2$ distributed.  In the high dimensional case, when all $p_t$ and $N$ are comparably large and $k$ is fixed (and thus $p$ is also comparably large as $N$), a CLT is established for the Wilks' statistics in \cite{jiang2013central}. This result can be regarded as a high-dimensional refinement of the result in the classical low-dimensional case. In  \cite{jiang2013central}, it is assumed that $p+2<N$ and $p_t/N\to \hat{y}_t \in (0,1)$ when $N\to \infty$. Again, in the high-dimensional case, a tracial statistic based on F matrix was constructed for the test (\ref{the test problem}) in \cite{jiang2013testing}, but the assumption on $p_t$'s is stronger. A Schott type statistic is then used for the same test in \cite{bao2017test} for the high-dimensional case, and the assumption on $p_t$'s is weaker. All these three works on the high-dimensional case were done for the Gaussian population. 
For generally distributed population, a very special case, $p_t=1$ for all $t$, has been widely studied in the high dimensional setting when $p$ and $N$ are comparably large. In this case, various testing statistics have been constructed based on the sample correlation matrices and their limiting laws have been derived in the literature. For instance, a testing statistic based on the largest  off-diagonal entry of the sample correlation matrix was considered  in \cite{jiang2004asymptotic}; and testing statistics based on linear spectral statistics of the sample correlation matrix were adopted in  \cite{gao2017high} and \cite{yinspectral}. Very recently,  the work \cite{dornemann2022likelihood} went beyond the scalar case, again under the high-dimensional setting and generally distribution assumption. More specifically, \cite{dornemann2022likelihood} imposed some technical assumptions on $p_t$'s and the total $p$ is comparably  large as $N$ with the restriction $p/N\to \hat{y}\in (0,1)$ when $N\to \infty$. However, $k$ is not necessarily fixed. Actually, in  \cite{dornemann2022likelihood}, all $p_t$'s and $k$ can be $N$ dependent, but $\inf_t p_tk$ is comparable with $N$ (or $p$) for all $t$.  We also refer to   earlier works \cite{dette2020likelihood, bodnar2019testing} for related study, but on Gaussian population. An essential reason for the work  {\cite{jiang2013central}, \cite{dornemann2022likelihood}, \cite{dette2020likelihood} to restrict on the case $p<N$ is that all of them chose the Wilks' statistic which involves the log determinant of the sample covariance matrices, and thus the positive definiteness was required in all these papers.  
 
 In this work, we aim at establishing a general theory for (\ref{the test problem}) which can include nearly all the previous high-dimensional results as special cases and further go far beyond the previous settings.  To this end, we first introduce the  following sample  block correlation matrix model, which is a natural block extension of the sample correlation matrix. We separate the cases when the population mean is  unknown or known. For brevity, we denote by 
 \begin{align*}
\widehat{Y} := (\mathbf{y}(1)-\bar{\mathbf{y}}, \dots, \mathbf{y}(N)-\bar{\mathbf{y}}), \quad \widehat{Y}_i := (\mathbf{y}_t(1)-\bar{\mathbf{y}}_t, \dots, \mathbf{y}_t(N)-\bar{\mathbf{y}}_t), \quad t \in [\![ k ]\!],
 \end{align*}
 where $\bar{\mathbf{y}}=\frac{1}{N} \sum_{i=1}^N \mathbf{y}(i)$ and $\bar{\mathbf{y}}_t=\frac{1}{N} \sum_{i=1}^N \mathbf{y}_t(i)$ are the sample means.  

\begin{defin}[Sample block correlation matrix (with unknown mean)]  \label{defi-unknown mean}
	For any $k \in \mathbb{N}$,  when the population mean $\mu$ is unknown, the sample block correlation matrix $\widehat{\mathcal{B}} :=\widehat{\mathcal{B}}(Y_1, \dots, Y_k)$ is defined as follows,
	\begin{align*}
		\widehat{\mathcal{B}} := {\rm diag}\big((\widehat{Y}_t\widehat{Y}_t')^{-\frac{1}{2}}\big)_{t=1}^k \cdot \widehat{Y}\widehat{Y}' \cdot {\rm diag}\big((\widehat{Y}_t\widehat{Y}_t')^{-\frac{1}{2}}\big)_{t=1}^k. 
	\end{align*}
\end{defin}
Alternatively, we also have the following definition when the population mean $\mu$ is known. In this case, since one can subtract the known mean from the samples, without loss of generality, we assume $\mu=0$. 
\begin{defin}[Sample block correlation matrix (with  mean $0$)]
	For any $k \in \mathbb{N}$,  with the population mean $\mu=0$, the sample block correlation matrix $\mathcal{B} := \mathcal{B}(Y_1, \dots, Y_k)$ is defined as follows,
	\begin{align}
		\mathcal{B} := {\rm diag}\big((Y_tY_t')^{-\frac{1}{2}}\big)_{t=1}^k \cdot YY' \cdot {\rm diag}\big((Y_tY_t')^{-\frac{1}{2}}\big)_{t=1}^k. \label{def of B}
	\end{align}
\end{defin}

\begin{remark} \label{rmk060601}Our final goal is to establish all the main results for $\widehat{\mathcal{B}}$. But for brevity, most of the derivations in this paper will be displayed for $\mathcal{B}$ at first. At the end, we extend all results to $\widehat{\mathcal{B}}$. Further, notice that in case $p_t=1$ for all $t$, $\widehat{\mathcal{B}}$ boils down to the classical sample correlation matrix. Also notice that the classical Wilks' statistic for (\ref{the test problem}) (c.f., \cite{jiang2013central}) is simply $\log \det \widehat{\mathcal{B}}$, and the Schotts' statistic for (\ref{the test problem}) (c.f., \cite{bao2017test}) is simply $\text{\Tr} \widehat{\mathcal{B}}^2$,  and both of them are the so-called linear spectral statistics of $\widehat{\mathcal{B}}$ (c.f., (\ref{def of LSS})). This motivates us to study the random matrix model $\widehat{\mathcal{B}}$ and construct general testing statistics of
(\ref{the test problem}) based on the spectrum of   $\widehat{\mathcal{B}}$. 
\end{remark}
In random matrix theory and high-dimensional multivariate statistics, a commonly adopted structural assumption is 
\begin{align*}
	\mathbf{y}_t(i) = T_t\mathbf{x}_t(i) + \mathbf{\mu}_t, \quad T_tT_t' = \Sigma_{tt} \succ 0, \quad t \in [\![ k ]\!] \quad \text{and} \quad i \in [\![ N ]\!],
\end{align*}
where $\mathbf{x}_i$ consists of independent mean $0$ variance $1$ components and $T_t\in \mathbb{R}^{p_t\times p_t}$ is an invertible matrix.  For simplicity, we present the notations and results for the case that $\mu = 0$ and consider the matrix $\mathcal{B}$ at first. With the normalized vector $\hat{\mathbf{x}}_t(i) : = \mathbf{x}_t(i) / \sqrt{N}$ for all $t \in [\![ k ]\!]$ and $i \in [\![ N ]\!]$, we can define the following normalized sample matrices, 
\begin{align}
X := (\hat{\mathbf{x}}(1), \dots, \hat{\mathbf{x}}(N)), \quad X_t := (\hat{\mathbf{x}}_t(1), \dots, \hat{\mathbf{x}}_t(N)), \quad t \in [\![ k ]\!]. \label{062901}
\end{align}
Instead of studying the spectral statistics of the sample block correlation matrix $\mathcal{B}$, we can turn to study the following matrix which has the same non-zero eigenvalues,
\begin{align}
\bbH :=Y' \cdot {\rm diag}((Y_tY_t')^{-1})_{t=1}^k \cdot Y = \sum_{t=1}^k Y_t'(Y_tY_t')^{-1}Y_t = \sum_{t=1}^k X_t'(X_tX_t')^{-1}X_t=:\sum_{t=1}^k {P}_t.  \label{def of H}
\end{align}
Notice that $\bbH$ is a sum of $k$ random projections, and it does not depend on the unknown $\Sigma_{tt}$'s (or $T_t$'s). This further implies that the spectral statistics of $\mathcal{B}$ does not depend on the unknown population covariance matrix $\Sigma$, under the null hypothesis $\mathbf{H}_0$. Hence $\mathcal{B}$ (or $\widehat{\mathcal{B}}$) has the same advantage as its scalar counterpart, sample correlation matrix, when one considers the spectral statistics. We denote  
the  ordered eigenvalues of $H$ by
$$
\lambda_1(H) \ge \lambda_2(H) \ge \dots \ge \lambda_N(H),
$$
and consider their statistics in the sequel. The empirical spectral distribution (ESD) of $H$ is defined as
\begin{align}
\mu_N := \frac{1}{N}\sum_{i=1}^N\delta_{\lambda_i(H)}. \label{def of mu H}
\end{align}
The limiting behaviour of the ESD can often be studied via the Green function of $H$,  $G(z) := (H - z)^{-1}$, and its normalized trace, also known as  the  Stieltjes transform of $H$ 
\begin{align}
m_N(z) = \frac{1}{N} \Tr G(z)=\int \frac{1}{\lambda-z} \mu_N({\rm d} z), \qquad z\in \mathbb{C}^+:=\{w\in \mathbb{C}: \Im w>0\}.  \label{def of m_N}
\end{align}
The linear spectral statistics (LSS) of $H$ is then defined as follows.
\begin{defin}[Linear spectral statistics] 
	For a test function $f:\mathbb{R}\to \mathbb{R}$, the linear spectral statistics (LSS) of $H$ is defined as
	\begin{align}
		\sum_{i=1}^N f(\lambda_i(H))  = \Tr f(H)=N\int f(\lambda) \mu_N({\rm d}\lambda). \label{def of LSS}
	\end{align}
\end{defin}
Similarly, we can define LSS for any square matrix. Hereafter we use $\Tr A$ to denote the trace of a matrix $A$ and use $\tr A := N^{-1}\Tr A $ to denote the normalized (by $N$) trace for any square matrix $A$, no matter the dimension of $A$ is $N\times N$ or not. 

From the LSS of $H$, one can easily recover the LSS of $\mathcal{B}$ since they share the same non-zero eigenvalues. It turns out that the classical Wilks' statistic and Schott's statistic are both LSS of $\mathcal{B}$, as mentioned in Remark \ref{rmk060601}. Our aim is to establish a general CLT for $\Tr f(H)$ for a general class of test function $f$ with rather general assumption on $k$, $p_t$'s and $N$,  and thus provide a class of testing statistics for the problem (\ref{the test problem}). To this end, according to (\ref{def of LSS}), it is clear that one needs to first study the limiting law for the random measure $\mu_N$.

\subsection{Free additive convolution} In (\ref{def of H}), 
observe that  ${P}_t$ is a projection matrix, whose  ESD is trivially 
$\mu_t\equiv \mu_{t}^N=y_t\delta_1+(1-y_t)\delta_0$ (almost surely), with $y_t:= p_t/N.$  If the $P_t$'s are replaced by independent Bernoulli random variables in classical probability theory, then the sum of these Bernoulli is distributed as the classical convolution of Bernoulli random variables. 
Heuristically, if we view $P_t$'s as certain random variables in a non-commutative probability space, and regard the ESD $\mu_t$ as the distribution of $P_t$, the random matrix $H$ can be regarded as a sum of $k$ random variables $P_t$ with Bernoulli distributions. This motivates us to consider convolution in certain non-commutative probability space. The right candidate is the free additive convolution from Free Probability Theory. Free probability was initiated by Voiculescu in  \cite{voiculescu1985symmetries}, and later a connection with random matrices was discovered in \cite{voiculescu1991limit}. Free additive convolution, the analogue of classical convolution, was first introduced in \cite{voiculescu1986addition} via R-transform. In the sequel, we will rely on an analytical definition of free additive convolution based on the subordination functions which dates back to \cite{voiculescu1993analogues}. 
Given a generic probability measure $\mu$ on $\mathbb{R}$, its Stieltjes transform, $m_{\mu}$,  is defined by
$
	m_{\mu}(z) := \int_{\mathbb{R}} (x-z)^{-1}{\rm d} \mu(x)$, $z\in \mathbb{C}^{+}.
$
We denote by $F_{\mu}$ the negative reciprocal Stieltjes transform of $\mu$, i.e.
$
	F_{\mu} (z) := -(m_{\mu}(z))^{-1}$, $z\in \mathbb{C}^{+}.
$
Observe that 
\begin{align}\label{F1}
	\lim_{\eta \to \infty} \frac{F_\mu(\mathrm{i}\eta)}{\mathrm{i}\eta} = 1,
\end{align}
and note that $F_\mu$ is analytic on $\mathbb{C}^{+}$ with nonnegative imaginary part. We refer to \cite{belinschi2007new} and \cite{chistyakov2011arithmetic} for instance for the following definition of  free additive convolution. 
\begin{prop}\label{freeaddprop}
	Given k probability measures $\mu_1,\dots,\mu_k$ on $\mathbb{R}$, there exist unique analytic functions, $\omega_1,\dots, \omega_k: \mathbb{C}^{+} \to \mathbb{C}^{+}$, such that,
	\begin{itemize}
		\item [i)] for all $z \in \mathbb{C}^{+}$, $\Im \omega_1, \Im \omega_2 ,\dots, \Im \omega_k \ge \Im z$, and
		\begin{align}\label{sub1}
			\lim_{\eta \to \infty} \frac{\omega_t(\mathrm{i}\eta)}{\mathrm{i}\eta} = 1, \quad t \in [\![ k ]\!],
		\end{align}
		\item [ii)] for all $z \in \mathbb{C}^{+}$,
		\begin{align}\label{sub2}
			F_{\mu_1}(\omega_1(z)) = F_{\mu_2}(\omega_2(z)) = \dots = F_{\mu_k}(\omega_k(z)),
		\end{align}
		and for all $t \in [\![ k ]\!]$, 
		\begin{align}\label{sub3}
			\omega_1(z) + \omega_2(z) + \dots + \omega_k(z) - z = (k-1)F_{\mu_t}(\omega_t(z)).
		\end{align}
	\end{itemize}
\end{prop}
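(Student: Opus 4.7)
The plan is to recast (\ref{sub2})--(\ref{sub3}) as a fixed-point problem on the product $(\mathbb{C}^+)^k$ and invoke a Denjoy--Wolff / Earle--Hamilton type contraction argument, generalizing the $k=2$ iteration scheme of Belinschi--Bercovici. Introduce $h_s(w) := F_{\mu_s}(w) - w$; from $F_{\mu_s}(w) = -1/m_{\mu_s}(w)$ one verifies via Cauchy--Schwarz that $\Im F_{\mu_s}(w) \ge \Im w$, so each $h_s$ maps $\mathbb{C}^+$ into the closed upper half-plane $\overline{\mathbb{C}^+}$. Any tuple $(\omega_1,\ldots,\omega_k)$ satisfying (\ref{sub2})--(\ref{sub3}) with common value $w(z):=F_{\mu_t}(\omega_t(z))$ automatically satisfies
\begin{equation*}
\omega_t(z) \;=\; z + \sum_{s\neq t} h_s(\omega_s(z)), \qquad t \in [\![ k ]\!],
\end{equation*}
and conversely a solution of this system recovers (\ref{sub2})--(\ref{sub3}) with $w(z) = z + \sum_{s=1}^k h_s(\omega_s(z))$. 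Define $\Phi_z : (\mathbb{C}^+)^k \to (\mathbb{C}^+)^k$ by $(\Phi_z(\omega))_t := z + \sum_{s\neq t} h_s(\omega_s)$; then $\Phi_z$ is holomorphic and $\Im (\Phi_z(\omega))_t \ge \Im z > 0$, which already delivers the inequality in (i).

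Next I would establish existence by iterating $\Phi_z$ from the starting point $(\omega^{(0)}_1,\ldots,\omega^{(0)}_k) = (z,\ldots,z)$. The crucial analytic input is that each $F_{\mu_s}$ is a non-constant Nevanlinna self-map of $\mathbb{C}^+$; excluding the trivial case in which some $\mu_s$ is a point mass (which merely lowers the rank of the system by one), the Schwarz--Pick inequality makes each $h_s$, and hence $\Phi_z$, a strict contraction in the product Poincar\'e metric on $(\mathbb{C}^+)^k$. Combined with the coercive lower bound $\Im (\Phi_z(\omega))_t \ge \Im z$, which prevents the iterates from escaping toward the real axis, this forces the iterates to converge to a fixed point $(\omega_1(z),\ldots,\omega_k(z))$ depending analytically on $z$ by uniform convergence on compacts. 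Uniqueness follows from the same contraction property: any two fixed points would have vanishing hyperbolic distance.

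Finally, the normalization (\ref{sub1}) is verified along $z = \mathrm{i}\eta$ with $\eta \to \infty$: the lower bound $\Im \omega_t(\mathrm{i}\eta) \ge \eta$ together with the Nevanlinna asymptotic $F_{\mu_s}(w)/w \to 1$ non-tangentially as $|w|\to\infty$ (which is (\ref{F1}) applied to each $\mu_s$) gives $h_s(\omega_s(\mathrm{i}\eta)) = o(\omega_s(\mathrm{i}\eta)) = o(\eta)$, so the fixed-point equation yields $\omega_t(\mathrm{i}\eta)/(\mathrm{i}\eta) \to 1$. The main obstacle will be the strict-contraction / existence step: $(\mathbb{C}^+)^k$ is not relatively compact in itself, so Banach's theorem does not apply directly, and one must either restrict $\Phi_z$ to a hyperbolically bounded subdomain to invoke Earle--Hamilton, or alternatively proceed by induction on $k$ starting from the Belinschi--Bercovici treatment of $k=2$, then use associativity of free additive convolution to reconstruct the symmetric $k$-tuple from the pairwise subordinations.
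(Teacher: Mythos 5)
First, a point of reference: the paper does not prove Proposition \ref{freeaddprop} at all — it imports it from \cite{belinschi2007new} and \cite{chistyakov2011arithmetic} — and your reformulation of \eqref{sub2}--\eqref{sub3} as the fixed-point system $\omega_t=z+\sum_{s\neq t}\bigl(F_{\mu_s}(\omega_s)-\omega_s\bigr)$ is exactly the route taken in those references, so there is no in-paper argument to compare against. Judged as a proof, however, your sketch has a genuine gap at its core, precisely at the step you flag at the end. The Schwarz--Pick inequality only makes the holomorphic self-map $\Phi_z$ of $(\mathbb{C}^{+})^k$ \emph{non-expansive} in the hyperbolic metric; it does not yield strict contraction, and even a pointwise strict contraction on a non-compact domain need not have a fixed point. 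Earle--Hamilton does not repair this as stated: the image of $\Phi_z$ lies in the product of half-planes $\{\Im w\ge \Im z\}$, which under the Cayley transform are horodisks \emph{tangent} to the boundary of the disk, hence not hyperbolically bounded away from the boundary (and $h_s$ is in general unbounded). The entire content of the proposition is the resolution of this point: the cited proofs study, for fixed $z$, a composed self-map of a single copy of $\mathbb{C}^{+}$ and invoke the Denjoy--Wolff theorem, with a separate argument (using the nontangential asymptotics of $F_{\mu}$) to exclude a boundary Denjoy--Wolff point at infinity; uniqueness also requires its own argument (ruling out the automorphism/point-mass degeneracies), since distinct fixed points of a merely non-expansive map need not coincide. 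As written, existence, uniqueness, and analyticity in $z$ are asserted rather than proved, and ``either restrict to a hyperbolically bounded subdomain or induct on $k$ via associativity'' is a programme, not an argument — the induction route in particular requires showing that pairwise subordination functions can be reassembled into a tuple solving the symmetric system \eqref{sub2}--\eqref{sub3}, which is not automatic.

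There is also a smaller gap in your verification of \eqref{sub1}: you apply $F_{\mu_s}(w)/w\to 1$ along the path $w=\omega_s(\mathrm{i}\eta)$, but this asymptotic is only valid nontangentially, and the sole a priori information $\Im\omega_s(\mathrm{i}\eta)\ge\eta$ does not prevent $\Re\omega_s(\mathrm{i}\eta)$ from growing much faster, i.e.\ a tangential approach along which $F_{\mu_s}(w)-w=o(|w|)$ can fail. The step can be repaired: each $\omega_t$ is a Nevanlinna function, so $\omega_t(\mathrm{i}\eta)/(\mathrm{i}\eta)$ converges to a finite limit $b_t$, which is $\ge 1$ by the bound $\Im\omega_t\ge\Im z$; the approach is then asymptotically vertical, and inserting this into \eqref{sub3} forces all $b_t$ to be equal and hence equal to $1$. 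But some such argument must be supplied; as stated the normalization is not justified.
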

It follows from \eqref{sub1} that the analytic function $F_{\boxplus}:\mathbb{C}^{+} \to \mathbb{C}^{+}$ defined by
\begin{align}\label{F2}
	F_{\boxplus}(z) := F_{\mu_t}(\omega_t(z)), \quad t \in [\![ k]\!],
\end{align} 
satisfies the analogues of \eqref{F1}. Hence, from \cite{akhiezer2020classical}, we know that  $F_{\boxplus}$ is the negative reciprocal Stieltjes transform of a probability measure $\mu_{\boxplus}$, called the free additive convolution of $\mu_t$'s, usually denoted by $\mu_{\boxplus} = \mu_1\boxplus\dots\boxplus\mu_k$. The functions $\omega_t$'s in Proposition \ref{freeaddprop} are called subordination functions and $m_{\boxplus}:=-1/{F_{\boxplus}}$ is said to be subordinated to $m_{\mu_t}$. Apparently, we can also rewrite (\ref{F2}) as 
\begin{align}
m_\boxplus(z)=m_{\mu_t}(\omega_t(z)),  \quad t \in [\![ k]\!]. \label{m2}
\end{align}

\subsection{Main results}

With the aid of the notations introduced above, in this part, we can present our results on the limiting behavior of $\mu_N$ and the CLTs for LSS. To this end, we start with our main assumptions. As we mentioned above, we will first present the result for the matrix $\mathcal{B}$ in (\ref{def of B}) and at the end we extend the result to $\widehat{\mathcal{B}}$.
\begin{assumption}[Assumption on matrix entrices]\label{assum1}
Assume that $X=(X_{ab})$ in (\ref{062901}) has i.i.d. columns. For its  entries, we further impose the following assumptions,
\begin{itemize}
	\item 	Under $\mathbf{H}_0$, $X_{ab}$'s $(a \in [\![ p ]\!], b \in [\![ N ]\!])$ are independent.
	\item 	$\mathbb{E}[ X_{ab} ]=0, \mathbb{E}[ |X_{ab}|^2 ]= 1 / N$ for all $a \in [\![ p ]\!]$ and $b \in [\![ N ]\!]$.
	\item  For each $\ell \in \mathbb{N}$, there exists a constant $C_\ell$ such that $\mathbb{E} [|\sqrt{N}X_{ab}|^{\ell} ] < C_\ell$ for all $N, a, b$.
\end{itemize}
\end{assumption}
Here we emphasize that we do not need $X_{ab}$'s to be i.i.d.  But we assume that the columns of the data matrix $X$ are i.i.d, which is natural since we have i.i.d. samples. When an entry of $X$ is the $(c,d)$ entry of a sub-matrix $X_t$ (c.f. (\ref{062901})), we also write this entry as $X_{t,cd}$.  Further, the moment condition $\mathbb{E} [|\sqrt{N}X_{ab}|^{\ell} ] < C_\ell$ will be eventually replaced by a weaker $4+\delta$-moment condition in Assumption \ref{assum3}, assuming the continuity of the matrix entry distributions. 

\begin{assumption}[Assumption on dimensional parameters]\label{assum2}
For the dimensional parameters, we impose the following assumptions,
\begin{itemize}
	\item $\sum_{t=1}^ky_t =: y\to \hat{y} \in (0,\infty)$ as $N\to \infty$.
	\item $y_t\to \hat{y}_t \in [0,1), t = 1,\dots,k$ as $N\to \infty$.
	\item there exists some small constant $c>0$, such that $y - \max_ty_t \geq c$, for sufficiently large $N$.
\end{itemize}
\end{assumption}

\begin{remark} \label{rem071301}
The second assumption on dimensional parameters is to ensure that the sample covariance matrix $X_tX_t'$ is invertible with high probability. The last assumption guarantees that the variance for LSS is typically of order $1$. See, for instance, Corollaries \ref{coroschott} and \ref{corowilk}. We also refer to Remark \ref{rem.080901} below for more discussion. 
\end{remark}

In the sequel,  for convenience, we also write 
$
p_{\max}:=\max_tp_t$, $y_{\max}:=\max_t y_t. 
$
Recall $\mu_N$ defined in (\ref{def of mu H}). 
\begin{thm}\label{1stLimit}
Under Assumptions \ref{assum1} and \ref{assum2} , the following convergence holds in probability: For any fixed integer $\ell>0$, we have the convergence in moment of $\mu_N$ to $\mu_1\boxplus\dots\boxplus\mu_k$,
\begin{align}
\int x^\ell{\rm d} \mu_N - \int x^\ell {\rm d}\mu_1\boxplus\dots\boxplus\mu_k \stackrel{\mathbb{P}}\longrightarrow 0.  \label{062703}
\end{align}
Further, we have the following convergence in probability in two special cases

	Case 1: $k \sim 1$, $p_t \sim N$ for all $t \in [\![ k ]\!]$
	 \begin{align}
		 \mu_N\stackrel{\mathbb{P}}\Longrightarrow \mu^{\infty}_{\boxplus} :=\mu^{\infty}_1\boxplus\dots\boxplus \mu^{\infty}_k, \qquad \mu^{\infty}_t\sim \text{Ber} (\hat{y}_t).  \label{062701}
 	\end{align}
 	
	Case 2: $k \gg 1$, $ p_t \le  N^{1-\epsilon}$ for some small constant $\epsilon > 0$ for all $t \in [\![ k ]\!]$ 
	 \begin{align}
 		\mu_N\stackrel{\mathbb{P}}\Longrightarrow \mu_{mp,\hat{y}} := \frac{\sqrt{([(1+\sqrt{\hat{y}})^2-x][x-(1-\sqrt{\hat{y}})^2])_+}}{2x} {\rm d} x+(1-\hat{y})_+\delta_0.  \label{062702}
 \end{align}
Here $A_N \stackrel{\mathbb{P}}\Longrightarrow A $ means $A_N$ converge weakly  to $A$ in probability, and the notations $\sim$ and $\gg$ are introduced in Section \ref{s.NC}.
\end{thm}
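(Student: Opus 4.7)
The plan is to exploit the decomposition $H=\sum_{t=1}^k P_t$ in which each $P_t$ is an orthogonal projection of rank $p_t$ with deterministic spectrum $\mu_t=y_t\delta_1+(1-y_t)\delta_0$. Statement \eqref{062703} will then be a consequence of the \emph{asymptotic freeness} of the family $(P_t)_{t=1}^k$: once this is established, moment convergence of $\mu_N$ to $\mu_1\boxplus\cdots\boxplus\mu_k$ is essentially the defining property of free additive convolution at the moment level.

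My plan for proving asymptotic freeness is the method of moments. The problem reduces to showing that for every fixed $r$ and every alternating index sequence $s_1\neq s_2\neq\cdots\neq s_r$,
\begin{align*}
\tr\bigl[(P_{s_1}-y_{s_1}I)(P_{s_2}-y_{s_2}I)\cdots(P_{s_r}-y_{s_r}I)\bigr]\stackrel{\mathbb{P}}\longrightarrow 0,
\end{align*}
since the relation $P_t^2=P_t$ allows $\tr H^\ell$ to be reduced to such alternating centered products after merging consecutive equal indices. When the columns of $X_t$ are Gaussian, the law of $P_t$ is $O(N)$-invariant and its range is Haar-distributed on the Grassmannian, and the vanishing above is Voiculescu's classical theorem on asymptotic freeness of independent Haar projections. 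For the general distribution in Assumption \ref{assum1}, I would transfer the Gaussian result by a Lindeberg-type entry swap combined with a resolvent expansion of $(X_tX_t'-zI)^{-1}$, which together show that $\E\tr\prod_j(P_{s_j}-y_{s_j}I)$ depends on moments of $X_{ab}$ of order three and higher only through an $o(1)$ remainder. Concentration (upgrading expectation to probability) would be obtained from a martingale decomposition column-by-column in $X$, yielding variance $O(N^{-1})$ via the crude bound $\|H\|\le k$ together with isotropic local-law estimates for $(X_tX_t'-zI)^{-1}$.

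For Case 1, $k$ is fixed and $y_t\to\hat{y}_t\in(0,1)$, so $\mu_t\Rightarrow\mu_t^\infty$; continuity of $\boxplus$ under weak convergence of compactly supported measures yields $\mu_1\boxplus\cdots\boxplus\mu_k\Rightarrow\mu_\boxplus^\infty$, which combined with \eqref{062703} gives \eqref{062701}, tightness of $\mu_N$ being automatic from $0\le H\le kI$. For Case 2, $y_t\to 0$ while $\sum_t y_t\to\hat{y}$, which is the classical free Poisson regime. Each Bernoulli $\mu_t$ has free cumulants $\kappa_n^{(t)}=y_t+O(y_t^2)$, and since $\sum_t y_t^2\le y_{\max}\cdot y=O(N^{-\epsilon})$, additivity of free cumulants under $\boxplus$ gives cumulants of $\mu_1\boxplus\cdots\boxplus\mu_k$ equal to $\hat{y}+o(1)$ at every fixed order; these are precisely the free cumulants of the Marchenko--Pastur law $\mu_{mp,\hat{y}}$, i.e.\ the free Poisson of rate $\hat{y}$. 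A minor technical point is that, because $k$ grows with $N$, one has to check that the error in \eqref{062703} is uniform enough in $k$, but this is straightforward as the relevant moments remain bounded uniformly in $k$ under Assumption \ref{assum2}.

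The main obstacle will be the universality step. Because $P_t$ depends \emph{nonlinearly} on $X_t$ through the inverse Wishart $(X_tX_t')^{-1}$, the usual four-moment-matching argument does not apply directly; instead one has to combine resolvent expansions of $(X_tX_t'-zI)^{-1}$ with isotropic local laws for sample covariance matrices to carry out the Lindeberg swap, in the spirit of the universality proofs for scalar sample correlation matrices in \cite{gao2017high,yinspectral}.
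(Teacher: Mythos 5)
Your route---reduce \eqref{062703} to asymptotic freeness of the projections $P_1,\dots,P_k$ by the moment method and then transfer from the Gaussian/Haar case by a Lindeberg swap---is genuinely different from the paper's proof, which never works with mixed moments: the paper builds approximate subordination functions $\omega_t^c$ out of $\tr P_tG$ and $\tr G$, derives a perturbed subordination system by cumulant expansions, and inverts it by a Newton--Kantorovich stability argument plus continuity in $z$. For \emph{fixed} $k$ with Gaussian entries your plan is sound: the row spaces of the $X_t$ are independent Haar subspaces, asymptotic freeness (Voiculescu; see also \cite{collins2011strong}) applies, and moment convergence to $\mu_1\boxplus\dots\boxplus\mu_k$ follows, with Case 1 and the free-cumulant computation for Case 2 being correct modulo the general statement.

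The genuine gap is your treatment of the general statement \eqref{062703}, where $k\equiv k(N)$ may grow (e.g.\ $k=p/2$, $p_1=\dots=p_k=2$) and the measures $\mu_t$, hence $\mu_1\boxplus\dots\boxplus\mu_k$, are $N$-dependent. There is no off-the-shelf asymptotic freeness theorem for a family whose size grows with $N$, even in the Gaussian case; one must prove a \emph{quantitative} bound, with the correct $p_t$-dependence, for each alternating centered word and then sum over the roughly $k^{\ell}$ words occurring in $\Tr H^{\ell}$, and a naive term-by-term bound does not close in all regimes allowed by Assumption \ref{assum2} (individual per-block errors are only of size $\sqrt{p_t/N^3}$-type, and summing $k$ of them, possibly with $O(k)$ prefactors coming from the combinatorics, can destroy the gain). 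This is precisely the fluctuation-averaging phenomenon the paper has to exploit in \eqref{Second error} of Lemma \ref{Lemma error estimates} to counterbalance the growth of $k$; declaring the uniformity in $k$ ``straightforward because the moments are bounded uniformly in $k$'' conflates control of the main terms with control of the accumulated error terms, and for general (non-Gaussian) entries $\mathbb{E}P_t\neq y_tI$ exactly, so the centered words do not even have mean zero and their biases must also be summed over $t$. Two further points: the concentration step cannot rely on the deterministic bound $\|H\|\le k$ when $k\sim N$ (you need the high-probability $O(1)$ bound on $\|H\|$, which itself uses $y_t$ bounded away from $1$, cf.\ Lemma \ref{Gbound}); and the Lindeberg swap through the nonlinear map $X_t\mapsto X_t'(X_tX_t')^{-1}X_t$, uniformly over blocks with $p_t$ ranging from $O(1)$ up to order $N$, is itself a substantial construction that you only sketch, whereas the paper's cumulant-expansion framework handles general entries (and later the $4+\delta$ moment relaxation) without any comparison to the Gaussian model.
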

\begin{remark} Here we remark that from Free Probability point of view, the above convergence of measure $\mu_N$ can be regarded as the non-commutative analogues of the limiting laws for sum of independent Bernoulli in classical probability. Specifically, case 1 is analogous to the sum of (fixed) $k$  independent Bernoulli whose distribution is known as Poisson Binomial distribution. Hence, we can call the measure in the RHS of (\ref{062701}) as Free Poisson Binomial distribution. Cases 2 corresponds to the classical Poisson convergence, and indeed Marchenko-Pastur law is called Free Poisson law in Free Probability Theory.  In Section \ref{s.case 3 semicircle} of Appendix, we also state the discussion for an alternative regime which is not covered by Theorem \ref{1stLimit}. Specifically, we consider the case when $\sum_{t=1}^k y_t(1-y_t)\to \infty$ as $N$ goes to infinity. Notice that this is a necessary condition for the sum of $k$ independent $ \text{Ber} (y_t)$'s to be asymptotically Gaussian in classical CLT.  We show under certain additional assumption that in this regime the ESD of a rescaled version of $H$ converges weakly in probability to the Free Gaussian law, i.e, semicircle law. We also remark here in the general statement (\ref{062703}), $\mu_1\boxplus\mu_2\dots\boxplus\mu_k$ could be $N$-dependent.
\end{remark}

The above theorem depicts the first order behaviour of $\mu_N$. For the second order, we shall derive  CLT for the LSS in (\ref{def of LSS}). 

We will start with Cauchy integral formula, writing the LSS as a contour integral involving the Stieltjes transform for test functions analytic in certain domain. Therefore, to avoid the singularity at the origin for some specific but important test functions, i.e., the logarithm function (the one used in Wilks' statistics), our discussion will be separated into two cases. For the case of $\hat{y}\in (0,1)$ and thus $H$ has trivial $0$ eigenvalues, we will use a contour which does not enclose $0$. While for other cases, we will simply use sufficiently large contour that encloses $\text{supp}(\mu_\boxplus)$ (c.f., Lemma \ref{supportofmu}). 
For the former case, we will work on the contour $\gamma^0$ shown in Fig.~\ref{contourFig}. We only depict the part in the upper half plane, and the lower half can be completed by complex conjugation. The upper half contour of $\gamma^0$ can be parameterized as the following. For sufficiently small $\epsilon_1 > \epsilon_2 > 0$, and sufficiently large $M_2 > M_1 > 0$, let
\begin{align}
	&\mathcal{C}_1\equiv \mathcal{C}_1(\epsilon_1,\epsilon_2) := \{z: |z| = \epsilon_1, \Im z \ge 0, \Re z \ge -\epsilon_2 \},\notag\\
	&\mathcal{C}_2\equiv  \mathcal{C}_2(\epsilon_1,\epsilon_2,M_1) := \{z: \Im z = \sqrt{\epsilon_1^2-\epsilon_2^2}, -M_1\leq \Re z \leq -\epsilon_2 \}, \notag\\
	&\mathcal{C}_3\equiv \mathcal{C}_3(\epsilon_1,\epsilon_2,M_1,M_2) := \{z: \sqrt{\epsilon_1^2-\epsilon_2^2} \leq \Im z \leq M_2 , \Re z = -M_1 \},\notag\\
	&\mathcal{C}_4\equiv \mathcal{C}_4(M_1,M_2) := \{z: \Im z = M_2,  -M_1 \le \Re z \le M_1 \},\notag\\
	&\mathcal{C}_5\equiv \mathcal{C}_5(\epsilon_1,\epsilon_2,M_1,M_2) := \{z: \sqrt{\epsilon_1^2-\epsilon_2^2} \leq  \Im z \leq M_2, \Re z = M_1 \},\notag\\
	&\mathcal{C}_6\equiv \mathcal{C}_6(\epsilon_1,\epsilon_2,M_1) := \{z: 0 \le \Im z \le \sqrt{\epsilon_1^2-\epsilon_2^2} , \Re z = M_1 \}.  \label{062033}
\end{align}
In summary, the contour for the case of $\hat{y}\in (0,1)$ is 
$
\gamma^0:=\mathcal{C}^0\cup\overline{\mathcal{C}^0}$, where $\mathcal{C}^0\equiv \mathcal{C}^0(\epsilon_1,\epsilon_2,M_1,M_2):= \bigcup_{a=1}^6 \mathcal{C}_a .   
$

\begin{figure}
        \centering
        \begin{minipage}{.5\textwidth}
            \centering
            \begin{tikzpicture}
		\draw[->](-3.2,0)--(3.2,0) node[left,below,font=\tiny]{$x$};
		\draw[->](0,-0.2)--(0,2.2) node[right,font=\tiny]{$y$};
		\draw[color=red, thick, smooth, domain=-2:-0.07]plot(\x,0.05);
		\draw[color=green, thick, smooth, domain=0.035:2]plot(-2,\x);
		\draw[color=black, thick, smooth, domain=-2:2]plot(\x,2);
		\draw[color=green, thick, smooth, domain=0.1:2.015]plot(2,\x);
		\draw[color=blue, thick, smooth, domain=0:0.1]plot(2,\x);
		\draw[color= blue, thick] (0.1,0) arc (0:150:0.1);
	\end{tikzpicture}
            \caption{Contour of $z$ in half plane (for test \\functions which are singular at origin)}
            \label{contourFig}
        \end{minipage}%
        \begin{minipage}{.5\textwidth}
            \centering
            \begin{tikzpicture}
			\draw[->](-3.2,0)--(3.2,0) node[left,below,font=\tiny]{$x$};
			\draw[->](0,-0.2)--(0,2.2) node[right,font=\tiny]{$y$};
			\draw[color=green, thick, smooth, domain=0.035:2]plot(-2,\x);
			\draw[color=black, thick, smooth, domain=-2:2]plot(\x,2);
			\draw[color=green, thick, smooth, domain=0.1:2.015]plot(2,\x);
			\draw[color=blue, thick, smooth, domain=0:0.1]plot(2,\x);
			\draw[color=blue, thick, smooth, domain=0:0.1]plot(-2,\x);
			\end{tikzpicture}
	        \caption{Contour of $z$ in half plane (for analytic test functions)}
            \label{zcontour2}
        \end{minipage}%
  \end{figure}
    
  For the other case, we choose the contour $\gamma$ with  upper half  shown in Fig. \ref{zcontour2} and its complex conjugate. Let
$
	\mathcal{C}_7 = \{z: 0 \le \Im z \le \epsilon_2 , \Re z = -M_1 \}.
$
The contour now becomes
$
\gamma:=\mathcal{C}\cup\overline{\mathcal{C}}$, where $\mathcal{C}\equiv \mathcal{C}(\epsilon_1,\epsilon_2,M_1,M_2) := \bigcup_{a=3}^7 \mathcal{C}_a.
$  
With the above configuration, we first define the contour used for the CLT. For sufficiently small $\epsilon_{1i} > \epsilon_{2i} > 0, i = 1,2$ and sufficiently large $M_{2i} > M_{1i} > 0, i=1,2$, let
\begin{align}
	\gamma^0_1:=\gamma(\epsilon_{11}, \epsilon_{21},M_{11},M_{21}),\quad \gamma^0_2:=\gamma(\epsilon_{12}, \epsilon_{22},M_{12},M_{22})\notag \\
	\gamma_1:=\gamma(\epsilon_{11}, \epsilon_{21},M_{11},M_{21}),\quad \gamma_2:=\gamma(\epsilon_{12}, \epsilon_{22},M_{12},M_{22})\label{contour12}
\end{align}
be counterclockwise contours.
Notice that by choosing sufficiently well separated parameters $\epsilon_{1i}$, $\epsilon_{2i}$, $M_{1i}$ and $M_{2i}$, $i=1,2$, the contours $\gamma_1^0$ ($\gamma_1$) and $\gamma_2^0$ ($\gamma_2$) are nonintersecting. In addition, by choosing $\epsilon_{1i}$, $\epsilon_{2i}$, $M_{1i}$ and $M_{2i}$, $i=1,2$ appropriately, we can always have that $\{m_{\boxplus}(z): z\in \gamma_1^0\}$ and $ \{m_{\boxplus}(z): z\in \gamma_2^0\}$ are well separated and  the same holds if $(\gamma_1^0,\gamma_2^0)$ is replaced by $(\gamma_1,\gamma_2)$ (c.f.,  Section \ref{Discussion on the contours} in Appendix). Notice that all the contours enclose the set $\text{supp}(\mu_{\boxplus}) \setminus 0$ (c.f.,  Lemma \ref{supportofmu}). The CLT for LSS can be summarized as the following.

\begin{thm}\label{FreeCLT}
Recall $ \mu_\boxplus=\mu_1\boxplus\dots\boxplus\mu_k$ and $m_\boxplus$ its Stieltjes transform. 
If $\hat{y} \in (0,1)$ and f is analytic inside $\gamma_1^0$ and $\gamma_2^0$, under Assumptions \ref{assum1} and \ref{assum2} , we have 
\begin{align}
\frac{\Tr f(H)-N\int f(x) {\rm d} \mu_\boxplus(x)-a_f}{\sigma_f}\Rightarrow \mathcal{N}(0, 1) \label{062230}
\end{align}
 if  $\sigma_f^2 > c$ with some small constant $c > 0$. Here
\begin{align}
a_f=-\frac{1}{4\pi \mathrm{i}} \oint_{\gamma_1^0} f(z) \bigg[ \sum_{t=1}^k \frac{\omega_t''(z)}{\omega_t'(z)}+(k-1)\left( \frac{2m'_{\boxplus}(z)}{m_\boxplus(z)}-\frac{m_\boxplus''(z)}{m_{\boxplus}'(z)}\right)\bigg] {\rm d} z \label{062201}
\end{align}
 and 
\begin{align}
\sigma_f^2=& \frac{-1}{2\pi^2} \oint_{\gamma_1^0}\oint_{\gamma_2^0} f(z_1)f(z_2)\bigg[ \sum_{t=1}^k \frac{\omega_t'(z_1)\omega_t'(z_2)}{(\omega_t(z_1)-\omega_t(z_2))^2} \notag\\
&\qquad-\frac{1}{(z_1-z_2)^2}
-\frac{(k-1)m'_\boxplus(z_1)m'_\boxplus(z_2)}{(m_\boxplus(z_1)-m_\boxplus(z_2))^2}\bigg] {\rm d}z_2 {\rm d}z_1. \label{072201}
\end{align}
The same result holds if  $\hat{y} \in (0,\infty)$ with $\gamma_{1}^0$ and $\gamma_{2}^0$ replaced by $\gamma_1$ and $\gamma_2$, respectively. 
\end{thm}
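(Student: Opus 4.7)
The plan is to pass to resolvents via Cauchy's integral formula and reduce the CLT for $\Tr f(H)$ to a Gaussian process CLT for $z \mapsto \Tr G(z) - N m_\boxplus(z)$ on the contour. Since $f$ is analytic inside $\gamma_1^0$ (resp.\ $\gamma_1$), and since an eigenvalue-rigidity statement derived from the local law below will show that with high probability the spectrum of $H$ sits inside $\text{supp}(\mu_\boxplus)$ up to a $o(1)$ neighborhood (c.f.\ Lemma \ref{supportofmu}), I would write
\begin{align*}
\Tr f(H) - N\int f\,{\rm d}\mu_\boxplus \;=\; -\frac{1}{2\pi \mathrm{i}}\oint_{\gamma_1^0} f(z)\bigl[\Tr G(z) - N m_\boxplus(z)\bigr]\,{\rm d}z.
\end{align*}
This naturally decomposes the target into a deterministic bias $-\frac{1}{2\pi \mathrm{i}}\oint f(z)\,(\mathbb{E}\Tr G(z) - N m_\boxplus(z))\,{\rm d}z$, which will produce $a_f$, and a mean-zero random part whose joint Gaussian behavior on $\gamma_1^0$ (and $\gamma_2^0$ for the variance) will deliver $\sigma_f^2$.

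The core analytic input is an optimal anisotropic local law for $G(z)$ that matches the matrix subordination philosophy underlying Proposition \ref{freeaddprop}. Specifically, I would construct deterministic $N\times N$ matrices $\Pi(z),\Pi_t(z)$ built from the scalar subordination functions $\omega_t$, and prove the entrywise and tracial bounds
\begin{align*}
\max_{a,b}\bigl|G(z)_{ab} - \Pi(z)_{ab}\bigr| \prec N^{-1/2}, \qquad |m_N(z) - m_\boxplus(z)| \prec N^{-1},
\end{align*}
uniformly for $z$ on the contours, which by construction remain a fixed distance from $\text{supp}(\mu_\boxplus)\setminus\{0\}$. Two structural identities will drive the derivation: the resolvent identity applied to $H - z = \sum_t P_t - z$, and the representation $P_t = X_t'(X_tX_t')^{-1}X_t$ together with a Schur-complement / leave-one-out argument that reduces $P_t$ to a concentrated quantity on its range. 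The scalar relations (\ref{sub2})--(\ref{sub3}) will arise as the self-consistent equations satisfied by the traces of $\Pi_t(z)$.

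Given the local law, the CLT is obtained by a cumulant (Stein-type) expansion of $\Tr G(z)$ in the independent entries of $X$, yielding both the $O(1)$ bias $\mathbb{E}\Tr G(z) - N m_\boxplus(z)$ and the limiting covariance kernel $K(z_1,z_2)=\lim_N \text{Cov}(\Tr G(z_1),\Tr G(z_2))$. The crucial universality feature, namely the absence of the fourth cumulant in the final formulas, is a consequence of the projection structure: $P_t$ depends on $X_t$ only through its row space, so the quartic term in the cumulant expansion contracts against the idempotent $P_t$ and cancels (the self-normalizing effect flagged in the abstract). Differentiating (\ref{sub2})--(\ref{sub3}) in $z$ then converts the raw covariance and bias expressions into the compact subordination forms (\ref{062201}) and (\ref{072201}); tightness of $\Tr G(\cdot) - \mathbb{E}\Tr G(\cdot)$ as a process on the compact contours follows from the local law combined with analyticity, and the extension from $\hat y\in(0,1)$ with $\gamma_i^0$ to $\hat y\in(0,\infty)$ with $\gamma_i$ only requires ruling out spectral accumulation at $0$ in the full-rank regime, so that the contour need not exclude the origin. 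I expect the main obstacle to be twofold: first, upgrading the local law for $m_N - m_\boxplus$ from a naive $N^{-1/2}$ bound to the sharp $N^{-1}$ precision needed to retain a meaningful $O(1)$ bias term; and second, the bookkeeping that matches the cumulant-expansion output to the subordination form, where the projection identities must be applied carefully to witness the cancellation of the non-universal contributions.
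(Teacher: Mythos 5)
Your overall route (Cauchy's formula on the contour, centering $\Tr G(z)$ at $Nm_\boxplus(z)$, cumulant expansion in the entries of $X$, subordination identities to recast bias and covariance, and the self-normalization explanation of the missing fourth cumulant) follows the same skeleton as the paper, but the step that actually produces Gaussianity is missing. Computing the limiting covariance kernel of $\Tr G(z_1),\Tr G(z_2)$ by cumulant expansion, together with tightness of the resolvent field, does not by itself yield a Gaussian limit: you must either show that all cumulants of order at least three of the linear statistic vanish asymptotically, or run a martingale/characteristic-function argument. The paper takes the latter route: it derives an approximate ODE $(\phi_f^N(x))' = -x\sigma_f^2\,\phi_f^N(x)+o(1)$ for the characteristic function of the (truncated, contour-integrated) centered statistic, by performing the cumulant expansion on quantities of the form $\E^{\chi}\big[\la \Tr P_tG(z_1)\ra\, {\rm e}^{\mathrm{i}x\la L_N^1(f)\ra}\big]$, i.e.\ with the exponential weight inside the expansion (Theorem \ref{GeneralCLT}); it is exactly this device that converts covariance information into asymptotic normality, and your proposal never closes it.

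The second gap is the treatment of $N$-dependent $k$ and heterogeneous $p_t$'s. An entrywise bound $|G_{ab}-\Pi_{ab}|\prec N^{-1/2}$ at global scale is plausible but not sufficient once you sum over the $k$ blocks, with $k$ possibly of order $N$: closing the self-consistent equation through $\sum_t \tr P_tG = \tr H G$, and computing the $O(1)$ bias, both add up $k$ per-block errors, so naive accumulation destroys precisely the $1/N$ precision you correctly flag as necessary for $a_f$. The paper needs per-block errors at scale $\sqrt{p_t/N^3}$ and, crucially, a fluctuation-averaging estimate $\sum_t\mathcal{W}_t\gamma_{ti}=O_{\prec}(1/N)$ for weighted sums over blocks (Lemma \ref{Lemma error estimates}), together with a stability analysis of the $k\times k$ Jacobian $\mathrm{D}\Phi$ whose inverse is controlled only via its rank-one-plus-diagonal structure; none of this appears in your sketch. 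Two further points of implementation: closing the self-consistent equation here is not a routine leave-one-out/Schur-complement exercise, because expanding $\tr P_tG$ creates $\tr Q_tG$ with $Q_t=X_t'(X_tX_t')^{-2}X_t$, and the system only closes after one also expands $\tr P_tGP_t$ and uses $\tr P_tGP_t=\tr P_tG$; and the fourth-cumulant cancellation is not a single contraction against the idempotent, but emerges only after combining the expansions of $\Tr P_tG$, $\Tr P_t$ and $\Tr P_tGP_t$, where the $\kappa_4$ contributions cancel among the resulting identities. With the characteristic-function step restored and the averaging-over-blocks machinery added, your outline would essentially reproduce the paper's proof rather than provide an alternative to it.
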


\begin{remark}\label{rem.080901} Here we further illustrate the generality of our result, in contrast to previous works in the special cases such as \cite{jiang2013central}, \cite{dornemann2022likelihood}, \cite{dette2020likelihood}, \cite{bao2017test}. From the perspective of the generality of test function, the results shown in \cite{jiang2013central}, \cite{dornemann2022likelihood}, \cite{dette2020likelihood} only considered the log likelihood ratio test statistics, i.e., Wilks' statistics,  while our results can be applied to a larger class of test functions, such us the polynomial test functions. The non--analytic function is not of great interest in statistics. Thus we do not pursue this direction here. For the assumption on the dimensional parameters, since the previous work \cite{jiang2013central}, \cite{dornemann2022likelihood}, \cite{dette2020likelihood} considered the Wilks' statistics, they need to impose the condition $p < N $ to ensure the existence of the log determinant of sample covariance matrices. But if we choose test functions which are analytic at 0 as well, we can simply remove this restriction.  Further, we do not require our $p_t$'s to be comparably large. For instance, $k=p/2$ and $p_1=\dots=p_{k}=2$ apparently satisfies Assumption \ref{assum2}; also, the case $k=4$, $p_1=p_2=2$, $p_3=p_4=p/2-2$ satisfies Assumption \ref{assum2} as well. But none of the previous works can cover both cases, say. In addition,  as we mentioned in Remark \ref{rem071301}, the variance $\sigma_f^2$ will typically degenerate if $y - \max_ty_t \geq c$ does not hold. However, we claim here that our CLT will still hold with a degenerate $\sigma_f^2$ even when $y - \max_ty_t$ degenerates at certain moderate rate. Actually, from our analysis, one can easily generalize the assumption to $y - \max_ty_t \gg N^{-\frac16}$, so that the CLT is still valid. This will further generalize our setting on $p_t$'s.  But for brevity, we will not pursue the direction on optimizing the degenerate rate of $y - \max_ty_t$ for the validity of CLT in the current work. 
\end{remark}

\begin{remark}
For many classical random matrix models such as Wigner matrices and sample covariance matrices, the CLT of LSS often depends on the $4$-th moment/cumulant of the matrix entries. Then the $4$-th moment/cumulant of the matrix entries shall be estimated  from real data in applications, which may not be feasible. However, for sample block correlation matrices, we notice that the above CLT does not depend on the $4$-th moment/cumulant of the matrix entries. Hence, under the null hypothesis, this asymptotic result indeed does not involve any additional unknown parameter. The independence of $4$-th moment/cumulant is essentially due to a self-normalizing effect of the correlation type of matrices. 
\end{remark}
Based on the foregoing Theorem, we can obtain the following asymptotic normality results for Schott's statistics ($f(x) = x^2$) and Wilks' statistics ($f(x) = \log(x)$).
\begin{corollary}\label{coroschott}
(Schott's statistics). If $\hat{y} \in (0,\infty)$, under Assumptions \ref{assum1} and \ref{assum2} , we have
	\begin{align*}
		\frac{\Tr \mathcal{B}^2 - a_1}{b_1} \Rightarrow \mathcal{N}(0, 1),
	\end{align*}
	where
	\begin{align*}
		&a_1 = N(\sum_{r \neq s}^k y_ry_s + \sum_{t=1}^k y_t),\quad b_1 = 4\sum_{r \neq s}^ky_ry_s(1-y_r)(1-y_s).
	\end{align*}
	\end{corollary}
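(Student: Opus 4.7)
The plan is to specialize Theorem~\ref{FreeCLT} to $f(x)=x^2$. Since $f$ is entire, the contours $\gamma_1,\gamma_2$ of Fig.~\ref{zcontour2} are admissible on the full range $\hat y\in(0,\infty)$, and because $\mathcal B$ and $H$ have the same non-zero spectrum ($\Tr\mathcal B^2=\Tr H^2$), the task reduces to identifying the centering $a_1$ with $N\int x^2\,\mathrm d\mu_\boxplus+a_{x^2}$, the asymptotic variance $b_1$ with $\sigma_{x^2}^2$, and checking non-degeneracy $b_1\gtrsim 1$ under the gap condition in Assumption~\ref{assum2}.

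For the centering, each $\mu_t\sim\mathrm{Ber}(y_t)$ has mean $y_t$ and variance $y_t(1-y_t)$. Since the first two free cumulants agree with the first two classical cumulants and are additive under $\boxplus$, $\mu_\boxplus$ has mean $y$ and variance $\sum_ty_t(1-y_t)$, giving $\int x^2\,\mathrm d\mu_\boxplus=\sum_ty_t+\sum_{r\neq s}y_ry_s$, so that $N$ times this equals $a_1$ exactly. To verify $a_{x^2}=0$, I use the direct identity $\mathbb E[\Tr H^2]=a_1$: the exchangeability of the columns of $X_t$ combined with $\Tr P_t=p_t$ forces $\mathbb E[P_t]=(p_t/N)I_N$, and independence of $P_t$ and $P_s$ for $t\neq s$ yields $\mathbb E[\Tr H^2]=\sum_tp_t+\sum_{t\neq s}p_tp_s/N=a_1$. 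A residue check on (\ref{062201}) is consistent: expanding $\omega_t(z)=z+(y_t-y)+O(1/z)$ and $m_\boxplus(z)=-1/z-y/z^2-m_2/z^3+\cdots$ at infinity, the $1/z^3$ contributions to the bracket from $\sum_t\omega_t''/\omega_t'$ and $(k-1)(2m_\boxplus'/m_\boxplus-m_\boxplus''/m_\boxplus')$ cancel thanks to $m_2-y^2=\sum_ty_t(1-y_t)$, so the bracket is $O(1/z^4)$ and deforming $\gamma_1$ to a large circle makes $a_{x^2}$ vanish.

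For the variance, I plug $f=z^2$ into (\ref{072201}) and evaluate the double contour integral by residues, using the explicit Bernoulli Stieltjes transform $m_{\mu_t}(z)=(y_t+z-1)/[z(1-z)]$ and the subordination identities $m_\boxplus=m_{\mu_t}\circ\omega_t$ together with (\ref{sub3}) to control $\omega_t$. In the first piece, the substitution $w_i=\omega_t(z_i)$ recasts the integrand as $\omega_t^{-1}(w_1)^2\omega_t^{-1}(w_2)^2/(w_1-w_2)^2$, which collapses via Cauchy's formula, and the other two pieces contribute compensating residues that leave $\sigma_{x^2}^2=4\sum_{r\neq s}y_ry_s(1-y_r)(1-y_s)=b_1$. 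A faster probabilistic cross-check writes $\Tr H^2-p=\sum_{t\neq s}\Tr(P_tP_s)$ and observes that the cross-covariances vanish (disjoint pairs by independence; pairs sharing one index via conditioning on the shared projector and $\mathbb E[P_s\mid P_t]=(p_s/N)I_N$), while for Gaussian $X$ a Grassmannian moment computation yields $\mathrm{Var}(\Tr(P_tP_s))=2p_tp_s(N-p_t)(N-p_s)/[N^2(N+2)(N-1)]\to 2y_ty_s(1-y_t)(1-y_s)$; summing over unordered pairs and multiplying by $4$ reproduces $b_1$, with universality for general $X$ under Assumption~\ref{assum1} already encoded in Theorem~\ref{FreeCLT}. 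The main obstacle is the residue bookkeeping around the Bernoulli singularities at $0$ and $1$ (tracking the images of $\gamma_1,\gamma_2$ under the $\omega_t$'s); once this is done, the gap assumption $y-\max_ty_t\geq c$ yields $b_1\gtrsim c>0$ by selecting any pair $(r,s)$ with both $y_r,y_s$ in a compact sub-interval of $(0,1)$.
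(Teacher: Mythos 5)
Your overall route is the paper's: specialize Theorem \ref{FreeCLT} to $f(x)=x^2$ on the contours $\gamma_1,\gamma_2$, compute $N\int x^2\,{\rm d}\mu_\boxplus$ from the additivity of the first two free cumulants (this is exactly the paper's computation and yields $a_1$), and then evaluate $a_{x^2}$ and $\sigma_{x^2}^2$ by residue calculus. Your argument that $a_{x^2}=0$ --- expanding the bracket in (\ref{062201}) at infinity, checking that the $1/z^3$ coefficients cancel because $\sum_t$ of the $1/z$--coefficients of $\omega_t$ equals $-(k-1)(m_2-y^2)$, and then pushing $\gamma_1$ out to a large circle --- is a legitimate alternative to the paper's change of variables $z\mapsto m_\boxplus$ and residue computation at $m_\boxplus=0$; since it is run at fixed $N$, no uniformity in $k$ is needed, though you should also record that $m_\boxplus$, $m_\boxplus'$ and $\omega_t'$ do not vanish in the region swept by the deformation (checkable for $M_{11},M_{21}$ large).

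The genuine gap is the variance. Identifying $\sigma_{x^2}^2$ with $b_1$ is the bulk of the paper's proof (the decomposition into $K_{1,t},K_2,K_3$ and the lengthy residue bookkeeping in the $m_\boxplus$ variable via the explicit formula (\ref{Eomega}) and (\ref{zofm})), and your proposal asserts rather than derives it: the claim that after substituting $w_i=\omega_t(z_i)$ the first piece ``collapses via Cauchy's formula'' while ``the other two pieces contribute compensating residues'' is precisely the bookkeeping you defer --- $\omega_t^{-1}$ is only accessible through $z=z(m_\boxplus)$, the inner integral picks up the poles of $(\omega_t^{-1}(w_2))^2$ inside the image contour in addition to the double pole at $w_2=w_1$, and whether one image contour encloses the other requires the separation analysis of Section \ref{Discussion on the contours}; carrying this out is essentially the paper's computation. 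The probabilistic cross-check does not close the gap: Theorem \ref{FreeCLT} only gives convergence in distribution with the contour-integral variance, so to conclude $\sigma_{x^2}^2=b_1(1+o(1))$ from the exact Gaussian variance of $\sum_{t\neq s}\Tr(P_tP_s)$ you would additionally need convergence of variances (a uniform-integrability or moment argument), which you do not supply. Two smaller inaccuracies: $\mathbb{E}[P_t]=(p_t/N)I_N$ is not exact under Assumption \ref{assum1} --- exchangeability of the columns only forces equal diagonal and equal off-diagonal entries, and $\mathbb{E}[(P_t)_{12}]\neq 0$ for non-symmetric entry distributions (exactness holds only for orthogonally invariant, e.g.\ Gaussian, data), so the ``direct identity'' $\mathbb{E}[\Tr H^2]=a_1$ is only asymptotic; and your non-degeneracy argument (pick a pair with $y_r,y_s$ in a compact subinterval of $(0,1)$) fails in the admissible regime where all $y_t\to 0$, although $b_1\gtrsim 1$ still follows from $b_1=4\bigl[\bigl(\sum_t y_t(1-y_t)\bigr)^2-\sum_t y_t^2(1-y_t)^2\bigr]$ combined with $y-\max_t y_t\geq c$.
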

\begin{corollary}\label{corowilk}
	(Wilks' statistics). If $\hat{y} \in (0,1)$, under Assumptions \ref{assum1} and \ref{assum2} , we have
	\begin{align*}
		\frac{\Tr \log(\mathcal{B}) - a_2}{b_2} \Rightarrow \mathcal{N}(0, 1),
	\end{align*}
	where 
	\begin{align*}
		&a_2 = \sum_{t=1}^{k}\left(N-p_t-\frac{1}{2}\right)\log(1-y_t) - \left(N-Ny- \frac{1}{2}\right)\log(1-y),\\
		&b_2 = -2\log(1-y) + 2\sum_{t=1}^k \log(1-y_t).
	\end{align*}
	
\end{corollary}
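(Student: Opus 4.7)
The strategy is to deduce Corollary \ref{corowilk} as a direct application of Theorem \ref{FreeCLT} with $f(x)=\log x$. Since $\hat y\in(0,1)$, we employ the contours $\gamma_1^0$ and $\gamma_2^0$, which enclose $\text{supp}(\mu_\boxplus)\setminus\{0\}$ while avoiding a fixed neighborhood of the origin; on a simply connected region containing them, $\log z$ admits a single-valued analytic branch, so $f$ meets the analyticity requirement of the theorem. Because $\mathcal B$ and $H$ share the same nonzero eigenvalues and $\mathcal B$ is almost surely positive definite when $y<1$, one has $\Tr\log(\mathcal B)=\sum_{i:\lambda_i(H)>0}\log\lambda_i(H)=-(2\pi\mathrm{i})^{-1}\oint_{\gamma_1^0}\log z\cdot\Tr G(z)\,\md z$, and Theorem \ref{FreeCLT} directly yields asymptotic normality with mean $\mathcal M:=N\int\log x\,\md\mu_\boxplus(x)+a_{\log}$ and variance $\sigma_{\log}^2$. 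It then remains to identify $\mathcal M$ with $a_2$ and $\sigma_{\log}^2$ with the stated $b_2$.

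The key device for the evaluation is the change of variable $w=F_\boxplus(z)=F_{\mu_t}(\omega_t(z))$ on each contour. For the Bernoulli atoms $\mu_t=y_t\delta_1+(1-y_t)\delta_0$ a direct computation gives $F_{\mu_t}(x)=x(1-x)/(1-y_t-x)$, so each $\omega_t(z)$ is the appropriate analytic branch of a quadratic in $w$, and the subordination identity \eqref{sub3} rearranges to $z=\sum_t F_{\mu_t}^{-1}(w)-(k-1)w$. Implicit differentiation then produces closed-form rational expressions in $w$ and the $y_t$'s for $\omega_t'$, $\omega_t''/\omega_t'$, $m_\boxplus'$ and $m_\boxplus''/m_\boxplus'$, and the contour $\gamma_1^0$ pushes forward to a curve in the $w$-plane encircling the image of the nonzero spectrum of $\mu_\boxplus$.

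Under this substitution the single-contour integral defining $a_{\log}$ reduces to a finite sum of residues at $w=0$ (the image of $z=\infty$) together with the branch values of the quadratics for the $\omega_t$'s; these residues produce exactly the half-integer corrections $-\tfrac12\log(1-y_t)$ and $+\tfrac12\log(1-y)$ appearing in $a_2$. The bulk piece $N\int\log x\,\md\mu_\boxplus$ can be handled by the same contour method or, equivalently, via the free log-determinant identity following from the $R$-transform description of $\mu_\boxplus$, yielding $\sum_t(N-p_t)\log(1-y_t)-(N-Ny)\log(1-y)$; summed with $a_{\log}$ this recovers $a_2$. In parallel, the double integral \eqref{072201} becomes, after substituting $w_i=F_\boxplus(z_i)$, a rational double integral in $(w_1,w_2)$ whose residues at the same distinguished $w$-values collapse to the stated expression $b_2$.

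The principal obstacle is the residue bookkeeping: the integrands in both \eqref{062201} and \eqref{072201} are sums over the $k$ subordination functions $\omega_t$ together with an extra $(k-1)$-correction in $m_\boxplus$, and the individual summands do not simplify until one repeatedly uses the identity $\sum_{t=1}^k\omega_t(z)-z=(k-1)F_\boxplus(z)$ and its derivatives to cancel the $t$-dependent singularities and fold the $k$-term sum back into the compact closed forms $a_2$ and $b_2$. Finally, the lower bound $\sigma_{\log}^2>c$ required by Theorem \ref{FreeCLT} follows from the hypothesis $y-\max_t y_t\ge c$ in Assumption \ref{assum2}, which prevents $b_2$ from degenerating.
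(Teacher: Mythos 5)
Your proposal follows essentially the same route as the paper: apply Theorem \ref{FreeCLT} with $f=\log$ on the contours $\gamma_1^0,\gamma_2^0$ avoiding the origin, then evaluate $N\int\log x\,{\rm d}\mu_{\boxplus}$, $a_{\log}$ and $\sigma_{\log}^2$ by changing variables to the subordination/Stieltjes-transform variable (the paper works with $m_{\boxplus}=-1/F_{\boxplus}$ where you work with $F_{\boxplus}$ itself) and doing residue calculus with the explicit Bernoulli formulas, including the bookkeeping of the $\log$ branch points, which is exactly the content of the paper's computation of $a_2$ and $b_2$. One small slip worth fixing: under $w=F_{\boxplus}(z)$ the point $z=\infty$ maps to $w=\infty$ (it is $m_{\boxplus}$ that tends to $0$ there), so the residues you attribute to $w=0$ are really residues at infinity in your variable — precisely the ``residue at infinity'' computations the paper performs — but this does not affect the validity of the plan.
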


\begin{remark} The derivations of the above two corollaries from Theorem \ref{FreeCLT} require us to compute $a_f$ and $\sigma_f^2$ which are seemingly involved integrals. The calculation schemes (c.f., Section \ref{Sec Proof of 1.14} in Appendix) for the integrals in these two special cases, i.e., $f(x)=x^2$ and $f(x)=\log (x)$ can actually be applied to more general test functions satisfying the assumptions in Theorem \ref{FreeCLT} . For brevity, we will not display the results for other test functions here. 
\end{remark}

If we impose stronger condition on the relation between $p_t$ and $N$, the following theorem provides another approximation of the asymptotic distribution of the LSS, where the free additive convolution is approximated by the simpler  \textit{Machenko-Pastur law}.

\begin{thm}\label{MPCLT}
Denote by  $m_{y}$ the Stieltjes transform of $\mu_{mp,y}$. Let ${p_{\max}} \le N^{1/2-\epsilon}$ for any given (small) constant $\epsilon > 0$, and f is analytic inside $\gamma_1^0$ and $\gamma_2^0$. Under Assumptions \ref{assum1} and \ref{assum2} , we have 
\begin{align*}
\frac{\Tr f(H)-N\int f(x) {\rm d} \mu_{mp,y}-a_f }{\sigma_f}\Rightarrow \mathcal{N}(0, 1)
\end{align*}
if $\sigma_f^2 > c$ with some small constant $c > 0$, where 
\begin{align*}
a_f=\frac{1}{2\pi \mathrm{i}} \oint_{\gamma_1^0} f(z) \Big[ -\sum_{t=1}^k \frac{Ny_t^2}{1-y_t}\frac{m_y(z)m_y'(z)}{(1+m_y(z))^3} +y\frac{(m'_y(z))^2-m_y^2(z)m_y'(z)}{m_y(z)(1+m_y(z))^3}\Big]{\rm d} z
\end{align*}
 and 
\begin{align*}
\sigma_f^2=&\frac{-1}{2\pi^2} \oint_{\gamma_1^0}\oint_{\gamma_2^0} f(z_1)f(z_2)\bigg[\frac{m_y'(z_1)m'_y(z_2)}{(m_y(z_1)-m_y(z_2))^2}-\frac{1}{(z_1-z_2)^2} \notag\\
&\qquad\qquad-\frac{ym_y'(z_1)m_y'(z_2)}{(1+m_y(z_1))^2(1+m_y(z_2))^2}\bigg]{\rm d}z_2{\rm d}z_1.
\end{align*}
The same result holds if  $\hat{y} \in (0,\infty)$ with $\gamma_{1}^0$ and $\gamma_{2}^0$ replaced by $\gamma_1$ and $\gamma_2$, respectively.
\end{thm}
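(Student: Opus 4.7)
The plan is to derive Theorem \ref{MPCLT} from Theorem \ref{FreeCLT} by asymptotic expansion of the subordination-function expressions in \eqref{062201} and \eqref{072201}. Under the stronger dimensional assumption $p_{\max}\le N^{1/2-\epsilon}$, one has $y_{\max}=p_{\max}/N\le N^{-1/2-\epsilon}$, so each Bernoulli measure $\mu_t=y_t\delta_1+(1-y_t)\delta_0$ is close to $\delta_0$ and the free convolution $\mu_\boxplus$ is close to the Marchenko-Pastur law $\mu_{mp,y}$. The strategy is to show that the deterministic shift $N\int f\,d\mu_\boxplus-N\int f\,d\mu_{mp,y}$, together with the lower-order corrections to \eqref{062201} and \eqref{072201}, combine into the $a_f$ and $\sigma_f^2$ formulas of Theorem \ref{MPCLT}, while the residual error is $o(1)$.

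Concretely, one substitutes the Bernoulli Stieltjes transform $m_{\mu_t}(\omega)=y_t/(1-\omega)-(1-y_t)/\omega$ into $m_\boxplus(z)=m_{\mu_t}(\omega_t(z))$, obtaining the quadratic $m_\boxplus\omega_t^2-(m_\boxplus-1)\omega_t-(1-y_t)=0$. Selecting the branch with $\omega_t\to -1/m_\boxplus$ as $y_t\to 0$ and Taylor-expanding the square root produces
\begin{align*}
\omega_t(z)=-\frac{1}{m_\boxplus(z)}+\frac{y_t}{m_\boxplus(z)+1}+\frac{m_\boxplus(z)\,y_t^2}{(m_\boxplus(z)+1)^3}+O(y_t^3).
\end{align*}
Inserting into the sum subordination identity \eqref{sub3} yields a scalar equation for $m_\boxplus$,
\begin{align*}
z=-\frac{1}{m_\boxplus(z)}+\frac{y}{m_\boxplus(z)+1}+\frac{m_\boxplus(z)\sum_{t=1}^k y_t^2}{(m_\boxplus(z)+1)^3}+O\Big(\sum_{t=1}^k y_t^3\Big),
\end{align*}
whose leading part is exactly the functional equation characterising $m_y$. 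Implicit differentiation, together with the identity $1/m_y'=1/m_y^2-y/(m_y+1)^2$ obtained from the MP equation, produces $m_\boxplus(z)-m_y(z)=O(\sum_t y_t^2)$ with an explicit leading expression, and analogous controlled expansions for $m_\boxplus'$, $m_\boxplus''$, $\omega_t'$ and $\omega_t''/\omega_t'$.

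Plugging these expansions into the variance kernel of \eqref{072201} and using the algebraic identity
\begin{align*}
\frac{\omega_t'(z_1)\omega_t'(z_2)}{(\omega_t(z_1)-\omega_t(z_2))^2}=\frac{m_\boxplus'(z_1)m_\boxplus'(z_2)}{(m_\boxplus(z_1)-m_\boxplus(z_2))^2}\left(1-y_t\left[\frac{m_\boxplus(z_1)-m_\boxplus(z_2)}{(m_\boxplus(z_1)+1)(m_\boxplus(z_2)+1)}\right]^2\right)+O(y_t^2),
\end{align*}
then summing over $t$, subtracting the $(k-1)m_\boxplus'(z_1)m_\boxplus'(z_2)/(m_\boxplus(z_1)-m_\boxplus(z_2))^2$ term, and replacing $m_\boxplus$ by $m_y$ (a substitution valid up to $O(\sum_t y_t^2)$ in the integrand) collapses the kernel to exactly the one appearing in the MP variance of Theorem \ref{MPCLT}. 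A parallel expansion of $\sum_t\omega_t''/\omega_t'=(\sum_t\log\omega_t')'$, combined with the shift $N(\int f\,d\mu_\boxplus-\int f\,d\mu_y)=-\tfrac{N}{2\pi\mathrm{i}}\oint f(z)(m_\boxplus-m_y)(z)\,dz$ evaluated through the explicit formula for $m_\boxplus-m_y$ derived above, produces after simplification using the MP functional equation the $a_f$ stated in the theorem.

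The main obstacle is uniform control of the error terms on the contours $\gamma_1^0,\gamma_2^0$ (and the analogues $\gamma_1,\gamma_2$ for $\hat y\in(0,\infty)$). One must verify: (i) the denominators $m_\boxplus(z)+1$, $m_y(z)+1$ and $m_\boxplus(z_1)-m_\boxplus(z_2)$ stay bounded away from zero on the contours, which follows from the edge behaviour of $m_y$ near the MP edges $(1\pm\sqrt{y})^2$ together with a priori control on $\text{dist}(\text{supp}(\mu_\boxplus),\text{supp}(\mu_{mp,y}))$ extracted from the subordination expansion; and (ii) the total residual error $N\cdot O(\sum_t y_t^3)\le N\cdot y_{\max}\sum_t y_t^2=O(p_{\max}^2\,y/N)=O(N^{-2\epsilon})$ is $o(1)$, which is precisely the content of the stronger assumption $p_{\max}\le N^{1/2-\epsilon}$. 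Once these two points are established, the CLT of Theorem \ref{FreeCLT} transfers directly to the MP centering and variance formulas, yielding Theorem \ref{MPCLT}.
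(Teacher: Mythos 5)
Your proposal is correct in outline, but it takes a genuinely different route from the paper. The paper does not pass through Theorem \ref{FreeCLT} at all: it returns to the intermediate Theorem \ref{GeneralCLT} and re-derives all deterministic inputs directly in Marchenko--Pastur terms under $p_{\max}\le N^{1/2-\epsilon}$, proving new approximation lemmas for $\tr P_tG$, $\tr Q_tG(z_1)P_tG(z_2)$ and the diagonal resolvent entries with $m_y$ in place of the subordination quantities (Lemmas \ref{trPGMP}, \ref{trPGQGMP}, \ref{DiagApproByMP}), then recomputing $\alpha_t,\beta_t$, the variance kernel, and the second-order term $E(z)=\E^{\chi}[\Tr G(z)]-Nm_y(z)$ through a fresh cumulant expansion in which the $\kappa_4$ contributions are again shown to cancel. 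You instead treat Theorem \ref{FreeCLT} as a black box and expand the subordination system in powers of $y_t$; your expansion of $\omega_t$ (the $y_t^2$ coefficient $m_\boxplus/(1+m_\boxplus)^3$ is right), the perturbed MP equation for $m_\boxplus$, the first-order kernel identity, and the error budget $N\sum_t y_t^3=O(N^{-2\epsilon})$ all check out, and Slutsky then transfers the CLT, provided you supply (i) uniform lower bounds on $|1+m_y|$, $|1+m_\boxplus|$, $|m_y'|$ and $|m_\boxplus(z_1)-m_\boxplus(z_2)|$ along the contours, (ii) the stability/branch-selection step converting the perturbed functional equation into $m_\boxplus-m_y=-m_y m_y'\sum_t y_t^2/(1+m_y)^3+O(\sum_t y_t^3)$ uniformly (a continuity argument from large $\Im z$, in the spirit of the paper's own stability analysis), and (iii) the transfer of these expansions to $z$-derivatives via Cauchy estimates on slightly enlarged contours. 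What your route buys is economy — no new random-matrix estimates, only deterministic complex analysis; what the paper's route buys is the MP-level resolvent approximations themselves and an explicit check that the fourth-cumulant cancellation persists in this regime. One bookkeeping caution: your centering shift $N\int f\,({\rm d}\mu_\boxplus-{\rm d}\mu_{mp,y})=\frac{1}{2\pi\mathrm{i}}\oint f(z)\,N\sum_t y_t^2\,m_y(z)m_y'(z)(1+m_y(z))^{-3}{\rm d}z+o(1)$ reproduces exactly the paper's formula (\ref{0629101}) for $E(z)$, and that matches the displayed $a_f$ of Theorem \ref{MPCLT} only after flipping the overall sign of the bracket (the printed $a_f$ equals $+\frac{1}{2\pi\mathrm{i}}\oint fE\,{\rm d}z$, while the convention used for Theorem \ref{FreeCLT} gives $-\frac{1}{2\pi\mathrm{i}}\oint fE\,{\rm d}z$); this discrepancy is internal to the paper, so in your final matching keep the convention that the centering is $-\frac{1}{2\pi\mathrm{i}}\oint f(z)\E^{\chi}[\Tr G(z)]{\rm d}z$ rather than forcing agreement with the printed formula, and do not claim the printed sign as a consequence of your computation.
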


Recall the setting with unknown population mean in Definition \ref{defi-unknown mean}. We have the following theorem. 

\begin{thm}\label{RemoveSampleMean}
Let $\hat{H}: = \hat{Y}' \cdot {\rm diag}((\hat{Y}_i\hat{Y}_i')^{-1})_{i=1}^k \cdot \hat{Y}$ be the matrix which has the same non-zero eigenvalues as $\hat{\mathcal{B}}$. 
Further let $ \tilde{\mu}_\boxplus=\tilde{\mu}_1\boxplus\dots\boxplus\tilde{\mu}_k$ with $\tilde{\mu}_t \sim Ber(p_t/(N-1))$ and $\tilde{m}_\boxplus$ its Stieltjes transform. 
If $\hat{y} \in (0,1)$ and f is analytic inside $\gamma_1^0$ and $\gamma_2^0$, under Assumptions \ref{assum1} and \ref{assum2} , we have 
\begin{align*}
\frac{\Tr f(\hat{H})-(N-1)\int f(x) {\rm d} \tilde{\mu}_\boxplus(x)-\tilde{a}_f}{\tilde{\sigma}_f}\Rightarrow \mathcal{N}(0, 1) 
\end{align*}
if  $\tilde{\sigma}_f^2 > c$ with some small constant $c > 0$, where
\begin{align*}
\tilde{a}_f=-\frac{1}{4\pi \mathrm{i}} \oint_{\gamma_1^0} f(z) \bigg[ \sum_{t=1}^k \frac{\tilde{\omega}_t''(z)}{\tilde{\omega}_t'(z)}+(k-1)\left( \frac{2\tilde{m}'_{\boxplus}(z)}{\tilde{m}_\boxplus(z)}-\frac{\tilde{m}_\boxplus''(z)}{\tilde{m}_{\boxplus}'(z)}\right) - \frac{1}{z}\bigg] {\rm d} z 
\end{align*}
 and 
\begin{align*}
\tilde{\sigma}_f^2=& \frac{-1}{2\pi^2} \oint_{\gamma_1^0}\oint_{\gamma_2^0} f(z_1)f(z_2)\bigg[ \sum_{t=1}^k \frac{\tilde{\omega}_t'(z_1)\tilde{\omega}_t'(z_2)}{(\tilde{\omega}_t(z_1)-\tilde{\omega}_t(z_2))^2} \notag\\
&\qquad-\frac{1}{(z_1-z_2)^2}
-\frac{(k-1)\tilde{m}'_\boxplus(z_1)\tilde{m}'_\boxplus(z_2)}{(\tilde{m}_\boxplus(z_1)-\tilde{m}_\boxplus(z_2))^2}\bigg] {\rm d}z_2 {\rm d}z_1,
\end{align*}
Here $\tilde{\omega}_t(z), t \in [\![k ]\!]$ are the subordination functions determined by Proposition \ref{freeaddprop} with $\mu_t \equiv \tilde{\mu}_t$.

The same result holds if  $\hat{y} \in (0,\infty)$ with $\gamma_{1}^0$ and $\gamma_{2}^0$ replaced by $\gamma_1$ and $\gamma_2$, respectively. 
In addition, Corollaries \ref{coroschott} and \ref{corowilk} still hold if we replace $\mathcal{B}$ by $\hat{\mathcal{B}}$ and $N$ by $N-1$ simultaneously. 
\end{thm}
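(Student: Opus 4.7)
The plan is to reduce $\hat H$ to an $(N-1)$-sample avatar of the known-mean matrix $H$ by orthogonally rotating away the direction of the sample mean, and then apply Theorem \ref{FreeCLT} after correcting for the single extra zero eigenvalue produced by the centering. Concretely, let $P_\perp := I_N - \tfrac{1}{N}\mathbf{1}_N\mathbf{1}_N'$ (of rank $N-1$) and choose $U \in \mathbb{R}^{N\times(N-1)}$ with orthonormal columns spanning $\mathbf{1}_N^\perp$, so $UU' = P_\perp$ and $U'U = I_{N-1}$. Set $\tilde X := XU$, $\tilde X_t := X_tU$, $\tilde Y := YU$, $\tilde Y_t := Y_tU$. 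Since $Y_t = T_tX_t + \mu_t\mathbf{1}'$ and $U'\mathbf{1} = 0$, one has $\tilde Y_t = T_t\tilde X_t$, so the block-correlation structure again absorbs the $T_t$'s. Moreover $\hat Y_t\hat Y_t' = Y_tP_\perp Y_t' = \tilde Y_t\tilde Y_t'$, which implies that $\hat H$ and the $(N-1)\times (N-1)$ matrix
\begin{align*}
\tilde H := \sum_{t=1}^{k} \tilde X_t'(\tilde X_t\tilde X_t')^{-1}\tilde X_t
\end{align*}
share the same non-zero spectrum. Since $\hat H$ is $N\times N$ of rank $\le N-1$, it carries exactly one additional zero eigenvalue, giving $\Tr f(\hat H) = f(0) + \Tr f(\tilde H)$ for every $f$. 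The matrix $\tilde H$ is manifestly scale-invariant in $\tilde X$, so no renormalization is needed to think of $\tilde X$ as an $(N-1)$-sample matrix.

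The next step is to apply Theorem \ref{FreeCLT} to $\tilde H$ with $N$ replaced by $N-1$, so that $y_t = p_t/(N-1)$ and $\mu_t$ becomes $\tilde\mu_t = \mathrm{Ber}(p_t/(N-1))$, producing exactly the $\tilde\omega_t$ and $\tilde m_\boxplus$ appearing in the statement. The subtlety is that $\tilde X = XU$ does not have literally i.i.d.\ columns: its columns are orthogonal linear combinations of the i.i.d.\ columns of $X$, hence only exchangeable and mutually uncorrelated (still independent across distinct rows). However, the CLT of Theorem \ref{FreeCLT} for LSS of $H$-type matrices is, by the self-normalizing mechanism emphasized in the remark following it, insensitive to the $4$-th cumulant of the entries and depends on the entry distribution only through its low-order moments. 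In the Gaussian case the rotational invariance of $X$ makes the columns of $\tilde X$ honestly i.i.d.\ Gaussian, so Theorem \ref{FreeCLT} applies verbatim; in the general case, a Lindeberg/Green-function swap on the columns of $X$, replicating the universality step in the proof of Theorem \ref{FreeCLT}, transfers the CLT to $\tilde X$. This yields
\begin{align*}
\frac{\Tr f(\tilde H) - (N-1)\int f\, d\tilde\mu_\boxplus - a_f^{(N-1)}}{\tilde\sigma_f}\ \Rightarrow\ \mathcal N(0,1),
\end{align*}
with $a_f^{(N-1)}$ and $\tilde\sigma_f^2$ given by \eqref{062201}--\eqref{072201} after the substitutions $\omega_t \mapsto \tilde\omega_t$, $m_\boxplus \mapsto \tilde m_\boxplus$.

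Combining the two steps, $\Tr f(\hat H) - (N-1)\int f\, d\tilde\mu_\boxplus = f(0) + a_f^{(N-1)} + \tilde\sigma_f Z + o(1)$ with $Z\sim\mathcal N(0,1)$, so it remains to identify the deterministic shift $f(0) + a_f^{(N-1)}$ with the formula stated for $\tilde a_f$. Using the identity $\Tr G_{\hat H}(z) = \Tr G_{\tilde H}(z) - 1/z$, which reflects the single zero eigenvalue of $\hat H$, the Cauchy contour representation of the mean of $\Tr f(\hat H)$ differs from that of $\Tr f(\tilde H)$ by exactly a term proportional to $\oint f(z)/z\, dz$; inserted into the contour formula for $a_f^{(N-1)}$, this is precisely the extra $-1/z$ appearing inside the bracket in the definition of $\tilde a_f$. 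The variance is unaffected by this deterministic shift, so $\tilde\sigma_f^2$ retains the form \eqref{072201} under the tilde substitutions, and the extension to the contours $\gamma_1,\gamma_2$ when $\hat y\in (0,\infty)$ is routine. The analogues of Corollaries \ref{coroschott} and \ref{corowilk} for $\hat{\mathcal B}$ follow from the same explicit integral evaluations with $y_t$ replaced by $p_t/(N-1)$; the mild $-1/z$ contribution is easily absorbed into the centering constants.

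The principal obstacle is the universality step above: rigorously transferring the CLT for LSS from $H$-type matrices with genuinely i.i.d.\ columns to $\tilde H$ built from a $\tilde X$ whose columns are only exchangeable. The structural reason this works is the independence of the limit from the $4$-th cumulant; technically it is cleanest to implement either by a Lindeberg swap embedded in the resolvent expansion of Theorem \ref{FreeCLT}, or by first reducing to the Gaussian case via rotational invariance and then swapping to general distributions. Once this bridge is in place, the remaining adjustments—the extra $f(0)$ from the trivial zero eigenvalue and its reformulation as the $-1/z$ correction in $\tilde a_f$—are essentially contour bookkeeping.
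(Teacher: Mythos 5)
Your structural reduction is correct and attractive: with $P_\perp=UU'$, $U'U=I_{N-1}$, one indeed has $\hat H=U\tilde H U'$ with $\tilde H$ built from $\tilde X_t=X_tU$, so $\hat H$ and $\tilde H$ share the nonzero spectrum with exactly one extra zero eigenvalue, and the scale invariance of $\tilde H$ makes the variance normalization irrelevant. But the step on which everything hinges — applying Theorem \ref{FreeCLT} to $\tilde H$ — is a genuine gap, not a routine transfer. The matrix $\tilde X=XU$ violates Assumption \ref{assum1}: the entries within a row of $\tilde X$ are different linear combinations of the same independent entries of $X$, hence dependent (merely uncorrelated) unless the population is Gaussian, and the columns of $\tilde X$ are not independent. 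The proof of Theorem \ref{FreeCLT} is a direct cumulant-expansion/characteristic-function argument that uses entrywise independence of $X_{t,ji}$ at every expansion; there is no ``universality step'' in that proof that you could replicate for dependent entries. The only comparison argument in the paper (the Green-function swap used for Theorem \ref{mainth22}) compares two ensembles that \emph{both} have independent entries and matched first four moments entrywise, and it again relies on that independence; it does not give a bridge from the dependent-entry ensemble $XU$ to a Gaussian model at the precision of order-one LSS fluctuations. Your appeal to the remark that the limit is free of the fourth cumulant is circular here: that insensitivity is a \emph{conclusion} of the explicit expansion, not an a priori license to swap distributions, and a column-wise Lindeberg argument that preserves the order-one mean shift $\tilde a_f$ would itself amount to proving a new four-moment universality theorem for this self-normalized model — comparable in length to the paper's actual proof. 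The paper instead keeps the original independent entries and carries the centering projection $A=I-\frac1N\mathds{1}\mathds{1}'$ through the cumulant expansions, showing the extra rank-one terms are negligible (e.g.\ the bounds on $\frac1N\sum_u[\cdot]_{ui}$, Lemmas \ref{070412} and \ref{070420}) and reading off the $N-1$ normalization from $\tr A$ and $\Tr\tilde P_t\tilde G A$; your rotation trick does give a genuinely shorter proof in the Gaussian case, but in the general case the universality bridge is exactly the missing ingredient.

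A secondary point of bookkeeping: the extra zero eigenvalue contributes $f(0)=-\frac{1}{2\pi\mathrm{i}}\oint f(z)\bigl(-\frac1z\bigr){\rm d}z$ when the contour encloses the origin, which corresponds to a term $-\frac{2}{z}$ inside the bracket normalized by $-\frac{1}{4\pi\mathrm{i}}$, not $-\frac1z$ as you assert; and for $\hat y\in(0,1)$ the contours $\gamma_1^0,\gamma_2^0$ do not enclose $0$ at all, so the zero eigenvalue and the atom of $\tilde\mu_\boxplus$ at $0$ must be treated by the convention that the LSS is captured only over the enclosed spectrum (indeed the paper's own route produces this term through the identity $\E^\chi[\Tr\tilde G]=\E^\chi[\Tr\tilde G A]-\frac1z$ rather than through an explicit $f(0)$). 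This is minor compared with the universality issue, but the constant in front of the $1/z$ correction deserves a careful check rather than the ``precisely'' you claim.
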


Finally, we present the following theorem with relaxed moment condition on matrix entries.
\begin{assumption}[Assumption on matrix entries]\label{assum3}
Keeping the first two assumptions in Assumption \ref{assum1}, and replacing the third assumption with
\begin{itemize}
	\item $X_{ab}$'s follow continuous distributions, and there exists a constant $\delta > 0$ such that $\mathbb{E} [|\sqrt{N}X_{ab}|^{4+\delta} ] < C$ for all $N, a, b$.
\end{itemize}
\end{assumption}

\begin{thm}\label{mainth22}
	Theorems \ref{FreeCLT}, \ref{MPCLT}, \ref{RemoveSampleMean} and Corollaries \ref{coroschott}, \ref{corowilk} still hold under Assumptions \ref{assum3} and \ref{assum2}.
\end{thm}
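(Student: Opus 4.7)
The plan is a standard truncation-and-comparison argument that reduces Assumption~\ref{assum3} to Assumption~\ref{assum1}. Fix $\tau \in (0, \delta/(2(4+\delta)))$ and set $\tilde X_{ab} := X_{ab}\mathbf{1}(|\sqrt{N} X_{ab}| \le N^{1/2-\tau})$ and $\check X_{ab} := (\tilde X_{ab} - \mathbb{E}\tilde X_{ab})/\sqrt{N\,\mathrm{Var}(\tilde X_{ab})}$. Markov's inequality and the $(4+\delta)$-th moment bound give $\mathbb{P}(|\sqrt{N}X_{ab}| > N^{1/2-\tau}) = O(N^{-(1/2-\tau)(4+\delta)})$, so a union bound over the $pN = O(N^2)$ entries yields $\mathbb{P}(X \ne \tilde X) \to 0$. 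Because columns of $X$ are i.i.d., $\mathbb{E}\tilde X_{ab}$ and $\mathrm{Var}(\tilde X_{ab})$ depend only on the row index $a$, with $|\mathbb{E}\tilde X_{ab}| = O(N^{-3/2-\delta/2+O(\tau)})$ and $N\,\mathrm{Var}(\tilde X_{ab}) = 1 + O(N^{-1-\delta/2+O(\tau)})$, obtained by splitting the mean-zero and variance-$1/N$ relations into truncated and discarded contributions. The continuity assumption guarantees that $X_tX_t'$ and $\check X_t\check X_t'$ are almost surely invertible, so both $H$ and $\check H := \sum_t \check X_t'(\check X_t\check X_t')^{-1}\check X_t$ are well defined.

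Next I would verify that the conclusions of Theorems~\ref{FreeCLT},~\ref{MPCLT}, and~\ref{RemoveSampleMean} hold directly for $\check H$. The truncated entries satisfy $\mathbb{E}|\sqrt{N}\check X_{ab}|^\ell \le CN^{(1/2-\tau)(\ell-4-\delta)}$ for $\ell \ge 4+\delta$; these moments are not uniformly bounded in $N$, but grow only as a small polynomial. A close reading of the proofs of the earlier theorems shows that only moments up to a fixed finite order $\ell_0$ are invoked (via a truncated cumulant/polynomial expansion of the Stieltjes transform of $H$), and the error terms at each order carry overriding negative powers of $N$ from local-law and concentration estimates. Provided $\tau$ is sufficiently small, these negative powers absorb the polynomial growth of the moments, so the entire proof scheme of the earlier theorems goes through for $\check H$ with the same limiting mean $a_f$ and variance $\sigma_f^2$.

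Finally I would transfer the result from $\check H$ back to $H$. On the event $\{X = \tilde X\}$, of probability $1 - o(1)$, the map $\tilde X_t \mapsto \check X_t$ consists of a diagonal left-multiplication $\tilde X_t \mapsto D_{s,t}^{-1}\tilde X_t$ followed by subtraction of a rank-one matrix $\mathbf{m}_t\mathbf{1}_N'$, where $D_{s,t}$ and $\mathbf{m}_t$ collect the row-wise standard deviations and row-wise means. The identity $P_t = X_t'(X_tX_t')^{-1}X_t$ is invariant under any invertible left-multiplication of $X_t$, so $D_{s,t}$ contributes nothing. The rank-one correction has operator norm $O(N^{-1/2-\delta/2+O(\tau)})$ and perturbs each $P_t$ by a matrix of rank at most $2$; a resolvent expansion around the unperturbed projection, combined with the well-conditioning of $\check X_t\check X_t'$ ensured by $\hat y_t < 1$, then yields $\Tr f(H) - \Tr f(\check H) = o_{\mathbb{P}}(1)$. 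The analogous comparison goes through for $\hat H$ in Theorem~\ref{RemoveSampleMean}, and Corollaries~\ref{coroschott} and~\ref{corowilk} follow from their respective theorem statements.

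The main obstacle is making the last step quantitative enough to preserve both the centering constant $a_f$ and the variance $\sigma_f^2$, not merely the leading deterministic limit. This requires sharp stability of the subordination functions $\omega_t$ under entry-wise perturbation and careful tracking of how the rank-one shift propagates through the inverse $(\check X_t\check X_t')^{-1}$; the separation condition $y - \max_t y_t \ge c$ in Assumption~\ref{assum2} and the continuity in Assumption~\ref{assum3} together supply the spectral gap and invertibility needed for these estimates. Once this control is in place, Theorem~\ref{mainth22} follows.
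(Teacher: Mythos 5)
Your overall strategy (truncate, compare with the original matrix on a high-probability event, and transfer the CLT) is in the right spirit, but the load-bearing step is asserted rather than proved, and it is exactly the step the paper's argument is built to avoid. In your second step you claim that the CLTs of Theorems \ref{FreeCLT}, \ref{MPCLT}, \ref{RemoveSampleMean} hold directly for the truncated/rescaled ensemble $\check H$ because ``only moments up to a fixed finite order are invoked'' in their proofs. That description is not accurate: the proofs rest on the stochastic-domination framework and on recursive moment estimates in which the moment order $n$ is arbitrary and the cumulant-expansion order $l$ is chosen large depending on $n$ (see, e.g., the remainder bounds using $\mathbb{E}|X_{t,ji}|^{2l+4}$), so arbitrarily high moments of $\sqrt{N}X_{ab}$ enter, and under your truncation these grow like $N^{(1/2-\tau)(\ell-4-\delta)}$. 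Re-running the entire machinery (local laws, fluctuation averaging, the $\kappa_3,\kappa_4$ expansions, the characteristic-function ODE) under such polynomially growing moments may well be possible, but it is a substantial re-derivation, not a ``close reading'', and your proposal does not carry it out. The paper instead invokes a coupling/truncation lemma to build, from the $(4+\delta)$-moment entries, (i) a truncated version $X^{(1)}$ coinciding with $X$ except with probability $O(N^{-2-\delta/4})$ per entry, and (ii) a companion ensemble $X^{(2)}$ whose normalized entries are \emph{uniformly bounded by a constant} (so Assumption \ref{assum1} holds and all earlier theorems apply to $H^{(2)}$ verbatim) and which \emph{matches the first four moments} of $X^{(1)}$. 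The transfer is then a Lindeberg-type Green-function comparison: swapping one entry at a time and Taylor-expanding a smooth bounded functional of the LSS to fifth order, the first four orders cancel by moment matching and the fifth-order terms are $O(N^{-5/2})$ and $O(N^{-2-\epsilon})$ per entry, summing to $o(1)$ over the $O(N^2)$ entries. Your scheme has no moment-matching step, so you cannot appeal to the already-proved theorems for any ensemble; you must genuinely reprove them for $\check H$, and that gap is not closed.

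Two secondary points. First, because you recenter and rescale after truncation, you create a rank-one-plus-diagonal perturbation of each $X_t$ that you must undo at the end; the paper's coupling produces exactly mean-zero, correctly normalized truncated variables, so no such step is needed. Second, even granting your perturbation step, your claim $\Tr f(H)-\Tr f(\check H)=o_{\mathbb{P}}(1)$ needs care when $k$ grows with $N$: each block contributes a rank-$\le 2$ projection difference, so neither the crude rank bound (total rank up to $2k\sim N$) nor an operator-norm-times-$N$ bound suffices; one needs a per-block trace-norm (rank times norm) estimate summed over the $k$ blocks, using the lower bound on $\sigma_{\min}(X_t)$, to beat the order-one size of the fluctuations. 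As written, the proposal neither supplies this summation argument nor, more importantly, the CLT for the truncated ensemble itself, so it does not constitute a proof of Theorem \ref{mainth22}.
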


\begin{remark} Here we remark that in the very special case $p_t=1$ for all $t$, our CLT matches the result in \cite{yinspectral}, where the independence of the $4$-th moment/cumulant was also observed. 
\end{remark}

\subsection{Proof Strategy}

CLT for linear spectral statistics is a classical and central topic in Random Matrix Theory. There is a vast body of  literature. Most of the works have been done for the classical random matrix models such as Wigner matrices and sample covariance matrices. We refer to \cite{lytova2009central, bai2005convergence, bai2008clt, bao2021quantitative, cipolloni2020functional, landon2022almost, shcherbina2011central} and the reference therein. We also refer to \cite{bao2015spectral,zheng2017clt,li2018structure,wang2021eigenvalues,li2021central,li2020asymptotic,zheng2019hypothesis} and reference therein for statistical applications of CLT for LSS of various random matrix models. For more general polynomials in classical random matrices, the notion ``second order freeness" was raised in \cite{mingo2006second}, \cite{mingo2007second}, and \cite{collins2006second}, which can be used to describe the fluctuation of the LSS. However, second order freeness have been  established  only for Gaussian matrices and Haar orthogonal/unitary matrices which bear a Gaussian nature as well. Further, the second order freeness does not deal with the polynomial with growing number of terms.  For our matrix model $H$ in (\ref{def of H}), a sum of random projections,  on one hand, the projections are generally distributed; on the other hand, our matrix polynomial has $k$ terms and $k\equiv k(N)$ might diverge. Hence, the previous works and methods therein do not apply to our model directly. 
In the sequel, we provide a brief description on our proof strategy.

For the first order behaviour of $\mu_N$ in Theorem \ref{1stLimit}, in order to prove its closeness with $\mu_1\boxplus\dots\boxplus\mu_k$, we shall turn to study the closeness between $m_N$ and $m_\boxplus$, which are the Stieltjes transforms of $\mu_N$ and $\mu_1\boxplus\dots\boxplus\mu_k$, 
respectively. Since $m_\boxplus$ is defined in terms of the subordination system in Proposition \ref{freeaddprop}, 
it is then natural to establish a perturbed subordination system of $m_N$ as well. Then by a stability analysis of the subordination  system we can conclude the closeness between $m_N$ and $m_\boxplus$.  To this end, we first define the approximate subordination functions $\omega_t^c$'s in (\ref{Appsub1}). There are two key steps for the above strategy: 1, establishing a perturbed subordination system for $\omega_t^c$'s; 2, analyzing the stability of the subordination system. For the first step, we rely on the cumulant expansion approach \cite{khorunzhy1996asymptotic, lytova2009central, he2020mesoscopic, lee2018local}. The key idea in this step is to carefully choose the right quantities to start with for the cumulant expansion. 
From the definition of $\omega_t^c$ in (\ref{Appsub1}), it is natural to start the cumulant expansion for $\text{tr} P_tG$'s, where $G=(H-z)^{-1}$ is the Green function of $H$. For instance, if we want to compute the expectation of $\text{tr}P_tG$, we can write $\mathbb{E}\text{tr} P_tG$ as $N^{-1}\sum_{ij} \mathbb{E}(X_{t,ji}[\dots]_{ij})$ and apply cumulant expansion formula in Lemma \ref{cumulant expansion} in Appendix w.r.t. $X_{t,ji}$'s.  However, after applying the expansion to $\text{tr} P_t G$, one will get a new quantity $\text{tr} Q_t G$, with $Q_t=X_t'(X_tX'_t)^{-2}X_t$. There is no a priori estimate for   $\text{tr} Q_t G$, and thus $\text{tr} P_t G$ cannot be estimated if we stop here. Nevertheless,  if we further apply cumulant expansion to $\text{tr} Q_t G$, we will further create new terms which do not have known estimates. Then the system will never be closed. A key observation to solve this issue  is that, if we start with $\text{tr}P_tGP_t$ rather than $\text{tr} P_t G$ and perform the cumulant expansion, we can actually get another relation between $\text{tr} P_tGP_t$ and $\text{tr} Q_tG$, which together with the first relation we can close the system by the trivial fact $\text{tr} P_t GP_t=\text{tr} P_tG$. This discussion shows that even one starts from essentially the same quantity, such as $\text{tr} P_t GP_t$ and $\text{tr} P_tG$, just by writing them in different form, the cumulant expansion may lead to different algebraic relations. Such kind of tricky choices of quantities in cumulant expansions are needed in various steps throughout the work. For the second step, after we get the perturbed system for the approximate subordination functions, we shall compare it with the original system in Proposition \ref{freeaddprop}. A stability analysis for the subordination system  is then necessary to exploit the closeness between $\omega_t^c$'s and $\omega_t$'s, which further implies the closeness between $m_N$ and $m_\boxplus$, in light of (\ref{Appsub2}) and (\ref{sub3})-(\ref{m2}).   When $k=2$, a stability analysis of the subordination system has been done in \cite{bao2016local, bao2020spectral}, for instance. The argument in \cite{bao2016local, bao2020spectral} can be naturally extended to the case of any fixed $k$. However, here in our setting, $k\equiv k(N)$ can be diverging. This will further requires us to exploit a fluctuation averaging for the $k$ error terms  in the perturbed subordination system of $\omega_t^c$'s, in order to counter balance the growth of $k $ (c.f., (\ref{Second error}) in Lemma \ref{Lemma error estimates}). 

For the second order behaviour, i.e, CLT for linear spectral statistics, we turn to estimate the characteristic functions of the centered statistics $\text{Tr}f(H)-\mathbb{E}\text{Tr} f(H)$. Such a strategy dates back to \cite{lytova2009central}. More specifically, we aim at establishing an approximate ODE $(\phi_f^N(x))'=-x\sigma_f^2 \phi_f^N(x)+\text{error}$.  The quantity $(\phi_f^N(x))'$ is suitable to start a cumulant expansion. Thanks to the estimates in the first order part, one can estimate various terms produced via cumulant expansion by $\omega_t$'s and $m_\boxplus$. This eventually gives the expression of $\sigma_f$. Then what remains is the estimate of $\mathbb{E}\text{Tr} f(H)$.  Notice that $\sigma_f$ is of order $1$, but the leading order of $\mathbb{E}\text{Tr} f(H)$ is $N$. Hence, we need to take a step further to identify the second order term of $\mathbb{E}\text{Tr} f(H)$, which is order $1$. Finally, for specific test functions such as those in Corollaries \ref{coroschott} and \ref{corowilk}, we derive simpler expressions for the mean and variance of LSS from Theorem  \ref{FreeCLT}, via involved residue calculation, with the aid of the subordination system in Proposition  \ref{freeaddprop}.

\subsection{Notations and Conventions} \label{s.NC}

Throughout this paper, we regard $N$ as our fundamental large parameter. Any quantities that are not explicit constant or fixed may depend on $N$; we almost always omit the argument $N$ from our notation. We use $\|u\|_\alpha$ to denote the $\ell^\alpha$-norm of a vector $u$. We further use $\|A\|_{(\alpha,\beta)}$ to denote the induced norm $\sup_{x\in \mathbb{C}^n, \|x\|_\alpha=1} \|Ax\|_\beta$ for an $A\in \mathbb{C}^{m\times n}$. We write  $\|A\|\equiv \|A\|_{(2,2)}$ for the usual operator norm of a matrix $A$.  We use $C$ to denote some generic (large) positive constant. The notation $a\sim b$ means $C^{-1}b \leq |a| \leq Cb$ for some positive constant $C$. Similarly, we use $a\lesssim b$ to denote the relation $|a|\leq Cb$ for some positive constant $C$. When we write $a \ll b$ and $a \gg b$ for possibly $N$-dependent quantities $a\equiv a(N)$ and $b\equiv b(N)$, we mean $|a|/b \to 0$ and $|a|/b \to \infty$ when $N\to \infty$, respectively. 

For any Hermitian matrix $A\in \mathbb{C}^{n\times n}$ we use $\lambda_1(A)\geq \dots\geq \lambda_n(A)$ to denote the ordered eigenvalues of $A$ and sometime we also use the notation $\lambda_{\max}(A)\equiv \lambda_{1}(A)$ and $\lambda_{\min}(A)\equiv \lambda_{n}(A)$ instead. For any rectangle matrix $B\in \mathbb{C}^{n\times m}$, we use $\sigma_1(B)\geq \dots \geq \sigma_{m\wedge n}(B)$ to denote the ordered singular values of $B$.

Throughout the paper, for $A\in\mathbb{C}^{n\times  n}$, we use $\text{Tr} A$ to denote the trace of $A$, and use $\text{tr} A=N^{-1}\text{Tr}A$ to denote the  trace normalized by $N$. 

\subsection{Organization}
The paper is organized as follows. In Section \ref{Sec. Intro and main}, we introduce the matrix model and state our main results and proof strategy.  Numerical performances of our methods are investigated using both simulated and real data in Section \ref{Sec. Simulation}.  Section \ref{Sec. Preliminaries} is devoted to the preliminaries, which is crucial for later discussion.  Then in Section \ref{Proofof1stLimit}, we prove (\ref{062703}) in Theorem \ref{1stLimit}. The proof of our main result, Theorem \ref{FreeCLT}, is stated in Section \ref{ProofofCLT}. The proofs of other lemmas, propositions and theorems are provided in Appendix.

\section{Simulation studies and real data analysis}\label{Sec. Simulation}
In this section, we perform simulation studies to demonstrate the finite sample behaviors of Schott's and Wilks' statistics, with comparison to the two existing methods proposed by \cite{jiang2013testing} and \cite{YamadaH17T}. 
Furthermore, we illustrate how these methods apply to real datasets' block-diagonal covariance selection problem. Due to the space limitation, we only present some selected figures with significant properties in the paper. The full results can be collected from \url{https://github.com/huj156/Block-Correlation-Matrix.git} including the source codes.

\subsection{Simulation}
 Our objectives in the simulation are as the following: (i) Examine the asymptotic properties as delineated in the theorems for finite sample sizes; (ii) Provide some empirical observation on their relative rates of convergence in the high dimensional cases; (iii) Explore the robustness of our proposed statistics against two existing methods proposed by \cite{jiang2013testing} and \cite{YamadaH17T}, which are called  JBZ statistic and YHN statistic in the sequel, respectively.

  Let $\mathbf{y} = (\mathbf{y}'_1,  \dots , \mathbf{y}'_k)'=T\mathbf{x}+\mathbf{\mu}$, where $\mathbf{x}=(x_1,\dots,x_p)$ with i.i.d. entries and  $\Sigma=TT'=(\Sigma_{ij})_{k\times k}$ is the block  covariance matrix corresponds to $(\mathbf{y}'_1, \dots , \mathbf{y}'_k)'$. In the following numerical studies, for simplicity, we set  $\mathbf{\mu}={0}_{p\times1}$ and $T=\Sigma^{1/2}$, and we always use the result in Theorem \ref{RemoveSampleMean}, i.e., the sample mean is subtracted although the population mean is taken to be $0$ here. 
  We examine the following three  different distributions of $\mathbf{x}$: 
  $$\mbox{D1:}~x_1\sim N(0,1);~~\mbox{D2:}~x_1\sim (\chi^2(1)-1)/\sqrt{2};~~\mbox{D3:}~x_1\sim t_{5}/\sqrt{5/3},$$
   where $t_5$ and $\chi^2(1)$ stand for Student's $t$-distribution with five degrees of freedom and  $\chi^2$-distribution with one degree of freedom, respectively. 
Notice that the above three population distributions have different kurtosis and the fifth moment of $t_{5}$ does not exist.
   For the covariance matrix, we also set three structures: 
  \begin{itemize}
  	\item[M1:] $\Sigma_{tt}=I_{p_t}$, $t \in [\![ k]\!]$;
  	\item[M2:] $\Sigma_{tt}=B\cdot \left(0.3^{|j_1-j_2|^{1 / 3}}\right)_{j_1,j_2=1}^{p_t} \cdot B$, $t \in [\![ k]\!]$,  where  $$B={\rm diag}\big((0.5+1 /(p_t+1))^{1 / 2}, \ldots,(0.5+p_t /(p_t+1))^{1 / 2}\big),$$ and $j_1$, $j_2$ are the coordinates of the  matrix entries.
  	\item[M3:] $\Sigma_{tt}=\frac{1}{p_t}U_tU_t'$, $t \in [\![ k]\!]$, where $U_t$ is a $p_t\times 2p_t$ random matrix whose entries follow the continuous uniform distribution $U(1,5)$.

  \end{itemize}
   We remark here the setting M2 is adopted from \cite{YamadaH17T}.  Since YHN depends on the estimation of the covariance matrices, we choose a more general block structure, M3, to examine the performance of YHN.  The settings of the sample sizes are constructed as follows: 
   $$\mbox{G1}:N=2p;~~\mbox{G2}:N=p+3;~~\mbox{G3}:N=3\max\{p_t,t \in [\![ k]\!]\}.$$
The above three different choices of sample sizes are set for the examination of the cases that $y$ is smaller than $1$, close to $1$, and bigger than $1$, respectively. Notice that in the case $p>N-1$,   Wilks and JBZ are not well defined. For the choice of the different groups, we consider the following three scenarios:
    $$\mbox{S1: $k=4$, $p_1=p_2=p_3=p_4=p/4$;~~S2: $k=p/2$, $p_1=\dots=p_k=2$}$$
    $$\mbox{S3: $k=4$, $p_1=p_2=2$, $p_3=p_4=p/2-2$.}$$
The settings S1 and S2 are the cases of equally big and small blocks, respectively. For the sake of comparison, we set S3 as the case for two small blocks and two large blocks. In the current numerical studies, the null hypothesis is defined as
$$\mbox{H0:	$\Sigma_{ts}={0}_{p_t\times p_s}$, for any $s, t \in [\![ k]\!]$, $t\neq s$}.$$
For  the alternative hypothesis, we adopt the  following  three  settings 
$$\mbox{H1}:\Sigma_{ts}=\rho\mathds{1}_{p_t}\mathds{1}_{p_s}';~~\mbox{H2}:\Sigma_{ts}=\rho\mathds{I}_{ts};~~\mbox{H3}:\Sigma_{ts}=\rho\mathds{1}_{ts}, \text{ for any } s, t \in [\![ k]\!], t\neq s. $$
Here $\mathds{1}_{p_t}$ is the all-one vector of dimension $p_t$,  $\mathds{I}_{ts}$ is the $p_t\times p_s$ rectangular matrix whose main diagonal entries are $1$ and the others are $0$, and $\mathds{1}_{ts}$ is the $p_t\times p_s$ rectangular matrix whose first entry is equal to $1$ and the others are equal to $0$.
The empirical results are obtained based on 10,000 replications with the dimension $p = 32$ and $p=160$, respectively.

In the captions of these figures, ``D$\ast$M$\ast$G$\ast$S$\ast$" stands for the setting D$\ast$, M$\ast$, G$\ast$, and S$\ast$. All the presented figures are based on the dimension $p=32$. Under settings G3S1 and G3S2,   JBZ and Wilks are not applicable. Thus the corresponding figures  do not contain these simulated curves. The simulated distributions under the null hypothesis are based on the kernel density estimation method with a normal kernel function. Since   Schott, Wilks and JBZ are scale-invariant, and  YHN does not perform very well under the null hypothesis with non-identity covariance matrices (e.g., Figure \ref{D3M3G3S3}), hence we do not consider the settings M2 and M3 in the empirical power studies.

\begin{figure}[htbp]
\centering
\includegraphics[height=2.2cm]{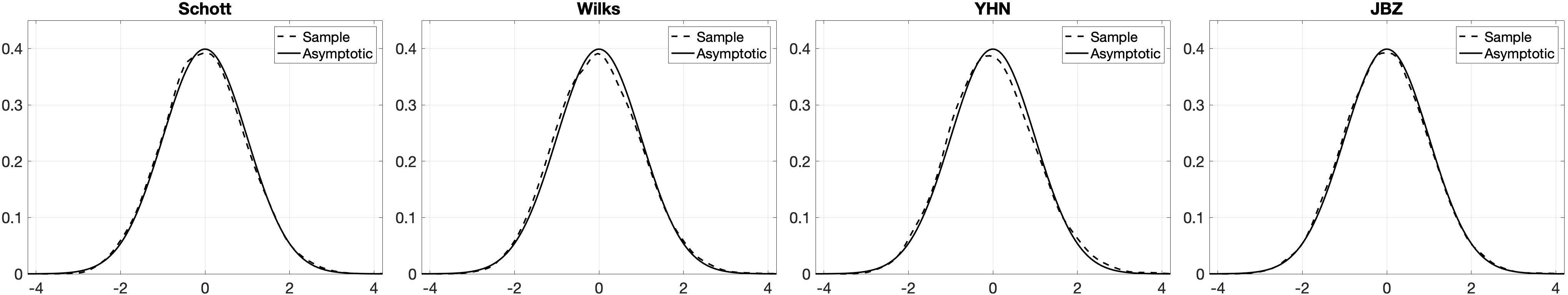}
\caption{Simulated distribution under the null hypothesis for D1M1G1S1}
\label{D1M1G1S1}\
\end{figure}

\begin{figure}[htbp]
\centering
\includegraphics[height=2.2cm]{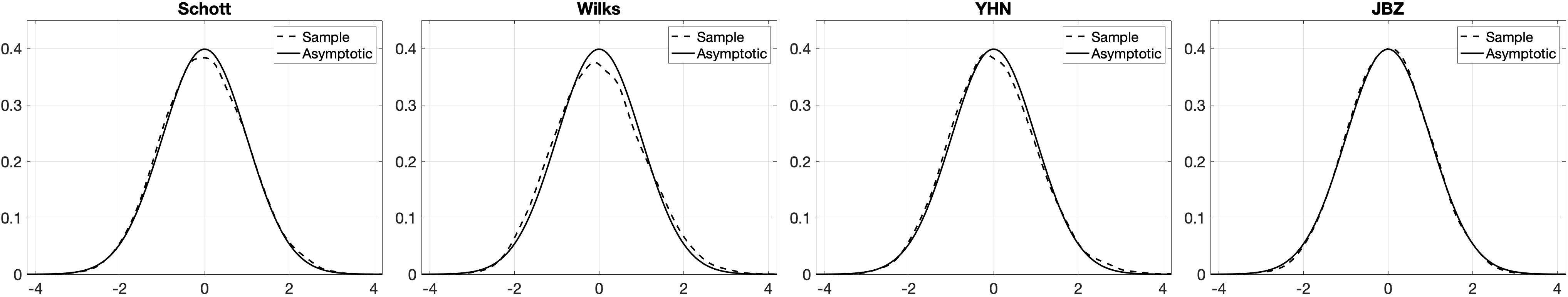}
\caption{Simulated distribution under the null hypothesis for D3M1G3S3}
\label{D3M1G3S3}
\end{figure}

\begin{figure}[htbp]
\centering
\includegraphics[height=2.2cm]{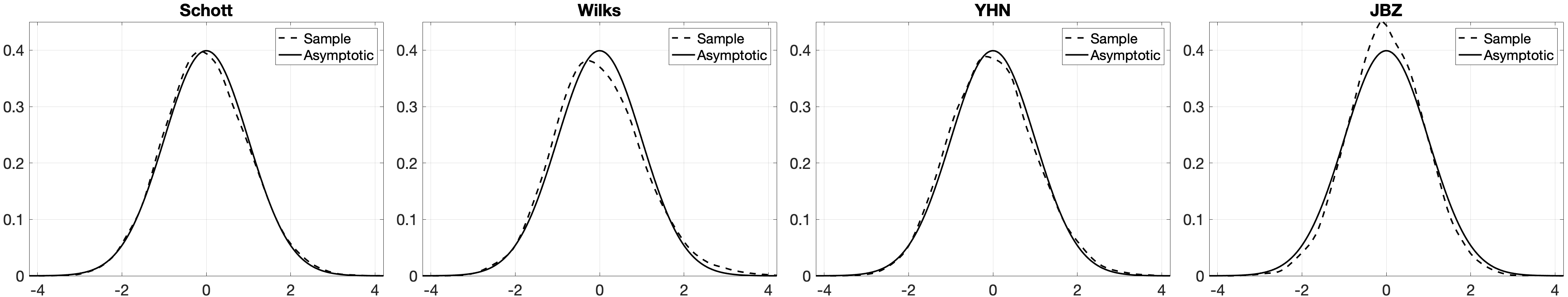}
\caption{Simulated distribution under the null hypothesis for D3M1G2S3}
\label{D3M1G2S3}
\end{figure}

\begin{figure}[htbp]
\centering
\includegraphics[height=2.2cm]{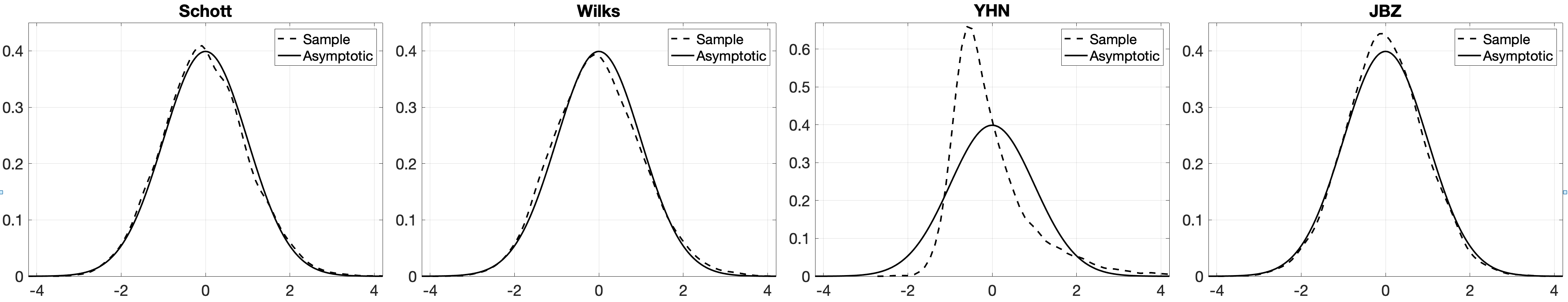}
\caption{Simulated distribution under the null hypothesis for D3M3G3S3}
\label{D3M3G3S3}
\end{figure}

\begin{figure}[htbp]
\centering
\begin{subfigure}
         \centering
\includegraphics[height=2.2cm]{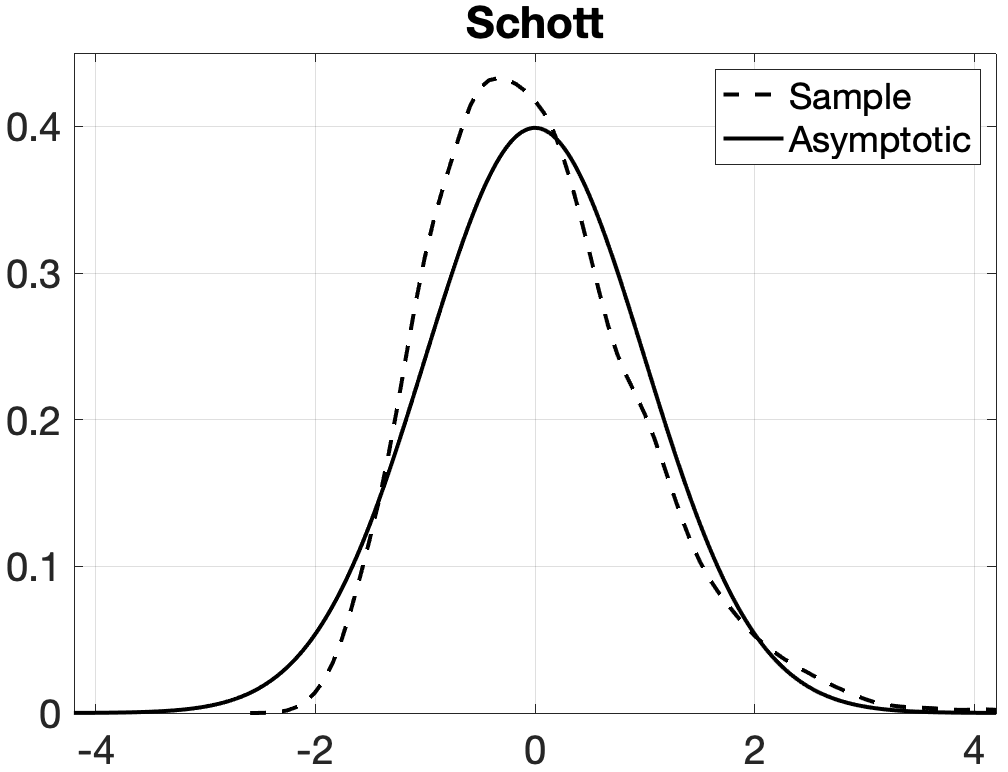}
     \end{subfigure}
\begin{subfigure}
         \centering
\includegraphics[height=2.2cm]{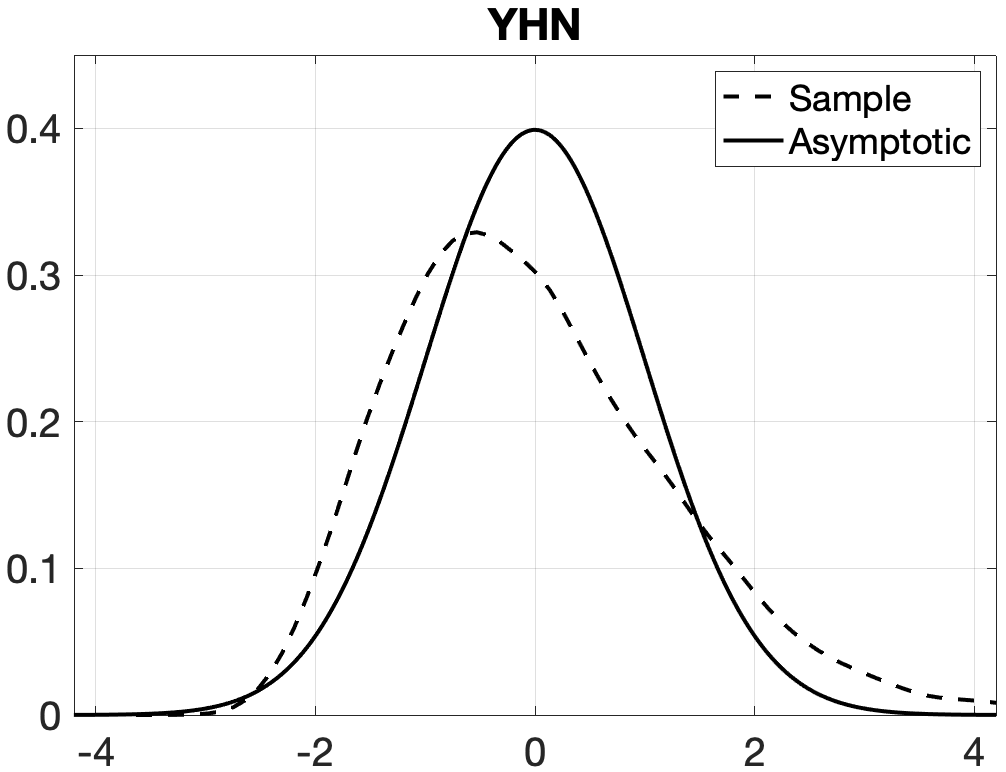}
     \end{subfigure}
\caption{Simulated distribution under the null hypothesis for D3M1G3S2}
\label{D3M1G3S2}
\end{figure}

\begin{figure}[htbp]
\centering
\includegraphics[height=3cm]{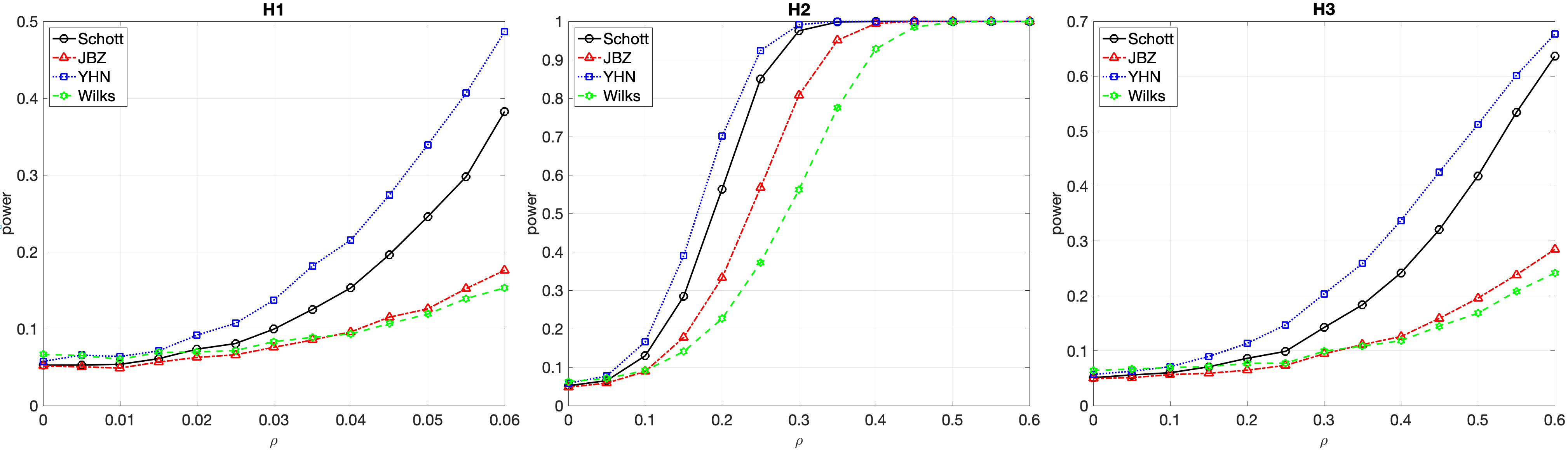}
\caption{Empirical power for D3M1G2S1}
\label{D3M1G2S1_power}
\end{figure}
\begin{figure}[htbp]
\centering
\includegraphics[height=3cm]{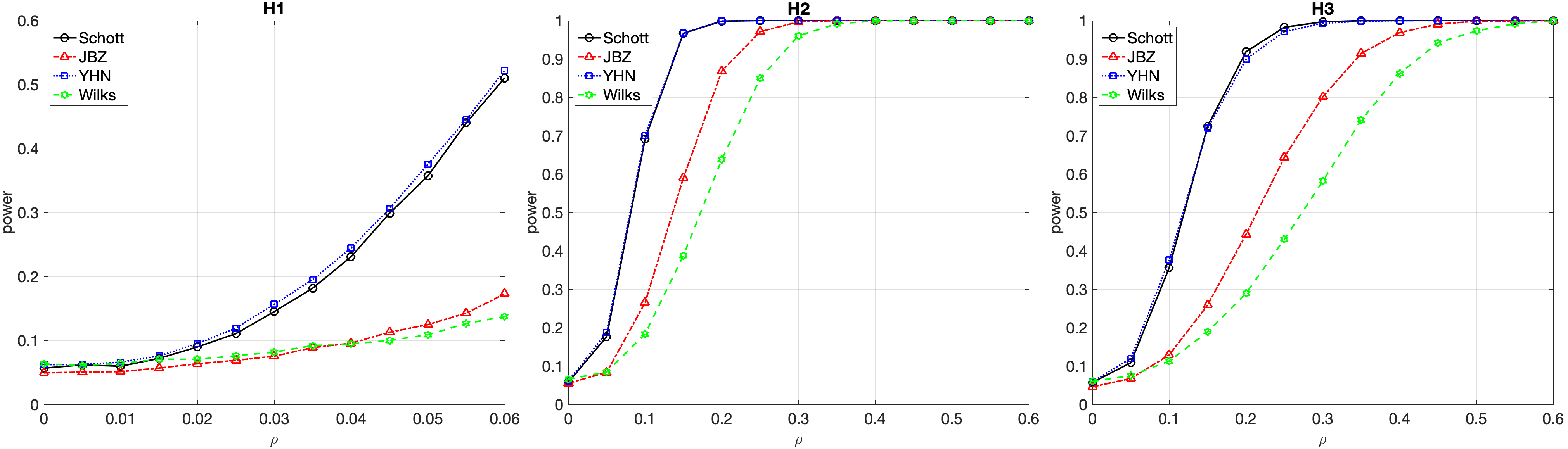}
\caption{Empirical power for D3M1G2S2}
\label{D3M1G2S2_power}
\end{figure}
\begin{figure}[htbp]
\centering
\includegraphics[height=3cm]{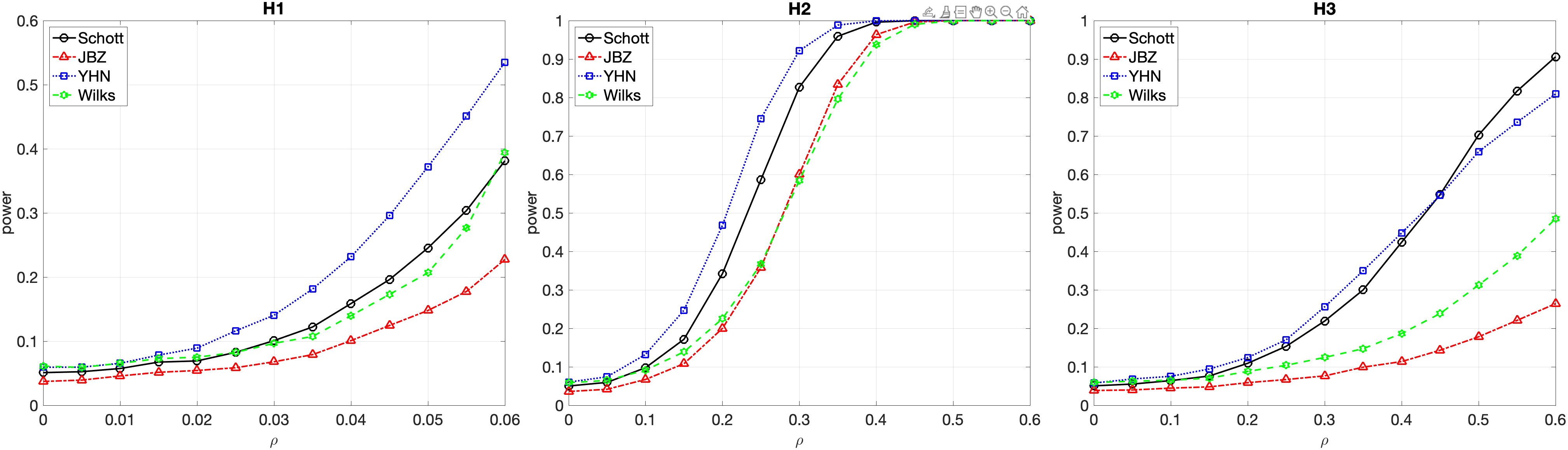}
\caption{Empirical power for D3M1G3S3}
\label{D3M1G3S3_power}
\end{figure}

Below are our conclusions based on our simulation studies:
\begin{enumerate}
\item For the null hypothesis, under suitable conditions,  the approximation accuracy of all the four statistics improves as the values of $p$ and $N$ increase. But in fact, the empirical distributions fit well enough when the values of $p$ and $N$ are moderate. Different underlying distributions with finite fourth moments do not significantly affect the empirical distributions of the four statistics when the sample size is large (even the CLT of JBZ is proved under Gaussian distribution (e.g., Figures \ref{D1M1G1S1} and \ref{D3M1G3S3}). By comparing these numerical results of the four statistics under different settings, we find that  Schott is the most stable one. Schott performs well in all settings except the case that the sample size $N$ and the dimensions of each block $p_t$ are small, but the number of the blocks $k$ is big (see Figure \ref{D3M1G3S2}). It is worth noting that this case violates Assumption \ref{assum2}, which means our CLTs could not hold for $y\to\infty$. When the total dimension $p$ is close to the sample size $N$ and one relatively large block exists, Wilks and JBZ have negligible bias,  but the bias of JBZ is slightly more severe than Wilks  (e.g., Figure \ref{D3M1G2S3}). For YHN, since it relies on the estimation of the population covariance matrices, non-identity covariance matrices could give rise to its unreliable approximation (e.g., Figure \ref{D3M1G3S3}). More importantly, the population covariance matrices are unknown in practice. Hence it is hard to be confident that the Type one error is controlled when applying YHN.

\item For the alternative hypothesis, we remark that the settings H1, H2 and H3 stand for the dense covariance matrix, sparse covariance matrix and extremely sparse covariance matrix, respectively. From the numerical results, we find that the empirical powers for all the four statistics increase reasonably as $\rho$ increases. In comparison, in most cases,   Schott and YHN are more powerful than  Wilks and JBZ. Especially under setting S2,  Schott and YHN perform very close to each other (e.g., Figure \ref{D3M1G2S2_power}).  Moreover, under settings H1 and H2,   YHN seems to be more powerful than    Schott. On the contrary,  under setting H3,    Schott would be more powerful.  For the comparison of  Wilks and JBZ, 
except for the settings G2S1 and G2S2,   Wilks shows better performance than   JBZ in general (e.g., Figures \ref{D3M1G2S1_power}-\ref{D3M1G3S3_power}). It is worth noting that we do not simulate the empirical powers under non-identity covariance matrice settings, because under which the approximation of   YHN could be unreliable anymore. Therefore, in summary, we recommend  Schott for the independence test problem for several groups, since it is simple, robust and powerful.
\end{enumerate}

In this sequel, we analyze two datasets for illustrating the efficacy of our methods. 
\subsection{Real data illustration with stock return data}
The dataset is collected from the Center for Research in Security Prices and contains the arithmetic daily stock returns of 75 companies for the trading days of the first half of 2014, i.e., from 1 January 2014 to 30 June 2014, with a total of 124 trading days.
 According to Fama and French's 48-industry classification \cite{FamaF97I}, the 75 companies belong to five industries, i.e., Non-Metallic and Industrial Metal Mining (Mines), Apparel (Clths), Healthcare (Hlth), Medical equipment (MedEq) and Food products (Food), and each industry contains 15 stocks. 
  That is the sample size $N=124$, the total dimension $p=75$, the number of the blocks $k=5$, and the dimensions of each block $p_1=\dots=p_5=15$.
Our interest is whether the five blocks are independent.

First, we use the four statistics Schott, Wilks, 
YHN and JBZ to test the five industries with the original dataset, respectively. 
 It is not surprising that all four statistics reject the null hypothesis with significantly small $p$-values.  Next, we apply principal component analysis to the stock data and remove one common factor. Then the smallest $p$-value among the four statistics is 0.255. The $p$-values are summarized in Table \ref{realdata1}. This means we have no evidence to reject the independence of the five blocks that removes one common factor. However, in this case, the $p$-values for each block obtained by the four statistics are all smaller than 0.001 (see Table \ref{realdata2}). Therefore, we can consider that the common factor is the market effect and the rest of the components as industry-specific effects. Detailed discussions for the structure of stock returns can be found in \cite{FanL13L,BiH22S}. 

\begin{table}[H]
\centering
\begin{tabular}{c|cccc}
\hline
   &Schott & Wilks & YHN &JBZ\\
      \hline
    Before&  $4.3866\times 10^{-18}$&$5.3873\times 10^{-7}$&$2.2292\times10^{-231}$& $1.4427\times10^{-4}$\\
       \hline
      After& 0.2550&0.4491&0.3998&0.3258\\
       \hline
\end{tabular}
\caption{The $p$-values obtained by statistics Schott, Wilks,  YHN  and JBZ for total of the five industries dataset that before and after removing one common factor, respectively. }
\label{realdata1}
\end{table}

\begin{table}[H]
\centering
\begin{tabular}{ccccc}
\hline
   &Schott & Wilks & YHN &JBZ\\
  
      \hline
                Mines & $9.1347\times10^{-12}$&$1.6640\times10^{-11}$&$0.0044$&$1.9010\times10^{-8}$\\    
          \hline
   Clths  & $3.3190\times10^{-180}$&$2.3775\times10^{-41}$&$2.7299\times10^{-122}$&$4.8450\times10^{-34}$\\
       \hline
          Hlth  & 0&$7.5521\times10^{-319}$&0&$4.8914\times10^{-168}$\\
       \hline
          MedEq  & $6.2410\times10^{-51}$&$9.2337\times10^{-18}$&$1.6016\times10^{-8}$&$2.4543\times10^{-15}$\\
       \hline
          Food  & $2.2444\times10^{-291}$&$1.6253\times10^{-73}$&$7.5335\times10^{-209}$&$2.0750\times10^{-57}$\\
       \hline

\end{tabular}
\caption{The $p$-values obtained by statistics Schott, Wilks,  YHN  and JBZ for each of the five industries dataset that after removing one common factor, respectively. }
\label{realdata2}
\end{table}

\subsection{Real data illustration in block-diagonal covariance selection}
It is well known that the covariance matrix plays a central role in multivariate statistical analysis. Also, the sample covariance 
matrix is no longer a consistent estimator of the high-dimensional population covariance matrix. Therefore, there exists an amount of work for high-dimensional covariance matrix estimation under different structure assumptions, such as sparsity (e.g., \cite{ElKaroui08O,LamF09S,CaiL11C}), handedness (e.g., \cite{WuP03N,BickelL08R,QiuC12T}), block-diagonal (e.g., \cite{CaiZ10O,DonohoG18O,DevijverG18B,Perrot-DockesL22E}). Especially,  the block-diagonal covariance matrix has the inherent advantage of reducing computational complexity, because under such an assumption, one can easily separate the variables into several uncorrelated parts. For instance,  in the analysis of gene expression data, the number of observations could be much smaller than the number of variables. Thus,  it is more feasible to analyze some unrelated subsets one by one. Therefore, in practice, the proposed statistics in this paper can be used to test whether the selected subset variables are uncorrelated. 

For illustration, we apply the proposed methods to a gene dataset.
The original data are the expression of 52,580 genes across 69 observations and can be collected from the Recount database \cite{FrazeeL11R}.  \cite{DevijverG18B}  identified 200 most variable genes among the 52,580 genes, and then investigated the block covariance structure among the 200 genes. Finally, the 200 genes are partitioned into four blocks of size 18, 13, 8 and 5, four blocks of size 3, two blocks of size 2,  and the remaining blocks are of size 1.
Detailed analysis can be found in the supplementary materials of \cite{DevijverG18B}. 

We use the four statistics Schott, Wilks
YHN and JBZ to test whether these blocks are uncorrelated. For the sake of being well defined for Wilks and JBZ, we only consider the blocks whose sizes are bigger than one, i.e,  ten blocks of size 18, 13, 8, 5, 3, 3, 3, 3, 2, 2, respectively.  Therefore, the sample size $N=69$, the total dimension $p=60$, and the number of the blocks $k=10$. The $p$-values of the four statistics are stated in Table \ref{table_p}. From these results, we have strong evidence to believe that these blocks are dependent. In fact,  we also apply the statistic Schott to test the independence of each pair of the ten blocks.  It is found that all the $p$-values for the paired blocks are smaller than 0.05 except for the pair of the block of size 8 and one of the blocks of size 2, whose $p$-value is 0.6814. Therefore, we think the selected blocks in \cite{DevijverG18B} should be re-examined. 
\begin{table}[H]
\centering
\begin{tabular}{cccc}
\hline
   Schott & Wilks & YHN &JBZ\\
  
      \hline
       0 &$ 5.2694\times 10^{-215}$ &$2.1667\times 10^{-9}$&$5.6930\times 10^{-162}$\\
       \hline

\end{tabular}
\caption{The $p$-values obtained by statistics Schott, Wilks,  YHN  and JBZ for the gene dataset. }
\label{table_p}
\end{table}

\section{Preliminaries}\label{Sec. Preliminaries}
In this section we collect some necessary  notations and technical tools that are used throughout the paper.

We first introduce the notions of stochastic domination which was introduced in \cite{erdHos2013averaging}.
 
 \begin{defin}[Stochastic domination] \label{def.sd}
 	Let 
 	$$
 		\mathsf{X}=\left(\mathsf{X}^{(N)}(u): N \in \mathbb{N}, u \in \mathrm{U}^{(N)}\right), \mathrm{Y}=\left(\mathsf{Y}^{(N)}(u): N \in \mathbb{N}, u \in \mathsf{U}^{(N)}\right)
 	$$
 	be two families of random variables, where $\mathsf{Y}$ is nonnegative, and $\mathsf{U}^{
(N)}$ is a possibly $N$-dependent parameter set.

We say that $\mathrm{X}$ is stochastically dominated by $\mathsf{Y}$, uniformly in $u$, if for all small $\epsilon > 0$  and large $D > 0$ ,
$$
\sup _{u \in \mathsf{U}^{(N)}} \mathbb{P}\left(\left|\mathsf{X}^{(N)}(u)\right|>N^{\varepsilon} \mathsf{Y}^{(N)}(u)\right) \leqslant N^{-D}
$$
for large enough $N > N_0(\epsilon, D)$. If $\mathsf{X}$ is stochastically dominated by $\mathsf{Y}$, uniformly in $u$, we use the notation
$\mathsf{X} \prec \mathsf{Y}$ , or equivalently $\mathsf{X} = O_{\prec}(\mathsf{Y})$. Note that in the special case when $\mathsf{X}$ and $\mathsf{Y}$ are deterministic, $\mathsf{X} \prec \mathsf{Y}$ means that for any given $\epsilon > 0$, $|\mathsf{X}^{(N)}(u)| \le N^{\epsilon}\mathsf{Y}^{(N)}(u)$ uniformly in $u$, for all sufficiently large $N \ge N_0(\epsilon)$.
\end{defin}

\begin{defin}[High probability event] \label{def.high-probab}
We say an event $\mathcal{E}\equiv \mathcal{E}(N)$ holds with {\it high probability } (in $N$) if for any fixed $D>0$, $\mathbb{P}(\mathcal{E}^c)\leq N^{-D}$ when $N$ is sufficiently large. 
\end{defin}
 
We have the following elementary result about stochastic domination.
\begin{lemma} \label{prop_prec} Let
	\begin{equation*}
	\mathsf{X}_i=(\mathsf{X}^{(N)}_i(u):  N \in \mathbb{N}, \ u \in \mathsf{U}^{(N)}), \   \mathsf{Y}_i=(\mathsf{Y}_i^{(N)}(u):  N \in \mathbb{N}, \ u \in \mathsf{U}^{(N)}),\quad i=1,2
	\end{equation*}
	be families of  random variables, where $\mathsf{Y}_i, i=1,2,$ are nonnegative, and $\mathsf{U}^{(N)}$ is a possibly $N$-dependent parameter set.	Let 
	\begin{align*}
	\Psi=(\Psi^{(N)}(u): N \in \mathbb{N}, \ u \in \mathsf{U}^{(N)})
	\end{align*}
	be a family of deterministic nonnegative quantities. We have the following results:
	
(i)	If $\mathsf{X}_1 \prec \mathsf{Y}_1$ and $\mathsf{X}_2 \prec \mathsf{Y}_2$ then $\mathsf{X}_1+\mathsf{X}_2 \prec \mathsf{Y}_1+\mathsf{Y}_2$ and  $\mathsf{X}_1 \mathsf{X}_2 \prec \mathsf{Y}_1 \mathsf{Y}_2$.

 (ii) Suppose $\mathsf{X}_1 \prec \Psi$, and there exists a constant $C>0$ such that  $|\mathsf{X}_1^{(N)}(u)| \leq N^{C}\Psi^{(N)}(u)$ a.s. uniformly in $u$ for all sufficiently large $N$. Then $\E \mathsf{X}_1 \prec \Psi$. 
\end{lemma}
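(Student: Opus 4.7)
The plan is to deduce both parts directly from Definition \ref{def.sd}, using only a union bound for (i) and a truncation argument for (ii). Throughout I would track uniformity in $u \in \mathsf{U}^{(N)}$ explicitly, since that is what the stochastic domination notation encodes; fortunately this uniformity is already part of the hypotheses, so it transfers automatically.

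For part (i), fix arbitrary $\epsilon > 0$ and $D > 0$. Applying $\mathsf{X}_i \prec \mathsf{Y}_i$ with $\epsilon/2$ in place of $\epsilon$ furnishes, for each $i \in \{1,2\}$ and all sufficiently large $N$, an event $\mathcal{E}_i(u)$ on which $|\mathsf{X}_i^{(N)}(u)| \le N^{\epsilon/2}\mathsf{Y}_i^{(N)}(u)$, whose complement has probability at most $N^{-D}$ uniformly in $u$. On $\mathcal{E}_1(u) \cap \mathcal{E}_2(u)$, whose complement has probability at most $2N^{-D}$ by a union bound, the triangle inequality gives $|\mathsf{X}_1+\mathsf{X}_2| \le N^{\epsilon/2}(\mathsf{Y}_1+\mathsf{Y}_2) \le N^{\epsilon}(\mathsf{Y}_1+\mathsf{Y}_2)$, while multiplying the two bounds yields $|\mathsf{X}_1\mathsf{X}_2| \le N^{\epsilon}\mathsf{Y}_1\mathsf{Y}_2$. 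Absorbing the factor $2$ into the exponent (i.e. replacing $D$ by $D-1$, say) shows both the sum and the product satisfy the required probability estimate, and since $\epsilon, D$ are arbitrary this gives $\mathsf{X}_1+\mathsf{X}_2 \prec \mathsf{Y}_1+\mathsf{Y}_2$ and $\mathsf{X}_1\mathsf{X}_2 \prec \mathsf{Y}_1\mathsf{Y}_2$.

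For part (ii), for any fixed $\epsilon > 0$ I would split the expectation according to whether $|\mathsf{X}_1^{(N)}(u)|$ exceeds $N^{\epsilon}\Psi^{(N)}(u)$:
\[
\E\bigl|\mathsf{X}_1^{(N)}(u)\bigr| = \E\bigl[|\mathsf{X}_1^{(N)}(u)|\, \mathbf{1}_{\{|\mathsf{X}_1^{(N)}(u)|\le N^{\epsilon}\Psi^{(N)}(u)\}}\bigr] + \E\bigl[|\mathsf{X}_1^{(N)}(u)|\, \mathbf{1}_{\{|\mathsf{X}_1^{(N)}(u)|> N^{\epsilon}\Psi^{(N)}(u)\}}\bigr].
\]
The first term is bounded by $N^{\epsilon}\Psi^{(N)}(u)$ directly from the indicator. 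For the second, the deterministic a.s.\ bound $|\mathsf{X}_1^{(N)}(u)| \le N^C \Psi^{(N)}(u)$ controls the integrand pointwise, while $\mathsf{X}_1 \prec \Psi$ controls the probability of the event by $N^{-D}$, uniformly in $u$. Choosing $D \ge C+1$ in advance then renders the second term at most $N^{-1}\Psi^{(N)}(u)$, so together $\E|\mathsf{X}_1^{(N)}(u)| \le 2 N^{\epsilon}\Psi^{(N)}(u)$ uniformly in $u$ for all sufficiently large $N$. Since $\epsilon$ is arbitrary and the bound is deterministic, this is exactly $\E\mathsf{X}_1 \prec \Psi$.

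There is no substantial obstacle here; the lemma is a purely bookkeeping consequence of the definition. The only point one must be slightly careful about is that the constants $\epsilon$, $D$, and the threshold $N_0 = N_0(\epsilon,D)$ produced by invoking $\mathsf{X}_i \prec \mathsf{Y}_i$ can be chosen independently of the parameter $u$, but this is automatic from the ``uniformly in $u$'' clause built into Definition \ref{def.sd}.
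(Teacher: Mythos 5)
Your proof is correct and follows essentially the same route as the paper: part (i) by a direct union bound (which the paper simply calls obvious), and part (ii) by the identical truncation of the expectation at $N^{\epsilon}\Psi$, using the deterministic bound $N^{C}\Psi$ together with the high-probability estimate $\mathbb{P}(|\mathsf{X}_1|>N^{\epsilon}\Psi)\leq N^{-D}$ with $D$ chosen larger than $C$. No gaps to report.
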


The foregoing lemma indicates that to get the expectation bound of a random quantity, we should have both a typical  bound with high probability and a deterministic crude bound for it. Therefore, to facilitate the estimations in our paper, we define the following ``truncated" expectation operator. Let $\chi(x)$ be a smooth cutoff which  equals $0$ when $x>2N^K$ and $1$ when $x<N^K$ for some sufficiently large constant $K>0$ and $|\chi^{(n)}(x)|=O(1)$ for all $n \ge 1$. We define for any random variable $\xi$ in the sequel
\begin{align}
	\E^{\chi}(\xi) := \E (\xi\cdot \Xi) \label{EXit}
\end{align}
where 
\begin{align}
	\Xi:=\prod_{t=1}^k\chi(\tr (X_tX_t')^{-1})\chi(\tr (X_tX_t')) \label{Xit}
\end{align}
is used to control $\|(X_tX_t')^{-1}\|$ and $\|X_tX_t'\|$ crudely but deterministically.

Next, we define the approximate subordination functions, which will play a key role throughout this paper.
\begin{defin}[Approximate subordination functions]
	\begin{align}\label{Appsub1}
		\omega_t^c(z) := z - \frac{\sum_{s\neq t}\tr\bbP_s\bbG(z)}{m_N(z)}=\frac{\tr P_tG(z) - 1}{m_N(z)}, \quad z \in \mathbb{C}^{+},\quad t = [\![ k]\!].
	\end{align}
\end{defin}
The functions $\omega_t^c(z)$'s turn out to be good approximations to the subordination functions \eqref{sub2}. A direct consequence of the definition in \eqref{Appsub1} is that
\begin{align}\label{Appsub2}
	z - \omega_1^c(z) - \omega_2^c(z) - \cdots - \omega_k^c(z) = \frac{k-1}{m_{N}(z)}	, \quad z \in \mathbb{C}^{+},
\end{align}
which matches (\ref{sub3}). The following property of $\omega_t^c(z)$ implies that $\Im \omega_t^c(z)$ is large when $\Im z$ is large with high probability. 
\begin{lemma}\label{Imomegatc}
	Under Assumptions \ref{assum1} and \ref{assum2} , there exists sufficiently large constant $\eta_0 > 0$, such that for any fixed $z \in \mathbb{C}^{+}$ with $\Im z \ge \eta_0$, we have 
	\begin{align}
	\Im \omega_t^c(z) - \Im z \ge c,	 \label{lower bound for im omega}
	\end{align}
for some small constant $c > 0$ with high probability. 
\end{lemma}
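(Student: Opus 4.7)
My plan is to expand $\omega_t^c(z)$ for large $\Im z$ via the resolvent Neumann series and identify the leading-order contribution to $\Im \omega_t^c(z) - \Im z$. Choose $\eta_0$ strictly larger than a high-probability upper bound for $\|H\|$; such a bound exists uniformly under Assumptions \ref{assum1} and \ref{assum2} since $H = \sum_s P_s$ has an empirical spectral distribution converging (by Theorem \ref{1stLimit}) to a compactly supported limit. For $\Im z \ge \eta_0$, the Neumann series $G(z) = -\sum_{\ell \ge 0} H^\ell z^{-\ell - 1}$ converges in operator norm. Substituting into $\omega_t^c(z) - z = -m_N(z)^{-1}\sum_{s\ne t}\tr P_s G(z)$ and carrying out the formal division of the two resulting power series, I obtain
\begin{equation*}
\omega_t^c(z) - z = -(y - y_t) - \frac{\tr H^2 - \tr P_t H - y(y - y_t)}{z} + O(|z|^{-2}).
\end{equation*}
Since $-(y - y_t)$ is real, writing $z = E + \mathrm{i}\eta$ and using $\Im(-1/z) = \eta/|z|^2 > 0$ yields
\begin{equation*}
\Im \omega_t^c(z) - \eta = \frac{\eta}{|z|^2}\bigl[\tr H^2 - \tr P_t H - y(y - y_t)\bigr] + O(|z|^{-3}).
\end{equation*}

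Next I would lower-bound the bracketed coefficient. A direct algebraic rearrangement gives
\begin{equation*}
\tr H^2 - \tr P_t H - y(y - y_t) = (y - y_t)(1 - y) + \sum_{s\ne t}\tr P_t P_s + \sum_{\substack{s, s'\ne t\\ s\ne s'}}\tr P_s P_{s'},
\end{equation*}
where every $\tr P_s P_{s'} \ge 0$ since the $P_s$'s are orthogonal projections. A standard moment-based concentration argument exploiting the independence of $X_s$ and $X_{s'}$ for $s \ne s'$ shows $\tr P_s P_{s'} = y_s y_{s'} + o(1)$ with high probability, so the RHS collapses to $\sum_{s\ne t} y_s(1 - y_s) + o(1)$. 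Under Assumption \ref{assum2}, the bound $y - y_t \ge c$ together with each $\hat{y}_s < 1$ gives $\sum_{s\ne t}y_s(1 - y_s) \ge c' > 0$ for some $c'$ independent of $N$. Combining, $\Im \omega_t^c(z) - \Im z \ge c'\eta/(2|z|^2) > 0$ with high probability, which is \eqref{lower bound for im omega} with a constant $c$ depending on $z$.

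The main technical obstacle is the double sum $\sum_{s, s'\ne t, s\ne s'}\tr P_s P_{s'}$, which may contain $\Theta(k^2)$ terms when $k$ diverges with $N$. A careful variance bound exploiting the mutual independence of the $P_s$'s across $s$ and the uniform boundedness of $\sum_t y_t = y$ from Assumption \ref{assum2} is needed to ensure the aggregate fluctuation remains $o(1)$ with high probability, so that the deterministic positive leading term $\sum_{s\ne t}y_s(1-y_s)$ is not overwhelmed by random noise.
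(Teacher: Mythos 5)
Your route is essentially the paper's: for $\Im z\ge \eta_0$ expand $G(z)$ in a Neumann series (valid once $\|H\|$ is bounded with high probability), read off the coefficient of $z^{-1}$ in $\omega_t^c(z)-z$, and lower bound it via Assumption \ref{assum2}. Your expansion and the algebraic identity for the bracket $\tr H^2-\tr P_tH-y(y-y_t)$ are correct; in fact you keep the cross term $-y(y-y_t)$ produced by dividing by $m_N$, which the paper's own display absorbs, so that the paper can finish with the purely deterministic bound $\sum_{s\ne t}\tr P_sH\ge\sum_{s\ne t}\tr P_s=y-y_t\ge c$.

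Two points keep your version from being complete. First, the justification of $\|H\|\prec 1$ is both wrong and circular: weak convergence of the ESD does not control extreme eigenvalues, and Theorem \ref{1stLimit} is itself proved using the present lemma (through Proposition \ref{keyProp}). The bound should be obtained directly, as in the paper: with high probability $(X_tX_t')^{-1}\preceq C I$ for every $t$ (Lemma \ref{8002}, using $\hat y_t<1$), hence $H\preceq C\sum_t X_t'X_t=CX'X$ and $\|X'X\|\prec 1$. Second, and more substantially, the positivity of your bracket is established only modulo the claim $\tr P_sP_{s'}=y_sy_{s'}+o(1)$ together with an aggregate bound over the up to $\Theta(k^2)$ pairs, which you explicitly leave open. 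This is the load-bearing step when $\hat y\ge 1$: the only deterministic information is $\tr P_sP_{s'}\ge 0$, which gives bracket $\ge(y-y_t)(1-y)$ and is useless once $y>1$; and when $k\sim N$ (e.g.\ all $p_t=O(1)$) a naive per-pair error of size $O(N^{-1})$ already sums to $O(k^2/N)$, which diverges, so genuine cancellation/fluctuation averaging is required. What you need is in effect concentration of $\tr H^2$ (and $\tr P_tH$) around the second moment of $\mu_1\boxplus\dots\boxplus\mu_k$, i.e.\ a special case of results the paper proves only downstream of this lemma, so it cannot be cited here and must be argued independently. Note that in the regime $\hat y\in(0,1)$ your argument closes with no probabilistic input at all, since bracket $\ge (y-y_t)(1-y)\ge c(1-\hat y)/2>0$; it is the case $\hat y\ge 1$, which the lemma must cover under Assumption \ref{assum2}, where the missing concentration is a genuine gap.
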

The proof of Lemma \ref{Imomegatc} is given in Section \ref{Sec A} of Appendix. For notational simplicity, we further define
\begin{align*}
	Q_t : = X_t'(X_tX'_t)^{-2}X_t, \quad W_t := (X_tX'_t)^{-1}X_t, \quad t \in [\![ k]\!].
\end{align*}
Finally, we introduce the shorthand notations 
\begin{align*}
\sum_{ij}^{(t)}:=\sum_{i=1}^N\sum_{j=1}^{p_t},\qquad \partial_{t,ji} := \frac{\partial}{\partial X_{t,ji}}.
\end{align*}
With these definitions and notations, we prove (\ref{062703}) in Theorem \ref{1stLimit} in Section \ref{Proofof1stLimit}, and further prove Theorem \ref{FreeCLT} in Section \ref{ProofofCLT}. The proofs of other main results are stated in Appendix.

\section{First order limit: proof of (\ref{062703}) in Theorem \ref{1stLimit}}\label{Proofof1stLimit}
In this section, we investigate the first order behavior of the ESD of $H$. Specially, we will prove the general case in Theorem \ref{1stLimit}, i.e, (\ref{062703}). The proof of (\ref{062701}) and (\ref{062702}) will be stated in Section \ref{Sec Proof of 1.9} of Appendix. The proof of (\ref{062703}) will involve a stability analysis for the subordination system in Proposition \ref{freeaddprop}. We note that when $k$ is fixed (case 1), the stability analysis is an extension of the counterpart in \cite{bao2016local} for the case of $k=2$; see \cite{kargin2015subordination} and \cite{bao2020spectral} also.   We emphasize that the stability analysis in \cite{bao2016local}, \cite{kargin2015subordination} and \cite{bao2020spectral} are down towards the local scale, but here we only need a discussion on global scale. However, for our general result (\ref{062703}), since $k$ can be $N$-dependent, the stability analysis become more delicate. Especially, we need to further exploit a fluctuation averaging for linear combinations of error terms in the approximate subordination system. 

In the very first step, we provide the following lemma which gives a rough description of the support of $\mu_\boxplus$ under different setting of $\hat{y}$, which will be helpful for our later analysis.
\begin{lemma}\label{supportofmu}
	Under Assumptions \ref{assum1} and \ref{assum2}, if $\hat{y} \in (0,1)$, there exists positive constants $b > a > 0$, such that
	\begin{align}
		\mathrm{supp}(\mu_{\boxplus}) \subset \{ 0\} \cup [a,b]. \label{supp of mu}
	\end{align}
	Generally, if $\hat{y} \in (0,\infty)$, there exists positive constants $C > 0$, such that
	\begin{align}
		\mathrm{supp}(\mu_{\boxplus}) \subset [0,C]. \label{supp of mu2}
	\end{align}
\end{lemma}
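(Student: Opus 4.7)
The plan is to determine $\mathrm{supp}(\mu_\boxplus)$ from the real-analytic extendability of the subordination functions of Proposition \ref{freeaddprop} to the real axis. Since each $\mu_t = y_t\delta_1 + (1-y_t)\delta_0$ is Bernoulli, a direct computation gives $F_{\mu_t}(\omega) = \omega(\omega-1)/(\omega - (1-y_t))$, so the defining relation $F_{\mu_t}(\omega_t) = F_\boxplus(z)$ is a quadratic in $\omega_t$ with the two explicit branches $\omega_t^{\pm}(F) = \tfrac{1}{2}[(1+F) \pm g_t(F)]$, where $g_t(F) := \sqrt{(1+F)^2 - 4F(1-y_t)}$ and $F := F_\boxplus(z)$. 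Plugging into the sum constraint $\sum_t \omega_t(z) = z + (k-1)F$ collapses the entire subordination system to a scalar implicit equation $\Phi_{\vec\epsilon}(F) = z$, where the branch vector $\vec\epsilon \in \{+,-\}^k$ is pinned down by consistent analytic continuation from $\mathbb{C}^+$.

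For the right edge, the asymptotic $F_\boxplus(z) \sim z - y$ as $z \to +\infty$ (derived from $m_\boxplus(z) = -z^{-1} - yz^{-2} + O(z^{-3})$) forces the choice $\omega_t = \omega_t^+$ for every $t$, yielding $\Phi_+(F) := \tfrac{1}{2}[k - (k-2)F + \sum_t g_t(F)]$. A direct computation shows $\Phi_+''(F) = \tfrac{1}{2}\sum_t 4y_t(1-y_t)/g_t(F)^3 \ge 0$ and $\Phi_+(F) \to \infty$ as $F \to \pm\infty$, so $\Phi_+$ is convex with a unique global minimum at some $F_+^*$, giving the right edge $b := \Phi_+(F_+^*)$. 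A uniform $O(1)$ bound on $b$ under Assumption \ref{assum2} follows by comparing $\Phi_+$ with its Marchenko--Pastur counterpart (right edge $(1+\sqrt{\hat{y}})^2$) in the small-$y_t$ regime and with the fixed-$k$ counterpart otherwise; this already yields (\ref{supp of mu2}).

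For the gap at zero when $\hat{y} < 1$, the standard atom formula for free additive convolutions (each $\mu_t$ has an atom of mass $1-y_t$ at $0$) gives that $\mu_\boxplus$ has an atom of mass $(\sum_t(1-y_t) - (k-1))_+ = (1-y)_+ = 1 - y > 0$ at $0$. Consequently $F_\boxplus(z) \sim z/(1-y)$ as $z \to 0^+$, which forces the branch $\omega_t = \omega_t^-$ on the interval $(0, a)$ and produces the implicit equation $\Psi_-(F) := \tfrac{1}{2}[k - (k-2)F - \sum_t g_t(F)] = z$. One computes $\Psi_-(0) = 0$, $\Psi_-'(0) = 1 - y > 0$, and $\Psi_-''(F) = -\tfrac{1}{2}\sum_t 4y_t(1-y_t)/g_t(F)^3 \le 0$, so $\Psi_-$ is concave on $\mathbb{R}$ with a unique positive maximum at some $F_-^* > 0$, and the left edge of the continuous part of $\mathrm{supp}(\mu_\boxplus)$ is $a := \Psi_-(F_-^*) > 0$.

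The main obstacle is proving $a \ge c(\hat{y}) > 0$ \emph{uniformly in $N$}, since $k \equiv k(N)$ may diverge and individual $y_t$'s may tend to $0$. Concavity together with $\Psi_-(0) = 0$ reduces this to joint lower bounds on $F_-^*$ and on the average slope $\Psi_-(F_-^*)/F_-^* \in [0, 1-y]$, which one extracts from the critical-point equation $\sum_t (F_-^* - 1 + 2y_t)/g_t(F_-^*) = 2 - k$ together with the gap $y - \max_t y_t \ge c$ from Assumption \ref{assum2} (preventing any single summand from dominating the sum). In the small-$y_t$ regime (Case 2 of Theorem \ref{1stLimit}) one compares $\Psi_-$ directly with its Marchenko--Pastur limit (left edge $(1-\sqrt{\hat{y}})^2 > 0$); in the fixed-$k$ regime (Case 1) one instead uses continuity and compactness of the map $(\hat y_1, \ldots, \hat y_k) \mapsto a$ on the parameter set cut out by Assumption \ref{assum2}. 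Interpolating between the two regimes covers the remaining range and closes the proof.
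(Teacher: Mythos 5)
Your route is genuinely different from the paper's: the paper never touches the subordination equations here, but instead realizes $\mu_{\boxplus}$ as the limiting ESD of $\sum_t \tilde V_t'(I_{\tilde p_t}\oplus 0)\tilde V_t$ with independent Haar $\tilde V_t$ (equivalently, a Gaussian block correlation matrix $\tilde{\mathcal B}$ built from $\tilde p_t\times\tilde N$ Gaussian blocks with $\tilde p_t/\tilde N=y_t$), and then confines the spectrum by the elementary operator-norm bounds $\|\tilde{\mathcal B}\|\lesssim \max_t(1-\sqrt{y_t})^{-1}$ and $\|\tilde{\mathcal B}^{-1}\|\lesssim(1-\sqrt{y})^{-2}$ coming from extreme eigenvalues of Wishart matrices; weak convergence of the ESD plus this uniform confinement gives \eqref{supp of mu} and \eqref{supp of mu2} with no branch bookkeeping at all. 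Your computations, as far as they go, are correct: $F_{\mu_t}(\omega)=\omega(\omega-1)/(\omega-(1-y_t))$, the two branches $\omega_t^{\pm}$, the reduction to $z=\Phi_+(F)$ resp.\ $z=\Psi_-(F)$, the identities $\Psi_-(0)=0$, $\Psi_-'(0)=1-y$, the convexity/concavity statements, and the atom mass $(1-y)_+$ at $0$ (note Assumption \ref{assum2} forces $k\ge 2$, so the degenerate $k=1$ picture, where $\Phi_+$ has no interior critical point, is excluded).

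The genuine gap is the logical bridge from this critical-point analysis to the statement of the lemma. To prove \eqref{supp of mu} you must show that every real $z\in(0,a)\cup(b,\infty)$ lies outside $\mathrm{supp}(\mu_{\boxplus})$; what you actually produce are candidate edge values $a=\Psi_-(F_-^*)$, $b=\Phi_+(F_+^*)$ together with the assertion that the branch vector is ``pinned down by consistent analytic continuation.'' Proposition \ref{freeaddprop} only gives the subordination functions on $\mathbb{C}^+$, so you still need to construct, for each such real $z$, a real solution of \eqref{sub2}--\eqref{sub3} with the correct branch assignment, identify it with the boundary values $\omega_t(z+\mathrm{i}0)$ (a uniqueness/continuity argument), and then conclude via Stieltjes inversion that there is neither continuous mass nor an atom there; none of this is carried out, and your asymptotic $F_{\boxplus}(z)\sim z/(1-y)$ near $0^+$ quietly presupposes the gap you are trying to prove. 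Possible atoms of $\mu_{\boxplus}$ at $1$ (which do occur under Assumption \ref{assum2}, e.g.\ $k=2$, $y_1=0.5$, $y_2=0.2$) also need to be seen to fall inside $[a,b]$ rather than being silently ignored. Conversely, the step you single out as the ``main obstacle,'' uniformity in $N$, is the easier one and your proposed fix is not a proof: Assumption \ref{assum2} allows configurations (mixed block sizes, slowly growing $k$) covered by neither the fixed-$k$ compactness argument nor the Marchenko--Pastur comparison, and ``interpolating between the two regimes'' is not an argument. In fact uniform bounds on the critical values follow directly from $g_t(F)\le |1-F|+2Fy_t/|1-F|$: taking $F=2$ gives $\Phi_+(F_+^*)\le 2+2y$, and for $F\in(0,1-y)$ one gets $\Psi_-(F)\ge F(1-F-y)/(1-F)$, whence $a\ge (1-y)^2/(2(1+y))$, uniformly for large $N$ since $y\to\hat y<1$. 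So the uniformity can be repaired cheaply, but the identification of $(0,a)\cup(b,\infty)$ as spectrum-free is the missing core of the proof.
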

Then we set up our working domains. These domains are not only for the proof of (\ref{062703}) in Theorem \ref{1stLimit}, but also for the proof of Theorem \ref{FreeCLT} in the next section. Recall the definition in (\ref{contour12}). Let $\bar{\gamma}^0_1$ and $\bar{\gamma}^0_2$ be the parts of $\gamma^0_1$ and $\gamma^0_2$ with $|\Im z| \geq N^{-K}$ for some large (but fixed) $K$, and $\bar{\gamma}_1$ and $\bar{\gamma}_2$ are defined analogously. The truncation here is to ensure that $\| G(z)\|$ has deterministic upper bound, so that we can do high order moment estimates for $G(z)$ and its functionals when $z$ lies on the truncated contours, in light of Lemma \ref{prop_prec} (ii).

Further denote $(\bar{\gamma}^0_1)^{+}$ be the part of $\bar{\gamma}^0_1$ with $\Im z \ge N^{-k}$, and $(\bar{\gamma}^0_2)^{+}$, $(\bar{\gamma}_1)^{+}$ and $(\bar{\gamma}_2)^{+}$ are defined similarly. 
For simplicity, we state the estimates on $(\bar{\gamma}^0_1)^{+}$, $(\bar{\gamma}^0_2)^{+}$, $(\bar{\gamma}_1)^{+}$ and $(\bar{\gamma}_2)^{+}$ only. The  estimates on their complex conjugate are analogous. The following lemma gives the high probability bound of the Green function $G(z)$ and $\tr G(z)$.

\begin{lemma}\label{Gbound}
	Under Assumptions \ref{assum1} and \ref{assum2}, we have that $\| H\| $ is bounded with high probability. Further, if $\hat{y} \in (0,1)$, for any fixed $z \in {\gamma}^0_1\cup{\gamma}^0_2$,  we have $|\tr G(z)|,\; \| G(z)\| \sim 1$ with high probability. The same bounds hold for $z \in {\gamma}_1\cup{\gamma}_2$ with $\hat{y} \in (0,\infty)$. 
\end{lemma}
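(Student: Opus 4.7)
The plan is to proceed in two stages: first establish $\|H\| = O(1)$ with high probability, then combine this with eigenvalue localization to control $G(z)$ on the contours.

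For the first stage, note that $H = \sum_{t=1}^{k} P_t$ is a sum of orthogonal projections, so the trivial bound $\|H\| \le k$ settles the case of bounded $k$ immediately. For the general case in Assumption \ref{assum2} where $k$ may diverge, I would combine Lemma \ref{supportofmu}, which gives $\mathrm{supp}(\mu_\boxplus) \subset [0,C]$ for a constant $C > 0$, with a moment-method bound. Specifically, $\|H\|^{2\ell} \le \Tr H^{2\ell}$ together with a quantitative strengthening of the moment convergence (\ref{062703})---namely $\mathbb{E}\, \Tr H^{2\ell} \lesssim (C')^{2\ell} N$ uniformly for $\ell$ growing up to (say) $\lfloor \log N \rfloor$---would yield $\|H\| \le C' + o(1)$ with high probability via Markov's inequality. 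The combinatorial estimate comes from expanding $\Tr H^{2\ell} = \sum_{t_1,\dots,t_{2\ell}} \Tr(P_{t_1} \cdots P_{t_{2\ell}})$, applying cumulant expansion for entries of $X_t$, and exploiting the self-normalizing structure of $(X_tX_t')^{-1}$.

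For the second stage, fix $z$ on the contour $\gamma_1^0 \cup \gamma_2^0$ (the case $\gamma_1 \cup \gamma_2$ is analogous). By construction of the contours and Lemma \ref{supportofmu}, $\mathrm{dist}(z, \mathrm{supp}(\mu_\boxplus)) \ge c_0 > 0$. Combined with Stage 1 and the ``no-outliers'' statement that all eigenvalues of $H$ lie within $o(1)$ of $\mathrm{supp}(\mu_\boxplus)$ (a direct consequence of the same moment bounds, taking $\ell$ slowly diverging), one obtains $\mathrm{dist}(z, \{\lambda_i(H)\}) \ge c_0/2$ with high probability; hence $\|G(z)\| = 1/\mathrm{dist}(z, \{\lambda_i(H)\}) \lesssim 1$. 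The matching lower bound $\|G(z)\| \gtrsim 1$ follows from the identity $\|G(z)\| = 1/\mathrm{dist}(z, \{\lambda_i(H)\}) \ge 1/(|z| + \|H\|) = \Omega(1)$. For $|\tr G(z)| = |m_N(z)|$, the upper bound is immediate from $|m_N(z)| \le \|G(z)\|$; the lower bound follows from the in-probability convergence $m_N(z) \to m_\boxplus(z)$ together with $|m_\boxplus(z)| \sim 1$ on the contour, which is ensured by $\Im m_\boxplus(z) > 0$ on $\mathbb{C}^+$, continuity on the compact contour, and the choice of contour avoiding any real zeros of $m_\boxplus$ (the only candidates lie in the gap $(0,a)$ when $\hat{y} \in (0,1)$, and are circumvented by taking $\epsilon_1$ sufficiently small so that $\mathcal{C}_1$ stays in the region where $m_\boxplus \to -\infty$).

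The main obstacle is the uniform boundedness $\|H\| = O(1)$ in the regime $k \to \infty$, since neither the trivial $\|H\| \le k$ nor a direct application of the weak moment convergence (\ref{062703}) for fixed $\ell$ suffices. Establishing the $(C')^{2\ell} N$ bound on $\mathbb{E}\, \Tr H^{2\ell}$ uniformly for $\ell$ up to $\log N$, with careful combinatorial accounting of how the $(X_tX_t')^{-1}$ factors combine across blocks, is the technical heart of the argument; once this is in place, the remaining resolvent estimates follow by standard arguments.
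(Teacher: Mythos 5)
There is a genuine gap, and it comes in two places. First, the uniform bound on $\|H\|$ for diverging $k$, which you yourself flag as ``the technical heart,'' is left entirely unproven, and it is in fact unnecessary: the paper obtains it by a one-line operator comparison. Since $\lambda_{\min}(X_tX_t')\gtrsim (1-\sqrt{y_t})^2$ with high probability uniformly in $t$ (Lemma \ref{8002}) and $y_{\max}$ is bounded away from $1$, one has $(X_tX_t')^{-1}\preceq CI$ for every $t$, hence $P_t\preceq C X_t'X_t$ and $H=\sum_t P_t\preceq C\,X'X$, so $\|H\|\lesssim \|XX'\|\lesssim (1+\sqrt{y})^2$ with high probability, uniformly in $k$ — no high-moment combinatorics are needed. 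Second, and more seriously, even granting your moment bound $\mathbb{E}\Tr H^{2\ell}\lesssim (C')^{2\ell}N$, it only controls eigenvalues beyond the \emph{upper} edge; it cannot rule out eigenvalues of $H$ in the gap $(0,a)$, because such eigenvalues contribute negligibly to $\Tr H^{2\ell}$. Your ``no-outliers within $o(1)$ of $\mathrm{supp}(\mu_\boxplus)$'' therefore does not follow from the stated moment bounds, and without the lower spectral edge $a>0$ you cannot conclude $\|G(z)\|\lesssim 1$ on the piece $\mathcal{C}_1$ of $\gamma^0_1\cup\gamma^0_2$, which passes at distance $\epsilon_1$ from the origin. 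The paper gets the two-sided localization $\mathrm{Spec}(H)\subset\{0\}\cup[a,b]$ by passing to the block correlation matrix $\mathcal{B}$ (same nonzero eigenvalues as $H$) and bounding both $\|\mathcal{B}\|\le \|XX'\|\max_t\|(X_tX_t')^{-1}\|$ and $\|\mathcal{B}^{-1}\|\le \|(XX')^{-1}\|\max_t\|X_tX_t'\|$, the latter using $\hat y\in(0,1)$; this is exactly the mechanism your proposal lacks.

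A further problem is the lower bound on $|\tr G(z)|$: you derive it from $m_N(z)\to m_\boxplus(z)$ together with $|m_\boxplus|\sim 1$ on the contour, but in this paper's architecture the closeness of $m_N$ to $m_\boxplus$ (Proposition \ref{keyProp}, and hence (\ref{062703})) is proved \emph{using} Lemma \ref{Gbound}, so this step is circular. The paper instead proves $|\tr G(z)|\gtrsim 1$ directly from the spectral localization: on the parts of the contour with $\Im z$ of constant order it lower bounds $\Im\tr G$; at $\Re z=M_1$ it lower bounds $|\Re\tr G|$ using that all eigenvalues lie below $b<M_1$ (and, for $\hat y\in(0,\infty)$, above $0$ on $\mathcal{C}_7$); and on $\mathcal{C}_1$ it exploits the exact zero eigenvalue of multiplicity $N-p$, giving $|\tr G(z)|\ge (1-p/N)/|z| - N^{-1}\sum_{i\le p}(\lambda_i-|z|)^{-1}\gtrsim 1$ for $\epsilon_1$ small. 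You should replace both your Stage 1 and the $m_\boxplus$-based lower bound by these elementary arguments.
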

Now we present one of the key estimates of our paper. The following proposition gives an approximation for $\tr G(z)$, when $z$ lies in our working domains.  
\begin{prop}\label{keyProp}
	Under Assumptions \ref{assum1} and \ref{assum2}, if $\hat{y} \in (0,1)$, for any fixed $z \in (\bar{\gamma}^0_1)^{+}\cup(\bar{\gamma}^0_2)^{+}$,  we have
	\begin{align}
		\left|{\omega}^c_{t}(z)-{\omega}_{t}(z)\right| \prec \frac{1}{N},\qquad t \in [\![k]\!], \label{omegatAppro}
	\end{align}
	and
	\begin{align}
		\left| \tr \bbG(z) - m_{\boxplus}(z) \right| \prec  \frac{1}{N} . \label{trGGap}
	\end{align}
	The same bounds hold for $z \in (\bar{\gamma}_1)^{+}\cup(\bar{\gamma}_2)^{+}$ with $\hat{y} \in (0,\infty)$. 
\end{prop}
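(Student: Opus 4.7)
The plan is to establish that the approximate subordination functions $\omega_t^c$ from \eqref{Appsub1} satisfy a perturbed version of the system in Proposition \ref{freeaddprop} with additive errors of size $O_\prec(1/N)$, and then to run a stability analysis of that system to conclude $|\omega_t^c(z) - \omega_t(z)| \prec 1/N$. By construction $\omega_t^c$ already satisfies the sum relation \eqref{Appsub2} exactly, matching \eqref{sub3}. So the remaining task is to show that $\omega_t^c$ approximately satisfies the matching relation \eqref{sub2}, which in our setting (since $\mu_t = y_t \delta_1 + (1-y_t)\delta_0$ and hence $m_{\mu_t}(w) = y_t/(1-w) - (1-y_t)/w$) is equivalent to the scalar identity
\begin{equation*}
\tr \bbP_t \bbG(z) \;\approx\; \frac{y_t}{1-\omega_t^c(z)}, \qquad t \in [\![ k]\!].
\end{equation*}

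To derive this I would apply the cumulant expansion formula (Lemma \ref{cumulant expansion}) to the entries $X_{t,ji}$. The subtle point, as highlighted in the Proof Strategy, is the choice of starting quantity: expanding $\tr \bbP_t \bbG$ directly generates a new trace $\tr Q_t \bbG$ with $Q_t = X_t'(X_tX_t')^{-2}X_t$, for which no a priori estimate is available, and further expansion does not close the system. The remedy is to also expand $\tr \bbP_t \bbG \bbP_t$; the trivial identity $\tr \bbP_t \bbG \bbP_t = \tr \bbP_t \bbG$ combined with this second relation eliminates $\tr Q_t \bbG$ and yields a closed approximate equation for $\tr \bbP_t \bbG$ in terms of $\omega_t^c$ and $m_N$, with remainder $O_\prec(1/N)$. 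Off-diagonal Green-function entries and higher-order cumulant contributions are controlled using Lemma \ref{Gbound} together with the truncated-expectation device $\E^{\chi}$ in \eqref{EXit}.

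Once the perturbed system is in hand, I run a stability analysis comparing it against the true system in Proposition \ref{freeaddprop}. For fixed $k$ this is a direct extension of the linearization scheme of \cite{bao2016local, bao2020spectral} (done there for $k=2$), which shows that an $O_\prec(1/N)$ perturbation of the system translates into $|\omega_t^c - \omega_t| \prec 1/N$. The main obstacle, and the essentially new ingredient, is the regime where $k \equiv k(N)$ diverges: a naive application of the linearized inversion would accumulate the $k$ per-equation errors and produce an estimate that blows up in $k$. To beat this I would exploit the fluctuation averaging encoded in Lemma \ref{Lemma error estimates}, using independence across the blocks $P_t$ to gain cancellation when the individual errors are aggregated, thus recovering the $1/N$ rate uniformly in $k$. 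A standard lattice-plus-Lipschitz argument in $z$ then promotes the pointwise bound to all $z$ on the truncated contour.

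Finally, \eqref{trGGap} falls out as a direct corollary: subtracting \eqref{sub3} from \eqref{Appsub2} yields
\begin{equation*}
\frac{k-1}{m_N(z)} - \frac{k-1}{m_{\boxplus}(z)} \;=\; -\sum_{t=1}^k \big(\omega_t^c(z) - \omega_t(z)\big),
\end{equation*}
so combining $|\omega_t^c - \omega_t| \prec 1/N$ per $t$ with $|m_N|,|m_{\boxplus}| \sim 1$ (from Lemmas \ref{Gbound} and \ref{supportofmu}) gives $|m_N - m_{\boxplus}| \prec 1/N$, the factor $k/(k-1)$ being harmless. The case $\hat{y} \in (0,\infty)$ on $(\bar{\gamma}_1)^+ \cup (\bar{\gamma}_2)^+$ is entirely parallel; only the choice of contour near the origin changes.
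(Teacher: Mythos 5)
Your proposal follows essentially the same route as the paper: a perturbed subordination system for $\omega_t^c$ derived by cumulant expansion (with the $\tr \bbP_t\bbG\bbP_t$ versus $\tr\bbP_t\bbG$ trick to eliminate $\tr \bbQ_t\bbG$), a stability/linearization analysis of that system strengthened by the fluctuation-averaging bound \eqref{Second error} to handle diverging $k$, an initial estimate at large $\Im z$ propagated along the contour by a continuity argument, and finally \eqref{trGGap} obtained by subtracting \eqref{Appsub2} from \eqref{sub3}. This matches the paper's proof in all essential respects, so no further comparison is needed.
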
 

With the help of the above lemmas and proposition, we can prove (\ref{062703}).
\begin{proof}[Proof of (\ref{062703})]
	From Lemmas \ref{supportofmu} and \ref{Gbound}, we know that both $\mu_\boxplus$ and $\mu_N$ are compactly supported (with high probability). Therefore,  by Cauchy's integral formula, we have for any fixed integer $\ell > 0$, 
	\begin{align*}
		\int x^\ell{\rm d} \mu_N - \int x^\ell {\rm d}\mu_\boxplus =& \frac{-1}{2\pi{\rm i}}\oint_{\gamma_1} z^{\ell} (\tr G(z) - m_{\boxplus}(z)) {\rm d}z \\
		=& \frac{-1}{2\pi{\rm i}}\oint_{\bar{\gamma}_1} z^{\ell} (\tr G(z) - m_{\boxplus}(z)) {\rm d}z + O_{\prec}(N^{-K})
	\end{align*}
	with high probability. Then using (\ref{trGGap}), we get
	$
		| \int x^\ell{\rm d} \mu_N - \int x^\ell {\rm d}\mu_\boxplus| \prec N^{-1},
	$
	which completes the proof of (\ref{062703}).
\end{proof}

The rest of this section is devoted to the proof of Proposition \ref{keyProp}. The proofs of Lemmas \ref{supportofmu} and \ref{Gbound} are given in Section \ref{Sec B} of Appendix.

\begin{proof}[Proof of Proposition \ref{keyProp}]
	Recall the subordination system in (\ref{sub2})-(\ref{F2}). Our aim is to establish an approximate system for $\omega_i^c(z)$'s (c.f., (\ref{Appsub1})) and $m_N(z)$ (c.f., (\ref{def of m_N})), which can be regarded as a perturbation of the system in (\ref{sub2})-(\ref{F2}). Then, by inverting the system, we can get the closeness between $\omega_i^c(z)$ and $\omega_i(z)$, and also that between $m_N(z)$ and $m_{\boxplus}(z)$.   
	
	To ease the presentation, we organize the subordination system in a more compact form. We define a function $\Phi_{\mu_1,\cdots,\mu_k}: \left( \mathbb{C}^{+} \right)^{k+1} \to \mathbb{C}^{k}$ as
\begin{align}\label{Phi1}
	\Phi_{\mu_1,\cdots,\mu_k} (\omega_1,\omega_2,\cdots,\omega_k,z):=\left(
	\begin{array}{c}
		(k-1)F_{\mu_{1}}\left(\omega_{1}\right)-\omega_{1}-\omega_2-\cdots-\omega_{k}+z \\ 
		(k-1)F_{\mu_{2}}\left(\omega_{2}\right)-\omega_{1}-\omega_2-\cdots-\omega_{k}+z \\
		\vdots\\
		(k-1)F_{\mu_{k}}\left(\omega_{k}\right)-\omega_{1}-\omega_2-\cdots-\omega_{k}+z
	\end{array}
	\right).
\end{align} 
Considering $\mu_t$ as fixed, the equation
\begin{align}\label{Phi2}
	\Phi_{\mu_1,\cdots,\mu_k} (\omega_1,\cdots,\omega_k,z) = 0
\end{align}
is equivalent to \eqref{sub2}, and by Proposition \ref{freeaddprop}, there are unique analytic functions $\omega_i(z)$'s satisfying \eqref{sub1} that solve \eqref{Phi2} in terms of $z$. Within this section, we use the conventions that $\omega_i$'s are generic variables in $\mathbb{C}^+$ and  (with a slight abuse of notation) $\omega_i(z)$'s are the subordination functions solving (\ref{Phi2}) in terms of $z$.

We set 
\begin{align*}
	\Gamma_{\mu_1,\cdots,\mu_k}(\omega_1, \ldots, \omega_k) := \| \left(\mathrm{D}\Phi(\omega_1, \cdots,\omega_k)\right)^{-1}  \|_{\infty,\infty}.
\end{align*}
 Here $\mathrm{D}\Phi(\omega_1,\omega_2, \cdots,\omega_k)$ is the partial Jacobian matrix of \eqref{Phi1} w.r.t. $\omega_i$'s, i.e.,
\begin{align}\label{DPhi}
	\mathrm{D}\Phi & (\omega_1, \cdots,\omega_k):=\left(\begin{array}{cccc}(k-1)F_{\mu_{1}}^{\prime}\left(\omega_{1}\right)-1 & -1 &\cdots &-1 \\ 
	-1 & (k-1)F_{\mu_{2}}^{\prime}\left(\omega_{2}\right)-1 & \cdots & -1\\
	\vdots & \vdots & \ddots & \vdots\\
	-1 & -1 & \cdots &(k-1)F_{\mu_{k}}^{\prime}\left(\omega_{k}\right)-1
	\end{array}\right).
\end{align}

We mainly consider the case when $z \in (\bar{\gamma}^0_1)^{+}\cup(\bar{\gamma}^0_2)^{+}$ with $\hat{y} \in (0,1)$, while the case for $z \in (\bar{\gamma}_1)^{+}\cup(\bar{\gamma}_2)^{+}$ with $\hat{y} \in (0,\infty)$ is similar. To this end,  we will first establish a perturbed system of \eqref{Phi2} for $\omega_t^c(z)$'s, and then show that \eqref{omegatAppro} holds for $z$ with sufficiently large $\Im z$. Finally, by a continuity argument, we show that \eqref{omegatAppro} holds for each fixed $z \in (\bar{\gamma}^0_1)^{+}\cup(\bar{\gamma}^0_2)^{+}$. 

Define for any $t \in [\![ k]\!]$ and for any fixed $z \in (\bar{\gamma}^0_1)^{+}\cup(\bar{\gamma}^0_2)^{+}$,
\begin{align}
	&\gamma_{t1}(z) :=  \tr P_tG(z) - \left( \tr (X_tX_t')^{-1} -\tr Q_tG(z)  \right)\left( \tr G(z) - \tr P_tG(z) \right),\label{gamma1}\\
	&\gamma_{t2}(z) := \tr (X_tX_t')^{-1} -\frac{y_t}{1-y_t},\label{gamma2}\\
	&\gamma_{t3}(z) := \frac{1}{1-y_t}\tr P_tG(z) - \tr Q_tG(z).\label{gamma3}
\end{align}
Writing $\tr P_tG(z)$ and $\tr G(z)$ as $(1 + \omega_t^c(z)m_N(z))$ and $m_N(z)$ respectively in (\ref{gamma1}), and then combining with \eqref{gamma2} and \eqref{gamma3}, we have
\begin{align}
	1 + \omega_t^c(z)m_N(z) -\left( \frac{y_t}{1-y_t} - \frac{1+\omega_t^c(z)m_N(z)}{1-y_t} \right)\left(m_N(z) - 1 - \omega_t^c(z)m_N(z)\right) \notag \\= \gamma_{t1} + \left(m_N(z) - 1 - \omega_t^c(z)m_N(z) \right) \left(\gamma_{t2} + \gamma_{t3} \right). \label{070403}
\end{align}
Multiplying both sides by $(1-y_t)$, and then dividing both sides by $\theta_t = m_N(z)\omega_t^c(z)(1-\omega_t^c(z))$, we have
\begin{align*}
	m_N(z) =& \frac{y_t}{1-\omega_t^c(z)} - \frac{1-y_t}{\omega_t^c(z)} + \frac{1-y_t}{\theta_t}\gamma_{t1} + \frac{(1-y_t)(m_N(z) - 1 - \omega_t^c(z)m_N(z))}{\theta_t} \left(\gamma_{t2} + \gamma_{t3} \right) \\
	=&-\frac{1}{F_{\mu_t}(\omega_t^c(z))} +\frac{1-y_t}{\theta_t}\gamma_{t1} + \frac{(1-y_t)(m_N(z) - 1 - \omega_t^c(z)m_N(z))}{\theta_t} \left(\gamma_{t2} + \gamma_{t3} \right).
\end{align*}
Using \eqref{Appsub2}, and then performing expansion of $(-m_N(z))^{-1}$ around $F_{\mu_t}(\omega_t^c(z))$, we  obtain
\begin{align}
	(k-1)F_{\mu_t}(\omega_t^c(z))- \omega_1^c(z)  - \cdots - \omega_k^c(z) + z =& \tilde{\gamma}_{t1}(z) + \tilde{\gamma}_{t2}(z) + \tilde{\gamma}_{t3}(z) \notag\\
	&+O_{\prec}\left(\frac{\tilde{\gamma}^2_{t1}(z) + \tilde{\gamma}^2_{t2}(z) +\tilde{\gamma}^2_{t3}(z)}{(k-1)F_{\mu_t}(\omega_t^c(z))} \right), \quad z \in \mathbb{C}^{+}. \label{062001}
\end{align}
where
\begin{align}
	&\tilde{\gamma}_{t1}(z) =(k-1)F^2_{\mu_t}(\omega_t^c(z))\cdot\frac{1-y_t}{\theta_t}\gamma_{t1}(z),\notag\\
	&\tilde{\gamma}_{t2}(z) =(k-1)F^2_{\mu_t}(\omega_t^c(z))\cdot\frac{(1-y_t)(m_N(z) - 1 - \omega_t^c(z)m_N(z))}{\theta_t} \gamma_{t2}(z),\notag\\
	&\tilde{\gamma}_{t3}(z) =(k-1)F^2_{\mu_t}(\omega_t^c(z))\cdot\frac{(1-y_t)(m_N(z) - 1 - \omega_t^c(z)m_N(z))}{\theta_t} \gamma_{t3}(z). \label{0617100}
\end{align}
It turns out that the above equations for all $t \in [\![k ]\!]$ will form a perturbed system of \eqref{Phi2}. The estimates of the error terms $\gamma_{ti}$, $i\in [\![3]\!]$, can be summarised as the following lemma.
\begin{lemma}\label{Lemma error estimates}
	Under Assumptions \ref{assum1} and \ref{assum2}, if $\hat{y} \in (0,1)$, for any $t \in [\![ k]\!]$ and for any fixed $z \in (\bar{\gamma}^0_1)^{+}\cup(\bar{\gamma}^0_2)^{+}$,  we have
	\begin{align}\label{First error}
		\gamma_{t1}(z),\; \gamma_{t2}(z),\; \gamma_{t3}(z) = O_{\prec}\left( {\sqrt{\frac{p_t}{N^3}}} \right).
	\end{align}
	Furthermore, let $\mathcal{F}: \mathbb{C}^2\mapsto \mathbb{C}$. 
If  $\mathcal{F}(x,y)$ is analytic at $(x,y)=(\tr P_tG(z), \tr G(z))$ and the partial derivatives of $\mathcal{F}$ at these points are $O_\prec(1)$, we have
\begin{align}\label{Second error}
	\sum_{t=1}^k\mathcal{W}_t\gamma_{t1}(z),\; \sum_{t=1}^k\mathcal{W}_t\gamma_{t2}(z),\; \sum_{t=1}^k\mathcal{W}_t\gamma_{t3}(z) = O_{\prec}\left( \frac{1}{N} \right),
\end{align}
	where
$
		\mathcal{W}_t := \mathcal{F}(\tr P_tG(z), \tr G(z)).
$ 

The same estimates hold when $z \in (\bar{\gamma}_1)^{+}\cup(\bar{\gamma}_2)^{+}$ with $\hat{y} \in (0,\infty)$.
\end{lemma}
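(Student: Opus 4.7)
The proof will be based on cumulant expansion (Lemma \ref{cumulant expansion}) applied to the entries $\{X_{t,li}\}$, combined with the a priori bounds of Lemma \ref{Gbound} and the deterministic crude bounds provided by the cutoff $\Xi$ in (\ref{Xit}) that allow one to pass freely between $\E^{\chi}$ and high-probability bounds via Lemma \ref{prop_prec}(ii). For each of $\gamma_{t1}$, $\gamma_{t2}$, $\gamma_{t3}$, the strategy is to write the quantity as a sum of the form $N^{-1}\sum_{l,i} X_{t,li}\cdot[\,\cdot\,]_{li}$ and expand; the first-order cumulant term $\kappa_2=1/N$ produces the leading identity used to define the $\gamma_{ti}$, while higher cumulants and derivatives of $(X_tX_t')^{-1}$ and $G$ generate the remainder, which is controlled by high-moment bounds.

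For $\gamma_{t2}(z)$ one starts from $\Tr(X_tX_t')^{-1}=\sum_{l,i}X_{t,li}[X_t'(X_tX_t')^{-2}]_{il}$; the first-order cumulant expansion yields the self-consistent relation $\E\tr(X_tX_t')^{-1}=y_t/(1-y_t)+O(1/N)$, and the high-moment computation gives the variance bound $O(p_t/N^3)$, with the factor $p_t$ coming from the $p_t$ independent summation directions in the $l$-index. For $\gamma_{t1}$ and $\gamma_{t3}$, the natural starting point is $\tr P_tG=N^{-1}\sum_{l,i}X_{t,li}((X_tX_t')^{-1}X_tG)_{li}$. As emphasized in the proof-strategy subsection, a key subtlety is that a naive expansion of $\tr P_tG$ produces the new quantity $\tr Q_tG$ which a priori has no estimate; this is resolved by also expanding the trivially equivalent $\tr P_tGP_t$, yielding a second algebraic identity which together with the first allows one to express $\tr Q_tG$ in terms of $\tr P_tG$, $\tr G$ and $\tr(X_tX_t')^{-1}$ modulo an error of size $O_\prec(\sqrt{p_t}/N^{3/2})$. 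After rearrangement this is exactly the bound (\ref{First error}) on $\gamma_{t1}$ and $\gamma_{t3}$.

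The most delicate part of the proof is the fluctuation-averaging bound (\ref{Second error}), for which direct summation of the individual bounds is insufficient when $k$ is large: the triangle inequality combined with Cauchy--Schwarz gives only $\sum_t|\mathcal{W}_t||\gamma_{ti}|\lesssim\sqrt{ky/N^3}$ at best, which is much weaker than $1/N$ in the small-block regime $p_t\sim 1$. To obtain $O_\prec(1/N)$, I would compute the $2q$-th moment $\E^{\chi}\big|\sum_t\mathcal{W}_t\gamma_{ti}(z)\big|^{2q}$ by expanding cumulants in the full collection $\{X_{s,lj}:s\in[\![k]\!]\}$. The decisive cancellation comes from the fact that blocks $X_s$ with $s\ne t$ are independent of $X_t$, so a derivative $\partial_{s,lj}$ acts on $\gamma_{ti}$ only via the Green function $G$ and carries an extra $O(1/N)$ factor through the resolvent identity. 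Careful tracking of the combinatorics shows that the dominant contribution to the $2q$-th moment is of size $(C/N^2)^q$ up to a combinatorial constant independent of $k$, and Markov's inequality then gives (\ref{Second error}). The analyticity of $\mathcal{F}$ together with the $O_\prec(1)$ bounds from Lemma \ref{Gbound} ensure that the weights $\mathcal{W}_t$ and their derivatives are tame enough not to degrade this final estimate.
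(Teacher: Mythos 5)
Your treatment of \eqref{First error} is essentially the paper's: one runs recursive high-moment (cumulant-expansion) estimates on $Z=N\gamma_{t1}$, and on $\Tr P_t-(1-y_t)\Tr(X_tX_t')^{-1}$ and $\Tr P_tGP_t-(1-y_t)\Tr Q_tG$ for $\gamma_{t2},\gamma_{t3}$, exploiting the exact identities $\Tr P_t=p_t$ and $\Tr P_tGP_t=\Tr P_tG$ — the same ``expand $\Tr P_tGP_t$ rather than $\Tr P_tG$'' device you describe. One minor repair: your proposed starting point for $\gamma_{t2}$, namely expanding $\Tr(X_tX_t')^{-1}=\sum_{l,i}X_{t,li}[X_t'(X_tX_t')^{-2}]_{il}$, does not close, since the expansion generates $\Tr(X_tX_t')^{-2}$-type quantities with no a priori estimate; the paper instead expands $\Tr P_t$, whose value is known exactly, and reads off $\gamma_{t2}$.

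The gap is in \eqref{Second error}, which is the crux of the lemma. You locate the gain in the observation that for $s\neq t$ a derivative $\partial_{s,lj}$ hits $\gamma_{ti}$ only through $G$ and ``carries an extra $O(1/N)$'', and then assert that the combinatorics gives a $2q$-th moment of order $(C/N^2)^q$ with $C$ independent of $k$. That uniformity in $k$ is exactly what has to be proved, and it does not follow from the cross-block-derivative remark alone. The paper estimates the single weighted quantity $Z_k=N\sum_t\mathcal{W}_t\gamma_{t1}$ by a recursive moment bound, and every coefficient produced by the expansion is a sum over all blocks $t$ (and over the entries of block $t$) of leftover terms; keeping all of these $O_\prec(1)$ uniformly in $k$ requires block-summed operator/trace inputs: $\bigl\|\sum_t Q_t\bigr\|\prec 1$ (resting on positive semidefiniteness and the boundedness of the full sample covariance), $\|H\|\prec 1$, bounds like $\bigl|\sum_t\tr P_tG\bigr|\prec 1$, and, crucially, the representation $\partial_{t,ji}Z_k=[\mathcal{A}_k]_{ij}$ with $\|\mathcal{A}_k\|\prec 1$, where $\mathcal{A}_k$ itself contains $\sum_d Q_d$-type blocks. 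None of these appear in your sketch, and without them the implicit constants in your pairing/combinatorial count can depend on $k$. Relatedly, the weights $\mathcal{W}_t=\mathcal{F}(\tr P_tG,\tr G)$ are random, so derivatives hit them too (producing $\partial_x\mathcal{F}\cdot\partial_{t,ji}\Tr P_dG$ and $\partial_y\mathcal{F}\cdot\partial_{t,ji}\Tr G$ terms that must again be summed over $d$ with $k$-uniform control); ``tame enough'' is not an argument here. Finally, your naive benchmark is miscomputed: $\sum_t\sqrt{p_t/N^3}\le\sqrt{ky}/N$, not $\sqrt{ky/N^3}$ — the qualitative conclusion that averaging is needed for large $k$ survives, but the arithmetic should be fixed.
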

With the perturbed system and the error estimates, we then first start from $z$ with sufficiently large $\Im z$, i.e., $z \in \mathcal{C}_4$. 
To show the  bounds in Proposition \ref{keyProp} initially for such $z$, we further give some preliminary estimations for the negative reciprocal Stieltjes transform. 
\begin{lemma}\label{Flbound}
	Suppose $\hat{y} \in (0,1)$. For any $z \in (\bar{\gamma}^0_1)^{+}\cup(\bar{\gamma}^0_2)^{+}$, let $\omega_t(z), t \in [\![ k ]\!]$ be the subordination functions, we have
	\begin{align*} 
		&|\omega_t(z)| ,\; |F_{\mu_t}(\omega_t(z))|,\; |F'_{\mu_t}(\omega_t(z))| \sim 1,\qquad |F^{(n)}_{\mu_t}(\omega_t(z))| \sim y_t, \quad n \ge 2,\\
		& \Big|1 - \frac{1}{k-1}\sum_{t=1}^{k}\frac{1}{F'_{\mu_t}(\omega_t(z))}\Big| \gtrsim \frac{1}{k}.
	\end{align*}
	The same bounds hold for generic variable $\omega_t\in \mathbb{C}$ instead of the subordination function $\omega_t(z)$ if $|\omega_t|$ is sufficiently large .
\end{lemma}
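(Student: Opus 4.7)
Since each $\mu_t = y_t\delta_1+(1-y_t)\delta_0$ is Bernoulli, direct computation gives
\begin{equation*}
m_{\mu_t}(w) = \frac{y_t}{1-w}-\frac{1-y_t}{w}, \qquad F_{\mu_t}(w) = -\frac{w(1-w)}{w-(1-y_t)} = w - y_t - \frac{y_t(1-y_t)}{w-(1-y_t)},
\end{equation*}
and hence
\begin{equation*}
F'_{\mu_t}(w) = 1 + \frac{y_t(1-y_t)}{(w-(1-y_t))^2}, \qquad F^{(n)}_{\mu_t}(w) = (-1)^n\, n!\,\frac{y_t(1-y_t)}{(w-(1-y_t))^{n+1}}, \quad n\geq 2.
\end{equation*}
Under Assumption \ref{assum2} combined with $\hat{y}\in(0,1)$, one has $y_t\leq y-c$ and hence $1-y_t\gtrsim 1$, so $y_t(1-y_t)\sim y_t$. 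All three order-of-magnitude claims for $F_{\mu_t}$ and its derivatives therefore reduce to showing that on the contours $\omega_t(z)$ stays in a compact region uniformly separated from the three distinguished points $0$, $1$, and $1-y_t$.

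Establishing that separation is the first and main technical step. The upper bound $|\omega_t(z)|\lesssim 1$ follows from analytically continuing the asymptotic $\omega_t(z)=z-(y-y_t)+o(1)$ at $\Im z = M_2$, forced by \eqref{sub1} and \eqref{sub3}, along the bounded contour. The separation of $\omega_t(z)$ from $1-y_t$ uses the standard monotonicity $\Im\omega_t(z)\geq \Im z$ from Proposition \ref{freeaddprop}, which directly gives $|\omega_t(z)-(1-y_t)|\geq \Im z$ of order one on the macroscopic pieces $\mathcal{C}_3,\mathcal{C}_4,\mathcal{C}_5$. On the low-$\Im z$ pieces $\mathcal{C}_1,\mathcal{C}_2,\mathcal{C}_6$ I would combine this with the identity $F_{\mu_t}(\omega_t(z))=-1/m_\boxplus(z)$ and the distance bound $\mathrm{dist}(\mathrm{supp}(\mu_\boxplus)\setminus\{0\},\gamma^0_j)\gtrsim 1$ from Lemma \ref{supportofmu}, which forces $|m_\boxplus(z)|$ to be bounded above and below on those arcs. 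Since $F_{\mu_t}$ vanishes only at $0$ and $1$, two-sided control on $|F_{\mu_t}(\omega_t(z))|=1/|m_\boxplus(z)|$, together with the quadratic relation $v_t^2+(1-2y_t-F_\boxplus(z))v_t-y_t(1-y_t)=0$ for $v_t=\omega_t(z)-(1-y_t)$ (with the branch fixed by \eqref{sub1}), rules out $\omega_t(z)$ approaching either $0$ or $1$. Once $|\omega_t(z)|,|\omega_t(z)-1|,|\omega_t(z)-(1-y_t)|\sim 1$, the magnitude bounds on $F_{\mu_t},F'_{\mu_t},F^{(n)}_{\mu_t}$ drop out of the explicit formulas above.

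For the final $\gtrsim 1/k$ estimate, my plan is to differentiate the subordination system in $z$. Differentiating $F_{\mu_t}(\omega_t(z))=F_\boxplus(z)$ yields $\omega_t'(z)=F_\boxplus'(z)/F'_{\mu_t}(\omega_t(z))$; summing over $t$ and using $\sum_t\omega_t'(z)-1=(k-1)F_\boxplus'(z)$, which comes from \eqref{sub3}, one obtains the clean identity
\begin{equation*}
1-\frac{1}{k-1}\sum_{t=1}^k \frac{1}{F'_{\mu_t}(\omega_t(z))} = -\frac{1}{(k-1)F_\boxplus'(z)}.
\end{equation*}
Thus the desired $\gtrsim 1/k$ bound is equivalent to the upper bound $|F_\boxplus'(z)|\lesssim 1$. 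Writing $F_\boxplus'=m_\boxplus'/m_\boxplus^2$, this reduces to a lower bound on $|m_\boxplus|$ and an upper bound on $|m_\boxplus'|$ along the contour; both follow from Lemma \ref{supportofmu} and the chosen separation of the contour from $\mathrm{supp}(\mu_\boxplus)\setminus\{0\}$, using near $\mathcal{C}_1$ the atom of mass $1-\hat{y}$ at the origin, which provides $|m_\boxplus(z)|\gtrsim 1/|z|$.

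The step I expect to be the main obstacle is the uniform separation of $\omega_t(z)$ on the short arc $\mathcal{C}_1$ near the atom of $\mu_\boxplus$ at the origin: there $m_\boxplus$ is singular and $F_{\mu_t}(\omega_t(z))\to 0$ forces $\omega_t(z)$ toward $\{0,1\}$, and one must use the correct root of the quadratic for $v_t$ to decide which point is approached and to quantitatively separate $\omega_t(z)$ from the other. The generic $|\omega_t|$-large variant at the end of the lemma is then immediate from the same explicit formulas, since for $|\omega_t|$ sufficiently large the denominator $(\omega_t-(1-y_t))^{n+1}$ is itself of order $|\omega_t|^{n+1}$ and all the required order-of-magnitude estimates follow without any reference to the subordination equations.
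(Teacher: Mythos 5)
Your derivation of the second display is correct and is a genuinely different route from the paper's. Differentiating $F_{\mu_t}(\omega_t(z))=F_\boxplus(z)$ and the sum rule \eqref{sub3} indeed gives the exact identity
\begin{align*}
1-\frac{1}{k-1}\sum_{t=1}^{k}\frac{1}{F'_{\mu_t}(\omega_t(z))}=-\frac{1}{(k-1)F'_{\boxplus}(z)},
\end{align*}
so the $\gtrsim 1/k$ bound reduces to $|F'_{\boxplus}(z)|=|m'_{\boxplus}(z)/m^2_{\boxplus}(z)|\lesssim 1$ on the contour, which does follow from Lemma \ref{supportofmu} together with the two-sided control of $|m_\boxplus|$ (the atom at $0$ handling the small arc, the negative real part handling $\mathcal{C}_2$, etc.). This is cleaner than the paper's treatment, which proves the same bound by a case analysis: Taylor expansion around $\omega_t\approx 0$ on $\mathcal{C}_1$, a same-sign argument for $\sum_t\Im F'_{\mu_t}(\omega_t)$ on $\mathcal{C}_2$, and explicit expansions when $|\omega_t|$ is large; your identity also reproduces the paper's limiting values $\frac{y-1}{k-1}$ near the origin and $\frac{-1}{k-1}$ on the large contour, and it does not even require the branch information discussed below, only $F'_{\mu_t}(\omega_t)\neq 0$.

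The gap is in the first group of bounds, and it is exactly the step you flag but do not carry out: locating $\omega_t(z)$ on $\mathcal{C}_1$ and $\mathcal{C}_2$. Near $\mathcal{C}_1$ one has $|F_{\mu_t}(\omega_t)|=1/|m_\boxplus|$ small, so $\omega_t$ must approach one of the two zeros $\{0,1\}$ of $F_{\mu_t}$, and the claimed estimates are \emph{not} branch-independent: if $\omega_t\approx 1$ then $F'_{\mu_t}(\omega_t)\approx 1/y_t$ and $F^{(n)}_{\mu_t}(\omega_t)\sim y_t^{-n}$, which violates $|F'_{\mu_t}|\sim1$ and $|F^{(n)}_{\mu_t}|\sim y_t$ when $y_t$ is small. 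Saying the branch is "fixed by \eqref{sub1}" does not settle this, since \eqref{sub1} constrains $\omega_t$ only at infinity; transporting that information down to $|z|=\epsilon_1$ requires a global argument. This is precisely what the paper supplies: it tracks $\omega_t$ along the negative real axis from $z\to-\infty$ (where $\omega_t\to-\infty$), uses monotonicity and the atom of $\mu_\boxplus$ at $0$ to conclude $\omega_t(x+\mathrm{i}0)\to 0^-$ as $x\to 0^-$, and then rules out a jump of $\omega_t$ toward $1$ while $z$ traverses the small arc; the same real-axis information, propagated by continuity, yields the quantitative bound $1-y_t-\Re\omega_t(z)\geq\frac12(1-y_t)$ on $\mathcal{C}_2$. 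Relatedly, your stated reduction to "separation from $0$, $1$, $1-y_t$" is not sufficient for the lower bound $|F'_{\mu_t}(\omega_t)|\gtrsim 1$, because $F'_{\mu_t}$ also vanishes at $1-y_t\pm\mathrm{i}\sqrt{y_t(1-y_t)}$; one needs the finer location statement (on $\mathcal{C}_1$, $\omega_t\approx0$; on $\mathcal{C}_2$, $\Im\omega_t$ small with $\Re\omega_t$ well to the left of $1-y_t$, so that $\Re(1-y_t-\omega_t)^2>0$ and $\Re F'_{\mu_t}\geq 1-o(1)$), which again comes out of the same continuity analysis. Finally, the boundedness of $|\omega_t(z)|$ should not be argued by "analytically continuing the asymptotic at $\Im z=M_2$"; the clean statement is $m_{\mu_t}(\omega_t(z))=m_\boxplus(z)$ with $|m_\boxplus(z)|\gtrsim 1$ on the contour, which forces $\mathrm{dist}(\omega_t(z),\{0,1\})\lesssim 1$.
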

The proofs of Lemmas \ref{Lemma error estimates} and \ref{Flbound} are given in Section \ref{Sec B} of Appendix. 

With the above two lemmas at hand, we can start the proof for $z$ with large imaginary part. For notational simplicity, we write $G := G(z)$,  $m_N := m_N(z) $, $\omega_t^c := \omega_t^c(z)$, $\gamma_{ti} := \gamma_{ti}(z)$, and $\tilde{\gamma}_{ti} := \tilde{\gamma}_{ti}(z)$, $i \in [\![ 3]\!]$ for short, whenever there is no confusion.

\noindent $\bullet${\it Sufficiently large $\Im z$.} Notice that by Lemma \ref{Imomegatc}, $\Im \omega_t^c \to \infty$ as $\Im z \to \infty$. Therefore, when $\Im z$ is sufficiently large,   we have by Lemma \ref{Flbound}, for any $t \in [\![ k]\!]$, with high probability,
\begin{align}
	&|F'_{\mu_t}(\omega_t^c)| \sim 1,\qquad |F^{(n)}_{\mu_t}(\omega_t^c)| \sim y_t, \quad n \ge 2, \notag\\
	& \Big|1 - \frac{1}{(k-1)}\sum_{t=1}^{k}\frac{1}{F'_{\mu_t}(\omega_t^c)}\Big| \gtrsim \frac{1}{k}.\label{Fomegatc1}
\end{align}
As $\Im z \to \infty$, by Lemma \ref{Imomegatc}, we have with high probability
	\begin{align}
		\frac{|\theta_t|}{|F^2_{\mu_t}(\omega_t^c)|} = \left| \frac{m_N(z)(y_t-1+\omega_t^c(z))^2}{\omega_t^c(z)(1-\omega_t^c(z))}\right|  \ge \frac{1}{2}|m_N(z)| \gtrsim \frac{1}{\Im z}. \label{6010}
	\end{align}

Therefore, by \eqref{Fomegatc1} and (\ref{6010}), we have with high probability
\begin{align}
	\left|\frac{F^2_{\mu_t}(\omega_t^c)}{F'_{\mu_t}(\omega_t^c)}\cdot\frac{1-y_t}{\theta_t}\right| \lesssim \left|\frac{F^2_{\mu_t}(\omega_t^c)}{\theta_t}\right| \lesssim \Im z, \label{6020}
\end{align}
when $\Im z$ is sufficiently large.
Notice that by resolvent identity,
	\begin{align}
		 (\mathrm{D}\Phi)^{-1}(\omega_1^c,\cdots,\omega_k^c) = (\mathcal{D}(\omega^c)-\mathds{1}\mathds{1}')^{-1} = \mathcal{D}^{-1}(\omega^c) + \frac{\mathcal{D}^{-1}(\omega^c)\mathds{1}\mathds{1}'\mathcal{D}^{-1}(\omega^c)}{1-\mathds{1}\mathcal{D}^{-1}(\omega^c)\mathds{1}'}.  \label{6001}
	\end{align}
	Here $\mathds{1}$ is the all-one vector, and  $\mathcal{D}(\omega^c) := {\rm diag}(\mathcal{D}_t(\omega_t^c))_{t=1}^k$ with $\mathcal{D}_t(\omega_t^c) := (k-1)F_{\mu_t}'(\omega^c_t(z))$. We consider the $\ell^\infty$-norm of $(\mathrm{D}\Phi)^{-1}\cdot\Phi:= (\mathrm{D}\Phi)^{-1}\cdot\Phi(\omega_1^c,\cdots,\omega_k^c)$. Then 
\begin{align}
	\left\|(\mathrm{D}\Phi)^{-1}\cdot\Phi\right\|_{\infty} \le& \|\mathcal{D}^{-1}(\omega^c)\cdot\Phi \|_{\infty}+ \left\|\frac{\mathcal{D}^{-1}(\omega^c)\mathds{1}\mathds{1}'\mathcal{D}^{-1}(\omega^c)}{1-\mathds{1}'\mathcal{D}^{-1}(\omega^c)\mathds{1}}\cdot \Phi\right\|_{\infty} \notag\\
	\leq & \max_t\left|\mathcal{D}_t^{-1}(\omega_t^c)\tilde{\gamma}_{t1} \right| + \max_t\left|\mathcal{D}_t^{-1}(\omega_t^c)\tilde{\gamma}_{t2} \right|+\max_t\left|\mathcal{D}_t^{-1}(\omega_t^c)\tilde{\gamma}_{t3}\right| \notag\\
	&+ \max_s \left| \frac{\mathcal{D}_s^{-1}(\omega_s^c)}{1-\mathds{1}'\mathcal{D}^{-1}(\omega^c)\mathds{1}}\sum_{t=1}^k \mathcal{D}_t^{-1}(\omega_t^c)\tilde{\gamma}_{t1}\right|+ \max_s \left| \frac{\mathcal{D}_s^{-1}(\omega_s^c)}{1-\mathds{1}'\mathcal{D}^{-1}(\omega^c)\mathds{1}}\sum_{t=1}^k \mathcal{D}_t^{-1}(\omega_t^c)\tilde{\gamma}_{t2}\right| \notag\\
	&+ \max_s \left| \frac{\mathcal{D}_s^{-1}(\omega_s^c)}{1-\mathds{1}'\mathcal{D}^{-1}(\omega^c)\mathds{1}}\sum_{t=1}^k \mathcal{D}_t^{-1}(\omega_t^c)\tilde{\gamma}_{t3}\right| + O_{\prec}\left(\max_s\left|\frac{\tilde{\gamma}^2_{s1} + \tilde{\gamma}^2_{s2} +\tilde{\gamma}^2_{s3}}{(k-1)\mathcal{D}_s(\omega_s^c)F_{\mu_s}(\omega_s^c)} \right|\right) \notag\\
	&+O_{\prec}\left(\max_s\left|\frac{\mathcal{D}_s^{-1}(\omega_s^c)}{1-\mathds{1}'\mathcal{D}^{-1}(\omega^c)\mathds{1}}\sum_{t=1}^k\frac{\tilde{\gamma}^2_{t1} + \tilde{\gamma}^2_{t2} +\tilde{\gamma}^2_{t3}}{(k-1)\mathcal{D}_t(\omega_t^c)F_{\mu_t}(\omega_t^c)} \right|\right), \label{0617101}
\end{align}
where the last step follows from (\ref{062001}).
By (\ref{gamma1})-(\ref{0617100}), (\ref{Fomegatc1}) and Lemma \ref{Lemma error estimates}, one can crudely bound the following terms
\begin{align}
	\max_s\left|\frac{\tilde{\gamma}^2_{s1} + \tilde{\gamma}^2_{s2} +\tilde{\gamma}^2_{s3}}{(k-1)\mathcal{D}_s(\omega_s^c)F_{\mu_s}(\omega_s^c)} \right|  \prec \frac{p_{\max}}{N^3}, \qquad \max_s\left|\frac{\mathcal{D}_s^{-1}(\omega_s^c)}{1-\mathds{1}'\mathcal{D}^{-1}(\omega^c)\mathds{1}}\sum_{t=1}^k\frac{\tilde{\gamma}^2_{t1} + \tilde{\gamma}^2_{t2} +\tilde{\gamma}^2_{t3}}{(k-1)\mathcal{D}_t(\omega_t^c)F_{\mu_t}(\omega_t^c)} \right| \prec \frac{p_{\max}}{N^2}. \label{6050}
\end{align}
Here we also used the fact $\mathcal{D}_t(\omega_t^c)\sim  k$ with high probability, in light of (\ref{Fomegatc1}) and the definition of $\mathcal{D}_t(\omega_t^c)$ in (\ref{6001}). 
Next, we consider the other terms in (\ref{0617101}). By Lemma \ref{Lemma error estimates} together with (\ref{6020}), we have
\begin{align}
	\mathcal{D}_t^{-1}(\omega_t^c)\tilde{\gamma}_{t1} = \frac{F^2_{\mu_t}(\omega_t^c)}{F'_{\mu_t}(\omega_t^c)}\cdot\frac{1-y_t}{\theta_t}\gamma_{1t} = O_{\prec}\left(\Im z\sqrt{\frac{p_t}{N^3}} \right).
\label{6051}\end{align}
Similarly, we can obtain
\begin{align}
		\mathcal{D}_t^{-1}(\omega_t^c)\tilde{\gamma}_{t2} = O_{\prec}\left(\Im z\sqrt{\frac{p_t}{N^3}} \right), \quad \mathcal{D}_t^{-1}(\omega_t^c)\tilde{\gamma}_{t3} = O_{\prec}\left(\Im z\sqrt{\frac{p_t}{N^3}}\right).
\label{6052}
\end{align}
Next, we turn to estimate the fourth to the sixth terms in (\ref{0617101}). Using \eqref{Fomegatc1}, we know that
\begin{align*}
	\left| \frac{\mathcal{D}_s^{-1}(\omega_s^c)}{1-\mathds{1}'\mathcal{D}^{-1}(\omega^c)\mathds{1}}\sum_{t=1}^k \mathcal{D}_t^{-1}(\omega_t^c)\tilde{\gamma}_{t1} \right| \lesssim\left| \sum_{t=1}^{k} \frac{F^2_{\mu_t}(\omega_t^c)}{F'_{\mu_t}(\omega_t^c)}\cdot\frac{1-y_t}{\theta_t}\gamma_{t1}\right|
\end{align*}
Then, we set in (\ref{Second error})
\begin{align*}
	\mathcal{W}_t := \frac{F^2_{\mu_t}(\omega_t^c)}{F'_{\mu_t}(\omega_t^c)}\cdot\frac{1-y_t}{\theta_t}.
\end{align*}
 Hence, using (\ref{Second error}) in Lemma \ref{Lemma error estimates}, we have
\begin{align}
	\Big| \frac{\mathcal{D}_s^{-1}(\omega_s^c)}{1-\mathds{1}'\mathcal{D}^{-1}(\omega^c)\mathds{1}}\sum_{t=1}^k \mathcal{D}_t^{-1}(\omega_t^c)\tilde{\gamma}_{t1} \Big| \prec \frac{1}{N}.
\label{6053}\end{align}
Similarly, we can obtain
\begin{align}
	\Big| \frac{\mathcal{D}_s^{-1}(\omega_s^c)}{1-\mathds{1}'\mathcal{D}^{-1}(\omega^c)\mathds{1}}\sum_{t=1}^k \mathcal{D}_t^{-1}(\omega_t^c)\tilde{\gamma}_{t2} \Big| \prec \frac{1}{N},\qquad \Big| \frac{\mathcal{D}_s^{-1}(\omega_s^c)}{1-\mathds{1}'\mathcal{D}^{-1}(\omega^c)\mathds{1}}\sum_{t=1}^k \mathcal{D}_t^{-1}(\omega_t^c)\tilde{\gamma}_{t3} \Big| \prec \frac{1}{N}.
\label{6054}\end{align}
As a result, by combining (\ref{6050})-(\ref{6054}), we get
\begin{align}
	\left\|(\mathrm{D}\Phi)^{-1}\cdot\Phi\right\|_\infty \prec \frac{1}{N}. \label{071201}
\end{align}
Further, by (\ref{6020}), we have 
\begin{align}
	\| (\mathrm{D}\Phi)^{-1}\cdot \mathrm{D}^{2}\Phi \|_{(\infty,\infty)} \le& \|\mathcal{D}^{-1}(\omega^c) \cdot \mathrm{D}^{2}\Phi \|_{(\infty,\infty)} + \left\|\frac{\mathcal{D}^{-1}(\omega^c)\mathds{1}\mathds{1}'\mathcal{D}^{-1}(\omega^c)}{1-\mathds{1}'\mathcal{D}^{-1}(\omega^c)\mathds{1}} \cdot \mathrm{D}^{2}\Phi \right\|_{(\infty,\infty)}\notag\\
	=& \max_s \left| \frac{F^{(2)}_{\mu_s}(\omega_s^c)}{F'_{\mu_s}(\omega_s^c)} \right|+ \max_s\sum_{t=1}^{k}\left|\frac{(k-1)F^{(2)}_{\mu_t}(\omega_t^c)}{(k-1)^2F'_{\mu_s}(\omega_s^c)F'_{\mu_t}(\omega_t^c)(1-\mathds{1}'\mathcal{D}^{-1}(\omega^c)\mathds{1})}\right| \notag\\
	\lesssim & \max_s y_s + \sum_{t=1}^k y_t \lesssim 1. \label{9010}
\end{align}
 Therefore, we have
	\begin{align}
		s_0 :=&\| (\mathrm{D}\Phi)^{-1}\cdot \mathrm{D}^{2}\Phi \|_{(\infty,\infty)}  \left\|(\mathrm{D}\Phi)^{-1}\cdot\Phi\right\|_{\infty}= O_{\prec}\left(  {\frac{1}{N}}	\right) < \frac{1}{2} \label{071210}
	\end{align}
	with high probability when $\Im z$ is sufficiently large . 
Recall the Newton-Kantorvich theorem (Theorem \ref{NewtonKantorvich} in Appendix) and set $b \equiv \left\|(\mathrm{D}\Phi)^{-1}\cdot\Phi\right\|_\infty $ and $L \equiv \| (\mathrm{D}\Phi)^{-1}\cdot \mathrm{D}^{2}\Phi \|_{(\infty,\infty)}$. Together with (\ref{071201}), (\ref{9010}) and (\ref{071210}),  we have that there is for every such $z$ a  unique collection of $\hat{\omega}_t(z)$'s s.t. 
	\begin{align}
	\Phi_{\mu_1,\cdots,\mu_k} (\hat{\omega}_1(z),\cdots,\hat{\omega}_k(z),z) =0 \label{9081}
	\end{align} 
	with
	\begin{align}\label{Gap4}
		\left|{\omega}^c_{t}(z)-\hat{\omega}_{t}(z)\right| \leq &\frac{1 - \sqrt{1 - 2s_0}}{s_0} \left\|(\mathrm{D}\Phi)^{-1}\cdot\Phi\right\|_{\infty} = O_{\prec}\left(\frac{1}{N} \right).
	\end{align}
	where we used the conclusion in the Newton-Kantorvich theorem that $x_{*} \in B[x_0, t_{*}]$. 
	Finally, using Lemma \ref{Imomegatc},  we note that $\Im \hat{\omega}_t(z) = \Im \hat{\omega}_t(z) - \Im{\omega}^c_t(z) + \Im {\omega}^c_t(z) \ge \Im z $  with high probability, when $\Im z$ is sufficiently large. It further follows that 
	$\Gamma_{\mu_1,\cdots,\mu_k} (\hat{\omega}_1(z),\cdots,\hat{\omega}_k(z)) \neq 0$ for all $z \in \mathbb{C}^{+}$ when $\Im z$ is sufficiently large. Thus $\hat{\omega}_t(z)$ is analytic when $\Im z$ is sufficiently large,  since $F_{\mu_t}(\hat{\omega}(z))$ is. By the definition of $F_{\mu_t}$, we have
	\begin{align}
		F_{\mu_t}(\hat{\omega}_t(z)) = \frac{-1}{m_{\mu_t}(\hat{\omega}_t(z))}= \hat{\omega}_t(z) -y_t+ \frac{y_t(1-y_t)}{1-y_t-\hat{\omega}_t(z)}, \quad t \in [\![ k]\!]. \label{9080}
	\end{align}
	Plugging (\ref{9080}) into (\ref{9081}) we have
$
		((k-1)\mathrm{I} - \mathds{1}\mathds{1}')\cdot \hat{x} = \hat{b},
$
	where 
	\begin{align*}
	&\hat{x} := \bigg(\frac{\hat{\omega}_1(z) }{ z}, \cdots,  \frac{\hat{\omega}_k (z)}{  z}\bigg)', \notag\\
	&\hat{b} := \bigg(-1 +\frac{(k-1)y_1}{z}- \frac{(k-1)y_1(1-y_1)}{(1-y_1-\hat{\omega}_1(z)) z}, \cdots, -1 +\frac{(k-1)y_k}{z}-\frac{(k-1)y_k(1-y_k)}{(1-y_k-\hat{\omega}_k(z))z}\bigg)'.
	\end{align*}
	 Solving the above linear system by inverting $((k-1)\mathrm{I} - \mathds{1}\mathds{1}')$, we have
$
		\hat{x} = (k-1)^{-1}(\mathrm{I} - \mathds{1}\mathds{1}')\cdot \hat{b},
$
	which gives
	\begin{align*}
		\frac{\hat{\omega}_t(z)}{z} =& 1 +\frac{y_t}{z} +\sum_{i=1}^{k}\frac{y_i}{z} -  \frac{y_t(1-y_t)}{(1-y_t-\hat{\omega}_t(z))z} - \sum_{t=1}^k\frac{y_t(1-y_t)}{(1-y_t-\hat{\omega}_t(z))z} = 1 + O(|\Im z|^{-1}),
	\end{align*}
	as $\Im z \to \infty$. Therefore, we can obtain
	\begin{align} 
		\lim_{ \eta \to \infty}\frac{\hat{\omega}_t(\mathrm{i}\eta)}{\mathrm{i}\eta} = 1.
	\end{align}
	Thus by the uniqueness claim in Proposition \ref{freeaddprop}, $\hat{\omega}_t(z)$ agrees with $\omega_t(z)$ when $\Im z$ is sufficiently large. Therefore, \eqref{Gap4} implies that
	\begin{align}\label{Gap5}
		\left|{\omega}^c_{t}(z)-{\omega}_{t}(z)\right| \prec \frac{1}{N},
	\end{align}
	for  all $z \in \mathbb{C}^{+}$ with $\Im z$ being sufficiently large. Subtracting \eqref{Appsub2} from \eqref{sub3}, we have
	\begin{align}\label{DPer2}
		\sum_{t=1}^{k}(\omega_t^c(z) - \omega_t(z)) = -\frac{k-1}{m_N(z)} + \frac{k-1}{m_\boxplus(z)} = (k-1)\frac{m_N(z)-m_\boxplus(z)}{m_N(z)m_\boxplus(z)}.
	\end{align}
	As a result,
	\begin{align}\label{initial2}
		\left|m_N(z) - m_\boxplus(z) \right| = O_{\prec}\left(\frac{1}{k-1} \sum_{t=1}^{k}|\omega_t^c(z) - \omega_t(z)| \right) = O_{\prec}\left( \frac{1}{N} \right),
	\end{align}
	for  all $z \in \mathbb{C}^{+}$ with  $\Im z$ being sufficiently large.

Next, taking \eqref{initial2} as an input, we can use the continuity argument to obtain the bound for each fixed $z \in (\bar{\gamma}^0_1)^{+}\cup(\bar{\gamma}^0_2)^{+}$.

\noindent
$\bullet$ {\it Any fixed $z\in (\bar{\gamma}^0_1)^{+}\cup(\bar{\gamma}^0_2)^{+}$.} 
Let $z_0\in (\bar{\gamma}^0_1)^{+}\cup(\bar{\gamma}^0_2)^{+}$ has sufficiently large imaginary part so that (\ref{Gap5}) and (\ref{initial2}) hold at $z_0$. Then, let $z\in (\bar{\gamma}^0_1)^{+}\cup(\bar{\gamma}^0_2)^{+}$ be any fixed point. We can find a sequence of number $z_i\in (\bar{\gamma}^0_1)^{+}\cup(\bar{\gamma}^0_2)^{+}, i=1,\ldots, N^2$, so that $z=z_{N^2}$ and $|z_i-z_{i-1}|=O(N^{-2})$ for all $i=1,\ldots, N^2$. 
Our strategy is to extend the estimates in (\ref{Gap5}) and (\ref{initial2}) from $z_0$ to $z$ via the intermediate points $z_i$'s step by step, using a continuity argument. In the sequence, we will only show the detailed argument for the first step from $z_0$ to $z_1$. The remaining steps are the same. 

We first consider the bound of $\omega'_t(z)$. Similar as we did in (\ref{9010}), with $\omega_t^c$ replaced by $\omega_t$, together with Lemma \ref{Flbound}, we can get
\begin{align}
		\| (\mathrm{D}\Phi)^{-1}\|_{(\infty,\infty)} \lesssim 1. \label{90101}
\end{align}
Differentiating the equation (\ref{Phi2}) w.r.t. $z$, we get
\begin{align}
	\mathrm{D}\Phi \cdot \omega'(z) = \mathds{1}, \label{8007}
\end{align}
where $\omega'(z) := (\omega_1'(z),\cdots,\omega_k'(z))$.  Together with (\ref{90101}), we get
\begin{align}
	|\omega_t'(z)| \lesssim 1, t \in [\![k ]\!] \label{080711}
\end{align}
by inverting (\ref{8007}). 
Recall the definition in (\ref{Appsub1}), we have
\begin{align*}
	(\omega_t^c(z))'  = \frac{\tr G(z)\tr P_tG^2(z) - (\tr P_tG(z) -1) \tr G^2(z)}{(\tr G(z))^2}.
\end{align*}
By Lemma \ref{Gbound}, we can obtain
$
	|(\omega_t^c(z))'| \lesssim 1
$
with high probability.

Therefore, from  (\ref{Gap5}) for $z_0$ and  the continuity of $\omega_t^c$ and $\omega_t$, we have
\begin{align}
	\left|{\omega}^c_{t}(z_1)-{\omega}_{t}(z_1)\right| \prec \frac{1}{N} + \frac{1}{N^2}. \label{7001}
\end{align}
More precisely, by the definition of the stochastic domination, (\ref{7001}) implies that  for any large $D > 0$ and small $\epsilon > 0$, there exists an high probability event $\mathcal{E}_1 \equiv \mathcal{E}(z_1,\epsilon,D)$, satisfying $\mathbb{P}(\mathcal{E}_1^c) < N^{-D}$, such that on the event $\mathcal{E}_1$, we have 
\begin{align}
		|\omega_t^c(z_1) - \omega_t(z_1)| \le N^{\epsilon}(N^{-1} + N^{-2}) \le 2N^{-1+\epsilon}, \label{7001E1}
	\end{align}
for any small $\epsilon > 0$.

From (\ref{062001}) we recall the perturbed system at $z_1$ 
	\begin{align*}
	&	(k-1)F_{\mu_t}(\omega_t^c(z_1)) - \omega_1^c(z_1)  - \cdots -\omega_k^c(z_1) + z_1 \notag\\
		&= \tilde{\gamma}_{t1}(z_1) + \tilde{\gamma}_{t2}(z_1) + \tilde{\gamma}_{t3}(z_1)+O_{\prec}\left(\frac{\tilde{\gamma}^2_{t1}(z_1) + \tilde{\gamma}^2_{t2}(z_1) +\tilde{\gamma}^2_{t3}(z_1)}{(k-1)F_{\mu_t}(\omega_t^c(z_1))} \right),
	\end{align*}
	with the definitions in (\ref{0617100}). 
Using (\ref{7001}), Lemma \ref{Lemma error estimates} and Lemma \ref{Flbound} , one can easily check that 
\begin{align*}
	\frac{\tilde{\gamma}^2_{t1}(z_1) + \tilde{\gamma}^2_{t2}(z_1) +\tilde{\gamma}^2_{t3}(z_1)}{(k-1)F_{\mu_t}(\omega_t^c(z_1))} \prec \frac{p_t}{N^2}.
\end{align*}
Therefore, let $\Omega_t(z_1):= \omega_t^c(z_1) - \omega_t(z_1)$, and apply expansion $F_{\mu_t}(\omega_t^c(z_1))$ around $\omega_t(z_1)$ with (\ref{Appsub2}).  We get
	\begin{align}
		&(k-1)F'_{\mu_t}(\omega_t(z_1))\Omega_1(z_1) - \Omega_1(z_1) - \cdots -\Omega_k(z_1) \notag \\
		&= \tilde{\gamma}_{t1}(z_1) + \tilde{\gamma}_{t2}(z_1) + \tilde{\gamma}_{t3} (z_1)- (k-1)\sum_{n\ge 2}\frac{1}{n!}F_{\mu_t}^{(n)}(\omega_t(z_1))\Omega_t^n(z_1) +O_{\prec}\left(\frac{p_t}{N^2} \right). \label{062010}
	\end{align}
	Denoted by
$
		\tilde{\gamma}_a(z_1) := (\tilde{\gamma}_{1a}(z_1),\cdots,\tilde{\gamma}_{ka}(z_1))',
$ for $a=1,2,3$. 
Writing the system (\ref{062010}) (in $t\in [\![ k ]\!]$) in terms of $\mathrm{D}\Phi$ in (\ref{DPhi}) and inverting $\mathrm{D}\Phi$,  we have
	\begin{align}
		\|\Omega(z_1)\|_{\infty} \leq& \|(\mathrm{D}\Phi)^{-1} \cdot \left(\tilde{\gamma}_1(z_1) + \tilde{\gamma}_2(z_1) + \tilde{\gamma}_3(z_1)\right)\|_{\infty} \notag\\
		&+\sum_{n\ge 2}\left\| \frac{k-1}{n!} (\mathrm{D}\Phi)^{-1}\cdot \Omega_F^n(z_1)\right\|_{\infty} + O_{\prec}\left(\| (\mathrm{D}\Phi)^{-1}\|_{(\infty,\infty)}\cdot \frac{p_{\max}}{N^2} \right), \label{9020}
	\end{align}
	where 
	$$\Omega_F^n(z_1) := \left( F_{\mu_1}^{(n)}(\omega_1(z_1))\Omega_1^n(z_1), \cdots,  F_{\mu_k}^{(n)}(\omega_k(z_1))\Omega_k^n(z_1)\right)'.$$
	Using (\ref{90101}), (\ref{9020}) can be written as
	\begin{align}
		\|\Omega(z_1)\|_{\infty} \leq& \|(\mathrm{D}\Phi)^{-1} \cdot \left(\tilde{\gamma}_1(z_1) + \tilde{\gamma}_2(z_1) + \tilde{\gamma}_3(z_1)\right)\|_{\infty} \notag\\
		&+\sum_{n\ge 2}\left\| \frac{k-1}{n!} (\mathrm{D}\Phi)^{-1}\cdot \Omega_F^n(z_1)\right\|_{\infty}+ O_{\prec}\left(  \frac{p_{\max}}{N^2} \right) \label{9030}
	\end{align}
	
First, we consider the high order terms in (\ref{9030}). Recall (\ref{6001}) and let $\mathcal{D}(\omega) := \mathcal{D}(\omega_1,\cdots,\omega_k)$, We have
	\begin{align*}
		\frac{k-1}{n!} (\mathrm{D}\Phi)^{-1}\cdot \Omega_F^n(z_1) = \frac{k-1}{n!}\mathcal{D}^{-1}(\omega)\cdot \Omega_F^n(z_1)  + \frac{k-1}{n!}\frac{\mathcal{D}^{-1}(\omega)\mathds{1}\mathds{1}'\mathcal{D}^{-1}(\omega)}{1-\mathds{1}'\mathcal{D}^{-1}(\omega)\mathds{1}}\cdot \Omega_F^n(z_1).
	\end{align*}
	Taking $\ell^\infty$-norm on both sides, by triangular inequality, we have
	\begin{align*}
		\left\| \frac{k-1}{n!} (\mathrm{D}\Phi)^{-1}\cdot \Omega_F^n(z_1)\right\|_{\infty}  \le \left\| \frac{k-1}{n!}\mathcal{D}^{-1}(\omega)\cdot \Omega_F^n(z_1) \right\|_{\infty}  + \left\|\frac{k-1}{n!}\frac{\mathcal{D}^{-1}(\omega)\mathds{1}\mathds{1}'\mathcal{D}^{-1}(\omega)}{1-\mathds{1}'\mathcal{D}^{-1}(\omega)\mathds{1}}\cdot \Omega_F^n(z_1)\right\|_{\infty} .
	\end{align*}
Notice that  for $n \ge 2$
	\begin{align}
		\left\| \frac{k-1}{n!}\mathcal{D}^{-1}(\omega)\cdot \Omega_F^n(z_1) \right\|_{\infty} =& \left\|\frac{1}{n!}\left( \frac{F_{\mu_1}^{(n)}(\omega_1(z_1))}{F_{\mu_1}'(\omega_1(z_1))}\Omega_1^n(z_1), \cdots,  \frac{F_{\mu_k}^{(n)}(\omega_k(z_1))}{F_{\mu_k}'(\omega_k(z_1))}\Omega_k^n(z_1) \right)'\right\|_{\infty}  \notag\\
		\lesssim& \max_{1\le t \le k}\frac{y_t}{|F_{\mu_t}'(\omega_t(z_1))|} |\Omega_t(z_1)|^n \lesssim \max_{1 \le t \le k}y_t|\Omega_t(z_1)|^n, \label{9001}
	\end{align}
 	and
	\begin{align}
		&\left\|\frac{k-1}{n!}\frac{\mathcal{D}^{-1}(\omega)\mathds{1}\mathds{1}'\mathcal{D}^{-1}(\omega)}{1-\mathds{1}'\mathcal{D}^{-1}(\omega)\mathds{1}}\cdot \Omega_F^n(z_1)\right\|_{\infty} \notag\\
		 =& \left\|\frac{k-1}{n!(1-\mathds{1}'\mathcal{D}^{-1}(\omega)\mathds{1})}\left( \sum_{t=1}^{k}\frac{ F_{\mu_t}^{(n)}(\omega_t(z_1))\Omega_t^n(z_1)}{\mathcal{D}_1(\omega_1)\mathcal{D}_{t}(\omega_i)},\cdots, \sum_{t=1}^{k} \frac{F_{\mu_t}^{(n)}(\omega_t(z_1))\Omega_t^n(z_1)}{\mathcal{D}_k(\omega_k)\mathcal{D}_{t}(\omega_t)}\right)  \right\|_{\infty}  \notag\\
		 \lesssim &\frac{1}{(k-1)(1-\mathds{1}'\mathcal{D}^{-1}(\omega)\mathds{1})}\sum_{t=1}^{k}\frac{y_t|\Omega_t(z_1)|^n}{|F_{\mu_t}'(\omega_t(z_1))|}\cdot\max_{1\le s\le k} \frac{1}{|F_{\mu_s}'(\omega_s(z_1))|}\lesssim \max_{1 \le t \le k}|\Omega_t(z_1)|^n,\label{9002}
	\end{align}
	where we used Lemma \ref{Flbound}.
	Plugging (\ref{9001}) and (\ref{9002}) into (\ref{9030}), we have
	\begin{align*}
		\|\Omega(z_1) \|_{\infty} \lesssim \| (\mathrm{D}\Phi)^{-1} \cdot  \left(\tilde{\gamma}_1(z_1) + \tilde{\gamma}_2(z_1) + \tilde{\gamma}_3(z_1)\right)  \|_{\infty} + \sum_{n\ge 2}\|\Omega(z_1)\|^n_{\infty} + O_{\prec}\left(  \frac{p_{\max}}{N^2} \right).
	\end{align*}
	Which means that there exists an high probability event $\mathcal{E}_2 = \mathcal{E}_2(z_1,\epsilon,D)$, satisfying $\mathbb{P}(\mathcal{E}_2^c) < N^{-D}$, such that on the event $\mathcal{E}_1\cap\mathcal{E}_2$,
	\begin{align*}
		\|\Omega(z_1) \|_{\infty} \lesssim \| (\mathrm{D}\Phi)^{-1} \cdot  \left(\tilde{\gamma}_1(z_1) + \tilde{\gamma}_2(z_1) + \tilde{\gamma}_3(z_1)\right)  \|_{\infty} + \sum_{n\ge 2}\|\Omega(z_1)\|^n_{\infty} + O\left(  \frac{p_{\max}}{N^{2-\epsilon}} \right).
	\end{align*}
	for any small $\epsilon > 0$. Since $\|\Omega(z_1) \|_{\infty}  \le 2N^{-1+\epsilon}$ on $\mathcal{E}_1$ in light of (\ref{7001E1}), we can absorb the quadratic term into the LHS, which gives 
	\begin{align}
		\|\Omega(z_1) \|_{\infty}  \lesssim &\| (\mathrm{D}\Phi)^{-1} \cdot  \left(\tilde{\gamma}_1(z_1) + \tilde{\gamma}_2(z_1) + \tilde{\gamma}_3(z_1)\right) \|_{\infty} + O\left(  \frac{p_{\max}}{N^{2-\epsilon}} \right) \notag\\
		\le& \| (\mathrm{D}\Phi)^{-1} \cdot  \tilde{\gamma}_1(z_1) \|_{\infty}   + \| (\mathrm{D}\Phi)^{-1} \cdot  \tilde{\gamma}_2(z_1)  \|_{\infty}  +\| (\mathrm{D}\Phi)^{-1} \cdot \tilde{\gamma}_3(z_1) \|_{\infty} + O\left(  \frac{p_{\max}}{N^{2-\epsilon}} \right) ,  \label{162019}
	\end{align}
	on the event $\mathcal{E}_1\cap\mathcal{E}_2$. 
	For the first term in the RHS, we have
	\begin{align}
		 \| (\mathrm{D}\Phi)^{-1} \cdot  \tilde{\gamma}_1(z_1) \|_{\infty}   \le& \|\mathcal{D}^{-1}(\omega)\cdot  \tilde{\gamma}_1(z_1) \| _{\infty} + \left\|\frac{\mathcal{D}^{-1}(\omega)\mathds{1}\mathds{1}'\mathcal{D}^{-1}(\omega)}{1-\mathds{1}'\mathcal{D}^{-1}(\omega)\mathds{1}}\cdot \tilde{\gamma}_1(z_1)\right\|_{\infty} \notag\\
		 =&\max_s \Big|\mathcal{D}_i^{-1}(\omega_s)\tilde{\gamma}_{s1} \Big| + \max_s \Big| \frac{\mathcal{D}_s^{-1}(\omega_s)}{1-\mathds{1}'\mathcal{D}^{-1}(\omega)\mathds{1}}\sum_{t=1}^k \mathcal{D}_t^{-1}(\omega_t)\tilde{\gamma}_{t1}(z_1)\Big|. \label{062018}
	\end{align}
For each $t \in [\![ k]\!]$, 
\begin{align}
	\mathcal{D}_t^{-1}(\omega_t)\tilde{\gamma}_{t1}(z_1) = \frac{F^2_{\mu_t}(\omega_t^c(z_1))}{F'_{\mu_t}(\omega_t(z_1))}\cdot\frac{1-y_t}{\theta_t}\gamma_{t1}(z_1). \label{062017}
\end{align}
By the continuity of function $F_{\mu_t}$, we have on the event $\mathcal{E}_1\cap\mathcal{E}_2$
\begin{align}
	\Big|\frac{F^2_{\mu_t}(\omega_t^c(z_1))}{F'_{\mu_t}(\omega_t(z_1))}\cdot\frac{1-y_t}{\theta_t} \Big| \lesssim \Big|\frac{F^2_{\mu_t}(\omega_t^c(z_1))}{\theta_t} \Big| = \Big| \frac{F^2_{\mu_t}(\omega_t(z_1))}{m_{\boxplus}(z_1)\omega_t(z_1)(1-\omega_t(z_1))}  \Big| + O\left( \frac{1}{N^{1-\epsilon}}\right),  \label{062015}
\end{align}
where the last equality comes from (\ref{7001}). 
Further  note
\begin{align}
\Big| \frac{F^2_{\mu_t}(\omega_t(z_1))}{m_{\boxplus}(z_1)\omega_t(z_1)(1-\omega_t(z_1))}  \Big| \lesssim 1. \label{062016}
\end{align}

Combining (\ref{062017}), (\ref{062015}), (\ref{062016}), (\ref{gamma1}) 
with  Lemma \ref{Lemma error estimates},  we can obtain
$
	\left| \mathcal{D}_t^{-1}\tilde{\gamma}_{t1} \right| \prec  \sqrt{\frac{p_t}{N^3} } .
$
This completes the estimate for the first term in the RHS of (\ref{062018}). For the second term in (\ref{062018}), we have
\begin{align*}
	\bigg| \frac{\mathcal{D}_s^{-1}(\omega_s)}{1-\mathds{1}'\mathcal{D}^{-1}(\omega)\mathds{1}}\sum_{t=1}^k \mathcal{D}_t^{-1}(\omega_t)\tilde{\gamma}_{t1}(z_1) \bigg| \lesssim\bigg| \sum_{t=1}^{k} \frac{F^2_{\mu_t}(\omega_t^c(z_1))}{F'_{\mu_t}(\omega_t^c(z_1))}\cdot\frac{1-y_t}{\theta_t}\gamma_{t1}(z_1)\bigg| \prec \frac{1}{N} ,
\end{align*}
where in the first step we used Lemma \ref{Flbound}, and in the last step we used Lemma \ref{Lemma error estimates}.

As a result,
$
 \| (\mathrm{D}\Phi)^{-1} \cdot  \tilde{\gamma}_1(z_1) \|_{\infty} \prec N^{-1}. 
$
Similarly, we can obtain,
$
	 \| (\mathrm{D}\Phi)^{-1} \cdot  \tilde{\gamma}_2(z_1) \|_{\infty}  \prec N^{-1}$, $\| (\mathrm{D}\Phi)^{-1} \cdot  \tilde{\gamma}_3(z_1) \|_{\infty}  \prec N^{-1}.
$
Again by the definition of the stochastic domination, we have there exists an high probability event $\mathcal{E}_3 \equiv \mathcal{E}(z_1,\epsilon,D)$, satisfying $\mathbb{P}(\mathcal{E}_3^c) < N^{-D}$, such that on the event $\mathcal{E}_1 \cap \mathcal{E}_2 \cap \mathcal{E}_3$,
\begin{align}
	\| (\mathrm{D}\Phi)^{-1} \cdot  \tilde{\gamma}_a(z_1) \|_{\infty} \le N^{1-\epsilon}, \quad a = 1,2,3, \label{080401}
\end{align}
for any small $\epsilon > 0$.
Therefore, plugging (\ref{080401}) into (\ref{162019}), we have event $\mathcal{E}_1 \cap \mathcal{E}_2 \cap \mathcal{E}_3$
\begin{align}
	\max_t|\omega_t(z_1) - \omega_t^c(z_1) | =  \|\Omega(z_1) \|_{\infty} \le CN^{1-\epsilon} , \label{080403}
\end{align}
 Repeating the above procedure for $O(N^2)$ steps, we will generate a series of events, i.e., $\mathcal{E}_1,\cdots,\mathcal{E}_{CN^2}$. Therefore, we have on the event $\bigcap_{i=1}^{O(N^2)}\mathcal{E}_i$, (\ref{080403}) still holds. Since all these events are high probability events, we obtain
	\begin{align}
		\max_t|\omega_t(z_1) - \omega_t^c(z_1) | \prec  {\frac{1}{N}}. \label{fine bound of omega}
	\end{align}
	Using \eqref{initial2}, we can again have
	\begin{align}
		|m_N(z_1) - m_{\boxplus}(z_1)| \prec \frac{1}{N}.  \label{071310}
	\end{align}
Repeating the above procedure we can prove (\ref{omegatAppro}) and (\ref{trGGap}) for all $z\in (\bar{\gamma}^0_1)^{+}\cup(\bar{\gamma}^0_2)^{+}$. 

\end{proof}

\section{Proof of Theorem \ref{FreeCLT}}\label{ProofofCLT}
In this section, we prove Theorem \ref{FreeCLT}. To  present the results, we  define the following two point function. For any fixed $z_1, z_2 \in \mathbb{C}\setminus \text{supp}(\mu_\boxplus)$, let
\begin{align*}
	\mathcal{L}_t(z_1,z_2) := \left(1 + \frac{(\omega_t(z_1)m_{\boxplus}(z_1)-\omega_t(z_2)m_{\boxplus}(z_2))^2}{(z_1-z_2)(m_{\boxplus}(z_1) - m_{\boxplus}(z_2))} + \frac{\omega_t(z_1) - \omega_t(z_2)}{\frac{1}{m_\boxplus(z_1)}-\frac{1}{m_{\boxplus}(z_2)}}  \right) .
\end{align*}
Further, if $z_1 = z_2$, we denote
\begin{align*}
	\mathcal{L}_t(z_1,z_1) := \frac{(1+\omega_t(z_1)m_{\boxplus}(z_1))(m_{\boxplus}(z_1)-1-\omega_t(z_1)m_{\boxplus}(z_1) - \omega_t(z_1)(\omega_t(z_1)m_{\boxplus}(z_1))') }{m_{\boxplus}(z_1)-1-2m_{\boxplus}(z_1)\omega_t(z_1)}.
\end{align*}
Based on Proposition \ref{keyProp}, we have the following approximations for tracial quantities and the entries of matrices involving Green functions. The proof is given in Section \ref{Sec C} of Appendix. 
\begin{prop}\label{keyprop2}
	Under Assumptions \ref{assum1} and \ref{assum2}, if $\hat{y} \in (0,1)$, for any $t \in [\![ k]\!]$ and for any fixed $z \in (\bar{\gamma}^0_1)^{+}\cup(\bar{\gamma}^0_2)^{+}$,  we have
	\begin{align}
		&\left| \tr \bbP_t\bbG(z) - \left(1+ \omega_t(z)m_{\boxplus}(z)\right) \right|,\; \left|\tr \bbQ_t\bbG(z) - \frac{1+ \omega_t(z)m_{\boxplus}(z)}{1-y_t} \right| \prec  \sqrt{\frac{p_t}{N^3}}, \label{080411}\\
		&\left| \tr \bbP_t\bbG(z)\bbP_t\bbG(z) - \mathcal{L}_t(z,z)\right|, \; \left|\tr \bbQ_t\bbG(z)\bbP_t\bbG(z) - \frac{\mathcal{L}_t(z,z)}{1-y_t}\right| \prec  \sqrt{\frac{p_t}{N^3}}.\label{080412}
	\end{align}
	The same bounds hold for $z \in (\bar{\gamma}_1)^{+}\cup(\bar{\gamma}_2)^{+}$ with $\hat{y} \in (0,\infty)$.
	 
	In addition, for any $t \in [\![ k]\!]$ and for any fixed $z_1 \in (\bar{\gamma}^0_1)^{+}$ and $z_2 \in (\bar{\gamma}^0_2)^+$,  we have
	\begin{align}
		\left| \tr \bbP_t\bbG(z_1)\bbP_t\bbG(z_2) - \mathcal{L}_t(z_1,z_2)\right|, \; \left|\tr \bbQ_t\bbG(z_1)\bbP_t\bbG(z_2) - \frac{\mathcal{L}_t(z_1,z_2)}{1-y_t}\right| \prec  \sqrt{\frac{p_t}{N^3}}. \label{0807120}
	\end{align}
	The same bounds hold for $z_1 \in (\bar{\gamma}_1)^{+}$ and $z_2 \in (\bar{\gamma}_2)^{+}$ with $\hat{y} \in (0,\infty)$. 
\end{prop}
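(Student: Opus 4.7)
The plan is to extend the perturbed-subordination-plus-cumulant-expansion machinery developed for Proposition \ref{keyProp} to the more detailed tracial quantities appearing in Proposition \ref{keyprop2}, now tracking errors at the per-$t$ scale $\sqrt{p_t/N^3}$. The one-point bounds \eqref{080411} should follow by combining the identity $\tr P_t G(z) = 1 + \omega_t^c(z) m_N(z)$ (by definition \eqref{Appsub1}) with (i) an algebraic relation derived from \eqref{gamma1}--\eqref{gamma3} after eliminating $\tr Q_t G$ and $\tr(X_tX_t')^{-1}$, whose right-hand side is $O_\prec(\sqrt{p_t/N^3})$ by the individual bounds in \eqref{First error}; and (ii) the bootstrap identity
\begin{align*}
z\,(m_N - m_\boxplus) \;=\; \sum_t \big(\tr P_t G - (1 + \omega_t m_\boxplus)\big),
\end{align*}
obtained from $\tr H G = 1 + z m_N$ together with $\sum_t \omega_t = z - (k-1)/m_\boxplus$ (via \eqref{sub3}). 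Setting $a_t := \tr P_t G - (1+\omega_t m_\boxplus)$, this turns the problem into inverting a linear system whose per-$t$ source terms are of order $\sqrt{p_t/N^3}$ and whose coupling across $t$ is of rank one via $\sum_t a_t$; invertibility is ensured by the separation condition $y - \max_t y_t \ge c$ in Assumption \ref{assum2}, exactly as in the proof of Proposition \ref{keyProp}. The companion bound for $\tr Q_t G$ then follows from \eqref{gamma3} and the bound just proved for $\tr P_t G$.

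For the two-point estimates \eqref{080412} and \eqref{0807120}, I would derive analogous perturbed relations via cumulant expansions w.r.t.\ the entries $X_{t,ab}$ of $X_t$, applied to judiciously chosen starter observables. Mirroring the one-point strategy emphasized in the paper (start from $\tr P_t G P_t$ rather than $\tr P_t G$ and close the system via $P_t^2 = P_t$), at the two-point level I would work with $\tr P_t G(z_1) P_t G(z_2) P_t$ and related $P_t$-weighted variants, so as to obtain a closed system of algebraic relations among $\tr P_t G(z_1) P_t G(z_2)$, $\tr Q_t G(z_1) P_t G(z_2)$, and the auxiliary two-point tracial quantities that these expansions generate. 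The leading deterministic solution of this system, expressed through $\omega_t$, $m_\boxplus$ and their values at $z_1$ and $z_2$, must coincide with $\mathcal{L}_t(z_1, z_2)$; this can be checked directly from the explicit form of $\mathcal{L}_t$, which is essentially a ``divided difference'' of the one-point identity $\tr P_t G = 1+\omega_t^c m_N$ combined with the one-point approximations already established. The diagonal case $z_1 = z_2$ in \eqref{080412} is recovered by letting $z_2 \to z_1$, producing the derivatives of $\omega_t m_\boxplus$ appearing in $\mathcal{L}_t(z,z)$. For $\tr Q_t G(z_1) P_t G(z_2)$, a two-point analog of the identity \eqref{gamma3} yields the extra factor $(1-y_t)^{-1}$.

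The main obstacles are threefold. First, as in the one-point analysis, the cumulant expansions for two-point quantities generate new tracial objects (involving $Q_t$, $X_t'(X_tX_t')^{-2}X_t$, etc.), and the starter observables must be chosen carefully so that the resulting algebraic system closes via trivial identities like $P_t^2 = P_t$, rather than spawning a proliferating chain of unknown tracial quantities. Second, to retain the per-$t$ scale $\sqrt{p_t/N^3}$ when $k$ is $N$-dependent, a two-point analog of the fluctuation-averaging estimate \eqref{Second error} is needed to counter-balance the growth of $k$ entering through the rank-one coupling in the inverted linear system. Third, the stability of the linearized two-point system depends on the denominators in $\mathcal{L}_t(z_1,z_2)$ (such as $m_\boxplus(z_1)-m_\boxplus(z_2)$ and $1/m_\boxplus(z_1)-1/m_\boxplus(z_2)$) being bounded away from zero, which is precisely why the contours $\gamma_1^0, \gamma_2^0$ (respectively $\gamma_1, \gamma_2$) were chosen so that $\{m_\boxplus(z):z\in\gamma_1^0\}$ and $\{m_\boxplus(z):z\in\gamma_2^0\}$ are well-separated. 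Finally, a net-plus-continuity argument, as in the closing step of the proof of Proposition \ref{keyProp}, upgrades each fixed-$z$ estimate to a uniform bound along the truncated contours.
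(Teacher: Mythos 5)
Your proposal is correct and follows essentially the same route as the paper: the one-point bounds come from the relations behind \eqref{gamma1}--\eqref{gamma3} combined with Proposition \ref{keyProp} (the small prefactor of size $O_\prec(p_t/N)$ multiplying $\tr G-\tr P_tG$ is exactly what lets the $O_\prec(1/N)$ error in $m_N-m_\boxplus$ be absorbed at scale $\sqrt{p_t/N^3}$), and the two-point bounds come from cumulant-expansion self-consistent relations for $\tr P_tG(z_1)P_tG(z_2)$ together with the $P_t$-weighted observable $\tr P_tG(z_1)P_tG(z_2)P_t$ closing the $Q_t$-to-$P_t$ link via $P_t^2=P_t$, which is precisely the paper's pair $Z_1$, $Z_2$. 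The paper closes the two-point system by the resolvent identity $G(z_1)G(z_2)=(G(z_1)-G(z_2))/(z_1-z_2)$ together with a $z_1\leftrightarrow z_2$ symmetrization and treats $z_1=z_2$ through $\partial_z$ rather than a limit, but these are the same mechanics you describe, and your anticipated need for a two-point fluctuation-averaging step is in fact unnecessary since only per-$t$ bounds at scale $\sqrt{p_t/N^3}$ enter here.
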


In addition to the concentration results for the tracial quantities, we also have the following concentration results for the diagonal entries of the involved random matrices. 
\begin{prop}\label{DiagApproByFC}
	Under Assumptions \ref{assum1} and \ref{assum2}, if $\hat{y} \in (0,1)$, for any fixed $z \in (\bar{\gamma}^0_1)^{+}\cup(\bar{\gamma}^0_2)^{+}$,  we have	
	\begin{align}
		\left| [G(z)]_{ii} - m_{\boxplus}(z) \right| \prec  \frac{1}{\sqrt{N} }, \label{GiiGap}
	\end{align}
	and for any $t \in [\![ k]\!]$,
	\begin{align}
		&\left| [\bbP_t\bbG(z)]_{ii} - \left(1+ \omega_t(z)m_{\boxplus}(z)\right) \right|, \;\left| [\bbP_t\bbG(z)\bbP_t]_{ii} -\left(1+ \omega_t(z)m_{\boxplus}(z)\right)  \right|\prec  \frac{1}{\sqrt{N}},\label{PGPiiGap}\\
		&\left| [(X_tX_t')^{-1}]_{jj} - \frac{1}{1-y_t} \right|, \; \left| [\bbW_t\bbG(z)\bbW'_t]_{jj} - \frac{1+ \omega_t(z)m_{\boxplus}(z)}{y_t(1-y_t)} \right|\prec  \frac{1}{\sqrt{N}}.\label{QGiiGap}
	\end{align}
	The same bounds hold for $z \in (\bar{\gamma}_1)^{+}\cup(\bar{\gamma}_2)^{+}$ with $\hat{y} \in (0,\infty)$.
\end{prop}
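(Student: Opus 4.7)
The plan is to upgrade the tracial estimates of Propositions \ref{keyProp} and \ref{keyprop2} to entrywise estimates at scale $N^{-1/2}$. For each entry considered, the proof has two parts: identifying its expectation via exchangeability and matching it to the already-established tracial limit; then bounding its fluctuation around the expectation by a moment estimate.

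For the column-indexed entries $[G(z)]_{ii}$, $[P_tG(z)]_{ii}$, and $[P_tG(z)P_t]_{ii}$ with $i\in[\![N]\!]$, the i.i.d.\ assumption on the columns of $X$ in Assumption \ref{assum1} implies that the joint law of $X$ is invariant under simultaneous permutation of the columns of $X_1,\ldots,X_k$ by any permutation $\Pi$. Since such a permutation transforms $H$ to $\Pi'H\Pi$ and $P_t$ to $\Pi'P_t\Pi$, the diagonal entries are distributionally invariant in $i$. Consequently $\mathbb{E}[G(z)]_{ii}=\mathbb{E}\tr G(z)$, $\mathbb{E}[P_tG]_{ii}=\mathbb{E}\tr P_tG$, and $\mathbb{E}[P_tGP_t]_{ii}=\mathbb{E}\tr P_tGP_t=\mathbb{E}\tr P_tG$ using $P_t^2=P_t$. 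Combined with Propositions \ref{keyProp} and \ref{keyprop2} via Lemma \ref{prop_prec}(ii), these expectations agree with the claimed limits $m_\boxplus(z)$ and $1+\omega_t(z)m_\boxplus(z)$ up to an error $\prec N^{-1}$.

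The fluctuation $|[G]_{ii}-\mathbb{E}[G]_{ii}|\prec N^{-1/2}$ is then obtained by estimating $\mathbb{E}|[G]_{ii}-\mathbb{E}[G]_{ii}|^{2p}$ for arbitrary fixed $p$, using the cumulant expansion (Lemma \ref{cumulant expansion}) applied to each $X_{t,ab}$. Each second-cumulant pairing contributes a factor $N^{-1}$, and the derivative $\partial_{t,ab}G=-G(\partial_{t,ab}H)G$ is controlled using the a priori bound $\|G(z)\|\sim 1$ (Lemma \ref{Gbound}) together with $\|(X_tX_t')^{-1}\|\lesssim 1$ and the tracial estimates already established. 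The leading contributions yield $\mathbb{E}|[G]_{ii}-\mathbb{E}[G]_{ii}|^{2p}\prec N^{-p}$, so Markov's inequality gives the bound in \eqref{GiiGap}, and the analogous argument handles \eqref{PGPiiGap}. For the row-indexed entries $[(X_tX_t')^{-1}]_{jj}$ and $[W_tG(z)W_t']_{jj}$ in \eqref{QGiiGap}, I would isolate the $j$-th row $x$ of $X_t$ via the Sherman--Morrison identity, reducing each entry to a scalar function of the quadratic form $xA^{(j)}x'$, where $A^{(j)}$ is built from the leave-one-row matrix (and, in the second case, additionally from $G$). Concentration of the quadratic form combined with the tracial limit of $\tr A^{(j)}$, already available from Propositions \ref{keyProp} and \ref{keyprop2}, delivers the $N^{-1/2}$ rate.

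The main obstacle is the combinatorial bookkeeping in the $2p$-th moment computation: each derivative in the cumulant expansion produces new $G$-factors, and iterating generates many cross-terms. When $k\equiv k(N)$ diverges, cross-block sums over $t\in[\![k]\!]$ appearing in the expansion must be controlled using the fluctuation-averaging mechanism of Lemma \ref{Lemma error estimates} rather than a naive triangle inequality, so that the total error is not inflated by a factor of $k$. As in the previous propositions, all estimates are first established on the truncated contours $(\bar\gamma_i^0)^+$ and $(\bar\gamma_i)^+$, where $\|G\|$ admits a deterministic polynomial bound that permits the passage from high-probability estimates to expectation bounds via Lemma \ref{prop_prec}(ii).
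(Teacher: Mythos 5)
Your identification of the expectations via column-exchangeability is fine, but the heart of the proposition is the $N^{-1/2}$ fluctuation bound, and there your argument has a genuine gap. First, $[G(z)]_{ii}$ has no linear-in-$X$ structure, so to launch a cumulant expansion you are forced back to $\sum_t[\bbP_t\bbG]_{ii}=1+z[\bbG]_{ii}$, i.e.\ to a sum over all $k$ blocks. Second, with only the inputs you invoke ($\|G\|\sim 1$, $\|(X_tX_t')^{-1}\|\prec 1$ and the tracial estimates of Propositions \ref{keyProp} and \ref{keyprop2}), each block's contribution to the second-cumulant terms can only be bounded by $O(N^{-1})$: the derivative $\partial_{t,ba}[G]_{ii}$ produces factors such as $G_{ia}[\bbW_t(I-\bbP_t)\bbG]_{bi}$, and the square sums over $a,b$ are controlled by diagonal entries like $[G^*(I-\bbP_t)\bbQ_t(I-\bbP_t)G]_{ii}$, which the a priori norm bounds only give as $O(1)$ per block, not $O(p_t/N)$. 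Summing over $k$ blocks then yields a total of order $k/N$, which is $O(1)$ in the admissible regime $k\sim N$ (e.g.\ $p_t=2$, $k=p/2$), so the claimed $\E|[G]_{ii}-\E[G]_{ii}|^{2p}\prec N^{-p}$ does not follow; to make each block contribute $O(p_t/N^2)$ you would need entrywise bounds of precisely the type being proven, so the argument as written is circular. Your appeal to Lemma \ref{Lemma error estimates} does not repair this: that lemma averages the specific tracial errors $\gamma_{ta}$ with weights that are functions of tracial quantities, and it says nothing about the cross-block sums of diagonal-entry quantities arising in your moment computation. This is exactly why the paper proves a separate entrywise fluctuation-averaging statement (Lemma \ref{LemSumDiag}) and, rather than estimating a variance at all, derives approximate self-consistent relations for the diagonal entries (Lemma \ref{LemPGii}), chooses the weights $\mathcal{W}_t=-(1-y_t)/(\omega_t(z)m_\boxplus(z))$, and solves the resulting equation through $\sum_t[\bbP_t\bbG]_{ii}=1+z[\bbG]_{ii}$ to get \eqref{GiiGap}, after which \eqref{PGPiiGap} follows by back-substitution.

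A secondary point concerns \eqref{QGiiGap}: the bound for $[(X_tX_t')^{-1}]_{jj}$ by a leave-one-row quadratic form is indeed standard, but for $[\bbW_t\bbG\bbW_t']_{jj}$ the matrix $G$ depends on the removed row through $\bbP_t$, so it is not "a scalar function of $x A^{(j)}x'$" with $A^{(j)}$ independent of $x$; you would additionally need a rank-one resolvent update $G=G^{(j)}-G^{(j)}vv'G^{(j)}/(1+v'G^{(j)}v)$ with $v\propto(I-\bbP_t^{(j)})x_j'$, control of $1+v'G^{(j)}v$ away from zero, and a comparison of leave-one-out tracial quantities with the full ones. This is repairable, but it is a substantially longer argument than sketched; the paper instead proves the relations \eqref{XXjj}--\eqref{WGWjj} by the same recursive-moment method and solves them, which avoids resolvent-update bookkeeping altogether.
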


  The proof of Proposition \ref{DiagApproByFC} is also given in Section \ref{Sec C} of Appendix.
  
The above bounds can be easily extended to the bound for the derivatives w.r.t. $z$ using Cauchy integral. For instance,  choosing sufficiently small (but of constant order) contour $\gamma$ centred at $z$, we have 
\begin{align}
	|\partial_z \tr G(z) - \partial_z m_{\boxplus}(z)| = \left|\frac{1}{2\pi {\rm i}}\oint_{\gamma}\frac{\tr G(\tilde{z}) - m_{\boxplus}(\tilde{z})}{(\tilde{z}-z)^2}{\rm d}\tilde{z} \right| \prec  \frac{1}{N}.  \label{06281152}
\end{align} Here for the last step in (\ref{06281152}) we need a bound of $\tr G(\tilde{z}) - m_{\boxplus}(\tilde{z})$ for all $\tilde{z}\in \gamma$. A desired bound for those $\tilde{z}$ with $|\Im \tilde{z}|\geq N^{-K}$ can be easily obtained from Propsition \ref{keyProp} by slightly modifying the contours $\gamma_a^0$'s to its neighborhood, and for those $\tilde{z}$ with $|\Im \tilde z|<N^{-K}$ one can use Lemma \ref{Gbound} and the fact $m_\boxplus(\tilde{z})\sim 1$ directly.
Similar arguments can be applied to the derivatives of all quantities in (\ref{080411})-(\ref{QGiiGap}), and the same bounds hold for any fixed order derivatives. Hence, in the sequel, we will often apply the error bounds obtained in the estimates of  the Green function functionals to their derivatives directly without further explanation. 

Based on Proposition \ref{keyProp}, \ref{keyprop2}, and \ref{DiagApproByFC}, we can now prove Theorem \ref{FreeCLT}. Our proof basically consists of two parts. In the first part, based on the method of characteristic function, we prove the asymptotic normality of the LSS, and the corresponding mean and variance can be expressed in terms of the expectation of the Green function and its functionals. In the second part, we show that, the expectation of the Green function and its functionals can be well approximated by functionals of $m_{{\boxplus}}$ and $\omega_t, t \in [\![ k]\!]$.  
The conclusion for the first part can be summarized as the following theorem. Recall $\E^{\chi}$ defined in (\ref{EXit}).
\begin{thm}\label{GeneralCLT}
	Let $\hat{y} \in (0,1)$ and f be analytic inside $\gamma_1^0$ and $\gamma_2^0$. 
	Denote by
	\begin{align}
		\alpha_t(z) =\E^{\chi} [ \tr( X_t X_t')^{-1} -\tr Q_tG(z) ], \qquad \beta_t(z) =\frac{1}{1-y_t} \E^{\chi} [\tr G(z) - \tr P_tG(z) ],  \label{def of alphabeta}
	\end{align}
	for all $t \in [\![ k ]\!]$. Under Assumptions \ref{assum1} and \ref{assum2} , we have
	\begin{align*}
		\frac{\Tr f(H) - \oint_{\bar{\gamma}^0_1}\E^{\chi}[ \Tr G(z)]f(z){\rm d} z}{\sigma_f} \Rightarrow \mathcal{N}(0, 1)
	\end{align*} 
	with
	\begin{align}
		\sigma_f^2 = -\frac{1}{2\pi^2} \oint_{\bar{\gamma}_1^0}\oint_{\bar{\gamma}_2^0} \mathcal{K}(z_1,z_2)f(z_1)f(z_2){\rm d}z_2{\rm d}z_1, \label{061360}
	\end{align}
	and
	\begin{align}
	\mathcal{K}(z_1,z_2) =&\left(z_1 - \sum_{s=1}^k\frac{\alpha_s(z_1)}{1+\alpha_s(z_1) + \beta_s(z_1)} \right)^{-1}  \sum_{t=1}^k\E^{\chi}\Bigg[ \partial_{z_2}\frac{  \tr\bbQ_t\bbG(z_1)\bbP_t\bbG(z_2)- \tr\bbQ_t\bbG(z_1)\bbG(z_2)}{1 + \alpha_t(z_1) + \beta_t(z_1)}\Bigg] . \label{def of K(zz)}
	\end{align}
	The same result holds if  $\hat{y} \in (0,\infty)$ with $\gamma_{1}^0$ and $\gamma_{2}^0$ replaced by $\gamma_1$ and $\gamma_2$, respectively.  
\end{thm}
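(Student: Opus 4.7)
The plan is to follow the characteristic-function strategy of \cite{lytova2009central}, combined with Cauchy's integral formula and a carefully-structured cumulant expansion. Since $f$ is analytic inside $\gamma_1^0$ and $\|G(z)\|$ is deterministically controlled under the cutoff $\Xi$, one has, up to a negligible $O_\prec(N^{-D})$ error,
\[
\Tr f(H) - \oint_{\bar\gamma_1^0}\E^\chi[\Tr G(z)]\,f(z)\,dz \;=\; -\tfrac{1}{2\pi i}\oint_{\bar\gamma_1^0} f(z)\,\big(\Tr G(z)-\E^\chi\Tr G(z)\big)\,dz.
\]
It therefore suffices to prove that $\phi_f^N(x):=\E^\chi\exp\bigl(ix\sigma_f^{-1}(\Tr f(H)-\text{center})\bigr)$ converges to $e^{-x^2/2}$ pointwise in $x$, which I would do by establishing the approximate ODE $(\phi_f^N)'(x) = -x\,\phi_f^N(x)+o(1)$ together with $\phi_f^N(0)=1$.

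Differentiating $\phi_f^N$ in $x$ brings down an additional copy of the contour integral, and a second application of Cauchy's formula in a variable $z_2\in\bar\gamma_2^0$ reduces the task to analyzing, for each $(z_1,z_2)$ on $\bar\gamma_1^0\times\bar\gamma_2^0$, the two-point quantity $\E^\chi\bigl[(\Tr G(z_2)-\E^\chi\Tr G(z_2))\,e^{ix(\cdots)}\bigr]$. Writing this fluctuation as a sum over entries of $X_t$ and applying the cumulant expansion (Lemma~\ref{cumulant expansion}) in the variables $X_{t,ji}$, I would initiate the expansion from functionals such as $\tr P_tG$, $\tr P_tGP_t$, and $\tr W_tG(z_2)W_t^\prime$, chosen so that the resulting algebraic system closes. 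The essential trick, emphasized in the proof-strategy subsection, is to exploit the two distinct representations $\tr P_tG$ and $\tr P_tGP_t$ of the same tracial quantity: one expansion generates $\tr Q_tG$ with an undetermined coefficient, but combining it with the second (and using $\tr P_tGP_t=\tr P_tG$) eliminates $\tr Q_tG$ and expresses everything in terms of the deterministic quantities $\alpha_t(z),\beta_t(z)$ of \eqref{def of alphabeta}. Summing over $t\in[\![ k]\!]$ and inverting the resulting $k\times k$ linear system by a Sherman--Morrison rank-one correction of the type \eqref{6001} produces the scalar prefactor $\bigl(z_1 - \sum_s \alpha_s(z_1)/(1+\alpha_s(z_1)+\beta_s(z_1))\bigr)^{-1}$, while the cross terms $\tr Q_tG(z_1)P_tG(z_2)-\tr Q_tG(z_1)G(z_2)$ arise precisely from the contributions in which $\partial_{t,ji}$ hits a second Green function at $z_2$ (introduced through yet another Cauchy representation inside $e^{ix(\cdots)}$).

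Factoring the exponential out of the leading contribution --- possible because cumulant expansions of $e^{ix(\cdots)}$ against $X_{t,ji}$ produce only negligible corrections thanks to the moment bound of Assumption~\ref{assum1} --- yields the kernel $\mathcal{K}(z_1,z_2)$ of \eqref{def of K(zz)} and identifies the variance $\sigma_f^2$ as in \eqref{061360}. The main obstacles are twofold. First, there is intricate bookkeeping in the cumulant expansion: each application produces new Green-function functionals, and one must choose the seed functionals so that the system genuinely closes at order $O_\prec(N^{-1})$ rather than spinning off into ever new undetermined quantities; this closing step is what ultimately determines the specific form of $\mathcal{K}$. Second, because $k\equiv k(N)$ may diverge, naive union bounds over $t\in[\![ k]\!]$ are insufficient; the fluctuation-averaging estimate \eqref{Second error} of Lemma~\ref{Lemma error estimates} (and its analogues for two-resolvent traces, derived by the same cumulant-expansion machinery in concert with the a priori bounds of Propositions~\ref{keyProp}, \ref{keyprop2}, and \ref{DiagApproByFC}, together with Cauchy-integral upgrades for their $z$-derivatives) is the essential tool for controlling summed error terms so that the residual in the ODE is genuinely $o(1)$.
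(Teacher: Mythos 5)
Your proposal is correct and follows essentially the same route as the paper's proof: the characteristic-function/approximate-ODE method with the cutoff $\Xi$, Cauchy representation on $\bar\gamma_1^0$, cumulant expansion of $\Tr P_t G(z_1)$ against ${\rm e}^{\mathrm{i}x\la L_N(f)\ra}$ with the second contour variable $z_2$ entering when $\partial_{t,ji}$ hits the exponential, closure through the deterministic $\alpha_t,\beta_t$, and summation over $t$ (via $\sum_t P_tG=zG+I$, which is what your rank-one inversion amounts to) yielding the prefactor and the kernel $\mathcal{K}$. The only cosmetic differences are that the paper solves a single scalar equation for $\E^{\chi}[\la\Tr G(z_1)\ra {\rm e}^{\mathrm{i}x\la L_N(f)\ra}]$ rather than inverting a $k\times k$ system, and that the division by $1+\alpha_t+\beta_t$ is justified by the lower bound of Lemma \ref{lem.062830}, a point you leave implicit.
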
 

\begin{proof}[Proof of Theorem \ref{GeneralCLT}]
We only prove the case when $\hat{y} \in (0,1)$ with contours $\gamma^0_1$ and $\gamma^0_2$, and the proof of the case $\hat{y} \in (0,\infty)$ with contours $\gamma_1$ and $\gamma_2$ is similar.
First, from the proof of  Lemma \ref{supportofmu} in Appendix, we can actually conclude that the nonzero eigenvalues of $H$ are bounded below and above by positive constants with high probability. Then by Cauchy's formula, by choosing our contour $\gamma_a^0$ properly to enclose all eigenvalues (with high probability), we have
\begin{align*}
	\Tr f(H)=\frac{-1}{2\pi{\rm i}}\oint_{\gamma^0_a} \Tr G(z)f(z){\rm d} z=\frac{-1}{2\pi{\rm i}}\oint_{\bar{\gamma}^0_a} \Tr G(z)f(z){\rm d} z+O_\prec(N^{-K+1}), \quad a=1,2.
\end{align*}
 Recall the definition in (\ref{Xit}), we have
\begin{align}
		\Tr f(H)=& \frac{-1}{2\pi{\rm i}}\oint_{\bar{\gamma}^0_a} \Tr G(z)f(z){\rm d} z \cdot \Xi +O_\prec(N^{-K+1}) =:L_N^a(f)+O_\prec(N^{-K+1}), \quad a=1,2 \label{062810}
\end{align}
for some large (but fixed) $K$. Therefore, to prove the asymptotic normality of $\Tr f(H)$, it suffices to prove the asymptotic normality of $L_N^1(f)$. An advantage of $L_N^1(f)$ over $\Tr f(H)$ is that the Green function $G(z)$ in the integrand of $L_N^1(f)$ and the derivatives of various functionals of $G(z)$ w.r.t. matrix entries always possess deterministic crude bound. It is crucial for us to transform high probability estimates into the expectation estimates in the following calculations, in the spirit of Lemma \ref{prop_prec} (ii).  Let
	$
		\phi_f^N(x) = \E \big[ {\rm e}^{\mathrm{i}x \la L_N^1(f)\ra  } \big]
	$
	be the characteristic function of $\la L_N^1(f)\ra $. Taking derivative w.r.t. $x$, we have
	\begin{align}\label{Dcharac}
		(\phi_f^N(x))' =\mathrm{i} \E \Big[\la L_N^1(f)\ra {\rm e}^{\mathrm{i}x\la L_N^1(f)\ra } \Big] = \frac{-1}{2\pi}\oint_{\bar{\gamma}^0_1} \E^{\chi} \Big[ \bla\Tr G(z_1)\bra {\rm e}^{\mathrm{i}x\la L_N^1(f)\ra } \Big] f(z_1){\rm d}z_1,
	\end{align}
	Our aim is to derive an approximate ODE for $\phi_f^N(x)$, from which we can solve $\phi_f^N(x)$. Then our task boils down to calculating the term,
	$
		\E^{\chi} \big[ \bla\Tr G(z_1)\bra {\rm e}^{\mathrm{i}x\la L_N^1(f)\ra } \big]$, $z_1\in \bar{\gamma}_1^0. 
	$
	By definition, we have the trivial identity
$
		\sum_{t=1}^k \Tr P_tG(z_1) = \Tr HG(z_1)  = N + z_1\Tr G(z_1).
$
	Then we can write
	\begin{align*}
		\E^{\chi} \Big[ \bla\Tr G(z_1)\bra {\rm e}^{\mathrm{i}x\la L_N^1(f)\ra } \Big]= \frac{1}{z_1}\sum_{t=1}^{k}\E^{\chi} \Big[ \bla\Tr P_tG(z_1)\bra {\rm e}^{\mathrm{i}x\la L_N^1(f)\ra } \Big]. 
	\end{align*}
	By the cumulant expansion (c.f., Lemma \ref{cumulant expansion} in Section \ref{Sec A} of Appendix),
	\begin{align}
&\E^{\chi}\Big[ \bla\Tr\bbP_t\bbG(z_1)\bra {\rm e}^{\mathrm{i}x\la L_N^1(f)\ra }\Big]\notag\\
&=\E^{\chi}\Big[ \sum_{ij}^{(t)}\bbX_{t,ij}[\bbW_t\bbG(z_1)]_{ji} \bla {\rm e}^{\mathrm{i}x\la L_N^1(f)\ra }\bra\Big] =\frac{1}{N}\E^{\chi}\Big[ \sum_{ij}^{(t)}(\partial_{t,ji}[\bbW_t\bbG(z_1)]_{ji})\bla {\rm e}^{\mathrm{i} xL_N^1(f)}\bra\Big] \notag\\
&\quad+\frac{1}{N}\E^{\chi}\Big[\sum_{ij}^{(t)}[\bbW_t\bbG(z_1)]_{ji}\partial_{t,ji}{\rm e}^{\mathrm{i}x\la L_N^1(f)\ra }\Big] 
+\sum_{ij}^{(t)}\sum_{a=0}^2 {2\choose a}\frac{\kappa_3^{t,j}}{2{N}^{3/2}}\E^{\chi} \Big[  (\partial^a_{t,ji}[\bbW_t\bbG(z_1)]_{ji}) \partial^{2-a}_{t,ji}\bla {\rm e}^{\mathrm{i}x\la L_N^1(f)\ra } \bra\Big] \notag\\
&\quad+\sum_{ij}^{(t)}\sum_{a=0}^3{3\choose a}\frac{\kappa^{t,j}_{4}}{6{N}^{2}}\E^{\chi} \Big[ (\partial^a_{t,ji}[\bbW_t\bbG(z_1)]_{ji})\partial^{3-a}_{t,ji}\bla {\rm e}^{\mathrm{i}x\la L_N^1(f)\ra }\bra\Big] \notag\\
&\quad+ \sum_{s\ge1}^lO\left(\frac{1}{N^{\frac{s+1}{2}}}	\right)\sum_{ij}^{(t)}\sum_{\substack{s_0+s_1=s\\s_1 \ge 1}}\E\Big[\partial_{t,ji}^{s_0}\left\{[W_tG(z_1)]_{ji} \bla {\rm e}^{\mathrm{i}x\la L_N^1(f)\ra }\bra \right\} \partial_{t,ji}^{s_1}\Xi \Big] \notag\\
&\quad+ \sum_{s\ge 4 }^lO\left(\frac{1}{N^{\frac{s+1}{2}}}	\right)\sum_{ij}^{(t)}\E^{\chi}\Big[\partial^s_{t,ji}\left\{ [W_tG(z_1)]_{ji} \bla {\rm e}^{\mathrm{i}x\la L_N^1(f)\ra }\bra\right\}  \Big] + \sum_{ij}^{(t)}  \E\Big[\mathcal{R}_{l+1}^{t,ji} \Big] \notag \\
&\quad=:\mathsf{I}_{t1}+\mathsf{I}_{t2}+\mathsf{I}_{t3}+\mathsf{I}_{t4}+ \mathsf{E}_1 + \mathsf{E}_2+\mathsf{E}_3, \label{061303}
\end{align}
where
\begin{align}
	\mathcal{R}_{l+1}^{t,ji} =& O(1) \cdot \sup _{X_{t,ji}\in \mathbb{R}}\left| \partial_{t,ji}^{l+1}\left\{ [ W_t G(z_1)]_{ji}\bla {\rm e}^{\mathrm{i}x\la L_N^1(f)\ra }\bra \Xi \right\} \right| \cdot \mathbb{E}\Big[|X_{t,ji}|^{l+2} \mathbf{1}_{|X_{t,ji}|>N^{-\frac12+\epsilon}}\Big]\notag\\
	&+O(1) \cdot \mathbb{E}\big[|X_{t,ji}|^{l+2} \big]\cdot \sup _{|X_{t,ji}| \leqslant N^{-\frac12+\epsilon}}\left| \partial_{t,ji}^{l+1}\left\{ [ W_t G]_{ji}\bla {\rm e}^{\mathrm{i}x\la L_N^1(f)\ra }\bra \Xi\right\} \right| . \label{062501}
\end{align}
Here we also used $\kappa_a^{t,j}, a=3,4$  to denote the $a$-th cumulant of $X_{t,ji}$'s. Note that since $i$ is the index of i.i.d. samples, according to our assumption the cumulant depends on $j$ only but not $i$. The following lemma gives the estimates of each term in (\ref{061303}).

\begin{lemma}\label{Error in CLT}
	\begin{align*}
	&\mathsf{I}_{t1} 
	= \alpha_t \E^{\chi}\Big[ \bla  \Tr\bbG(z_1) \bra {\rm e}^{\mathrm{i}x\la L_N^1(f)\ra }\Big] - (\alpha_t + \beta_t)\E^{\chi} \Big[ \bla  \Tr \bbP_t\bbG(z_1)  \bra {\rm e}^{\mathrm{i}x\la L_N^1(f)\ra }\Big]+O_{\prec}\left( \sqrt{\frac{p_t}{N^3}} \right),\\
	&\mathsf{I}_{t2}= \frac{x}{\pi}\oint_{\bar{\gamma}^0_2}\E^{\chi}\partial_{z_2}\Big[   \tr\bbQ_t\bbG(z_1)\bbP_t\bbG(z_2)- \tr\bbQ_t\bbG(z_1)\bbG(z_2)\Big]f(z_2){\rm d}z_2 \E \big[{\rm e}^{\mathrm{i}x\la L_N^1(f)\ra }\big]
	+O_{\prec}\left( \sqrt{\frac{p_t}{N^3}} \right),  \\
	&\mathsf{I}_{t3} = O_{\prec}\left(\frac{p_t}{N^{\frac76}}\right),\; \mathsf{I}_{t4}= O_{\prec}\left(\frac{p_t}{N^\frac32} \right), \; \mathsf{E}_1 + \mathsf{E}_2 + \mathsf{E}_3 = O_{\prec}\left( \frac{p_t}{N^{\frac32}} \right).
\end{align*}
\end{lemma}
The proof of Lemma \ref{Error in CLT} is given in Section \ref{Sec C} of Appendix.

First plugging the estimates of  $\mathsf{I}_{t1}$, $\mathsf{I}_{t3}$ and $\mathsf{I}_{t4}$ in (\ref{061303}), we arrive at
\begin{align*}
	&\E^{\chi} \bigg[  \Tr P_tG(z_1)\bla {\rm e}^{\mathrm{i}x\la L_N^1(f)\ra }\bra \bigg] = \alpha_t(z_1) \E^{\chi}\Big[  \Tr\bbG(z_1) \bla{\rm e}^{\mathrm{i}x\la L_N^1(f)\ra }\bra\Big] \notag\\
	& \qquad - (\alpha_t(z_1) + \beta_t(z_1))\E^{\chi} \Big[\Tr \bbP_t\bbG(z_1)\bla {\rm e}^{\mathrm{i}x\la L_N^1(f)\ra }\bra\Big]+ \mathsf{I}_{t2} + O_{\prec}\left( \frac{p_t}{N^{\frac76}} \right).
\end{align*} 
Moving the second term in the RHS to the LHS, and then dividing both sides by $(1 + \alpha_t(z_1) + \beta_t(z_1))$, we have
\begin{align*}
	\E^{\chi} \Big[  \Tr \bbP_t\bbG(z_1) \bla {\rm e}^{\mathrm{i}x\la L_N^1(f)\ra }\bra\Big]=& \frac{\alpha_t(z_1)\E^{\chi}\Big[ \Tr\bbG(z_1) \bla {\rm e}^{\mathrm{i}x\la L_N^1(f)\ra }\bra\Big]}{1 + \alpha_t(z_1) + \beta_t(z_1)} + \frac{\mathsf{I}_{t2}}{1 + \alpha_t(z_1) + \beta_t(z_1)}+ O_{\prec}\left( \frac{p_t}{N^{\frac76}} \right).
\end{align*}
Here we actually used the fact that $|1 + \alpha_t(z_1) + \beta_t(z_1)|$ is bounded below (implied by Lemma \ref{lem.062830} below). 
Summing over $t$ and using the trivial identity $\sum_t P_t G=zG+I$, we arrive at an  equation  for $\E^{\chi}\big[  \Tr\bbG(z_1) \bla{\rm e}^{\mathrm{i}x\la L_N^1(f)\ra }\bra\big]$. Solving the equation gives 
\begin{align}
	&\E^{\chi}\Big[  \Tr\bbG(z_1)\bla {\rm e}^{\mathrm{i}x\la L_N^1(f)\ra }\bra \Big] = \left(z_1 - \sum_{s=1}^k\frac{\alpha_s(z_1)}{1+\alpha_s(z_1) + \beta_s(z_1)} \right)^{-1} \sum_{t=1}^k\frac{\mathsf{I}_{t2}}{1 + \alpha_t(z_1) + \beta_t(z_1)} + O_{\prec}\left( \frac{1}{N^{\frac16}} \right). \label{061401}
\end{align}
Substituting this result together with the estimate of $\mathsf{I}_{t2}$ back to \eqref{Dcharac}, we have
\begin{align*}
	(\phi_f^N(x))' = -x\sigma^2_f\E \Big[ {\rm e}^{\mathrm{i}x\la L_N^1(f)\ra } \Big] + O_{\prec}\left( \frac{1}{N^{\frac16}} \right)=-x\sigma^2_f \phi_f^N(x) + O_{\prec}\left( \frac{1}{N^{\frac16}} \right),
\end{align*}
where $\sigma_f$ is given in (\ref{061360}). 
This proves the asymptotic normality of $\la L_N^1(f)\ra $, which further implies the normality of $\text{Tr} f(H)$. 
\end{proof}

From Theorem \ref{GeneralCLT} to Theorem \ref{FreeCLT}, one needs to approximate the tracial quantities  by functions of $m_{\boxplus}$ and $\omega_t$'s.  To this end,  we first provide the following lemma, whose proof is postponed to Section \ref{Sec C} in Appendix.
\begin{lemma} \label{lem.062830}
	Under the assumption of Theorem \ref{FreeCLT}, if $\hat{y} \in (0,1)$, for any fixed $z \in (\bar{\gamma}^0_1)^{+}\cup(\bar{\gamma}^0_2)^{+}$,  we have
	\begin{align}
		1 + \alpha_t(z) + \beta_t(z) =\frac{ m_{\boxplus}(z)-1-2\omega_t(z)m_{\boxplus}(z)}{1-y_t} + O_{\prec}\left(\frac{1}{N} \right),
	\label{1plusalphatbetat}
	\end{align}
	and
	\begin{align}
		\left|m_{\boxplus}(z)-1-2\omega_t(z)m_{\boxplus}(z)\right| > 0. \label{lowerboundof1plusalphatbetat}
	\end{align}
The same bounds hold for $z \in (\bar{\gamma}_1)^{+}\cup(\bar{\gamma}_2)^{+}$ with $\hat{y} \in (0,\infty)$. 
\end{lemma}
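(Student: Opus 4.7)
The plan is to substitute the deterministic approximations from Propositions \ref{keyProp} and \ref{keyprop2} into the definitions \eqref{def of alphabeta} of $\alpha_t(z)$ and $\beta_t(z)$, and then combine them by simple algebra. First I would collect the four high-probability ingredients at scale $O_{\prec}(N^{-1})$: by Proposition \ref{keyProp} we have $\tr G(z) = m_{\boxplus}(z) + O_{\prec}(N^{-1})$; by \eqref{080411} we have $\tr P_t G(z) = 1 + \omega_t(z)m_{\boxplus}(z) + O_{\prec}(N^{-1})$ and $\tr Q_t G(z) = (1 + \omega_t(z)m_{\boxplus}(z))/(1-y_t) + O_{\prec}(N^{-1})$; and by combining \eqref{gamma2} with \eqref{First error} we have $\tr(X_tX_t')^{-1} = y_t/(1-y_t) + O_{\prec}(N^{-1})$. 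Throughout I use $\sqrt{p_t/N^3}\le N^{-1}$, which holds since $p_t\le N$.

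Next I would promote each of these stochastic-domination bounds to bounds on $\E^{\chi}$ via Lemma \ref{prop_prec}(ii). The role of the cutoff $\Xi$ in \eqref{Xit} is precisely this: it makes $\|(X_tX_t')^{-1}\|$ and $\|X_tX_t'\|$ deterministically of polynomial size, which is the crude a priori bound required to turn $\prec$ into an expectation estimate. Inserting the resulting approximations into \eqref{def of alphabeta} gives $\alpha_t(z) = (y_t - 1 - \omega_t m_{\boxplus})/(1-y_t) + O_{\prec}(N^{-1})$ and $\beta_t(z) = (m_{\boxplus} - 1 - \omega_t m_{\boxplus})/(1-y_t) + O_{\prec}(N^{-1})$. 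Adding $1$ and simplifying the numerators,
\begin{align*}
(1-y_t) + (y_t - 1 - \omega_t m_{\boxplus}) + (m_{\boxplus} - 1 - \omega_t m_{\boxplus}) = m_{\boxplus} - 1 - 2\omega_t m_{\boxplus},
\end{align*}
which delivers \eqref{1plusalphatbetat}.

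For the lower bound \eqref{lowerboundof1plusalphatbetat} I would exploit the Bernoulli form $\mu_t = y_t\delta_1 + (1-y_t)\delta_0$. The subordination identity $m_{\boxplus}(z) = m_{\mu_t}(\omega_t(z))$ together with $m_{\mu_t}(w) = (w-1+y_t)/(w(1-w))$ yields the closed form
\begin{align*}
m_{\boxplus}(z) - 1 - 2\omega_t(z) m_{\boxplus}(z) = \frac{y_t(1 - 2\omega_t(z)) - (1 - \omega_t(z))^2}{\omega_t(z)(1 - \omega_t(z))}.
\end{align*}
Lemma \ref{Flbound} and the bound $y_t<1$ from Assumption \ref{assum2} keep the denominator bounded and bounded away from zero on the compact working contour, so only the numerator is at stake. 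Its zeros are the isolated points $\omega_t = (1-y_t) \pm i\sqrt{y_t(1-y_t)}$, and this is the step I expect to be the main obstacle: one must ensure that $\omega_t(z)$ does not hit these two values for any $z$ on the contour. The cleanest route is to observe that this same quantity already appears as the denominator in the formula for $\mathcal{L}_t(z,z)$ in Proposition \ref{keyprop2}, so its non-vanishing on the working domain is implicit in that proposition; alternatively, since the contour parameters $\epsilon_{ji}, M_{ji}$ in \eqref{contour12} admit some freedom, one may invoke an open-mapping/analyticity argument to perturb the contour so it avoids the at most finitely many preimages of the critical values of $\omega_t$, after which continuity on the compact contour produces a uniform positive lower bound.
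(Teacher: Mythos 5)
Your derivation of \eqref{1plusalphatbetat} is correct and is essentially the paper's own argument: insert the estimates of Propositions \ref{keyProp}, \ref{keyprop2} and of $\gamma_{t2}$ into \eqref{def of alphabeta}, pass from $\prec$ to $\E^{\chi}$ via Lemma \ref{prop_prec}(ii) and the cutoff $\Xi$, and add. The genuine gap is in \eqref{lowerboundof1plusalphatbetat}. You correctly reduce it to showing that $\omega_t(z)$ never hits $(1-y_t)\pm \mathrm{i}\sqrt{y_t(1-y_t)}$ on the working contours, but neither of your two routes establishes this. Citing Proposition \ref{keyprop2} is circular: that proposition only asserts approximation bounds in whose statement $\mathcal{L}_t$ appears; it neither claims nor proves a lower bound on the denominator $m_{\boxplus}-1-2\omega_t m_{\boxplus}$, and its own derivation (solving the linear relation for $\tr P_t G P_t G$) tacitly uses exactly this non-degeneracy — which is precisely why the paper verifies it independently in this lemma. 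The contour-perturbation route does not prove the lemma as stated: the contours $\gamma_a^0,\gamma_a$ are fixed in advance and reused in the CLT as integration contours, and since $k$, the $y_t$'s, hence $\omega_t$ and the critical points are $N$-dependent, "finitely many preimages" and "a uniform lower bound by compactness" are per-$N$ statements; you never actually verify that the prescribed (or any admissible) contour misses the preimages, nor that the resulting bound is uniform in $N$.

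The missing idea is a direct, piecewise argument along the contour, which is what the paper does. Note first that your numerator can be rewritten through the subordination relation: since $m_{\boxplus}=m_{\mu_t}(\omega_t)$,
\begin{align*}
y_t(1-2\omega_t)-(1-\omega_t)^2=-\bigl(y_t\omega_t^2+(1-y_t)(1-\omega_t)^2\bigr)=-\omega_t^2(1-\omega_t)^2\,m_{\mu_t}'(\omega_t),
\qquad\text{so}\qquad
m_{\boxplus}-1-2\omega_t m_{\boxplus}=-\,\omega_t(1-\omega_t)\,\frac{m_{\boxplus}'(z)}{\omega_t'(z)}.
\end{align*}
Hence, on the pieces where $|\omega_t|,|1-\omega_t|\sim 1$ and $|\omega_t'|\lesssim 1$ (Lemma \ref{Flbound} and \eqref{080711}), non-vanishing reduces to $m_{\boxplus}'(z)\neq 0$. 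The paper then argues: on the large-$|z|$ parts (all of $\gamma_1\cup\gamma_2$, and $\mathcal{C}_3$--$\mathcal{C}_6$), writing $\omega_t m_{\boxplus}=\frac{y_t\omega_t}{1-\omega_t}-(1-y_t)$ shows the quantity tends to $1$ as $|z|\to\infty$; on $\mathcal{C}_1$ it blows up as $|z|\to 0$ because of the point mass of $\mu_{\boxplus}$ at the origin; and on the delicate segments $\mathcal{C}_2$, where $\Re z<0$ and $\Im z>0$ is small, one uses $\mathrm{supp}\,\mu_{\boxplus}\subset[0,\infty)$ (Lemma \ref{supportofmu}) to get $|m_{\boxplus}'(z)|\ge \Im m_{\boxplus}'(z)=\int 2(\lambda-\Re z)\Im z\,|\lambda-z|^{-4}\,{\rm d}\mu_{\boxplus}>0$. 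That last observation is exactly the step your proposal leaves open.
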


Now, we are ready to prove Theorem \ref{FreeCLT}.
\begin{proof}[Proof of Theorem \ref{FreeCLT}]
	By the estimates in Propositions \ref{keyProp}, \ref{keyprop2}, and Lemma \ref{lem.062830}, we have
	\begin{align*}
		\frac{\alpha_t(z)}{1 + \alpha_t(z) + \beta_t(z)} = \frac{y_t - 1 -\omega_t(z)m_{\boxplus}(z) }{m_{\boxplus}(z)-1-2\omega_t(z)m_{\boxplus}(z)} +  O_{\prec}\left( \frac{p_t}{N^2} \right).
	\end{align*}
	Here  we also used the fact that $\alpha_t(z)$ is of order $O_{\prec}\left(p_t/N \right)$.
	This implies
	\begin{align*}
		&\left(z_1 - \sum_{s=1}^k\frac{\alpha_s(z_1)}{1+\alpha_s(z_1) + \beta_s(z_1)} \right)^{-1} = \left( z_1 - \sum_{s=1}^{k}\frac{y_s - 1 -\omega_s(z_1)m_{\boxplus}(z_1) }{m_{\boxplus}(z_1)-1-2\omega_s(z_1)m_{\boxplus}(z_1)}  \right)^{-1} + O_{\prec}\left( \frac{1}{N}   \right).
	\end{align*}
After some elementary algebraic operation (c.f., Section \ref{SimplifyV} in Appendix) , by using \eqref{sub2}, \eqref{sub3} and \eqref{F2}, we can get
	\begin{align}\label{FreeCLTfactor}
		\left( z_1 - \sum_{s=1}^{k}\frac{y_s - 1 -\omega_s(z_1)m_{\boxplus}(z_1) }{m_{\boxplus}(z_1)-1-2\omega_s(z_1)m_{\boxplus}(z_1)}  \right)^{-1} = -\frac{m'_{\boxplus}(z_1)}{m_{\boxplus}(z_1)}
	\end{align}
	which is bounded. This gives the leading factor of $\mathcal{K}(z_1,z_2)$. Applying Proposition \ref{keyprop2} and Lemma \ref{lem.062830}, we conclude
	\begin{align}\label{FreeCLTK1}
		&\frac{\E^{\chi} [ \partial_{z_2}\left( \tr Q_tG(z_1)P_tG(z_2) \right)]}{1 + \alpha_t(z_1) + \beta_t(z_1)}=\frac{\partial_{z_2}\mathcal{L}_t(z_1,z_2)}{m_{\boxplus}(z_1)-1-2\omega_t(z_1)m_{\boxplus}(z_1)} + O_{\prec}\left( \sqrt{\frac{p_t}{N^3}} \right).
	\end{align}
Here we used the fact that $\{m_{\boxplus}(z): z\in \gamma_1^0\}$ and $ \{m_{\boxplus}(z): z\in \gamma_2^0\}$ are well separated (c.f., Section \ref{Discussion on the contours} in Appendix).

  Similarly, we have by Proposition \ref{keyprop2} with $G(z_1)G(z_2)=(G(z_1)-G(z_2))/(z_1-z_2)$ 
	\begin{align}\label{FreeCLTK2}
	\frac{\E^{\chi} [ \partial_{z_2}\left( \tr Q_tG(z_1)G(z_2) \right) ]}{1 + \alpha_t(z_1) + \beta_t(z_1)} 
		=\frac{\partial_{z_2}\left( \frac{\omega_t(z_1)m_{\boxplus}(z_1) - \omega_t(z_2)m_{\boxplus}(z_2)}{z_1 - z_2}  \right)}{m_{\boxplus}(z_1)-1-2\omega_t(z_1)m_{\boxplus}(z_1)} + O_{\prec}\left( \sqrt{\frac{p_t}{N^3}} \right).
	\end{align}
	Plugging \eqref{FreeCLTfactor}, \eqref{FreeCLTK1} and \eqref{FreeCLTK2} into (\ref{def of K(zz)}), we can get
	\begin{align}
		\mathcal{K}(z_1,z_2) =& -\frac{m'_{\boxplus}(z_1)}{m_{\boxplus}(z_1)} \sum_{t=1}^{k}\left( \frac{\partial_{z_2}\mathcal{L}_t(z_1,z_2)-\partial_{z_2}\left( \frac{\omega_t(z_1)m_{\boxplus}(z_1) - \omega_t(z_2)m_{\boxplus}(z_2)}{z_1 - z_2}  \right)}{m_{\boxplus}(z_1)-1-2\omega_t(z_1)m_{\boxplus}(z_1)}  \right)  + O_{\prec}\left( \frac{1}{\sqrt{N}} \right). \label{06221215}
	\end{align}
Further simplification (c.f., Section \ref{SimplifyV} in Appendix) leads to  
	\begin{align}
		\mathcal{K}(z_1,z_2)
		= \sum_{t=1}^{k} \frac{\omega'_t(z_1)\omega'_t(z_2)}{(\omega_t(z_1) - \omega_t(z_2))^2} - \frac{(k-1)m'_{\boxplus}(z_1)m'_{\boxplus}(z_2)}{(m_{\boxplus}(z_1) - m_{\boxplus}(z_2))^2} -\frac{1}{(z_1 - z_2)^2} + O_{\prec}\left( \frac{1}{\sqrt{N}} \right). \label{06221214}
	\end{align}
Adding  the parts that $|\Im z| \le N^{-K}$ of the contours back to the integral of the main term in the RHS of (\ref{06221214})  (with negligible error)  gives the variance in Theorem \ref{FreeCLT}.

	For the expectation in Theorem \ref{FreeCLT}, we can estimate it as follows. Let
	\begin{align}\label{EtEFree}
		E_t(z) : = \E^{\chi} [ \Tr P_tG(z) ] - N(1+\omega_t(z)m_{\boxplus}(z)), \quad E(z) := \E^{\chi} [\Tr G(z) ] - Nm_{\boxplus}(z).
	\end{align}
	By Propositions \ref{keyProp} and \ref{keyprop2},  we immediately obtain $E(z) = O_{\prec}\left(1\right)$  and 
	$E_t(z) = O_{\prec}( \sqrt{p_t/N} )$.
	Our task is to give an explicit expression for $E(z)$ up to the first order. In the following estimation, since only one parameter $z$ is involved, we use the shorthand notation $G:=G(z)$ for brevity. 

	We start from the following lemma which is essentially the cumulant expansion of $\E^{\chi} [\Tr P_tG]$ with some error estimates.
	\begin{lemma}\label{LemmaEtrPG2}
\begin{align}\label{EtrPG2}
		\E^{\chi} [ \Tr P_tG ] =& \E^{\chi} [ \tr \left(\bbX_t\bbX_t' \right)^{-1}  - \tr \bbQ_t\bbG ]\E^{\chi} [  \Tr \bbG - \Tr \bbP_t\bbG  ] +\E^{\chi} [ \tr\bbQ_t\bbG\bbP_t\bbG-\tr\bbQ_t\bbG-\tr\bbQ_t\bbG^2 ] \notag \\
	&+\sum_{ij}^{(t)} \frac{\kappa_4^{t,j}}{{N}^{2}}\E^{\chi} \Big[  [\left(\bbX_t\bbX_t' \right)^{-1}]_{jj}^2\left(  [{I} - \bbP_t]_{ii} [(\bbP_t-{I})\bbG]_{ii} - [({I} - \bbP_t)\bbG]_{ii}^2 \right) \Big]+ O_{\prec}\left(\frac{p_t}{N^{\frac{7}{6}}} \right).
	\end{align}
	\end{lemma}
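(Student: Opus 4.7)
The plan is to apply the cumulant expansion of Lemma \ref{cumulant expansion} to the identity $\Tr P_tG = \sum_{ij}^{(t)} X_{t,ji}[W_tG]_{ji}$ and isolate the second-, third-, and fourth-cumulant contributions. The second-cumulant piece will produce both the product-of-expectations term and the three-trace correction in the claim; the fourth-cumulant piece will produce the explicit $\kappa_4$-correction; the third-cumulant piece together with all higher-order remainders will be absorbed into the $O_\prec(p_t/N^{7/6})$ error.

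\textbf{Second cumulant (main term).} Using $\partial_{t,ji}(X_tX_t')^{-1} = -(X_tX_t')^{-1}[\mathbf{e}_j\mathbf{e}_i'X_t' + X_t\mathbf{e}_i\mathbf{e}_j'](X_tX_t')^{-1}$ and $\partial_{t,ji}G = -G(\partial_{t,ji}P_t)G$ with $P_t = X_t'W_t$, a direct computation yields
\begin{align*}
\partial_{t,ji}[W_tG]_{ji} = [(X_tX_t')^{-1}]_{jj}[(I-P_t)G]_{ii} - [W_t]_{ji}[W_tG]_{ji} - [W_tG]_{ji}^2 - [W_tGW_t']_{jj}[(I-P_t)G]_{ii} + [W_tGP_t]_{ji}[W_tG]_{ji}.
\end{align*}
Summing over $i \in [\![ N ]\!]$ and $j \in [\![ p_t ]\!]$, and using $W_t'W_t = Q_t$ together with $G' = G$, the five pieces collapse to
\begin{align*}
\frac{1}{N}\sum_{ij}^{(t)}\partial_{t,ji}[W_tG]_{ji} = [\tr(X_tX_t')^{-1} - \tr Q_tG](\Tr G - \Tr P_tG) + \tr Q_tGP_tG - \tr Q_tG - \tr Q_tG^2.
\end{align*}
For the product of fluctuating quantities, Lemma \ref{Lemma error estimates} gives $\tr(X_tX_t')^{-1} - \tr Q_tG = O_\prec(\sqrt{p_t/N^3})$, while Propositions \ref{keyProp} and \ref{keyprop2} give centered fluctuations of $\Tr G - \Tr P_tG$ of order $O_\prec(1+\sqrt{p_t/N})$, so Cauchy--Schwarz factorizes the expectation of the product as the product of expectations at cost $O_\prec(p_t/N^{7/6})$.

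\textbf{Third cumulant (the main obstacle).} The third-cumulant sum $\frac{\kappa_3^{t,j}}{2N^{3/2}}\sum_{ij}\E^\chi[\partial_{t,ji}^2[W_tG]_{ji}]$ is bounded only by $O(p_t/\sqrt N)$ naively, which would swamp the $\kappa_4$-term; the required $O_\prec(p_t/N^{7/6})$ relies on the self-normalizing cancellations produced by the projector $(I - P_t)$ that appears in each derivative of $W_t$. After the second derivative the $(I-P_t)$ factors turn second-moment quantities into differences of the form $\Tr f(G) - \Tr P_tf(G)$ whose mean is small: this is precisely the mechanism quantified in the $\mathsf{I}_{t3}$-estimate of Lemma \ref{Error in CLT}, and the same argument transplants verbatim.

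\textbf{Fourth cumulant and tail.} Expanding $\partial_{t,ji}^3[W_tG]_{ji}$ by the same chain rule, the only terms that survive at scale $1/N^2$ after summation and expectation are those in which the two derivatives acting on $(X_tX_t')^{-1}$ both produce the diagonal factor $[(X_tX_t')^{-1}]_{jj}$; the combinatorial multiplicity $3!=6$ cancels the $1/6$ prefactor in Lemma \ref{cumulant expansion} and reproduces the explicit $\kappa_4$-term in the statement, while all other terms are $O_\prec(p_t/N^{7/6})$. Cumulants of order $s\ge 5$ contribute $O_\prec(p_t/N^{3/2})$ by Assumption \ref{assum1} and the deterministic bounds from the cutoff $\Xi$, and the tail remainder $\mathcal{R}_{l+1}^{t,ji}$ is negligible exactly as in the treatment of $\mathsf{E}_3$ within Lemma \ref{Error in CLT}. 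The technical heart of the proof, as indicated, is the third-cumulant bound; the rest is essentially algebra and standard resolvent expansions.
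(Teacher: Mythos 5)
Your proposal is correct and takes essentially the same route as the paper: cumulant expansion of $\Tr P_tG=\sum_{ij}^{(t)}X_{t,ji}[W_tG]_{ji}$, with the explicit first-derivative algebra collapsing to $(\tr(X_tX_t')^{-1}-\tr Q_tG)(\Tr G-\Tr P_tG)+\tr Q_tGP_tG-\tr Q_tG-\tr Q_tG^2$, factorization of the product term via the concentration estimates of Lemma \ref{Lemma error estimates} and Propositions \ref{keyProp}, \ref{keyprop2}, the $\mathsf{I}_{t3}$-machinery of Lemma \ref{Error in CLT} for the $\kappa_3$ term, extraction of the all-diagonal terms for the $\kappa_4$ contribution, and crude bounds plus the cutoff $\Xi$ for higher cumulants and the remainder. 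Two cosmetic slips do not affect the argument: only the \emph{fluctuation} of $\tr(X_tX_t')^{-1}-\tr Q_tG$ is $O_{\prec}(\sqrt{p_t/N^3})$ (the quantity itself is of order $y_t$, which is exactly why the product of expectations survives as a main term), and the $N^{-1/6}$ gain in the third-cumulant bound comes from the fluctuation averaging of the row sums $\sum_i[W_tG]_{ji}$ in Lemma \ref{sumij} after replacing the diagonal factors by their deterministic limits, not from any smallness of $\Tr G-\Tr P_tG$ — but since you transplant the $\mathsf{I}_{t3}$ argument itself, these inaccuracies are harmless.
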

	The proof of Lemma \ref{LemmaEtrPG2} is given in Section \ref{Sec C} of Appendix. Next, if we start from  $\E^{\chi}[\Tr P_t] $ and $\E^{\chi}[\Tr P_tGP_t]$, write them as $\sum_{ij}^{(t)} \mathbb{E}^{\chi} X_{t,ji}(\cdots)_{ij}$  and then apply cumulant expansion w.r.t. $X_{t,ji}$'s, we can get the quantities $\E^{\chi} [ \Tr (\bbX_t\bbX_t' )^{-1} ] $ and $\E^{\chi} [\Tr \bbQ_t\bbG ] $ after the expansion. More specifically, by performing cumulant expansion up to $\kappa_{4}^{t,j}$ term on $\E^{\chi}[\Tr P_t] $ and $\E^{\chi}[\Tr P_tGP_t]$, and then using Propositions \ref{keyProp}, \ref{keyprop2} and \ref{DiagApproByFC} to replace the random quantities by deterministic estimates, we can actually get 
\begin{align}
	&\E^{\chi} [ \Tr (\bbX_t\bbX_t' )^{-1} ] = \frac{\E^{\chi}[\Tr P_t] }{1-y_t} + \frac{y_t}{(1-y_t)^2}+ \sum_{ij}^{(t)}\frac{\kappa_4^{t,j}}{N^2(1-y_t)} +{ O_{\prec}\left(\frac{p_t}{N^{\frac76}}\right)},\label{EtrXX}\\
	&\E^{\chi} [\Tr \bbQ_t\bbG ] = \frac{{\E^{\chi} [\Tr P_tG P_t ]}}{1-y_t-\frac{1}{N}} - \sum_{ij}^{(t)}\frac{\kappa_4^{t,j}(m_{\boxplus}(z)-1-\omega_t(z)m_{\boxplus}(z))}{N^2(1-y_t)\left(1-y_t-\frac{1}{N} \right)}+ { O_{\prec}\left(\frac{p_t}{N^{\frac76}}\right)}.\label{EtrQG}
\end{align}
The derivation of the above two equations is similar to (\ref{EtrPG2}), and thus the details are omitted. In addition, we observe that $\text{Tr} P_t=p_t$ and $\text{Tr} P_tGP_t=\text{Tr}P_tG$. 

Further using Proposition \ref{DiagApproByFC}, we have 
\begin{align}\label{k4E}
	&\E^{\chi} \Big[  [\left(\bbX_t\bbX_t' \right)^{-1}]_{jj}^2\left(  [{I} - \bbP_t]_{ii} [(\bbP_t-{I})\bbG]_{ii} - [({I} - \bbP_t)\bbG]_{ii}^2 \right) \Big] \notag \\
	=& \frac{1+\omega_t(z)m_{\boxplus}(z)-m_{\boxplus}(z)}{1-y_t} - \left(\frac{m_{\boxplus}(z)-1-\omega_t(z)m_{\boxplus}(z)}{1-y_t}\right)^2  + {O_{\prec}\left(\frac{p_t}{N^{\frac{3}{2}}}\right)}.
\end{align} 
Plugging \eqref{EtEFree}, \eqref{EtrXX}, \eqref{EtrQG} and \eqref{k4E} into \eqref{EtrPG2}, after some basic algebra, it turns out that the $\kappa_4^{t,j}$ terms are cancelled out, and we can get
\begin{align*}
	\frac{m_{\boxplus}(z)-1-2\omega_t(z)m_{\boxplus}(z)}{1-y_t} E_t(z)=& \frac{y_t - 1 - \omega_t(z)m_{\boxplus}(z)}{1-y_t}E(z) - \E^{\chi} [ \tr\bbQ_t\bbG^2 ]+ \E^{\chi} [ \tr\bbQ_t\bbG\bbP_t\bbG ] + {O_{\prec}\left(\frac{p_t}{N^{\frac{7}{6}}}\right)} .
\end{align*}
Dividing the coefficient of $E_t(z)$ on both sides, and then summing over $t$, we have
\begin{align*}
	\sum_{t=1}^{k} E_t(z) =& \sum_{t=1}^k \frac{y_t - 1 - \omega_t(z)m_{\boxplus}(z)}{m_{\boxplus}(z)-1-2\omega_t(z)m_{\boxplus}(z)}E(z) \\
	&- \sum_{t=1}^k\frac{\E^{\chi}[ (1-y_t) \tr\bbQ_t\bbG^2 ]- \E^{\chi} [(1-y_t)  \tr\bbQ_t\bbG\bbP_t\bbG ]}{m_{\boxplus}(z)-1-2\omega_t(z)m_{\boxplus}(z)} +{O_{\prec}\left( \frac{1}{N^{\frac{1}{6}}} \right)}.
\end{align*}
Notice that by the resolvent identity, we have
$
	\sum_{t=1}^kE_t (z)= zE(z).
$
Hence, 
\begin{align}
	 E(z) =& \left( z - \sum_{t=1}^{k} \frac{y_t - 1 -\omega_t(z)m_{\boxplus}(z) }{m_{\boxplus}(z)-1-2\omega_t(z)m_{\boxplus}(z)} \right)^{-1} \notag\\
	 &\times \sum_{t=1}^k\frac{\E^{\chi} [(1-y_t)  \tr\bbQ_t\bbG\bbP_t\bbG ] - \E^{\chi} [ (1-y_t) \tr\bbQ_t\bbG^2  ]}{m_{\boxplus}(z)-1-2\omega_t(z)m_{\boxplus}(z)} + {O_{\prec}\left( \frac{1}{N^{\frac{1}{6}}} \right)}\notag\\
	 =& -\frac{m'_{\boxplus}(z)}{m_{\boxplus}(z)}\sum_{t=1}^k\frac{\E^{\chi} [(1-y_t) \tr\bbQ_t\bbG\bbP_t\bbG ] - \E^{\chi} [ (1-y_t)\tr\bbQ_t\bbG^2  ]}{m_{\boxplus}(z)-1-2\omega_t(z)m_{\boxplus}(z)} + {O_{\prec}\left( \frac{1}{N^{\frac{1}{6}}} \right)} \label{0621100}
\end{align}
Using Proposition \ref{keyprop2}, and then performing some simplification (c.f., Section \ref{SimplifyV} in Appendix), we can obtain
\begin{align}
	E(z)  = \frac{1}{2}\bigg[ \sum_{t=1}^k\frac{\omega''_t(z)}{\omega_t'(z)}+(k-1)\left( \frac{2m_{\boxplus}'(z)}{m_{\boxplus}(z)}-\frac{m_{\boxplus}''(z)}{m_{\boxplus}'(z)} \right) \bigg] + {O_{\prec}\left( \frac{1}{N^{\frac{1}{6}}} \right)}. \label{0621101}
\end{align}
Therefore, by the definition of $E(z)$ in (\ref{EtEFree}),
\begin{align}
	\E^{\chi} [\Tr G ]  = Nm_{\boxplus}(z) +\frac{1}{2}\bigg[ \sum_{t=1}^k\frac{\omega''_t(z)}{\omega_t'(z)}+(k-1)\left( \frac{2m_{\boxplus}'(z)}{m_{\boxplus}(z)}-\frac{m_{\boxplus}''(z)}{m_{\boxplus}'(z)} \right) \bigg] + {O_{\prec}\left( \frac{1}{N^{\frac{1}{6}}} \right)}. \label{062850}
\end{align}
Adding  the part that $|\Im z| \le N^{-K}$ of the contour back to the integral of the main term in the RHS of (\ref{062850})  (with negligible error)  gives  the expectation in Theorem \ref{FreeCLT}.
\end{proof}

\begin{appendix}
\section{Additional preliminary and proofs of Lemmas in Section \ref{Sec. Preliminaries}}\label{Sec A}

The following cumulant expansion formula plays a central role in our computation, whose proof can be found in \cite{he2020mesoscopic}; also see for instance \cite{lytova2009central} and \cite{khorunzhy1996asymptotic} for earlier versions of this formula. 
 \begin{lemma}[Cumulant expansion] \label{cumulant expansion}
 	Let $f: \mathbb{R}\to \mathbb{C}$ be a smooth function, and denote by $f^{(k)}$ its $kth$ derivative. Then for every fixed $l \in \mathbb{N}$, we have
 	\begin{align}\label{CE}
 		\E [\xi f(\xi)] = \sum_{k=0}^l\frac{\kappa_{k+1}(\xi)}{k!}\E [f^{(k)}(\xi)] + \mathcal{R}_{l+1},
 	\end{align}
	assuming that all expectations in \eqref{CE} exist, where $\mathcal{R}_{l+1}$ is the remainder term (depending on $f$ and $\xi$), such that for any $t > 0$,
	\begin{align}\label{remainder}
	\mathcal{R}_{\ell+1}=&O(1) \cdot \mathbb{E}\left[|\xi|^{ \ell+2} \mathbf{1}_{|\xi|>t}\right]\cdot \sup _{x \in \mathbb{R}}\left|f^{(\ell+1)}(x)\right|  +O(1) \cdot \mathbb{E}\left[|\xi|^{\ell+2}\right] \cdot \sup _{|x| \leqslant t}\left|f^{(\ell+1)}(x)\right|.	
	\end{align} 
 \end{lemma}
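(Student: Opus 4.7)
The plan is to derive the expansion via the characteristic function of $\xi$. Let $\phi(s):=\E[e^{is\xi}]$ and assume first that $f$ is a Schwartz function, so that Fourier inversion gives $f(\xi)=\frac{1}{2\pi}\int\hat f(s)e^{is\xi}\,ds$; the general smooth case then follows by a standard smooth-truncation/approximation argument, where one applies the formula to a truncation $f_R$ of $f$ and controls the error by the moment assumption in \eqref{remainder}. After exchanging expectation and integration one obtains
\begin{align*}
\E[\xi f(\xi)]=\frac{1}{2\pi}\int\hat f(s)\,\E[\xi e^{is\xi}]\,ds=\frac{-i}{2\pi}\int\hat f(s)\,\phi'(s)\,ds,
\end{align*}
so the problem reduces to a suitable expansion of $\phi'(s)$ in powers of $s$.

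Next I would exploit the cumulant generating function. Formally $\log\phi(s)=\sum_{k\ge1}\kappa_k(\xi)(is)^k/k!$, valid up to order $l+1$ under the moment assumption, so differentiating gives
\begin{align*}
\phi'(s)=i\,\phi(s)\sum_{k\ge1}\frac{\kappa_k(\xi)}{(k-1)!}(is)^{k-1}.
\end{align*}
Truncating at order $l$ yields $\E[\xi e^{is\xi}]=\sum_{k=0}^{l}\tfrac{\kappa_{k+1}(\xi)}{k!}(is)^k\phi(s)+\mathsf r_{l+1}(s)$. Substituting into the Fourier representation and using that $\frac{1}{2\pi}\int\hat f(s)(is)^k\phi(s)\,ds=\E[f^{(k)}(\xi)]$ (since $(is)^k$ inverts to $k$-fold differentiation) delivers the main sum $\sum_{k=0}^l\tfrac{\kappa_{k+1}(\xi)}{k!}\E[f^{(k)}(\xi)]$ in \eqref{CE}.

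The main work is to prove the specific form \eqref{remainder} of the remainder, and here I would not push the pure Fourier calculation but switch to a direct Taylor argument in $\xi$. Namely, apply Taylor's theorem to $f$ with an integral remainder of order $l+1$ in $\xi$, and combine with the moment–cumulant polynomial identities expressing $\E[\xi^{k+1}]$ in terms of $\kappa_1,\ldots,\kappa_{k+1}$; this matches the polynomial contributions of $f$ to the sum $\sum_{k=0}^{l}\tfrac{\kappa_{k+1}(\xi)}{k!}\E[f^{(k)}(\xi)]$ and leaves as residue an expression involving $f^{(l+1)}$ and $\xi^{l+2}$. Splitting this residue according to $\{|\xi|\le t\}$ versus $\{|\xi|>t\}$, and bounding the integrand on each piece by the local supremum $\sup_{|x|\le t}|f^{(l+1)}(x)|$ (against the full moment $\E|\xi|^{l+2}$) and the global supremum $\sup_{x\in\mathbb R}|f^{(l+1)}(x)|$ (against the truncated moment $\E[|\xi|^{l+2}\mathbf1_{|\xi|>t}]$) respectively, produces precisely the two-term bound in \eqref{remainder}.

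The main obstacle is the book-keeping between the Fourier/cumulant side (where the expansion of $\phi'$ is clean and natural) and the $\xi$-side (where the pointwise Taylor remainder of $f$ is clean and natural), since \eqref{remainder} is stated in the latter language. Reconciling the two requires using the explicit polynomial moment–cumulant identities at each order $k\le l$ so that the polynomial part of $f$'s Taylor expansion is absorbed into the cumulant sum exactly, leaving only an error of the form claimed. The split at an arbitrary threshold $t$ is what gives the flexibility needed in the random matrix applications (where one typically takes $t=N^{-1/2+\epsilon}$) and is the one step that genuinely uses a probabilistic truncation rather than pure Fourier analysis.
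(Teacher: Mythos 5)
The paper itself gives no proof of this lemma: it is imported verbatim from \cite{he2020mesoscopic} (with \cite{lytova2009central} and \cite{khorunzhy1996asymptotic} cited as earlier versions), so there is no in-paper argument to compare against; your sketch can only be measured against the standard proof in those references.

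The second half of your proposal is, in essence, that standard proof, and it is the part that works: for a polynomial of degree at most $l$ the identity \eqref{CE} holds exactly with zero remainder, because of the moment--cumulant recursion $\E[\xi^{m+1}]=\sum_{k=0}^{m}\binom{m}{k}\kappa_{k+1}(\xi)\,\E[\xi^{m-k}]$, so writing $f=P_l+r$ with $P_l$ the order-$l$ Taylor polynomial of $f$ at $0$ reduces the whole lemma to estimating the terms containing $r$, and the split at $|\xi|\le t$ versus $|\xi|>t$ produces exactly the two-term bound \eqref{remainder}. Two caveats, though. First, the residue is not only $\E[\xi\, r(\xi)]$: you must also control $\sum_{k=0}^{l}\frac{\kappa_{k+1}(\xi)}{k!}\E[r^{(k)}(\xi)]$, and since $|r^{(k)}(\xi)|\le \sup_x|f^{(l+1)}(x)|\,|\xi|^{l+1-k}/(l+1-k)!$ these terms pair a deterministic cumulant with an expectation; to reach \eqref{remainder} you need $|\kappa_{k+1}(\xi)|\le C_k\,\E|\xi|^{k+1}$ together with an inequality such as $\E|\xi|^{k+1}\,\E\big[|\xi|^{l+1-k}\mathbf{1}_{|\xi|>t}\big]\le \E\big[|\xi|^{l+2}\mathbf{1}_{|\xi|>t}\big]$ (association of monotone functions of $|\xi|$; Lyapunov suffices for the untruncated piece). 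This is fillable, but it is more than ``bounding the integrand on each piece'', which only covers the $\E[\xi r(\xi)]$ term. Second, the Fourier/characteristic-function half should be dropped or completely reworked: $\log\phi$ is defined and Taylor-expandable only near $s=0$, so the truncation $\phi'(s)=\mathrm{i}\,\phi(s)\sum_{k\le l+1}\kappa_k(\mathrm{i}s)^{k-1}/(k-1)!+\mathsf{r}_{l+1}(s)$ carries a remainder you cannot control for large $s$, yet you integrate it against $\hat f(s)$ over all of $\mathbb{R}$; as written that step is not valid, and since the Taylor argument already yields both the main term and the remainder bound simultaneously, the Fourier detour is also unnecessary.
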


We also need the following result on the spectrum of the sample covariance matrix.
\begin{lemma}\label{8002}
	Under Assumptions \ref{assum1} and \ref{assum2} , we have for all $t \in [\![ k]\!]$,
	\begin{align}
		\frac{1}{2}\left(1 - \sqrt{\frac{p_t}{N}} \right)^2 \leq \lambda_{\min}(X_tX_t') < \lambda_{\max}(X_tX_t')  \leq 2\left(1 + \sqrt{\frac{p_t}{N}} \right)^2 \label{4114}
	\end{align}
	with high probability.
\end{lemma}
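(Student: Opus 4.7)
The plan is to reduce Lemma \ref{8002} to standard non-asymptotic extreme singular value estimates for rectangular random matrices. Under Assumption \ref{assum1}, the matrix $X_t \in \mathbb{R}^{p_t \times N}$ has independent entries with mean $0$, variance $1/N$, and all polynomial moments uniformly bounded. Under Assumption \ref{assum2}, $y_t = p_t/N$ is bounded away from $1$, so $(1-\sqrt{y_t})^2$ is bounded below by a positive constant. Observe that the stated bounds leave a constant factor of slack past the Marchenko-Pastur edges $(1\pm\sqrt{y_t})^2$, so only loose edge estimates are required.

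For the upper bound $\lambda_{\max}(X_tX_t') \le 2(1+\sqrt{y_t})^2$, I would use the trace-moment method. Expanding $\mathbb{E}[\operatorname{tr}(X_tX_t')^k]$ as a sum over closed bipartite walks of length $2k$ and applying Bai-Yin type combinatorial estimates together with the uniform moment bound $\mathbb{E}[|\sqrt{N}X_{ab}|^\ell]<C_\ell$, one obtains $\mathbb{E}[\operatorname{tr}(X_tX_t')^k] \le p_t \bigl((1+\sqrt{y_t})^2(1+o(1))\bigr)^k$, where the $o(1)$ can be made of order $k^{O(1)}/N$. Choosing $k$ to grow like $\log N$ (or any slowly growing function) and applying Markov's inequality yields $\lambda_{\max}(X_tX_t') \le (1+\sqrt{y_t})^2 + N^{-\epsilon}$ with probability at least $1 - N^{-D}$ for any fixed $D$, which is a fortiori below $2(1+\sqrt{y_t})^2$.

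For the lower bound $\lambda_{\min}(X_tX_t') \ge \tfrac12(1-\sqrt{y_t})^2$, I would appeal to a (global) Marchenko-Pastur law combined with a soft edge argument. Specifically, since $y_t$ is bounded away from $1$, the limiting MP law has support $[(1-\sqrt{y_t})^2,(1+\sqrt{y_t})^2]$ with a strictly positive lower edge. A standard resolvent argument for $z = E + \mathrm{i}\eta$ with $E$ just below the lower MP edge shows that $\operatorname{Im}\operatorname{tr}(X_tX_t'-z)^{-1} \prec \eta$, which rules out eigenvalues in a window of size $\eta$ around $E$, yielding $\lambda_{\min}(X_tX_t') \ge (1-\sqrt{y_t})^2 - N^{-\epsilon}$ with high probability. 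Taking $N$ large, this is below $\tfrac12(1-\sqrt{y_t})^2$.

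The main obstacle is the lower bound: the upper bound is a routine moment computation, but controlling $\lambda_{\min}$ away from zero for general (possibly non-continuous) distributions without anti-concentration input requires either the local MP law or a Rudelson-Vershynin type small-ball estimate. The saving grace is the generosity of the constants $\tfrac12$ and $2$ in the statement, which means only the soft edge (bulk/global) version of the MP convergence is needed; one does not need refined edge universality or sharp small-ball probabilities, making the argument essentially standard under the all-moments assumption.
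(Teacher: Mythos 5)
Your upper-bound argument (trace moments with $k\sim\log N$ plus Markov) is a legitimate route under the all-moments assumption, and it is different from the paper, which in the regime $p_t\ge N^{\epsilon}$ simply quotes the strong eigenvalue bounds for sample covariance matrices from \cite{bloemendal2016principal} and in the regime $p_t\le N^{\epsilon}$ writes $X_tX_t'=I+R$ and uses Weyl's inequality with $\|R\|\le\|R\|_{\mathrm{HS}}$ small. The genuine gap is in your lower bound. The claim that ``only the soft edge (bulk/global) version of the MP convergence is needed'' because of the factor-$\tfrac12$ slack is not correct: weak convergence of the ESD controls only the \emph{fraction} of eigenvalues below $(1-\sqrt{y_t})^2-\delta$, and a single outlier eigenvalue (fraction $1/p_t$) is invisible to it. To exclude outliers with probability $1-N^{-D}$ for every fixed $D$ (the paper's notion of high probability) you need a quantitative ``no eigenvalues outside the support'' statement, i.e.\ exactly the local-law/rigidity input of \cite{bloemendal2016principal} that the paper cites. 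Your own window argument betrays this: if $\operatorname{tr}$ is the normalized trace, an eigenvalue within distance $\eta$ of $E$ forces $\Im\operatorname{tr}(X_tX_t'-z)^{-1}\gtrsim (N\eta)^{-1}$, which contradicts a bound $\prec\eta$ only when $\eta\ll N^{-1/2}$; so you need resolvent control at genuinely local scales, not a global statement, and under a mere $4+\delta$ or all-moment hypothesis that control is not ``free''.

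The second problem is uniformity in $p_t$. Under Assumption \ref{assum2}, $p_t$ ranges from $1$ up to order $N$ (the paper's simulations use $p_t=2$), and for bounded or very slowly growing $p_t$ the phrases ``lower MP edge'' and ``local MP law'' either degenerate or fall outside the hypotheses of the standard references, which assume $p_t\ge N^{\epsilon}$ or $p_t\asymp N$; likewise a Rudelson--Vershynin-type net argument with only polynomial tails cannot absorb a union bound over $e^{cp_t}$ points unless $p_t\lesssim\log N$. The paper resolves this by an explicit case split: for $p_t\le N^{\epsilon}$ it avoids spectral-edge considerations altogether via the perturbation $X_tX_t'=I+R$ with $\|R\|$ small and Weyl's inequality. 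Your proposal needs either this case split or some other argument covering small $p_t$; as written, the lower bound is justified only (and even then with the wrong stated input) in the proportional regime.
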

\begin{proof}[Proof of Lemma \ref{8002}]
	In the regime that $p_t \ge N^{\epsilon}$ for some fixed small $\epsilon > 0$, (\ref{4114}) can be directly obtained from \cite{bloemendal2016principal}. 
	In the regime that $p_t \le N^{\epsilon}$ for sufficiently small $\epsilon > 0$, we write
	\begin{align*}
		X_tX_t' = \mathrm{I} + R,
	\end{align*} 
	where $R$ is a $p_t \times p_t$ matrix defined entrywise by $R_{ij} = x_{ti}x_{tj}' - \E x_{ti}x_{tj}'$ and $x_{ti}$ represents the $i$-th row of $X_t$. One can easily see that $|R_{ij}| \prec N^{-1}$ and $\| R\| \le \|R \|_{\mathrm{HS}} \prec p_t^2/N \prec N^{-1+2\epsilon}$. Therefore, by the Weyl's inequality, we have
	\begin{align*}
		\big|\lambda_l(X_tX_t') - 1\big| \prec N^{-1+2\epsilon}, \quad l \in [\![p_t ]\!],
	\end{align*}
	which implies (\ref{4114}). 
\end{proof}

Based on Lemma \ref{8002}, we have some preliminary high probability bounds on the spectral norm of some random matrices involved in the following discussions. 
\begin{lemma} \label{bound for covariance}
	Under Assumptions \ref{assum1} and \ref{assum2} , we have for all $t \in [\![ k]\!]$,
	\begin{align}
		 \|X_t \| \prec 1,\quad  \|(X_tX_t')^{-1} \| \prec 1.
	\label{8001}\end{align}
	Further, for any $z \in \mathbb{C}$ such that $\| G\|\prec 1$, we have
	\begin{align}
		|\tr G|\prec 1,\quad |\tr P_tG| \prec \frac{p_t}{N}, \quad |\tr Q_tG| \prec \frac{p_t}{N} ,\quad |\tr (X_tX_t')^{-1}| \prec \frac{p_t}{N}.
	\label{8003}\end{align}
\end{lemma}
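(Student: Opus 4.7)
\textbf{Proof proposal for Lemma \ref{bound for covariance}.}

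The plan is to deduce all four estimates directly from Lemma \ref{8002} together with elementary trace/rank inequalities, without any new probabilistic input. The key observation is that all the matrices appearing on the left-hand sides either have small rank ($p_t$) or are controlled in operator norm by $X_tX_t'$ and its inverse.

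First I would establish the two operator-norm bounds in \eqref{8001}. Since $\|X_t\|^2 = \lambda_{\max}(X_tX_t')$, Lemma \ref{8002} gives $\|X_t\|^2 \leq 2(1+\sqrt{p_t/N})^2$ with high probability; because $y_t = p_t/N$ is bounded above (by Assumption \ref{assum2}, $y_t \leq y$ and $y \to \hat y \in (0,\infty)$), this is $O(1)$, hence $\|X_t\| \prec 1$. Likewise $\|(X_tX_t')^{-1}\| = \lambda_{\min}(X_tX_t')^{-1} \leq 2(1-\sqrt{p_t/N})^{-2}$ with high probability. Here Assumption \ref{assum2} is used in a second way: $y_t \to \hat y_t \in [0,1)$ implies $\sqrt{p_t/N}$ is bounded away from $1$ for large $N$, so $(1-\sqrt{p_t/N})^{-2} = O(1)$ and thus $\|(X_tX_t')^{-1}\| \prec 1$. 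Stochastic domination is closed under sums and products (Lemma \ref{prop_prec}(i)), so these bounds are well-behaved for subsequent use.

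Next I would derive the four trace bounds in \eqref{8003} by combining the hypothesis $\|G\| \prec 1$ with the basic inequality $|\Tr AB| \leq \mathrm{rank}(A)\,\|A\|\,\|B\|$. For $\tr G = N^{-1}\Tr G$ on an $N\times N$ matrix, $|\tr G| \leq \|G\| \prec 1$. For $\Tr P_tG$, note that $P_t = X_t'(X_tX_t')^{-1}X_t$ is an orthogonal projection of rank $p_t$, so $|\Tr P_tG| \leq p_t\|G\| \prec p_t$ and $|\tr P_tG| \prec p_t/N$. For $\Tr Q_tG$, observe $Q_t = X_t'(X_tX_t')^{-2}X_t$ has rank at most $p_t$ and operator norm $\|Q_t\| = \|(X_tX_t')^{-1}\| \prec 1$ by the first part, so $|\Tr Q_tG| \leq p_t\|Q_t\|\|G\| \prec p_t$, giving the desired bound. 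Finally $(X_tX_t')^{-1}$ is $p_t\times p_t$ with operator norm $\prec 1$, so $|\Tr (X_tX_t')^{-1}| \leq p_t\|(X_tX_t')^{-1}\| \prec p_t$ and the normalized trace (by $N$, per the convention in Section \ref{s.NC}) is $\prec p_t/N$.

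There is no substantive obstacle here; the statement is essentially a consolidation lemma that packages Lemma \ref{8002} into the forms that will be invoked repeatedly in the main arguments. The only points worth flagging are the use of Assumption \ref{assum2} to ensure $\sqrt{p_t/N}$ stays bounded away from $1$ uniformly (so that $\|(X_tX_t')^{-1}\| \prec 1$ genuinely holds), and the consistent normalization convention that $\tr$ always divides by $N$ regardless of the actual matrix dimension, which is precisely why the $p_t/N$ factor appears on the right-hand sides of \eqref{8003}.
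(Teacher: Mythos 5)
Your proof is correct and follows essentially the same route as the paper: \eqref{8001} is read off directly from Lemma \ref{8002}, and \eqref{8003} follows from the trace inequality $|\Tr A|\le \mathrm{rank}(A)\,\|A\|$ applied to the rank-$p_t$ matrices $P_tG$, $Q_tG$ and $(X_tX_t')^{-1}$. The only cosmetic difference is that you bound $\|Q_t\|$ by $\|(X_tX_t')^{-1}\|$ via the spectral identity, while the paper uses the slightly cruder bound $\|Q_tG\|\le\|X_t\|^2\|(X_tX_t')^{-1}\|^2\|G\|$; both are fine.
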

\begin{proof}[Proof of Lemma \ref{bound for covariance}]
By Lemma \ref{8002}, we can obtain (\ref{8001}) directly. If $\| G\|  = O_{\prec}(1)$, (\ref{8003}) can be bounded as the following,
 \begin{align*}
 	&|\tr G| \le \|G \| \prec 1, \quad |\tr P_tG| \le \frac{p_t}{N} \| P_tG\| \prec \frac{p_t}{N} ,\\
 	&|\tr Q_tG |\le \frac{p_t}{N} \| Q_tG\|  \leq  \frac{p_t}{N} \| X_t \|^2\| \|(X_tX_t')^{-1} \|^2\| G\|  \prec \frac{p_t}{N} ,\notag\\
	& \tr (X_tX_t')^{-1} \le  \frac{p_t}{N} \| (X_tX_t')^{-1} \|  \prec \frac{p_t}{N}.
 \end{align*}
 Here we used the inequality $|\Tr (A)| \le r\|A \| $   for any matrix $A$ with rank $r$.
\end{proof}

We have the following properties of the truncation function $\Xi$ in (\ref{Xit}) which is a direct consequence of the definition and Lemma \ref{8002}.
\begin{lemma}\label{XiLemma}
Under the assumption of Theorem \ref{FreeCLT}, we have $\Xi = 1$ with high probability,  and for any fixed $s$, $\partial_{t,ji}^s\Xi = 0$ with high probability.
\end{lemma}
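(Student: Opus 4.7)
The plan is straightforward and relies almost entirely on Lemma \ref{8002}. That lemma tells us that with high probability, for every $t\in [\![ k]\!]$, all eigenvalues of $X_tX_t'$ lie in a compact interval bounded away from $0$ and $\infty$ by positive constants. As an immediate consequence, $\tr(X_tX_t')$ and $\tr((X_tX_t')^{-1})$ are both $O(p_t/N)=O(1)$ with high probability, which is certainly much smaller than $N^K$ once the constant $K$ in the definition of the cutoff $\chi$ is fixed to be, say, $K\geq 1$.

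For the first claim, I would fix the high-probability event on which the eigenvalue bounds of Lemma \ref{8002} hold simultaneously for every $t\in [\![ k]\!]$ (a union bound over the $k\lesssim N$ values of $t$ still yields a high-probability event in the sense of Definition \ref{def.high-probab}). On this event, every argument $\tr(X_tX_t')$ and $\tr((X_tX_t')^{-1})$ sits in the region $\{x < N^K\}$ on which $\chi \equiv 1$. Hence each factor in the product defining $\Xi$ equals $1$, so $\Xi=1$ on this event.

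For the second claim, expanding $\partial_{t,ji}^s\Xi$ via the Leibniz rule and the chain rule produces a finite sum of terms, each of which carries at least one factor of the form $\chi^{(m)}(\tr(X_tX_t'))$ or $\chi^{(m)}(\tr((X_tX_t')^{-1}))$ with $m\ge 1$, multiplied by products of derivatives of the traces with respect to $X_{t,ji}$ (which are deterministic in form). Since $\chi$ is identically $1$ on the open set $\{x<N^K\}$, every positive-order derivative of $\chi$ vanishes there. Thus, on the same high-probability event considered above, each such factor vanishes, forcing $\partial_{t,ji}^s\Xi = 0$ with high probability.

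There is essentially no technical obstacle: the lemma is a direct packaging of Lemma \ref{8002} with the construction of $\chi$. The only minor points are (i) taking a union bound over $t$, which is harmless because the failure probabilities are super-polynomially small in $N$, and (ii) making sure the constant $K$ in $\chi$ is chosen independent of $t$, which is automatic since $\tr(X_tX_t')$ and $\tr((X_tX_t')^{-1})$ are uniformly $O(1)$ with high probability across all $t$.
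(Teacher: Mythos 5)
Your proposal is correct and is exactly the argument the paper intends: the lemma is stated as a direct consequence of the definition of $\chi$, $\Xi$ and the eigenvalue bounds of Lemma \ref{8002}, which force $\tr(X_tX_t')$ and $\tr((X_tX_t')^{-1})$ into the region where $\chi\equiv 1$ (so $\Xi=1$) and where all positive-order derivatives of $\chi$ vanish (so $\partial_{t,ji}^s\Xi=0$), uniformly in $t$ on a single high-probability event. Your extra care with the union bound over $t$ and the Leibniz expansion only makes explicit what the paper leaves implicit.
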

From the definition of $\Xi$, we also have the following lemma

\begin{lemma}\label{moreXi}
	Let
	$
	\mathsf{X}=(\mathsf{X}^{(N)}(u):  N \in \mathbb{N}, \ u \in \mathsf{U}^{(N)})
	$
	be family of  random variables, where $\mathsf{U}^{(N)}$ is a possibly $N$-dependent parameter set.	

There exists a constant $C>0$ such that  $|\mathsf{X}^{(N)}(u)| \leq N^{C}$ a.s. uniformly in $u$ for all sufficiently large $N$. Then for any fixed $n \ge 0$, we have $\E[ \mathsf{X}]=  \E[ \mathsf{X}\cdot \Xi^n ]+ O(N^{-D})$ for any fixed $D>0$. 
\end{lemma}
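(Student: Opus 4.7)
The plan is to reduce the claim to a single application of Lemma~\ref{XiLemma} together with the deterministic polynomial bound on $\mathsf{X}$. The key observation is that $\Xi$ is built as a product of cutoffs $\chi$ taking values in $[0,1]$, so in particular $\Xi \in [0,1]$ and hence $\Xi^n \in [0,1]$ for every fixed $n \ge 0$; consequently $|1-\Xi^n| \le 1$ pointwise, and moreover $1-\Xi^n = 0$ on the event $\{\Xi = 1\}$. This means $1-\Xi^n$ is supported on the low-probability event $\{\Xi < 1\}$.

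Concretely, I would write
\begin{align*}
\bigl|\E[\mathsf{X}] - \E[\mathsf{X}\cdot \Xi^n]\bigr| = \bigl|\E[\mathsf{X}(1-\Xi^n)]\bigr| \le \E\bigl[|\mathsf{X}|\cdot \mathbf{1}_{\{\Xi < 1\}}\bigr]\le N^{C}\,\mathbb{P}(\Xi < 1),
\end{align*}
uniformly in $u \in \mathsf{U}^{(N)}$, where in the last step I used the almost-sure bound $|\mathsf{X}^{(N)}(u)|\le N^{C}$. By Lemma~\ref{XiLemma}, $\Xi = 1$ with high probability in the sense of Definition~\ref{def.high-probab}, so $\mathbb{P}(\Xi < 1)\le N^{-(D+C)}$ for any fixed $D>0$ and $N$ large. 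Combining, one gets $|\E[\mathsf{X}] - \E[\mathsf{X}\cdot \Xi^n]| \le N^{-D}$, which is exactly the required conclusion.

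There is no real obstacle: the statement is a routine truncation-removal lemma. The only care required is verifying the pointwise inequality $|1-\Xi^n|\le \mathbf{1}_{\{\Xi<1\}}$, which is immediate from $\Xi \in [0,1]$, and invoking Lemma~\ref{XiLemma} with the exponent $D+C$ rather than $D$ to absorb the deterministic prefactor $N^C$.
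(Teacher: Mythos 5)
Your proof is correct and follows essentially the same route as the paper: split off $\E[\mathsf{X}(1-\Xi^n)]$, use the deterministic bound $|\mathsf{X}|\le N^{C}$ together with the fact that $1-\Xi^n$ vanishes on the high-probability event $\{\Xi=1\}$ (Lemma \ref{XiLemma}), and choose the exponent large enough to absorb $N^{C}$. The only cosmetic difference is that the paper loosely labels this step ``Cauchy--Schwarz'' and bounds by $\mathbb{P}(\Xi\neq 1)$, whereas you argue directly via the indicator of $\{\Xi<1\}$; the substance is identical.
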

\begin{proof}
Notice that $
		\E[ \mathsf{X}]=  \E[ \mathsf{X}\cdot \Xi^n ] + \E[ \mathsf{X}\cdot (1 - \Xi^n) ]. 
$ By Cauchy-Schwarz, we have
\begin{align*}
	|\E[ \mathsf{X}\cdot (1 - \Xi^n) ]|   \le N^C\mathbb{P}(\Xi \neq 1) =O(N^{-D})
\end{align*}
for any large $D > 0$. Where in the last step, we used Lemma \ref{XiLemma}.

\end{proof}

We further notice that on the event $\{\Xi\neq 0\}$, by the definition of $\Xi$ the crude bounds hold
\begin{align*}
\|(X_tX_t')^{-1}\|\leq 2N^{K+1}, \qquad \|X_tX_t'\|\leq 2N^{K+1},\qquad t\in [\![k]\!].  
\end{align*}

Next, we give the proofs of Lemmas \ref{prop_prec} and \ref{Imomegatc}.
\begin{proof}[Proof of Lemma \ref{prop_prec}]
	Part (i) is obvious from Definition \ref{def.sd}. For any fixed $\varepsilon>0$, we have
	\begin{align}
	|\E \mathsf{X}_1| \leq \E |\mathsf{X}_1{\bf 1}(|\mathsf{X}_1|\leq  N^{\varepsilon}\Psi)|+\E |\mathsf{X}_1{\bf 1}(|\mathsf{X}_1|\geq  N^{\varepsilon}\Psi)|\leq N^{\varepsilon}\Psi+ N^{C}\Psi \mathbb{P}(|\mathsf{X}_1|\geq N^{\varepsilon}\Psi)=O(N^{\varepsilon}\Psi) \label{062801}
	\end{align}
 for sufficiently large $N\geq N_0(\epsilon)$. This proves part (ii). 
\end{proof}

\begin{proof}[Proof of Lemma \ref{Imomegatc}]
First, we have the following high probability bound on the spectral norm of $H$.
\begin{align*}
	\|H \| = \Big\|\sum_{t=1}^kP_t \Big\| \prec \Big\|\sum_{t=1}^{k}X_t'X_t \Big\| \prec 1,
\end{align*}
where in the second step, we used Lemma \ref{8002} to get that $\sum_{t=1}^kX_t'((X_tX_t')^{-1} - CI)X_t$ is semi-negative definite for some large $C > 0$. In the last step, we used the fact that the spectral norm of a $p \times N$ sample covariance matrix is bounded with high probability.
	Therefore, by the boundedness of the operator norm of $H$, $G(z)$ can be expanded as
	\begin{align*}
		G(z) =- \sum_{l=1}^{\infty}z^{-l}H^{l-1}, 
	\end{align*}
	as $\Im z \to \infty$. Therefore, from the definition of $\omega_t^c(z)$, we have
	\begin{align*}
		\omega_t^c(z) = z - \frac{\sum_{s\neq t}\tr P_sG(z)}{\tr G(z)} = z - \frac{\sum_{s\neq t}\left( \tr P_sz^{-1} + \tr P_sHz^{-2} \right)}{z^{-1}} + O(z^{-2}),
	\end{align*}
	as $\Im z \to \infty$. Taking imaginary part on both sides, we have 
	\begin{align*}
		\Im \omega_t^c(z) - \Im z = \frac{\Im z \sum_{s\neq t}\tr P_sH}{|z|^2} + O\left(|z|^{-2} \right),
	\end{align*}
	as $\Im z \to \infty$. 
	Since
$
		\sum_{s\neq t}\tr P_sH \geq  \sum_{s\neq t}\tr P_s = \sum_{s\neq t} p_s/N,
$
we can get
$
		\Im \omega_t^c(z) - \Im z \ge \frac{\Im z}{2|z|^2}\sum_{s\neq t}p_s/N,
$
	as $\Im z \to \infty$. Hence, when $\Im z$ is sufficiently large, we have 
$
		\Im \omega_t^c(z) - \Im z \ge c 
$
for some small constants $c > 0$	by Assumption \ref{assum2}.  This concludes the proof. 
\end{proof}

We further recall the  classical Newton-Kantorvich theorem \cite{ferreira2012kantorovich}.
	\begin{thm}[Newton-Kantorvich] Let $X,Y$ be Banach spaces, $\mathcal{S} \subset X$, and $g:\mathcal{S}  \mapsto Y$ a continuous function, continuously differentiable on $\mathrm{int}(\mathcal{S})$. Take $x_0 \in \mathrm{int}(\mathcal{S})$, $L,b >0$ and suppose that,
	\begin{itemize}
		\item[1.] $g'(x_0)$ is non-singular,
		\item[2.] $\|g^{\prime}\left(x_{0}\right)^{-1}\left[g^{\prime}(y)-g^{\prime}(x)\right]\| \leq L\|x-y\| \quad \text { for any } x, y \in \mathcal{S}$,
		\item[3.] $\|g^{\prime}\left(x_{0}\right)^{-1} g\left(x_{0}\right)\| \leq b$,
		\item[4.] $2bL < 1$.
	\end{itemize}
		Define 
		\begin{align*}
			t_{*}:=\frac{1-\sqrt{1-2 b L}}{L}, \quad t_{* *}:=\frac{1+\sqrt{1-2 b L}}{L}
		\end{align*}
		If $B[x_0, t_{*}] \subset \mathcal{S}$, then sequences $\{x_n\}$ generated by Newton's Method for solving $g(x) = 0$ with starting point $x_0$,
		\begin{align*}
			x_{n+1} = x_n -g'(x_n)^{-1} g(x_n), \quad n = 1,2,\cdots
		\end{align*}
		is well defined, is contained in $B(x_0, t_{*})$, converges to a point $x_{*} \in B[x_0, t_{*}]$ which is the unique zero of $g$ in $B[x_0, t_{*}]$ and 
		\begin{align*}
			\left\|x_{*}-x_{n+1}\right\| \leq \frac{1}{2}\left\|x_{*}-x_{n}\right\|, \quad n=0,1, \cdots
		\end{align*}
	\label{NewtonKantorvich}\end{thm}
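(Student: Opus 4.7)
The plan is a standard truncation-and-universality argument that exploits the fact that the CLTs under Assumption \ref{assum1} are already universal: the mean $a_f$ and variance $\sigma_f^2$ in Theorems \ref{FreeCLT}, \ref{MPCLT}, \ref{RemoveSampleMean} depend on the entry distribution only through the first two moments, not on $\kappa_3$ or $\kappa_4$. Hence it suffices to replace $X$ by a matrix $\hat X$ with bounded entries matching the first two moments, provided the LSS changes by $o_p(1)$ and the cumulant expansions in the original proofs survive with polynomially growing higher moments.

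First, I would fix $\tau \in (2/(4+\delta),\, 1/2)$, which exists precisely because $\delta > 0$, and set $\tilde X_{ab} := X_{ab}\mathbf{1}(|\sqrt{N}X_{ab}| \leq N^\tau)$. Markov's inequality and the $(4+\delta)$-moment bound give $\mathbb{P}(\sqrt{N}|X_{ab}| > N^\tau) \leq CN^{-\tau(4+\delta)}$, so a union bound over the $pN$ entries yields $\mathbb{P}(X \neq \tilde X) \leq CN^{2-\tau(4+\delta)} = o(1)$. Then I would recenter and rescale entrywise: $\hat X_{ab} := (\tilde X_{ab} - \mathbb{E}\tilde X_{ab})/\sigma_a$ with $\sigma_a$ chosen so that $\mathbb{E}\hat X_{ab} = 0$ and $\mathbb{E}|\hat X_{ab}|^2 = 1/N$. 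Standard tail estimates give $|\mathbb{E}\tilde X_{ab}| = O(N^{-1/2 - \tau(3+\delta)})$ and $|N\sigma_a^2 - 1| = O(N^{-\tau(2+\delta)})$, and since the columns of $X$ are i.i.d.\ both corrections depend on the row index $a$ only.

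Next I would compare the LSS of $X$ and of $\hat X$. The row-wise rescaling by $\sigma_a$ corresponds to left-multiplication of each $X_t$ by a diagonal matrix $D_t$, and the identity $(D_tX_t)'(D_tX_tX_t'D_t)^{-1}(D_tX_t) = X_t'(X_tX_t')^{-1}X_t$ shows that $H$, hence every LSS, is exactly invariant under this operation. The recentering shifts each row of $X_t$ by a constant, producing a rank-at-most-two perturbation of $X_tX_t'$ with operator norm $O(N^{1 - \tau(3+\delta)}) = o(1)$ (since $\tau(3+\delta) > 3/2$); a Sherman--Morrison--Woodbury expansion combined with the Cauchy-integral representation $\Tr f(H) = -(2\pi\mathrm{i})^{-1}\oint f(z)\Tr G(z)\,\mathrm{d}z$ of the analytic LSS then bounds the LSS perturbation by $o_p(1)$. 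For $\hat{\mathcal{B}}$ in Theorem \ref{RemoveSampleMean} this step is trivial, since the sample-mean subtraction absorbs any constant row shift. Combined with $\mathbb{P}(X \neq \tilde X) = o(1)$, we may therefore replace $X$ by $\hat X$ without affecting the limiting distribution.

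The main task is to verify that the proofs of Theorems \ref{FreeCLT}, \ref{MPCLT}, \ref{RemoveSampleMean} go through for $\hat X$ despite the higher moments now growing polynomially. Every invocation of the full moment hierarchy in those proofs occurs in the cumulant expansion of Lemma \ref{cumulant expansion}, its remainder (\ref{062501}), and Lemma \ref{Error in CLT}. For $\hat X$ one has $|\sqrt{N}\hat X_{ab}| \leq 2N^\tau$ almost surely, so $\mathbb{E}|\sqrt{N}\hat X_{ab}|^s \leq CN^{\tau(s-4-\delta)_+}$; each $s$-th order cumulant-expansion term carries a factor $N^{-s/2}$ together with a combinatorial prefactor at most $Np_t = O(N^2)$, giving a total of order $N^{2 - s(1/2-\tau) - \tau(4+\delta)\mathbf{1}(s \geq 4+\delta)}$, which is $o(1)$ for all sufficiently large $s$ since $\tau(4+\delta) > 2$ and $1/2 - \tau > 0$. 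Truncating every cumulant expansion at a fixed but sufficiently large order $l = l(\tau, \delta)$ and re-running the arguments verbatim closes the estimates; the continuity hypothesis enters only to guarantee almost-sure invertibility of $X_tX_t'$, legitimising Lemma \ref{8002} and the use of $(X_tX_t')^{-1}$. The main obstacle I anticipate is the bookkeeping of auditing every remainder estimate in the multi-step proofs of Theorem \ref{GeneralCLT}, of the expectation formula (\ref{062850}), and of their analogues for Theorems \ref{MPCLT} and \ref{RemoveSampleMean}, checking uniformly in the regimes of Assumption \ref{assum2} that the single choice $(\tau, l)$ renders every remainder negligible; no new algebraic identities are required because the leading expressions for $a_f$ and $\sigma_f^2$ are preserved by construction, and Corollaries \ref{coroschott}, \ref{corowilk} then follow by the same residue calculations as before.
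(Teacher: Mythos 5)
Your proposal does not address the statement at hand. The statement is Theorem \ref{NewtonKantorvich}, the classical Newton--Kantorovich theorem on the convergence of Newton's method for a map $g$ between Banach spaces under the affine-invariant Lipschitz condition $\|g'(x_0)^{-1}[g'(y)-g'(x)]\|\le L\|x-y\|$ and the smallness condition $2bL<1$; it asserts well-definedness of the iterates $x_{n+1}=x_n-g'(x_n)^{-1}g(x_n)$, their containment in $B(x_0,t_*)$, convergence to the unique zero $x_*$ of $g$ in $B[x_0,t_*]$, and the contraction estimate $\|x_*-x_{n+1}\|\le\frac12\|x_*-x_n\|$. (In the paper this is an imported tool, quoted from \cite{ferreira2012kantorovich} and used in the stability analysis of the subordination system; the paper supplies no proof of it.) What you have written instead is a sketch of the truncation-and-comparison argument for Theorem \ref{mainth22} — replacing $X$ by a matrix with entries bounded by $N^{\tau}$, matching the first two (or four) moments, and re-auditing the cumulant expansions — i.e.\ a proof outline for the moment-relaxation result, not for the Newton--Kantorovich theorem. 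Nothing in your text concerns Newton iterates, invertibility of $g'(x_n)$, majorizing sequences, or uniqueness of a zero, so as a proof of the stated theorem it is entirely off target.

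If you do want to prove Theorem \ref{NewtonKantorvich}, the standard route is via the scalar majorant $\phi(t)=\frac{L}{2}t^2-t+b$, whose roots are exactly $t_*$ and $t_{**}$: one shows by induction, using the Banach lemma together with hypothesis 2 (which gives $\|g'(x_0)^{-1}(g'(x_n)-g'(x_0))\|\le L\|x_n-x_0\|\le Lt_*<1$, hence invertibility of $g'(x_n)$), that the Newton iterates for $g$ are majorized by the Newton iterates $t_n$ for $\phi$ started at $t_0=0$, in the sense $\|x_{n+1}-x_n\|\le t_{n+1}-t_n$. Since $t_n\uparrow t_*$, the sequence $\{x_n\}$ is Cauchy, stays in $B(x_0,t_*)$, and converges to some $x_*\in B[x_0,t_*]$ with $g(x_*)=0$; uniqueness in the closed ball and the estimate $\|x_*-x_{n+1}\|\le\frac12\|x_*-x_n\|$ follow from the quadratic structure of $\phi$ and the condition $2bL<1$. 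None of these ingredients appear in your proposal, so there is a complete gap between what was asked and what you argued.
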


\section{Proofs of results in Section \ref{Proofof1stLimit}}\label{Sec B}
\subsection{Proof of Lemma \ref{supportofmu}}
For any $t \in [\![k ]\!]$, define $\tilde{X}_t$ be an $\tilde{p}_t \times \tilde{N}$ Gaussian matrix with $\tilde{p}_t =  y_t\tilde{N}$, where we choose $\tilde{N}=\ell N$ so that $\tilde{p_t}=\ell p_t$ is an integer. In this part, we regard $N$ and also $k=k(N)$ as fixed, but send  $\ell $ to infinity independently of $N$. Using the large $\ell$ (and thus large $\tilde{N}$) limit behaviour of the ESD of the matrix $\tilde{H}$ defined below in (\ref{061590}), we can get some basic property of $\mu_\boxplus$.  Performing the singular value decomposition, we have
	\begin{align*}
		\tilde{X}_t = \tilde{U}'_t \big(\tilde{\Lambda}_t ,  0\big)\tilde{V}_t, \quad t \in [\![k ]\!],
	\end{align*}
	where $\tilde{U}_t$ and $\tilde{V}_t$ are $\tilde{p}_t$ dimensional and $\tilde{N}$ dimensional orthogonal matrices respectively. Since $\tilde{X}_t$ has i.i.d normal entries,  $\tilde{U}_t$ and $\tilde{V}_t$ are independent and Haar distributed. Let
	\begin{align}
		\tilde{H}:= \sum_{t=1}^{k} \tilde{X}_t'(\tilde{X}_t\tilde{X}_t')^{-1}\tilde{X}_t = \sum_{t=1}^k\tilde{V}_t'\left(I_{\tilde{p}_t} \oplus 0_{\tilde{N}-\tilde{p}_t}\right)\tilde{V}_t. \label{061590}
	\end{align}
	Notice that the ESD of $\tilde{V}_t'\big(I_{\tilde{p}_t} \oplus 0_{\tilde{N}-\tilde{p}_t}\big)\tilde{V}_t$ goes to $\mu_t$ when $\tilde{N}\to \infty$ via $\ell\to \infty$, and it is well known  that the ESD of $\tilde{H}$ converges weakly in probability to $\mu_{\boxplus}$; see  \cite{collins2011strong} for instance.  Therefore, in order to show (\ref{supp of mu}), it suffices to show that 
	\begin{align}
		\text{Spec}(\tilde{H})\setminus\{0\}\subset [a, b]  \label{0615100}
	\end{align} 
	with high probability (in $\tilde{N}$) for some strictly positive constants $a$ and $b$.   Since the non-zero eigenvalues of $\tilde{H}$ are the same as the following matrix
	\begin{align*}
		\tilde{\mathcal{B}} = {\rm diag}((\tilde{X}_t\tilde{X}_t')^{-\frac{1}{2}})_{t=1}^k \cdot \tilde{X}\tilde{X}' \cdot {\rm diag}((\tilde{X}_t\tilde{X}_t')^{-\frac{1}{2}})_{t=1}^k,
	\end{align*}
	where $\tilde{Y}$ is defined analogously to $Y$ in the definition of block correlation matrix. We have by the bounds of the smallest and largest eigenvalues  of sample covariance matrix (c.f. Lemma \ref{8002}), 
	\begin{align}
		\|\tilde{\mathcal{B}}\| 
		\le C\left( 1 + \sqrt{y}\right)^2 \max_t  \left(1 - \sqrt{y_t} \right)^{-1}
		\le  C'\max_t  \left(1 - \sqrt{y_t} \right)^{-1} \label{0624011}
	\end{align}
	and similarly,
	\begin{align}
		\| \tilde{\mathcal{B}}^{-1}\| 
		\le  C\left( 1 - \sqrt{y}\right)^{-2} \max_t  \left(1 + \sqrt{y_t} \right)
		\le  C'\left(1-\sqrt{y}\right)^2 \label{0624012}
	\end{align}
	with high probability (in $\tilde{N}$). 
	Here we used the assumption that $\hat{y} \in(0,1)$. 
	Note the above bounds hold almost surely. Since $y_t$ and $y$ are away from $1$, we have (\ref{0615100}). 	
	Combining (\ref{0624011}) with the fact that $\tilde{H}$ is semi-positive definite, we have (\ref{supp of mu2}). Hence, we complete the proof.

\subsection{Proof of Lemma \ref{Gbound}}
If $\hat{y} \in (0,1)$, similar to (\ref{0624011}) and (\ref{0624012}), we have
	\begin{align}
		&\|{\mathcal{B}}\| 
		\le C\left( 1 + \sqrt{y}\right)^2 \max_t  \left(1 - \sqrt{y_t} \right)^{-1}
		\le  C'\max_t  \left(1 - \sqrt{y_t} \right)^{-1} ,\label{06291015} \\
		&\| {\mathcal{B}}^{-1}\| 
		\le  C\left( 1 - \sqrt{y}\right)^{-2} \max_t  \left(1 + \sqrt{y_t} \right)
		\le  C'\left(1-\sqrt{y}\right)^2
	\end{align}
	with high probability. Since  $y_t$ and $y$ are away from $1$, we have
	\begin{align*}
		\mathrm{Spec}(H) \subset \{ 0\} \cup [a,b]
	\end{align*}
	with high probability (in $N$) for some strictly positive constants $a$ and $b$. Therefore, by the construction of contours in (\ref{contour12}), we have $\|G(z) \| = \max_i |\lambda_i(H)-z| \sim 1$ with high probability. The upper bound of $|\tr G(z)|$ can be obtained directly by the inequality $|\tr G(z)| \le \|G(z) \|$. For the lower bound, the discussion is divided into three cases. 
		
		(i) When $z \in \cup_{a=2}^5 \mathcal{C}_a$, we know that $\Im z > 0$. Hence, we have
		\begin{align*}
			|\tr G(z)| \ge \Im \tr G(z) = \frac{1}{N}\sum_{i=1}^N\frac{\Im z}{|\lambda_i - z|^2} \gtrsim 1
		\end{align*} 
		with high probability. Here in the last step we used the fact that $\mathrm{Spec}(H)$ is bounded with high probability.
		
		(ii) When $z \in \mathcal{C}_6$, with sufficiently large $\Re z$, we have
		\begin{align*}
			|\tr G(z)| \ge |\Re \tr G(z)| = \frac{1}{N}\sum_{i=1}^N\frac{\Re z - \lambda_i}{|\lambda_i - z|^2} \gtrsim 1
		\end{align*}
	
		(iii)When $z \in \mathcal{C}_1$, we have
		\begin{align*}
			|\tr G(z)| \ge \left(1-\frac{p}{N}\right)\frac{1}{|z|} - \frac{1}{N}\sum_{i=1}^p\frac{1}{\lambda_i - |z|} \gtrsim 1
		\end{align*}
		with high probability. Here in the last step we used the smallness of $|z|$ and the fact that $\lambda_p \ge a$ with high probability.
		
	If $\hat{y} \in (0,\infty)$, combining (\ref{06291015}) with the fact that $H$ is semi-positive definite, we have $\mathrm{Spec}(H) \subset [0,C]$ with high probability (in $N$) for some strictly positive constant $C$. Therefore, if $z \in \gamma_1\cup\gamma_2$ (sufficiently large contours, c.f. (\ref{contour12})), we still have $\|G(z) \| = \max_i |\lambda_i(H)-z| \sim 1$ with high probability. For the estimate of $|\tr G(z)|$, we only need to consider the case when $z \in \mathcal{C}_7$. This case is similar to case (ii) above. We have
		\begin{align*}
			|\tr G(z)| \ge |\Re \tr G(z)| = \frac{1}{N}\sum_{i=1}^N\frac{\lambda_i-\Re z}{|\lambda_i - z|^2} \gtrsim 1
		\end{align*}
		with high probability. Here in the last step we used the fact that $\mathrm{Spec}(H)$ is supported in the positive real line with high probability.

\subsection{Proof of Lemma \ref{Lemma error estimates}}
We will show the estimates of $\gamma_{t1}$ and $\sum_{t=1}^k\mathcal{W}_t\gamma_{t1}$ in detail, and  the estimates of other terms will be sketchy since it is simpler. 

We first consider $\gamma_{t1}$. For simplicity, we set
\begin{align*}
Z=\Tr P_t G-\left(\tr( X_t X_t')^{-1}-\tr Q_t G\right)(\Tr G-\Tr P_t G)
\end{align*}
and further  set 
\begin{align*}
\mathsf{Z}^{p,q}:= (Z\cdot\Xi)^p(\bar{Z}\cdot \Xi)^q,
\end{align*}
where $\Xi$ is defined in (\ref{Xit}).
By direct calculation, one can show that 
\begin{align}
	\partial_{t,ji} Z = [\mathcal{A}]_{ij}\label{061491}
\end{align} 
where
\begin{align}
	\mathcal{A} =& 2 W_t G( I- P_t) -2  W_t G P_t G( I- P_t)\notag\\
	&+ 2\left(\tr G-\tr P_t G\right)\left(( X_t X_t')^{-2} X_t  -W_t G Q_t +W_t G Q_t G( I- P_t)+( I- P_t) G X_t'( X_t W_t')^{-2} \right)\notag\\
	&+ 2\left(\tr( X_t X_t')^{-1}-\tr Q_t G\right)\left( W_t G( I- P_t)+ W_t G G( I- P_t)+W_t G P_t G( I- P_t) \right), \label{061493}
\end{align}
and it is easy to show that   $\| \mathcal{A} \|$ is bounded with high probability by Lemma \ref{bound for covariance}. 

Taking derivatives inductively and using Lemma \ref{bound for covariance}, one can easily check the following bound for any fixed order partial derivatives 
\begin{align*}
	|\partial_{t,ji}^s Z| = O_{\prec}(1), \quad s \in \mathbb{N}.
\end{align*}
For any fixed positive integer $n$, by the cumulant expansion, we have 
\begin{align}
\E \big[ \mathsf{Z}^{n,n} \big] 
=& \E^{\chi}\Big[ \left( \Tr P_t G-\left(\tr( X_t X_t')^{-1}-\tr Q_t G\right)\left(\Tr G-\Tr P_t G\right)\right)\mathsf{Z}^{n-1,n} \Big] \notag\\
=&\E^{\chi} \Big[  \sum_{ij}^{(t)}X'_{t,ij}[ W_t G]_{ji}\mathsf{Z}^{n-1,n} \Big]-\E^{\chi}\Big[ \left(\tr( X_t X_t')^{-1}-\tr Q_t G\right)(\Tr G-\Tr P_t G)\mathsf{Z}^{n-1,n}\Big] \notag\\
=&\frac{1}{N} \E^{\chi}\bigg[\Big(\sum_{ij}^{(t)}\partial_{t,ji} [ W_t G]_{ji}-\left(\tr( X_t X_t')^{-1}-\tr Q_t G\right)(\Tr G-\Tr P_t G) \Big) \mathsf{Z}^{n-1,n}\bigg] \notag\\
&+\frac{1}{N}\E^{\chi}\bigg[\Big(\sum_{ij}^{(t)}[ W_t G]_{ji} \partial_{t,ji} Z\Big)\Xi \mathsf{Z}^{n-2,n}\bigg] +  \frac{1}{N}\E^{\chi}\bigg[\Big(\sum_{ij}^{(t)}[ W_t G]_{ji} \partial_{t,ji} \bar{Z}\Big)\Xi\mathsf{Z}^{n-1,n-1}\bigg] \notag\\
&+\sum_{s\geq 2}^l O\left(\frac{1}{N^{\frac{s+1}{2}}}\right)\E^{\chi}\bigg[ \sum_{ij}^{(t)}\partial_{t,ji}^s\{[ W_t G]_{ji}\mathsf{Z}^{n-1,n}\} \bigg]\notag \\
&+ \sum_{s\geq1}^l O\left(\frac{1}{N^{\frac{s+1}{2}}}\right) \sum_{\substack{s_0+s_1 = s\\s_1\geq 1}}\E\Big[ \sum_{ij}^{(t)}\partial_{t,ji}^{s_0}\{[ W_t G]_{ji}\mathsf{Z}^{n-1,n}\} \partial_{t,ji}^{s_1}\Xi\Big]  + \E\bigg[ \sum_{ij}^{(t)} \mathcal{R}^{ij}_{l+1}\bigg]\notag\\
  =:& \mathsf{T}_1 + \mathsf{T}_2 + \mathsf{T}_3 + \mathsf{T}_4 + \mathsf{E}_1 + \mathsf{E}_2. \label{061490}
\end{align}
where 
\begin{align}
	\mathcal{R}^{ij}_{l+1} =& O(1) \cdot \sup _{X_{t,ji}\in \mathbb{R}}\left| \partial_{t,ji}^{l+1}\left\{ [ W_t G]_{ji}\Xi\mathsf{Z}^{n-1,n} \right\} \right| \cdot \mathbb{E}[|X_{t,ji}|^{l+2} \mathbf{1}_{|X_{t,ji}|>x}] \notag\\
	&+O(1) \cdot \mathbb{E}[|X_{t,ji}|^{l+2} ]\cdot \sup _{|X_{t,ji}| \leqslant x}\left| \partial_{t,ji}^{l+1}\left\{ [ W_t G]_{ji}\Xi \mathsf{Z}^{n-1,n} \right\} \right|. \label{0625041}
\end{align}
We first consider the error terms $\mathsf{E}_i, i = 1,2$. For $\mathsf{E}_1$, by the definition of $\Xi$ in (\ref{Xit}), we have $\partial_{t,ji}^{s_1}\Xi = 0$ with high probability. Together with the fact that $\partial_{t,ji}^{s_0}\{[ W_t G]_{ji}\mathsf{Z}^{n-1,n}\} \partial_{t,ji}^{s_1}\Xi$ has deterministic bound when $\Im z  > 0$, we have 
\begin{align}
	|\mathsf{E}_1| \prec N^{-D} \label{Error1}
\end{align}
 for any large $D$. For $\mathsf{E}_2$. Choosing $x = N^{-\frac12+\epsilon}$ with $\epsilon$ being any small positive constant, we have
\begin{align}
	\mathbb{E}\Big[|X_{t,ji}|^{l+2} \mathbf{1}_{|X_{t,ji}|>N^{-\frac12+\epsilon}}\Big] \le \sqrt{\E \Big[|X_{t,ji}|^{2l+4}\Big] } \sqrt{\mathbb{P}(|X_{t,ji}|>N^{-\frac12+\epsilon})} \lesssim N^{-D} \label{0625021}
\end{align}  
for any large $D > 0$. Due to the existence of $\Xi$ and $\Im z  > 0$, we have 
\begin{align}
	\sup _{X_{t,ji}\in \mathbb{R}}\left| \partial_{t,ji}^{l+1}\left\{ [ W_t G(z_1)]_{ji} \Xi \mathsf{Z}^{n-1,n} \right\} \right| \lesssim N^{\tilde{K}} \label{0625031}
\end{align}
for some fixed $\tilde{K}>0$. Combining (\ref{0625021}) and  (\ref{0625031}), we have the first term in (\ref{0625041}) can be bounded by $N^{-D}$ for any large $D > 0$. For the second term in (\ref{0625041}), by Lemma \ref{remainderLemma} and the crude bound $|\bar{Z}|,|Z| \prec p_t$ (c.f. Lemmas \ref{bound for covariance} and \ref{Gbound}), we have
\begin{align*}
	\sup _{|X_{t,ji}| \leqslant N^{-\frac12+\epsilon}}\left| \partial_{t,ji}^{l+1}\left\{ [ W_t G]_{ji} \Xi \mathsf{Z}^{n-1,n} \right\} \right| \prec p_t^{2n-1}
\end{align*}
Therefore, together with Assumption \ref{assum1}, we have
\begin{align}
	\mathbb{E}[|X_{t,ji}|^{l+2} ]\cdot \sup _{|X_{t,ji}| \leqslant x}\left| \partial_{t,ji}^{l+1}\left\{ [ W_t G]_{ji} \Xi\mathsf{Z}^{n-1,n}\right\} \right| \prec p_t^{2n-1}N^{-\frac{l+2}{2}}. \label{0625044}
\end{align}
Plugging (\ref{0625021}), (\ref{0625031}) and (\ref{0625044}) into (\ref{0625041}), we can get
\begin{align*}
	\mathcal{R}_{l+1}^{t,ji} \prec p_t^{2n-1}N^{-\frac{l+2}{2}}.
\end{align*}
Choosing $l > 4n + 1$ , we have 
\begin{align}
	| \mathsf{E}_2| \prec \sum_{ij}^{(t)} \frac{p_t^{2n-1}}{N^{2n+2}} \prec \left(\frac{p_t}{N}\right)^{n}.\label{Error2}
\end{align}

 We then turn to the main terms ($\mathsf{T}_i, i = 1,2,3,4$) in the RHS of (\ref{061490}). First, 
notice that
 \begin{align}
\frac{1}{N}\sum_{ij}^{(t)}\partial_{t,ji} [ W_t G]_{ji} =& \left(\tr( X_t X_t')^{-1}-\tr Q_t G\right)\left(\Tr G-\Tr P_t G \right) - \tr Q_tG - \tr Q_tGG + \tr Q_tGP_tG \notag\\
=&\left(\tr( X_t X_t')^{-1}-\tr Q_t G\right)\left(\Tr G-\Tr P_t G \right) + O_{\prec}\left( \frac{p_t}{N} \right), \label{061501}
 \end{align}
 which together with (\ref{061491}) gives
 \begin{align}
 	\frac{1}{N}\sum_{ij}^{(t)}[ W_t G]_{ji} \partial_{t,ji} Z = & \frac{1}{N}\sum_{ij}^{(t)} [ W_t G]_{ji} \mathcal{A}_{ij} = \tr  \mathcal{A}W_t G = O_{\prec}\left( \frac{p_t}{N} \right), \label{061502}
 \end{align}
 where  we used (\ref{061493}) from which it is easy to check  $|\Tr \mathcal{A}W_t G|  \le p_t \|  \mathcal{A}W_t G \| = O_{\prec}(p_t)$.
Plugging (\ref{061501}) and (\ref{061502}) (also its complex conjugate counterpart) into (\ref{061490}), we have
\begin{align}
	\mathsf{T}_1 + \mathsf{T}_2 + \mathsf{T}_3 = \E^{\chi} \bigg[O_{\prec}\left( \frac{p_t}{N} \right) \mathsf{Z}^{n-1,n} \bigg] + \E^{\chi} \bigg[O_{\prec}\left( \frac{p_t}{N} \right) \mathsf{Z}^{n-2,n} \bigg] +\E^{\chi} \bigg[O_{\prec}\left( \frac{p_t}{N} \right) \mathsf{Z}^{n-1,n-1} \bigg]. \label{062590}
\end{align}

 Next, we  estimate $\mathsf{T}_4$ in (\ref{061490}). 
First, similarly to the proof for Lemma \ref{moreXi}, we can easily show that 
\begin{align*}
	\E^{\chi}\bigg[ \sum_{ij}^{(t)}\partial_{t,ji}^s\{[ W_t G]_{ji}\mathsf{Z}^{n-1,n}\} \bigg] 
	= \E^{\chi} \bigg[ \sum_{ij}^{(t)}\partial_{t,ji}^s\{[ W_t G]_{ji}Z^{n-1}\bar{Z}^n\} \bigg] + O_{\prec}(N^{-D})
\end{align*}
for any fixed $D > 0$.  Further,  since the complex conjugate takes no effect in the remaining estimations, we drop the complex conjugate for simplicity. 
 Therefore,  we will work with $Z^{2n-1}$ instead of $\mathsf{Z}^{n-1,n}$ in the remaining derivation, for notational brevity. For any $s\geq 2$, we first write  
\begin{align*}
\frac{1}{N^{\frac{s+1}{2}}}\sum_{ij}^{(t)}\partial_{t,ji}^s\{[ W_t G]_{ji}Z^{2n-1}\}&=\frac{1}{N^{\frac{s+1}{2}}}\sum_{r=0}^{s}\sum_{ij}^{(t)}\sum_{\substack{s_0\geq 0,s_1,\cdots,s_r >0\\ s_0+s_1+\cdots+s_r=s}}\partial_{t,ji}^{s_0}[ W_t G ]_{ji}\partial_{t,ji}^{s_1} Z \cdots \partial_{t,ji}^{s_r}Z Z^{2n-1-r}\\
&=:\sum_{r=0}^s\mathcal{C}_{r,s} Z^{(2n-1-r)\vee 0}
\end{align*}
When $r=s$, i.e., $s_0 = s_1 - 1 = \cdots = s_r - 1 = 0$, with (\ref{061491}) and the fact that $\| \mathcal{A} \| \prec 1$, we have  
\begin{align}
|\mathcal{C}_{r,s}| \prec &  \frac{1}{N^{\frac{r+1}{2}}}\bigg|\sum_{ij}^{(t)}[W_t G]_{ji}\left([\mathcal{A}]_{ji}\right)^r\bigg| \prec  \frac{1}{N^{\frac{r+1}{2}}}  \sum_{ij}^{(t)} | [W_t G]_{ji}|| [\mathcal{A}]_{ji}|  \notag \\
    \lesssim  &\frac{1}{N^{\frac{r+1}{2}}} \sqrt{\sum_{ij}^{(t)} | [W_t G]_{ji}|^2} \sqrt{\sum_{ij}^{(t)} | [\mathcal{A}]_{ji}|^2 } =\frac{1}{N^{\frac{r+1}{2}}} \sqrt{\Tr W_t GG^{\ast} W_t'}\sqrt{\Tr \mathcal{A}\mathcal{A}^*}\prec  \frac{p_t}{N^{\frac{r+1}{2}}}. \label{062301}
\end{align}
 When $s-r=1$, i.e.,  $s_0 - 1= s_1 - 1 = \cdots = s_r - 1 = 0$ or there exists an $i$ such that $s_0 = s_1 - 1 = \cdots = s_i - 2 = \cdots = s_r - 1 = 0$, we have
 \begin{align*}
 	&|\mathcal{C}_{r,s}| \prec \frac{1}{N^{\frac{r+2}{2}}} \bigg|\sum_{ij}^{(t)}\partial_{t,ji}[W_t G]_{ji}\left([\mathcal{A}]_{ji}\right)^r \bigg| + \frac{1}{N^{\frac{r+2}{2}}} \bigg|\sum_{ij}^{(t)}[W_t G]_{ji}\partial_{t,ji}[\mathcal{A}]_{ji} \left([\mathcal{A}]_{ji}\right)^{r-1}  \bigg|\\
 &\prec \frac{1}{N^{\frac{r+2}{2}}} \sum_{ij}^{(t)} \left(|[\mathcal{A}]_{ji} |  +   |[W_t G]_{ji} |\right)
 	\lesssim \frac{1}{N^{\frac{r+1}{2}}} \left(\sqrt{p_t \Tr \mathcal{A}\mathcal{A}^*}  + \sqrt{p_t \Tr W_t GG^*W_t' } \right)\prec \frac{p_t}{N^{\frac{r+1}{2}}},
 \end{align*}
 where in the third step we used Cauchy-Schwarz. 
When $s-r\geq 2$, we have two cases, $r = 0$ and $r > 0$. For $r > 0$,
\begin{align*}
	|\mathcal{C}_{r,s}| \prec \frac{1}{N^{\frac{r+3}{2}}} \bigg|\sum_{ij}^{(t)}\sum_{\substack{s_0\geq 0,s_1,\cdots,s_r >0\\ s_0+s_1+\cdots+s_r=s}}\partial_{t,ji}^{s_0}[ W_t G ]_{ji}\partial_{t,ji}^{s_1} Z \cdots\partial_{t,ji}^{s_r}Z   \bigg|\prec \frac{p_t}{N^{\frac{r+1}{2}}},
\end{align*}
and for $r = 0$
\begin{align*}
	|\mathcal{C}_{0,s}| \prec&\frac{1}{N^{\frac{3}{2}}} \bigg|\sum_{ij}^{(t)} \partial_{t,ji}^{2}[ W_t G ]_{ji} \bigg|+ \frac{1}{N^{2}}\bigg|\sum_{ij}^{(t)} \partial_{t,ji}^{3}[ W_t G ]_{ji}\bigg| + \frac{1}{N^{\frac{5}{2}}}\bigg|\sum_{ij}^{(t)} \partial_{t,ji}^{4}[ W_t G ]_{ji}\bigg| + \cdots \\
	= & \frac{1}{N^{\frac{3}{2}}}\bigg|\sum_{ij}^{(t)}\partial_{t,ji}^{2}[ W_t G ]_{ji} \bigg|+ O_{\prec}\left( \frac{p_t}{N} \right).
\end{align*}
Notice that all terms in $ \partial_{t,ji}^{2}[ W_t G ]_{ji}$ contain at least one off-diagonal entry, which can be denoted by $\mathcal{M}_{ij}$ in general, say. Hence, the bound 
\begin{align*}
 \frac{1}{\sqrt{N}}\sum_{ij}^{(t)} |[\mathcal{M}]_{ij}|\leq  \frac{1}{\sqrt{N}}\sqrt{\sum_{ij}^{(t)} |[\mathcal{M}]_{ij}|^2 Np_t} = \sqrt{p_t\Tr \mathcal{M}\mathcal{M}^*} \prec p_t
\end{align*}
implies
$
	|\mathcal{C}_{0,s}|   \prec p_t/N.
$

Combining the above estimates, we can get
\begin{align*}
	\E \big[ \mathsf{Z}^{n,n}\big] = & \E^{\chi} \bigg[O_{\prec}\left( \frac{p_t}{N} \right) \mathsf{Z}^{n-1,n} \bigg] + \E^{\chi} \bigg[O_{\prec}\left( \frac{p_t}{N} \right) \mathsf{Z}^{n-2,n} \bigg] +\E^{\chi} \bigg[O_{\prec}\left( \frac{p_t}{N} \right) \mathsf{Z}^{n-1,n-1} \bigg] \\
	&+\sum_{r = 1}^{2n-1} \E^{\chi}\bigg[ O_{\prec}\left(  \frac{p_t}{N^{\frac{r+1}{2}}}\right) |Z|^{2n-r-1} \bigg] +  O_{\prec}\left( \left(\frac{p_t}{N}\right)^{n} \right) \\
	= & \E^{\chi} \bigg[O_{\prec}\left( \frac{p_t}{N} \right) \mathsf{Z}^{n-1,n} \bigg] + \E^{\chi} \bigg[O_{\prec}\left( \frac{p_t}{N} \right) \mathsf{Z}^{n-2,n} \bigg] +\E^{\chi} \bigg[O_{\prec}\left( \frac{p_t}{N} \right) \mathsf{Z}^{n-1,n-1} \bigg] \\
	&+\sum_{r = 1}^{2n-1} \E^{\chi}\bigg[ O_{\prec}\left(  \frac{p_t}{N^{\frac{r+1}{2}}}\right) |Z\cdot \Xi|^{2n-r-1} \bigg] +  O_{\prec}\left( \left(\frac{p_t}{N}\right)^{n} \right)
\end{align*}
where in the last step, we substitute $Z\cdot\Xi$ for $Z$, by a similar argument as the proof of Lemma \ref{moreXi}, up to an $O_{\prec}(N^{-D})$ error which can be absorbed by $O_{\prec}\left( \left(\frac{p_t}{N}\right)^{n} \right)$.

Applying Young's inequality for each term in the RHS of the above equation, we have
\begin{align}
	\E \big[ \mathsf{Z}^{n,n} \big] 
	\le &  \frac{1}{2n}\E^{\chi} \bigg[ O_{\prec}\left(\frac{p_t}{N} \right)^{2n}(\log N )^{2n} \bigg] + \frac{2n-1}{2n\left( \log N \right)^{\frac{2n}{2n-1}}}\E \big[  \mathsf{Z}^{n,n} \big] \notag \\
	&+ \sum_{r = 1}^{2n-1}  \left( \frac{r+1}{2n}\E^{\chi} \bigg[ O_{\prec}\left(\frac{p_t}{N^\frac{r+1}{2}} \right)^{\frac{2n}{r+1}} (\log N)^{\frac{2n}{r+1}}\bigg] + \frac{2n-r-1}{2n\left(\log N \right)^{\frac{2n}{2n-r-1}}}\E\big[  \mathsf{Z}^{n,n}\big]  \right) \notag \\
	= &   O_{\prec}\left( \left(\frac{p_t}{N}\right)^{n} \right)  + \sum_{r = 0}^{2n-1} \frac{2n-r-1}{2n\left(\log N \right)^{\frac{2n}{2n-r-1}}}\E\big[  \mathsf{Z}^{n,n}\big], \label{YoungIneq}
\end{align}
where in the last step we absorbed the logarithmic factor into $O_\prec(\cdot)$ by definition, and we also noticed that the quantities which give us  the $O_\prec(\cdot)$ terms in the first two lines all have crude deterministic bound due to the existence of the smooth cutoff function $\chi$, and thus the stochastic bound can also be applied when one take expectation. 
Absorbing the term with $\mathsf{Z}^{n,n}$ in the RHS into LHS in the above equation, we get
\begin{align}\label{PGMoment}
	\E \Big[ \mathsf{Z}^{n,n} \Big]\prec \left( \frac{p_t}{N}\right)^n .
\end{align}
Using Markov inequality, we obtain \eqref{trPG} from \eqref{PGMoment}, due to the arbitrariness of the fixed $n$.

If we replace the definition of $Z$ by 
\begin{align*}
	Z = \Tr P_t - (1-y_t)\Tr (X_tX_t')^{-1},
\end{align*}
write $\Tr P_t = \sum_{ij}^{(t)}X_{t,ji}[W_t]_{ji}$ and then apply cumulant expansion w.r.t $X_{t,ji}$'s
, similarly,  we can obtain the estimate of $\gamma_{t2}$. Here at the end we used the trivial fact that $\Tr P_t=p_t$. The details are omitted.

Again, if we replace the definition of $Z$ by 
\begin{align*}
	Z = \Tr P_tGP_t - (1-y_t)\Tr Q_tG,
\end{align*}
write $\Tr P_tGP_t = \sum_{ij}^{(t)}X_{t,ji}[W_tGP_t]_{ji}$ and then apply cumulant expansion w.r.t $X_{t,ji}$'s
, similarly,  we can obtain the estimate of $\gamma_{t3}$. Here at the end we used the fact that $\Tr P_tGP_t=\Tr P_t^2G=\Tr P_tG$. The details are also omitted.

Next, we consider the estimate of $\sum_{t=1}^k\mathcal{W}_t\gamma_{t1}$.  
By Lemma \ref{8002}, we have, 
\begin{align*}
&\lambda_{\max}((X_tX_t')^{-1})\leq C\left(1-\sqrt{\frac{p_t}{N}}\right)^{-2}, \\
&\lambda_{\max}(X_t'X_t), \quad  \lambda_{\max}(X_tAX_t')\leq C \left(1+\sqrt{\frac{p_t}{N}}\right)^2\leq 4C, 
\end{align*}
with high probability (c.f. Definition \ref{def.high-probab}),  where $A$ is any Hermitian matrix with bounded spectral norm $C$. Then  with high probability we have that $\sum_{t=1}^k X_t'[(X_tX_t')^{-2}-aI]X_t$ is a negative definite matrix for sufficiently large $a>0$. In addition, because $Q_t$ and $X_tX_t'$ are positive semi-definite matrices, we have 
\begin{align}\label{SumQt}
 \Big\|\sum_{t=1}^kQ_t\Big\|\leq C\Big\|\sum_{t=1}^kX_tX_t'\Big\|=O_\prec(1).
\end{align}
Define
		\begin{align*}
Z_k=\sum_{t=1}^k \mathcal{W}_t\Tr P_t G-\sum_{t=1}^k\mathcal{W}_t\left(\tr( X_t X_t')^{-1}-\tr Q_t G\right)(\Tr G-\Tr P_t G)
\end{align*}
and further set
\begin{align*}
	\mathsf{Z}_k^{q,p} := (Z_k\cdot\Xi)^p(\bar{Z}_k\cdot\Xi)^q.
\end{align*}
Then for any fixed $n$, applying the cumulant expansion formula in Lemma \ref{cumulant expansion},  we have
\begin{align}
&\E\big[\mathsf{Z}_k^{n,n}\big]\notag\\
&=\E^{\chi} \bigg[\sum_{t=1}^k\alpha_t\sum_{ij}^{(t)}X'_{t,ij}[ W_t G]_{ji}\mathsf{Z}_k^{n-1,n}\bigg]-\E^{\chi}\bigg[\sum_{t=1}^k\alpha_t \left(\tr( X_t X_t')^{-1}-\tr Q_t G\right)(\Tr G-\Tr P_t G)\mathsf{Z}_k^{n-1,n}\bigg] \notag\\
&=\E^{\chi}\bigg[ \sum_{t=1}^k\sum_{ij}^{(t)}\frac{1}{N}\partial_{t,ji}\{\alpha_t[ W_t G]_{ji}\mathsf{Z}_k^{n-1,n}\}\bigg]-\E^{\chi}\bigg[\sum_{t=1}^k \alpha_t\left(\tr( X_t X_t')^{-1}-\tr Q_t G\right)(\Tr G-\Tr P_t G)\mathsf{Z}_k^{n-1,n}\bigg] \notag\\
&+\sum_{s\geq 2}^lO\left(\frac{1}{N^{\frac{s+1}{2}}}\right)\E^{\chi}\bigg[\sum_{t=1}^k\sum_{ij}^{(t)}\partial_{t,ji}^s \{\alpha_t[ W_t G]_{ji}\mathsf{Z}_k^{n-1,n}\}\bigg] +O_{\prec}(1). \label{062102}
\end{align}
where  the error $O_{\prec}(1)$ can be verified similarly to (\ref{061490}) by choosing $l$ sufficiently large.
By direct calculation, we have
\begin{align*}
	\partial_{t,ji} Z_k =&\sum_{d=1}^{k}\tr P_d G \Big((\partial_x \mathcal{F}) (\partial_{t,ji} \Tr P_d G) +  (\partial_y\mathcal{F}) (\partial_{t,ji} \Tr G)\Big) +\sum_{d=1}^{k}\mathcal{W}_d(\partial_{t,ji} \Tr P_d G)\\
	&-\sum_{d=1}^{k} \Big((\partial_x\mathcal{F}) (\partial_{t,ji} \Tr P_d G)+(\partial_y\mathcal{F}) (\partial_{t,ji} \Tr G)\Big)\left(\tr( X_d X_d')^{-1}-\tr Q_d G\right)\left(\tr G-\tr P_d G\right)\\
	&+2\sum_{d=1}^k \mathcal{W}_d\left(\tr(X_dX_d')^{-1}-\tr Q_d G\right)[ W_t G( I- P_d) G( I- P_t)]_{ji}\\
	&-2\sum_{d=1}^k \mathcal{W}_d[ W_t G Q_d G( I- P_t)]_{ji}\left(\tr G-\tr P_d G\right)+2\mathcal{W}_t( W_t'([ X_t X_t')^{-1}]_{ij}+[ W_t G Q_t]_{ji}\\
	&-[( X_t X_t')^{-2} X_t G( I- P_t)]_{ji})\left(\tr G-\tr P_t G\right)-2\mathcal{W}_t[ W_t G( I- P_t)]_{ji}\left(\tr( X_t X_t')^{-1}-\tr Q_t G\right)
\end{align*}
Similarly to (\ref{061491}), one can check that, there exists a matrix $\mathcal{A}_{k}\in \mathbb{C}^{N\times p_t}$ with $\|\mathcal{A}_k\| = O_{\prec}(1)$, s.t.
\begin{align}
	\partial_{t,ji} Z_k = [\mathcal{A}_{k}]_{ij}. \label{062150}
\end{align}
With the boundedness of $\mathcal{W}_t$ and its partial derivatives, the boundedness of $\|\mathcal{A}_k\|$ is benefited from either the spectral norm of the summation of $Q_d$ is bounded or $\sum_{s=1}^k \tr M = O_{\prec}(1)$ when $M$ has $O_{\prec}(1)$ operator norm and of rank $p_t$.
For example, for the fourth term of $\partial_{t,ji} Z_k$, we have
\begin{align*}
	\Big\|\sum_{d=1}^k\mathcal{W}_d(\tr(X_dX_d')^{-1}-\tr Q_d G) W_t G( I- P_d) G( I- P_t)\Big\| 
	\prec \sum_{d=1}^k\left|\tr(X_dX_d')^{-1}-\tr Q_d G\right| \prec \sum_{s=1}^k \frac{p_t}{N} = O_{\prec}(1).
\end{align*}
And for the fifth term of $\partial_{t,ji} Z_k$, we have
\begin{align*}
	\Big\|\sum_{d=1}^k\mathcal{W}_d W_t G Q_d G( I- P_t)\left(\tr G-\tr P_d G\right)\Big\|
	 &\prec \Big\|\sum_{d=1}^k W_t G Q_d G( I- P_t)\Big\| + \Big\|\sum_{s=1}^k W_t G Q_d G( I- P_t)\tr P_d G\Big\|\notag\\
	 &\prec\Big\|\sum_{d=1}^kQ_d\Big\| +\Big|\sum_{d=1}^k \tr P_d G\Big| =O_{\prec}(1).
\end{align*}
Here we used \eqref{SumQt} to bound the spectral norm of the summation of $Q_d$. The other terms in the formula of $\partial_{t,ji}  Z_k$ can be checked similarly.

Therefore, we have the technical estimates
\begin{align*}
	&\frac{1}{N}\sum_{t=1}^k\sum_{ij}^{(t)}[ W_t G]_{ji}\partial_{t,ji}Z_k  = \sum_{t=1}^k\tr W_tG\mathcal{A}_k =  O_{\prec}\left(1\right), \\\
	&\frac{1}{N^{3/2}}\sum_{t=1}^k\sum_{ij}^{(t)}\partial_{t,ji}Z_k  \le  \frac{1}{N^{3/2}} \sum_{t=1}^k \sqrt{\sum_{ij}^{(t)}|[\mathcal{A}_{k}]_{ij}|^2 Np_t } =\sum_{t=1}^k  \frac{\sqrt{p_t\Tr \mathcal{A}_k\mathcal{A}_k^*}}{N} = O_{\prec}\left(1\right),
\end{align*}
and the same bounds hold when $Z_k$ is replaced by its complex conjugate $\bar{Z}_k$. 

Applying the above two estimates,  together with (\ref{061501}), we have 
\begin{align}
	&\E^{\chi}\bigg[ \sum_{t=1}^k\sum_{ij}^{(t)}\frac{1}{N}\partial_{t,ji}\{\mathcal{W}_t[ W_t G]_{ji}\mathsf{Z}_k^{n-1,n}\}\bigg]-\E^{\chi}\bigg[\sum_{t=1}^k \mathcal{W}_t\left(\tr( X_t X_t')^{-1}-\tr Q_t G\right)(\Tr G-\Tr P_t G)\mathsf{Z}_k^{n-1,n}\bigg] \notag\\
	=& \E^{\chi}\bigg[ \sum_{t=1}^k\sum_{ij}^{(t)}\frac{\mathcal{W}_t}{N}\partial_{t,ji}\{[ W_t G]_{ji}\}\mathsf{Z}_k^{n-1,n}\bigg]-\E^{\chi}\bigg[\sum_{t=1}^k \mathcal{W}_t\left(\tr( X_t X_t')^{-1}-\tr Q_t G\right)(\Tr G-\Tr P_t G)\mathsf{Z}_k^{n-1,n}\bigg] \notag\\
	&+ \E^{\chi}\bigg[ \sum_{t=1}^k\sum_{ij}^{(t)}\frac{1}{N}[ W_t G]_{ji}(\partial_x\mathcal{F}) (\partial_{t,ji} \Tr P_t G)\mathsf{Z}_k^{n-1,n}\bigg]+\E^{\chi}\bigg[ \sum_{t=1}^k\sum_{ij}^{(t)}\frac{1}{N}[ W_t G]_{ji}(\partial_y\mathcal{F}) (\partial_{t,ji} \Tr G)\mathsf{Z}_k^{n-1,n}\bigg]\notag\\
	& +\E^{\chi}\bigg[ \sum_{t=1}^k\sum_{ij}^{(t)}\frac{\mathcal{W}_t}{N}[ W_t G]_{ji}\left(\partial_{t,ji}Z_k\right) \mathsf{Z}_k^{n-2,n}\bigg]+\E^{\chi}\bigg[ \sum_{t=1}^k\sum_{ij}^{(t)}\frac{\mathcal{W}_t}{N}[ W_t G]_{ji}\left(\partial_{t,ji}\bar{Z}_k\right) \mathsf{Z}_k^{n-1,n-1}\bigg] + O_{\prec}(N^{-D}) \notag\\
	=&\E^{\chi}\bigg[\sum_{t=1}^k\mathcal{W}_t\left(\tr Q_t G-\tr Q_t G( I- P_t) G \right)\mathsf{Z}_k^{n-1,n}\bigg] + \E^{\chi}\bigg[ \sum_{t=1}^k\sum_{ij}^{(t)}\frac{1}{N}[ W_t G]_{ji}(\partial_x\mathcal{F}) (\partial_{t,ji} \Tr P_t G)\mathsf{Z}_k^{n-1,n}\bigg] \notag\\
	&+ \E^{\chi}\bigg[ \sum_{t=1}^k\sum_{ij}^{(t)}\frac{1}{N}[ W_t G]_{ji}(\partial_y\mathcal{F}) (\partial_{t,ji} \Tr G)\mathsf{Z}_k^{n-1,n}\bigg]+\E^{\chi} \bigg[O_{\prec}(1)\mathsf{Z}_k^{n-2,n} \bigg] + \E ^{\chi}\bigg[O_{\prec}(1)\mathsf{Z}_k^{n-1,n-1} \bigg] + O_{\prec}(N^{-D}) \notag\\
	=&\E^{\chi} \bigg[O_{\prec}(1)\mathsf{Z}_k^{n-1,n} \bigg] +\E ^{\chi}\bigg[O_{\prec}(1)\mathsf{Z}_k^{n-2,n} \bigg] + \E^{\chi} \bigg[O_{\prec}(1)\mathsf{Z}_k^{n-1,n-1} \bigg], \label{062101}
\end{align} 
where in the first step we replaced $Z_k\cdot \Xi$ by $Z_k$ (the error is negligible by Lemma \ref{moreXi}) and absorbed the terms containing the derivatives of $\Xi$ into the error (c.f. Lemma \ref{XiLemma}). In the last step we also used (\ref{8003}),  the boundedness of the partial derivatives of $\mathcal{F}(x,y)$, and
\begin{align*}
	\sum_{ij}^{(t)}[W_tG]_{ji}(\partial_{t,ji}\Tr P_tG) = O_{\prec}(p_t),\quad \sum_{ij}^{(t)}[W_tG]_{ji}(\partial_{t,ji}\Tr G) = O_{\prec}(p_t).
\end{align*}
Hence, (\ref{062101}) gives the estimate for the first two terms in the RHS of (\ref{062102}).
%
Then we turn to the third term in the RHS of (\ref{062102}). Since the complex conjugate takes no effect in the remaining estimations, we drop the complex conjugate for simplicity. Specifically,  we will work with $Z_k^{2n-1}$ instead of $\mathsf{Z}_k^{n-1,n}$ in the case $s\geq 2$, for notational brevity.  Here we also drop the function $\Xi$ first and put it back at the last step, up to negligible error.

For $s>2$, we have 
\begin{align*}
& \frac{1}{N^{\frac{s+1}{2}}}\sum_{t=1}^k\sum_{ij}^{(t)}\partial_{t,ji}^s \{\mathcal{W}_t[ W_t G]_{ji}Z_k^{2n-1}\} \\
&= \frac{1}{N^{\frac{s+1}{2}}}\sum_{r=0}^{2n-1}\sum_{t=1}^k\sum_{ij}^{(t)}\sum_{\substack{s_0\geq 0,s_1,\cdots,s_r >0\\ s_0+s_1+\cdots+s_r=s}}(\partial_{t,ji}^{s_0}\mathcal{W}_t[ W_t G ]_{ji})\partial_{t,ji}^{s_1} Z_k \cdots\partial_{t,ji}^{s_r}Z_k Z_k^{2n-1-r}\\
&= \sum_{r=0}^{2n-1}\sum_{t=1}^k\sum_{ij}^{(t)}O_{\prec}\left(\frac{1}{N^{2}}\right) Z_k^{2n-1-r} = \sum_{r=0}^{2n-1}O_{\prec}(1) Z_k^{2n-1-r},
\end{align*}
where the Green function entries such as $G_{ab}$ and $(W_tG)_{ab}$ are all bounded by $O_\prec(1)$ crudely. 
For $s=2$, we have  
\begin{align*}
&\frac{1}{N^{\frac{3}{2}}}\sum_{t=1}^k\sum_{ij}^{(t)}\partial_{t,ji}^2 \{\mathcal{W}_t[ W_t G]_{ji}Z_k^{2n-1}\} \\
&=\frac{1}{N^{\frac{3}{2}}}\sum_{t=1}^k\sum_{ij}^{(t)}\Big( \mathcal{W}_t(\partial_{t,ji}^2 [ W_t G]_{ji})  +2\left(\partial_{t,ji} \mathcal{W}_t\right) \left(\partial_{t,ji} [ W_t G]_{ji}\right) +  \left(\partial^2_{t,ji} \mathcal{W}_t\right)  [ W_t G]_{ji}\Big)     Z_k^{2n-1} \\
&+ \frac{1}{N^{\frac{3}{2}}}\sum_{t=1}^k\sum_{ij}^{(t)}\Big(\mathcal{W}_t[ W_t G]_{ji}\left(\partial_{t,ji}^2 Z_k\right)+2\mathcal{W}_t\left(\partial_{t,ji} [ W_t G]_{ji}\right)\left(\partial_{t,ji} Z_k\right) +2\left(\partial_{t,ji} \mathcal{W}_t\right)  [ W_t G]_{ji}\left(\partial_{t,ji} Z_k\right)   \Big) Z_k^{2n-2}\\
&+ \frac{2}{N^{\frac{3}{2}}}\sum_{t=1}^k\sum_{ij}^{(t)}  \mathcal{W}_t [ W_t G]_{ji}\left(\partial_{t,ji} Z_k\right)^2 Z_k^{2n-3} =  O_{\prec}(1) Z_k^{2n-1}+O_{\prec}(1) Z_k^{2n-2}+O_{\prec}(1) Z_k^{2n-3} ,
\end{align*}
where we used the fact that $\partial_{t,ji} \mathcal{W}_t , \partial_{t,ji}^2[ W_t G]_{ji}$ and $\partial_{t,ji} Z_k$ all generate at least one off-diagonal entry, and then we can use the Cauchy-Schwarz inequality to bound the summation over $i,j$ of these off-diagonal entries by tracial quantities.

Combining the above cases,  and substituting $Z_k\cdot \Xi$ for $Z_k$, we can get
\begin{align*}
	\E\big[ \mathsf{Z}_k^{n,n} \big] \le \sum_{r=0}^{2n-1}\E \big[ O_{\prec}(1)|Z_k\cdot \Xi|^{2n-1-r} \big] + O_{\prec}(N^{-D}).
\end{align*}
for any large $D > 0$.
 Applying Young's inequality as we did in the estimate of $\gamma_{t1}$, we can get
\begin{align}\label{strPGMoment}
	\E\big[ \mathsf{Z}_k^{n,n} \big]  = O_{\prec} \left( 1 \right).
\end{align}
Using Markov's inequality, we obtain the estimate of $\sum_{t=1}^k\mathcal{W}_t\gamma_{t1}$ from \eqref{strPGMoment}.

If we replace the definition of $Z_k$ by 
\begin{align*}
	Z_k = \sum_{t=1}^k\mathcal{W}_t\frac{1}{1-y_t}\tr P_t- \sum_{t=1}^k\mathcal{W}_t\tr( X_t X_t')^{-1},
\end{align*}
and apply the cumulant expansion again, similarly,  we can obtain the estimate of $\sum_{t=1}^k\mathcal{W}_t\gamma_{t2}$. 
Also, if we replace the definition of $Z_k$ by 
\begin{align*}
	Z_k = \sum_{t=1}^k\mathcal{W}_t\frac{1}{1-y_t}\tr P_tGP_t- \sum_{t=1}^k\mathcal{W}_t\tr Q_tG,
\end{align*}
and apply the cumulant expansion again, similarly,  we can obtain the estimate of $\sum_{t=1}^k\mathcal{W}_t\gamma_{t3}$. Here we used the fact that $\Tr P_tGP_t=\Tr P_t^2G=\Tr P_tG$. The details are omitted.  This completes the proof of Lemma 5.4.

\subsection{Proof of Lemma \ref{Flbound}}
Recall
	\begin{align*}
		F_{\mu_t}(\omega_t) = \omega_t -y_t + \frac{y_t(1-y_t)}{1-y_t-\omega_t}
	\end{align*}
	and also the contour defined in (\ref{062033}). 
	Taking derivatives with respect to $\omega_t$, we have
	\begin{align*}
		F'_{\mu_t}(\omega_t) = 1 +  \frac{y_t(1-y_t)}{(1-y_t-\omega_t)^2},\qquad F^{(n)}_{\mu_t}(\omega_t) = \frac{y_t(1-y_t)n!}{(1-y_t-\omega_t)^{n+1}}, \quad n \ge 2.
	\end{align*}
	For $z \in (\bar{\gamma}_1^0)^+\cup (\bar{\gamma}_2^0)^+$ with $\hat{y} \in (0,1)$, since $\mu_{\boxplus}$ has point mass at the origin,  we know that when $|z| \to 0$, by the definition of the Stieltjes transform, $|m_\boxplus(z)| \to \infty$. Recall that 
	\begin{align}
		m_\boxplus(z) = \frac{y_t}{1-\omega_t(z)} - \frac{1-y_t}{\omega_t(z)}, \quad t \in [\![k]\!], \label{062040}
	\end{align}
	we have $|\omega_t(z)| \to 0$ as $|z| \to 0$, which can be checked starting from $z=-\infty$ and applying monotonicity. More specifically,  when $z\to-\infty$ along the real line, we have $m_\boxplus(z)\to 0+$ as well.  By (\ref{062040}), in this case, we have either $\omega_t(z)\to -\infty$ or $\omega_t(z)\to 1-y_i$. We first show that the latter is impossible. This can be checked by taking $z=-E+\mathrm{i}\eta$ with increasing $\eta$ and sufficiently large $E$. When $E$ is sufficiently large, $m_\boxplus(z)$ is sufficiently small. The smallness of $m_\boxplus(z)$ will not  change when one increase $\eta$. However, if $\omega_t(z)$ is close to $1-y_t$ when $z=-E$ for large $E$, by increasing $\eta$, it is impossible for the RHS of (\ref{062040}) keep being small due to $\Im \omega_t(z)\gtrsim  \eta$.  This shows $\omega_t(z)\to -\infty$ if $z\to- \infty$. Then starting from $-\infty$, we increase $z$ to $0-$. Due to monotonicity, $\omega_t(z) \to 0-$ when $z\to 0-$ by (\ref{062040}) and the fact that $\mu_\boxplus$ has a point mass at $0$. Then we shall further argue that when $z\to 0+$, $\omega_t(z)\to 0+$ as well. This can be done by showing that when $z$ goes from $-\varepsilon$ to $\varepsilon$ via the arc $|z|=\varepsilon$ clockwise, it is impossible for $\omega_t(z)$ to go from $-\tilde{\varepsilon}$ to somewhere close to $1$. Since on the arc $|z|=\varepsilon$, $m_\boxplus(z)$ will always be large in magnitude. But it is possible for the RHS of  (\ref{062040}) keep being large when  $\omega_t(z)$ go from $-\tilde{\varepsilon}$ to somewhere close to $1$. Therefore, for $z\in \mathcal{C}_1(\epsilon_{11},\epsilon_{21})\cup\mathcal{C}_1(\epsilon_{12},\epsilon_{22})$, by choosing $\epsilon_{11}$ and $\epsilon_{12}$ sufficiently small, and then performing Taylor expansion around $\omega_t = 0$, we have
	\begin{align*}
		&|\omega_t| \sim 1, \qquad  F_{\mu_t}(\omega_t) = \frac{\omega_t}{1-y_t} + O(|\omega_t|^2) \sim 1, \\
		&F'_{\mu_t}(\omega_t)=1 + \frac{y_t}{1-y_t} + O(|\omega_t|) =  \frac{1}{1-y_t} + O(|\omega_t|) \sim 1,\\
		&F^{(n)}_{\mu_t}(\omega_t) = \frac{y_tn!}{(1-y_t)^{n-1}} + O(\left|y_t\omega_t \right| ) \sim y_t, \quad n \geq  2,
		\end{align*}
		and 
		\begin{align*}
		&1 - \frac{1}{k-1}\sum_{t=1}^{k}\frac{1}{F'_{\mu_t}(\omega_t)} =1 - \frac{1}{k-1}\sum_{t=1}^{k}\frac{1}{F'_{\mu_t}(0) + O\left(|F''_{\mu_t}(0)\omega_t|\right)}\\
		&= 1 - \frac{1}{k-1}\sum_{t=1}^{k}\frac{1}{F'_{\mu_t}(0) } + \frac{1}{k-1}\sum_{t=1}^{k}O\left(y_t|\omega_t|\right) = \frac{y-1 + O\left(|\omega_t|\right) }{k-1}  \sim \frac{1}{k}.
	\end{align*}
	For $z \in \mathcal{C}_2(\epsilon_{11},\epsilon_{21},M_{11})$ with $\hat{y} \in (0,1)$, since $\mu_{\boxplus}$ is supported on the nonnegative half of  real line and $z$ lies outside the support of $\mu_{\boxplus}$ (c.f. Lemma \ref{supportofmu}), we have $\Im \omega_t = 0$ when $\Im z = 0$. By the continuity of $\omega_t$, if we choose $\epsilon_{21}$ to be sufficiently close to $\epsilon_{11}$ so that $\Im z=\sqrt{\epsilon_{11}^2-\epsilon_{21}^2}$ is sufficiently small on $\mathcal{C}_2(\epsilon_{11},\epsilon_{21},M_{11})$, we also have  that $\Im \omega_t$ is sufficiently  small. According to previous discussion, if we choose $z=x+\mathrm{i} 0+$ for $x\in (-\infty,0)$, $m_\boxplus(z)=m_{\mu_t}(\omega_t(z))$ is monotonic in $x$, and thus $\omega_t(z)=\Re \omega_t(z)+\mathrm{i}0+$ and $\Re \omega_t(z)$ goes from $-\infty$ to $0-$ when $x$ goes from $-\infty$ to $0-$. Hence $|1-y_t-\Re\omega_t(x+\mathrm{i}0+)|\geq 1-y_t$.  Then, by continuity (c.f. (\ref{080711})), if we choose $\epsilon_{21}$ to be sufficiently close to $\epsilon_{11}$ so that $\Im z=\sqrt{\epsilon_{11}^2-\epsilon_{21}^2}$ is sufficiently small, we also have $|1-y_t\Re\omega_t(z)|\geq \frac{1}{2}(1-y_t)$ when $z\in \mathcal{C}_2$.  
	
Since $\Im z > 0$ and $|z|$ is bounded, by the continuity of $\omega_t(z)$ and definition of Stieltjes transform, we have
\begin{align*}
	|\omega_t(z)| \sim 1,\qquad |F_{\mu_t}(\omega_t)| = -\frac{1}{|m_{\boxplus}(z)|} \sim 1.
\end{align*}
Then we can perform Taylor expansion around $\Im \omega_t$, 
	\begin{align*}
		&F'_{\mu_t}(\omega_t)=1 + \frac{y_t(1-y_t)}{(1-y_t-\Re \omega_t)^2} + O(|\Im \omega_t|)   \sim 1,\\
		&F^{(n)}_{\mu_t}(\omega_t) =  \frac{y_tn!}{(1-y_t-\Re \omega_t)^{n-1}} + O(\left|\Im\omega_t \right| ) \sim y_t, \quad n \ge 2.
	\end{align*}	
	Further,  we lower bound the last quantity in Lemma \ref{Flbound} by its imaginary part, which reads as
	\begin{align*}
		&\bigg|1 - \frac{1}{(k-1)}\sum_{t=1}^{k}\frac{1}{F'_{\mu_t}(\omega_t)}\bigg| \ge \bigg|  \frac{1}{(k-1)}\sum_{t=1}^{k}\frac{\Im F'_{\mu_t}(\omega_t)}{|F'_{\mu_t}(\omega_t)|^2} \bigg|\\
		&\gtrsim\bigg|\frac{1}{(k-1)}\sum_{t=1}^{k} \frac{2y_t(1-y_t)(1-y_t-\Re(\omega_t))\Im(\omega_t)}{|1-y_t-\omega_t|^4} \bigg| \gtrsim \frac{1}{k},
	\end{align*}
	where we used the bound $|1-y_t-\Re\omega_t(z)|\geq \frac{1}{2}(1-y_t)$ when $z\in \mathcal{C}_2(\epsilon_{11},\epsilon_{21},M_{11})$. The case of $z \in \mathcal{C}_2(\epsilon_{12},\epsilon_{22},M_{12})$ with $\hat{y} \in (0,1)$ is similar.

	For $z \in (\bar{\gamma}_1)^+\cup (\bar{\gamma}_2)^+$ with $\hat{y} \in (0, \infty)$, by choosing $M_{11}, M_{21}, M_{12}$ and $M_{22}$ sufficiently large, we have $|\omega_t|$ sufficiently large. Therefore,
	\begin{align*}
		&|\omega_t| \sim 1,\qquad |F_{\mu_t}(\omega_t)| = |\omega_t - y_t + O(|\omega_t|^{-1})| \sim |\omega_t| \sim 1, \\
		&|F'_{\mu_t}(\omega_t)|=\left|1 +  O(y_t|\omega_t|^{-1}) \right|  \sim 1,\qquad |F^{(n)}_{\mu_t}(\omega_t)| =  O(y_t\left|\omega_t \right|^{-n} ) \sim y_t, \quad n \ge 2.\\
		&\bigg|1 - \frac{1}{k-1}\sum_{t=1}^{k}\frac{1}{F'_{\mu_t}(\omega_t)}\bigg| = \bigg| 1 +  \frac{1}{k-1}\sum_{t=1}^{k}\frac{1}{1 + O(y_t|\omega_t|^{-1})} \bigg|=\bigg| \frac{-1 + O(|\omega_t|^{-1})}{k-1} \bigg| \gtrsim \frac{1}{k}.
	\end{align*}
	We finish the proof by combining the above results.

\section{Proofs of results in Section \ref{ProofofCLT}}\label{Sec C}
\subsection{Proof of Proposition \ref{keyprop2}}
For any $t \in [\![ k ]\!]$, by the definition of $\omega_t^c(z)$, we have
$
	\tr P_t G(z) = 1 + \omega_t^c(z) m_N(z) 
$
which together with Proposition \ref{keyProp} gives the a priori bound
\begin{align}\label{trPGGap1}
	&\left| \tr P_t G(z) -  \left(1 + \omega_t(z)m_{{\boxplus}}(z)\right) \right| \prec  \left| \omega_t^c(z) - \omega_t(z) \right| + \left| m_N(z) - m_{{\boxplus}}(z) \right| 
	= O_{\prec}\left(\frac{1}{N} \right).
\end{align}
Notice that this error bound  can be improved as follows
\begin{align*}
	&\tr P_tG(z) =\left( \frac{y_t}{1-y_t} - \frac{1}{1-y_t}\tr P_tG(z) \right)\left(\tr\bbG(z)-\tr\bbP_t\bbG(z)\right)+ O_{\prec} \left( \sqrt{\frac{p_t}{N^3}} \right) \\
	&=\left( \frac{y_t}{1-y_t} - \frac{1}{1-y_t}\tr P_tG(z) \right)\left(m_{\boxplus}(z) -1 - \omega_t(z)m_{\boxplus}(z)\right) + O_{\prec} \left( \sqrt{\frac{p_t}{N^3}} \right),
\end{align*}
where we used the estimates of $\gamma_{t2}$ and $\gamma_{t3}$ (c.f., Lemma \ref{Lemma error estimates}) in the first step, and Proposition \ref{keyProp} and \eqref{trPGGap1} in the second step.
Solving the above equation for $\tr P_tG(z)$, we can obtain,
\begin{align*}
	\tr P_tG(z) = 1 + \omega_t(z)m_{{\boxplus}}(z)  + O_{\prec} \left( \sqrt{\frac{p_t}{N^3}} \right).
\end{align*}
This proves the first estimate of (\ref{080411}).
Starting from the estimate of $\gamma_{t3}$, for any $t \in [\![ k ]\!]$, we have
\begin{align*}
	\tr Q_tG(z) =&\frac{1}{1-y_t}\tr P_t G(z) + O_{\prec} \left( \sqrt{\frac{p_t}{N^3}} \right) = \frac{1+ \omega_t(z)m_{\boxplus}(z)}{1-y_t} + O_{\prec} \left( \sqrt{\frac{p_t}{N^3}} \right).
\end{align*}

For the estimates of two-points functions of Green functions, we first define
	\begin{align*}
		Z_1 =& \tr\bbP_t\bbG(z_1)\bbP_t\bbG(z_2) -\left(\tr(\bbX_t\bbX_t')^{-1}-\tr \bbQ_t\bbG(z_1) \right)\left( \tr\bbG(z_1)\bbP_t\bbG(z_2)- \tr\bbP_t\bbG(z_1)\bbP_t\bbG(z_2) \right)\\
		&+\left( \tr \bbQ_t\bbG(z_1)\bbP_t\bbG(z_2) - \tr \bbQ_t\bbG(z_1) \right)\left( \tr \bbG(z_2) - \tr\bbP_t\bbG(z_2) \right),\\
			Z_2 =& \tr P_tG(z_1)P_tG(z_2)P_t - \frac{1-y_t}{N}\Tr Q_tG(z_1)P_tG(z_2),
	\end{align*}
	and then follow the same argument as we did in the estimate of $\gamma_{t1}$,  one can get
	\begin{align}
		Z_1, Z_2 = O_{\prec}\left(\sqrt{\frac{p_t}{N^3}}\right). \label{trPGPGZ1}
	\end{align}
	The details are omitted.
	Therefore,
	\begin{align}
			\tr\bbP_t\bbG(z_1)\bbP_t\bbG(z_2) =&\left(\tr(\bbX_t\bbX_t')^{-1}-\tr \bbQ_t\bbG(z_1) \right)\left( \tr\bbG(z_1)\bbP_t\bbG(z_2)- \tr\bbP_t\bbG(z_1)\bbP_t\bbG(z_2) \right)\notag\\
		&-\left( \tr \bbQ_t\bbG(z_1)\bbP_t\bbG(z_2) - \tr \bbQ_t\bbG(z_1) \right)\left( \tr \bbG(z_2) - \tr\bbP_t\bbG(z_2) \right) + O_{\prec}\left(\sqrt{\frac{p_t}{N^3}}\right)\notag \\
		=&\left(\tr(\bbX_t\bbX_t')^{-1}-\tr \bbQ_t\bbG(z_1) \right)\left(\frac{ \tr\bbP_t\bbG(z_1) - \tr\bbP_t\bbG(z_2)}{z_1-z_2}- \tr\bbP_t\bbG(z_1)\bbP_t\bbG(z_2) \right)\notag\\
		&-\left( \tr \bbQ_t\bbG(z_1)\bbP_t\bbG(z_2) - \tr \bbQ_t\bbG(z_1) \right)\left( \tr \bbG(z_2) - \tr\bbP_t\bbG(z_2) \right) + O_{\prec}\left(\sqrt{\frac{p_t}{N^3}}\right). \label{5001}
	\end{align}
	Interchanging $z_1$ and $z_2$ in the above equation, we have
	\begin{align}
		\tr\bbP_t\bbG(z_2)\bbP_t\bbG(z_1) =&\left(\tr(\bbX_t\bbX_t')^{-1}-\tr \bbQ_t\bbG(z_2) \right)\left(\frac{ \tr\bbP_t\bbG(z_2) - \tr\bbP_t\bbG(z_1)}{z_2-z_1}- \tr\bbP_t\bbG(z_2)\bbP_t\bbG(z_1) \right)\notag\\
		&-\left( \tr \bbQ_t\bbG(z_2)\bbP_t\bbG(z_1) - \tr \bbQ_t\bbG(z_2) \right)\left( \tr \bbG(z_1) - \tr\bbP_t\bbG(z_1) \right) + O_{\prec}\left(\sqrt{\frac{p_t}{N^3}}\right). \label{5002}
	\end{align}
	Replacing $\tr Q_tG(z_1)P_tG(z_2)$ and $\tr Q_tG(z_2)P_tG(z_1)$ by $\tr P_tG(z_1)P_tG(z_2)$ and $\tr P_tG(z_2)P_tG(z_1)$ respectively, and then combining (\ref{5001}) and (\ref{5002}), we get
	\begin{align*}
	\tr P_tG(z_1)P_tG(z_2)&=\frac{(\tr P_tG(z_1)-\tr P_tG(z_2))^2}{(z_1-z_2)(\tr G(z_1)-\tr G(z_2))}\\
	&+\frac{\tr P_tG(z_2)\tr G(z_1)-\tr P_tG(z_1)\tr G(z_2)}{\tr G(z_1)-\tr G(z_2)}+O_{\prec}\left(\sqrt{\frac{p_t}{N^3}}\right).
	\end{align*}
By the choice of $\gamma^0_1$ and $\gamma^0_2$, together with the estimates $\tr G(z_1) - \tr G(z_2) = m_{\boxplus}(z_1) - m_{\boxplus}(z_2) + O_{\prec}(N^{-1})$, we see that $\tr G(z_1) \neq \tr G(z_2)$ with high probability. Further using the estimates of the tracial quantities we already obtained, we get the estimate of $\tr P_tG(z_1)P_tG(z_2)$. 
Finally, using the estimate of $\tr P_tG(z_1)P_tG(z_2)$ together with the estimation of $Z_2$, we can obtain the estimate of $\tr Q_tG(z_1)P_tG(z_2)$. This completes the proof of (\ref{0807120}).
	
	Since for any function $F$ which is independent of $z$, we have 
$
		\tr F\bbG(z)^2 = \partial_z \left(\tr FG(z) \right),
$	
	together with \eqref{trPGPGZ1}, we have
	\begin{align*}
		\tr\bbP_t\bbG(z)\bbP_t\bbG(z) =& \left(\tr(\bbX_t\bbX_t')^{-1}-\tr \bbQ_t\bbG(z) \right)\left( \tr\bbG(z)\bbP_t\bbG(z)- \tr\bbP_t\bbG(z)\bbP_t\bbG(z) \right)\notag\\
		&-\left( \tr \bbQ_t\bbG(z)\bbP_t\bbG(z) - \tr \bbQ_t\bbG(z) \right)\left( \tr \bbG(z) - \tr\bbP_t\bbG(z) \right) + O_{\prec}\left(\sqrt{\frac{p_t}{N^3}}\right) \notag\\	
		=& \left(\tr(\bbX_t\bbX_t')^{-1}-\tr \bbQ_t\bbG(z) \right)\left( \partial_z(\tr\bbP_t\bbG(z)) -  \tr\bbP_t\bbG(z)\bbP_t\bbG(z) \right)\notag\\
		&-\left( \tr \bbQ_t\bbG(z)\bbP_t\bbG(z) - \tr \bbQ_t\bbG(z) \right)\left( \tr \bbG(z) - \tr\bbP_t\bbG(z) \right) + O_{\prec}\left(\sqrt{\frac{p_t}{N^3}}\right).
	\end{align*}
	Using the estimation of $Z_2$ with $z_1=z_2=z$, and then by the deterministic approximations (c.f. (\ref{080411}) and Proposition \ref{keyProp}), we can obtain \eqref{080412}.

\subsection{Proof of Proposition \ref{DiagApproByFC}}\label{Sec Proof of Proposition 6.2}
We should first have some concentration results of the diagonal entries, which are stated in the following lemma.
\begin{lemma}\label{LemPGii}
		For any $t \in [\![k]\!]$, if $\hat{y} \in (0,1)$, for any $z \in (\bar{\gamma}^0_1)^{+}\cup(\bar{\gamma}^0_2)^{+}$, we have
	\begin{align}
		&[P_t]_{ii} - y_t = O_{\prec}\left(\frac{1}{\sqrt{N}}\right), \label{Pii} \\
		&[ P_t G]_{ii}-\left(\tr( X_t X_t')^{-1}-\tr Q_t G\right)( G_{ii}-[ P_t G]_{ii})=O_{\prec}\left(\frac{1}{\sqrt{N}}\right), \label{PGii} \\
		&[\bbP_t\bbG\bbP_t]_{ii}-\big(\tr(\bbX_t\bbX_t')^{-1}-\tr\bbQ_t\bbG\big)\big([\bbG\bbP_t]_{ii}-[\bbP_t\bbG\bbP_t]_{ii}\big)
		-\left(1-[\bbP_t]_{ii}\right)\tr\bbQ_t\bbG=O_{\prec}\left(\frac{1}{\sqrt{N}}\right)\label{PGPii},\\
		&1 - (1-y_t)[(X_tX_t')^{-1}]_{jj} = O_{\prec}\left(\frac{1}{\sqrt{N}}\right), \label{XXjj} \\
		&(1-y_t)[\bbW_t\bbG\bbW_t']_{jj}-\left(\tr\bbG-\tr\bbP_t\bbG\right)\left([\left(\bbX_t\bbX_t'\right)^{-1}]_{jj}-[\bbW_t\bbG\bbW'_t]_{jj}\right) =O_{\prec}\left(\frac{1}{\sqrt{N}}\right)\label{WGWjj}.
	\end{align}
	The same estimates hold when $z \in (\bar{\gamma}_1)^{+}\cup(\bar{\gamma}_2)^{+}$ with $\hat{y} \in (0,\infty)$.
\end{lemma}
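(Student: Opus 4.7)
The plan is to prove all five concentration estimates in a parallel manner by adapting the cumulant expansion/moment-method scheme already used for Lemma \ref{Lemma error estimates}. The key observation is that each of the five equations represents a ``pointwise'' (i.e., at a fixed index $i$ or $j$) version of an algebraic identity that surfaces naturally when one applies the cumulant expansion formula (Lemma \ref{cumulant expansion}) to a suitable quantity written as $\sum_{ab} X_{t,ab} [\cdots]_{ab}$. Whereas Lemma \ref{Lemma error estimates} tracks tracial quantities and obtains the rate $\sqrt{p_t/N^3}$, here we track single diagonal entries, so the target rate is $1/\sqrt{N}$.

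For each of the five estimates I would set $Z$ to be the left-hand side of the equation (viewed as a random variable that we expect to be $O_\prec(N^{-1/2})$), and then estimate $\mathbb{E}[|Z\cdot \Xi|^{2n}]$ for each fixed integer $n$ by applying cumulant expansion to one of the $X_{t,ji}$-factors hiding in $Z$. Concretely, one writes for instance
\begin{itemize}
\item for \eqref{Pii}: $[P_t]_{ii}=\sum_{j}X_{t,ji}[W_t]_{ji}$, and cumulant-expand in $X_{t,ji}$;
\item for \eqref{PGii}: $[P_tG]_{ii}=\sum_{j}X_{t,ji}[W_tG]_{ji}$;
\item for \eqref{PGPii}: $[P_tGP_t]_{ii}=\sum_{j}X_{t,ji}[W_tGP_t]_{ji}$;
\item for \eqref{XXjj}: write $[(X_tX_t')^{-1}]_{jj}=\sum_{i}\,[(X_tX_t')^{-1}]_{jj}$ and use the identity $\delta_{jj'}=\sum_i X_{t,ji}[W_t]_{j'i}$ with cumulant expansion;
\item for \eqref{WGWjj}: $[W_tGW_t']_{jj}=\sum_i [W_tG]_{ji}X_{t,ji}^*$ and expand likewise.
\end{itemize}
In each case the leading term of the expansion produces exactly the algebraic identity claimed in the lemma (these are the deterministic-looking relations such as $[P_tG]_{ii}=(\tr(X_tX_t')^{-1}-\tr Q_tG)(G_{ii}-[P_tG]_{ii})+\text{error}$), while the higher cumulants and derivative terms involve off-diagonal Green-function entries. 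As in \eqref{062301} and thereafter, these off-diagonal sums are controlled by Cauchy--Schwarz, replacing a sum over off-diagonals by an inner square-root of a trace, which is $O_\prec(1)$ by Lemma \ref{bound for covariance}, Lemma \ref{Gbound} and Proposition \ref{keyprop2}.

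The combinatorial bookkeeping then mimics that of the proof of Lemma \ref{Lemma error estimates}: one collects the expansion into terms of the form $O_\prec(N^{-(r+1)/2})\,|Z\cdot\Xi|^{2n-1-r}$ (plus a remainder negligible by the moment assumption and the definition of $\Xi$), applies Young's inequality with logarithmic factors as in \eqref{YoungIneq} to absorb the $\mathbb{E}|Z|^{2n}$ terms back to the left, and concludes $\mathbb{E}[|Z\cdot \Xi|^{2n}]\prec N^{-n}$. Markov's inequality and the arbitrariness of $n$ then give $Z=O_\prec(N^{-1/2})$, and Lemma \ref{moreXi} removes the cutoff $\Xi$ up to a probability $O(N^{-D})$ error. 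The statements for $z\in(\bar\gamma_1)^+\cup(\bar\gamma_2)^+$ under $\hat y\in(0,\infty)$ are obtained by the same argument: the only place where the regime affects the proof is through the a priori bounds on $\|G(z)\|$, $\|(X_tX_t')^{-1}\|$ and $\tr P_tG$, which hold uniformly on the relevant contours by Lemma \ref{Gbound} and Lemma \ref{bound for covariance}.

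The hard part is not any single estimate but getting the cancellations right in \eqref{PGii}, \eqref{PGPii} and \eqref{WGWjj}: the naive leading term of the cumulant expansion of $[P_tG]_{ii}=\sum_j X_{t,ji}[W_tG]_{ji}$ produces quantities like $[P_t]_{ii}[G]_{ii}$, $[P_tG]_{ii}[(X_tX_t')^{-1}]_{jj}$ and $[Q_tG]_{\cdot}$ that are not a priori controlled pointwise. The remedy is the same as in the proof of Lemma \ref{Lemma error estimates}: one must \emph{not} try to isolate each such quantity but instead package them into the tracial combinations $\tr(X_tX_t')^{-1}-\tr Q_tG$ and $G_{ii}-[P_tG]_{ii}$ that appear in the statements; those tracial pieces already have deterministic approximations from Proposition \ref{keyprop2}, allowing the Young-inequality closure to go through. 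Once this algebraic bookkeeping is set up, the remainder analysis (including handling of the truncation $\Xi$ and the tail term $\mathcal R_{\ell+1}$ in \eqref{0625041}) is essentially a transcription of the computations following \eqref{061490}.
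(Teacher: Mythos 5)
Your overall scheme --- take $Z$ to be the left-hand side, multiply by the cutoff $\Xi$, estimate the high moments $\E\big[(Z\Xi)^n(\bar Z\Xi)^n\big]$ by cumulant expansion in the $X_{t,ji}$'s, control the off-diagonal contributions by Cauchy--Schwarz, and close with Young's inequality, Markov, and Lemma \ref{moreXi} --- is exactly the paper's proof, and your starting decompositions for \eqref{Pii}, \eqref{PGii} and \eqref{PGPii} coincide with the ones used there. For \eqref{XXjj} the paper expands the trivial identity $[X_tX_t'(X_tX_t')^{-1}]_{jj}=1$, which is what your identity $\delta_{jj'}=\sum_i X_{t,ji}[W_t]_{j'i}$ amounts to (your first formula in that bullet is garbled, but the intent is the right one), and the leading term indeed produces $(1-y_t)[(X_tX_t')^{-1}]_{jj}$.

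The genuine gap is \eqref{WGWjj}. The identity you propose, $[W_tGW_t']_{jj}=\sum_i[W_tG]_{ji}X_{t,ji}$, is false: the right-hand side equals $[W_tGX_t']_{jj}=[X_tGW_t']_{jj}$, which differs from $[W_tGW_t']_{jj}$ by a factor $(X_tX_t')^{-1}$. Consequently a single cumulant expansion of $\sum_i X_{t,ji}[W_tG]_{ji}$ does not yield the claimed relation: its leading term gives $[X_tGW_t']_{jj}\approx\big(\tr G-\tr P_tG\big)\big([(X_tX_t')^{-1}]_{jj}-[W_tGW_t']_{jj}\big)$, so the uncontrolled entry $[X_tGW_t']_{jj}$, not $(1-y_t)[W_tGW_t']_{jj}$, sits on the left, and the prefactor $(1-y_t)$ cannot appear from this expansion alone. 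The paper closes the system with a second expansion: it also proves $Z_2:=[X_tP_tGW_t']_{jj}-(1-y_t)[W_tGW_t']_{jj}=O_\prec(N^{-1/2})$ by expanding $[X_tP_tGW_t']_{jj}=\sum_i X_{t,ji}[P_tGW_t']_{ij}$, and then uses the algebraic identity $X_tGW_t'=X_tP_tGW_t'$ (from $X_tP_t=X_t$) to subtract the two relations, which yields precisely \eqref{WGWjj}. This is the same pairing device as the $\tr P_tGP_t$ versus $\tr P_tG$ trick used elsewhere in the paper (e.g.\ in the proof of Lemma \ref{sumij}); without it your route to \eqref{WGWjj} stalls, while the remaining four estimates go through as you describe.
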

However, if we directly apply the above lemma to obtain the concentration results of $[G]_{ii}$ and $[PG]_{ii}$, the error term will blow up for large $k$. Therefore,  we have to further explore  the fluctuation averaging effect due to the  summation over $k$. The resulting concentration behaviour can be described by the following lemma.
\begin{lemma}\label{LemSumDiag}
	Let $\mathcal{W}_t, t \in [\![k]\!]$ be a collection of deterministic complex numbers. Suppose that   $\sup_t| \mathcal{W}_t| <  C$ for some constant $C$. If $\hat{y} \in (0,1)$, for any $z \in (\bar{\gamma}^0_1)^{+}\cup(\bar{\gamma}^0_2)^{+}$, we have
	\begin{align}
		\sum_{t=1}^k \mathcal{W}_t [ P_t G]_{ii}-\sum_{t=1}^k \mathcal{W}_t \left(\tr( X_t X_t')^{-1}-\tr Q_t G\right)( G_{ii}-[ P_t G]_{ii}) =O_{\prec}\left(\frac{1}{\sqrt{N}}\right). \label{sPGii}	
	\end{align}
	The same estimates hold when $z \in (\bar{\gamma}_1)^{+}\cup(\bar{\gamma}_2)^{+}$ with $\hat{y} \in (0,\infty)$.
\end{lemma}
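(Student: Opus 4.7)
\textbf{Proof plan for Lemma \ref{LemSumDiag}.} The strategy is to adapt the high-moment / cumulant-expansion scheme that was used to establish the fluctuation-averaging bound \eqref{Second error} in Lemma \ref{Lemma error estimates}, but now applied to a diagonal-entry quantity instead of a trace. Fix $i \in [\![N]\!]$ and set
\begin{align*}
Z_k := \sum_{t=1}^k \mathcal{W}_t [P_t G]_{ii} - \sum_{t=1}^k \mathcal{W}_t \bigl(\tr (X_tX_t')^{-1} - \tr Q_t G\bigr)\bigl(G_{ii} - [P_t G]_{ii}\bigr).
\end{align*}
The goal is to prove that, for every fixed integer $n \ge 1$,
\begin{align*}
\E\bigl[(Z_k\Xi)^n (\bar Z_k \Xi)^n \bigr] \prec N^{-n},
\end{align*}
from which Markov's inequality together with the arbitrariness of $n$ delivers $Z_k = O_\prec(N^{-1/2})$.

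\textbf{Step 1 (cumulant expansion).} Writing $[P_t G]_{ii} = \sum_a X_{t,ai} [W_t G]_{ai}$, I will apply the cumulant expansion formula (Lemma \ref{cumulant expansion}) with respect to each independent entry $X_{t,ai}$ to
\begin{align*}
\E^{\chi}\Bigl[\sum_{t=1}^k \mathcal{W}_t \sum_a X_{t,ai} [W_t G]_{ai} \cdot (Z_k\Xi)^{n-1}(\bar Z_k\Xi)^n \Bigr].
\end{align*}
Truncating the expansion at a sufficiently large fixed order $l$ yields a remainder that is negligible by Assumption \ref{assum1} and the cutoff $\Xi$, exactly as in \eqref{Error1}--\eqref{Error2}.

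\textbf{Step 2 (cancellation at the second cumulant).} The $\kappa_2$-contribution with the derivative landing on $[W_t G]_{ai}$ produces, via the direct computation
\begin{align*}
\tfrac{1}{N} \sum_a \partial_{t,ai} [W_t G]_{ai} = \bigl(\tr(X_t X_t')^{-1} - \tr Q_t G\bigr)\bigl(G_{ii} - [P_t G]_{ii}\bigr) + O_\prec(N^{-1}),
\end{align*}
exactly the deterministic term we are subtracting in the definition of $Z_k$ (cf.\ the local law \eqref{PGii} in Lemma \ref{LemPGii}). Summing over $t$ with weights $\mathcal{W}_t$ therefore cancels the second sum in $Z_k$ up to an $O_\prec(N^{-1})$ remainder, which is the source of the target scale.

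\textbf{Step 3 (fluctuation averaging in the error terms).} When a derivative $\partial_{t,ai}$ falls on $Z_k$ itself, a calculation analogous to \eqref{062150} shows that $\partial_{t,ai} Z_k = [\mathcal{A}_k^{(i)}]_{ia}$ for some random matrix $\mathcal{A}_k^{(i)}$ with $\|\mathcal{A}_k^{(i)}\| = O_\prec(1)$ \emph{uniformly in $k$}. This is where the $k$-averaging is harvested: the resolvent identity $\sum_t P_t G = I + z G$, together with the key bound $\|\sum_t Q_t\| = O_\prec(1)$ from \eqref{SumQt}, collapses all $k$-summed operators that arise in the differentiation of $Z_k$ into a single object of $O_\prec(1)$ operator norm. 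Cauchy--Schwarz then turns entrywise sums such as $\sum_a |[W_t G]_{ai}| |[\mathcal{A}_k^{(i)}]_{ia}|$ into tracial quantities of operator-norm-bounded matrices, giving bounds of the form
\begin{align*}
\tfrac{1}{N}\sum_{t,a} [W_t G]_{ai}\, \partial_{t,ai} Z_k = O_\prec(1).
\end{align*}
Higher cumulant orders are handled analogously by distributing the finitely many derivatives across factors and invoking \eqref{SumQt}, Cauchy--Schwarz, and the crude bounds $|G_{ab}|, [G G^*]_{ii} = O_\prec(1)$ at each step.

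\textbf{Step 4 (closing the moment recursion).} Collecting the contributions leads to an inequality of the schematic form
\begin{align*}
\E\bigl[|Z_k\Xi|^{2n}\bigr] \le \sum_{r=0}^{2n-1} \E\Bigl[ O_\prec(N^{-1}) \cdot |Z_k\Xi|^{2n-1-r}\Bigr] + O_\prec(N^{-D}),
\end{align*}
and Young's inequality applied exactly as in \eqref{YoungIneq} absorbs the $|Z_k\Xi|$-dependent terms on the right into the left, yielding $\E[|Z_k\Xi|^{2n}] \prec N^{-n}$ as desired.

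\textbf{Main obstacle.} The crux of the argument is Step 3: verifying that \emph{every} derivative configuration generated in the cumulant expansion (up to the truncation order $l$) produces a random matrix whose operator norm is $O_\prec(1)$ uniformly in $k$. The derivative $\partial_{t,ai}$ may act on $W_t$, $G$, $Q_t$, $(X_t X_t')^{-1}$, or directly on the factors making up $Z_k$, and each case generates a sum over $t$ which \emph{a priori} scales linearly with $k$. The systematic use of the two $k$-saving mechanisms, namely $\|\sum_t Q_t\| = O_\prec(1)$ and the identity $\sum_t P_t G = I + z G$, combined with Cauchy--Schwarz to convert off-diagonal entrywise sums into traces of operator-norm-bounded matrices, is the combinatorial heart of the proof and must be checked configuration by configuration.
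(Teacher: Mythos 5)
Your proposal follows essentially the same route as the paper's proof: fix $i$, form the self-improving quantity $Z_k$, run a cumulant expansion on high moments of $Z_k\Xi$, use the second-cumulant term to cancel the subtracted deterministic-looking product, harvest the $k$-averaging through $\|\sum_t Q_t\|=O_\prec(1)$ together with $\sum_t P_tG = I+zG$ and Cauchy--Schwarz (writing $\partial_{t,ai}Z_k$ as an entry of an operator-norm-bounded matrix), and close with Young's inequality; this is exactly the paper's argument (there stated for $\mathcal{W}_t\equiv1$ with the remark that bounded weights are handled identically).

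One caveat: the recursion displayed in your Step 4 is too weak as written. If every coefficient of $|Z_k\Xi|^{2n-1-r}$ were only $O_\prec(N^{-1})$, Young's inequality would yield merely $\E\bigl[|Z_k\Xi|^{2n}\bigr]\prec N^{-2n/(r+1)}$ from the worst term (e.g.\ $N^{-1}$ when $r=2n-1$), not $N^{-n}$, and hence only $Z_k=O_\prec(N^{-1/(2n)})$. What the expansion actually produces, and what you need, is that the coefficient of $|Z_k\Xi|^{2n-1-r}$ is $O_\prec\bigl(N^{-(r+1)/2}\bigr)$: the order-$(s+1)$ cumulant carries the prefactor $N^{-(s+1)/2}$ with $s\ge r$, and each derivative hitting $Z_k$ contributes an entry of a bounded-norm matrix so that Cauchy--Schwarz keeps the $(t,a)$-sums of size $O_\prec(N)$ rather than $O_\prec(N^{3/2})$. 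This is precisely the case analysis (in $s-r$) carried out in the paper's treatment of the higher-cumulant terms, so it is a bookkeeping correction to your Step 4 rather than a missing idea, but without the $r$-dependent decay of the coefficients the final Markov step does not give the claimed $O_\prec(N^{-1/2})$.
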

The proof of Lemma \ref{LemPGii} and Lemma \ref{LemSumDiag} will be postponed to Section \ref{Sec Proof of Lemma C}.
With the aid of the  two lemmas above, we can prove Proposition \ref{DiagApproByFC}. Firstly, (\ref{XXjj}) directly implies the first estimate of (\ref{QGiiGap}).
	For any $z \in (\bar{\gamma}_1)^{+}\cup(\bar{\gamma}_2)^{+}$ with $\hat{y} \in (0,\infty)$ or $z \in (\bar{\gamma}_1)^{+}\cup(\bar{\gamma}_2)^{+}$ with $\hat{y} \in (0,\infty)$, taking 
 $
 \mathcal{W}_t = -(1-y_t)/(\omega_t(z) m_{\boxplus}(z))
 $
 in \eqref{sPGii} and rearranging terms, we have
 \begin{align}\label{sPGii1}
 	\sum_{t=1}^k & \frac{1 + \tr( X_t X_t')^{-1}-\tr Q_t G}{ -\omega_t(z) m_{\boxplus}(z)(1-y_t)^{-1} } [ P_t G]_{ii} -\sum_{t=1}^k \frac{\tr( X_t X_t')^{-1}-\tr Q_t G}{ -\omega_t(z) m_{\boxplus}(z)(1-y_t)^{-1} }[G]_{ii}  = O_{\prec}\left(\frac{1}{\sqrt{N}}\right). 
 \end{align}
 From (\ref{PGii}), it is easy to check 
 \begin{align}
 |[PG]_{ii}|\prec \frac{p_t}{N}+\frac{1}{\sqrt{N}}. \label{062030}
 \end{align} 
For the first term in the LHS of \eqref{sPGii1},  using (\ref{062030}), Lemma \ref{Lemma error estimates} and  Proposition \ref{keyprop2}, one can check 
 \begin{align*}
 	\sum_{t=1}^k \frac{1 + \tr( X_t X_t')^{-1}-\tr Q_t G}{ -\omega_t(z) m_{\boxplus}(z)(1-y_t)^{-1} } [ P_t G]_{ii} = &\sum_{t=1}^k [ P_t G]_{ii}  + O_{\prec}\left( \frac{1}{\sqrt{N}}\right) = 1 + z[G]_{ii} + O_{\prec}\left( \frac{1}{\sqrt{N}}\right) .
 \end{align*} 

For the second term in the LHS of \eqref{sPGii1}, similarly, we have 
\begin{align*}
	\sum_{t=1}^k \frac{\tr( X_t X_t')^{-1}-\tr Q_t G}{-\omega_t(z) m_{\boxplus}(z)(1-y_t)^{-1}}[G]_{ii}  
	= \sum_{t=1}^k \frac{1+\omega_t(z) m_{\boxplus}(z)-y_t}{\omega_t(z) m_{\boxplus}(z)} [G]_{ii} + O_{\prec}\left( \frac{1}{\sqrt{N}} \right).
\end{align*}

Combining the above estimations, \eqref{sPGii1} boils down to
\begin{align*}
	1 + z[G]_{ii} = \sum_{t=1}^k \frac{1+\omega_t(z) m_{\boxplus}(z)-y_t}{\omega_t(z) m_{\boxplus}(z)} [G]_{ii} + O_{\prec}\left( \frac{1}{\sqrt{N}} \right), 
	\end{align*}
	which further gives
	\begin{align}\label{GiiGap}
	[G]_{ii} = \left(z - \sum_{t=1}^k \frac{1+\omega_t(z) m_{\boxplus}(z)-y_t}{\omega_t(z) m_{\boxplus}(z)}  \right)^{-1} + O_{\prec}\left( \frac{1}{\sqrt{N}} \right) = m_{\boxplus}(z) + O_{\prec}\left( \frac{1}{\sqrt{N}} \right).
\end{align}
Therefore, by the deterministic approximation of the tracial quantities (c.f., Proposition \ref{keyprop2} and the estimate of $\gamma_{t2}$ in Lemma \ref{Lemma error estimates}) and (\ref{PGii}), we have
\begin{align}\label{PGiiGap1}
	[P_tG]_{ii} &= \frac{\tr( X_t X_t')^{-1} -\tr Q_tG  }{1 + \tr( X_t X_t')^{-1} -\tr Q_tG}[G]_{ii} +  O_{\prec}\left( \frac{1}{\sqrt{N}} \right)\notag\\
	&= \frac{1 + \omega_t(z) m_{\boxplus}(z) - y_t}{\omega_t(z) m_{\boxplus}(z)} m_{\boxplus}(z) + O_{\prec}\left( \frac{1}{\sqrt{N}} \right) 
	= 1 + \omega_t(z) m_{\boxplus}(z) + O_{\prec}\left( \frac{1}{\sqrt{N}} \right).
\end{align}
By applying similar arguments, we can also prove the second estimate of \eqref{PGPiiGap} and \eqref{QGiiGap}. The details are omitted. Hence, we complete the proof of Proposition \ref{DiagApproByFC}.

\subsection{Proofs of Lemma \ref{Error in CLT}}\label{Sec Proofs of Lemma 6.4}
Before we prove Lemma \ref{Error in CLT}, we give some preliminary lemmas, whose proofs  will be postponed to Section \ref{Sec Proof of Lemma C}.

The following lemma provide some estimates on some $W_t$ related quantities.
\begin{lemma}\label{sumij}
Under the assumption of Theorem \ref{FreeCLT}, if $\hat{y} \in (0,1)$, for any fixed $z \in (\bar{\gamma}^0_1)^{+}\cup(\bar{\gamma}^0_2)^{+}$,  we have
\begin{align}
&\Big|\frac{1}{\sqrt{N}}\sum_{i=1}^{N}[ W_t]_{ji}\Big| \prec \frac{1}{N^{\frac{1}{6}}},\label{SUMw}\\
	&\Big|\frac{1}{\sqrt{N}}\sum_{i=1}^{N}[ W_t G(z)]_{ji} \Big|\prec \frac{1}{N^{\frac{1}{6}}} , \qquad \Big|\frac{1}{\sqrt{N}}\sum_{i=1}^{N}[ W_t G(z) P_t]_{ji}\Big|\prec \frac{1}{N^{\frac{1}{6}}}. \label{SUMWGP}
\end{align}
The same bounds hold for $z \in (\bar{\gamma}_1)^{+}\cup(\bar{\gamma}_2)^{+}$ with $\hat{y} \in (0,\infty)$. 
\end{lemma}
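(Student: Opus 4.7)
The plan is to establish all three bounds by a single high-moment argument based on the cumulant expansion of Lemma~\ref{cumulant expansion}, following the template already used to prove the $\gamma_{ti}$ estimates in Lemma~\ref{Lemma error estimates}. For each target quantity $Z$, the goal is to show
\[
\E\bigl[(Z\cdot\Xi)^n(\bar Z\cdot\Xi)^n\bigr]\prec N^{-n/3}
\]
for every fixed integer $n\ge 1$; Markov's inequality together with the arbitrariness of $n$ then delivers $Z\prec N^{-1/6}$.

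For \eqref{SUMw} I would take $Z=\frac{1}{\sqrt N}\sum_i[W_t]_{ji}=\frac{1}{\sqrt N}\sum_{i,a}[(X_tX_t')^{-1}]_{ja}X_{t,ai}$ and start from $\E[Z\cdot(Z\Xi)^{n-1}(\bar Z\Xi)^n]$, applying cumulant expansion in each $X_{t,ai}$. Using the resolvent differentiation identity
\[
\partial_{t,ai}[(X_tX_t')^{-1}]_{jb}=-[(X_tX_t')^{-1}]_{ja}[W_t]_{bi}-[W_t]_{ji}[(X_tX_t')^{-1}]_{ab},
\]
the first-order piece, after the sum in $i$ and $a$ is carried out, produces a diagonal contribution proportional to $-\tr(X_tX_t')^{-1}\cdot Z$ together with an off-diagonal contribution $\frac{1}{N}\sum_a[(X_tX_t')^{-1}]_{ja}Z_a$, where $Z_a:=\frac{1}{\sqrt N}\sum_i[W_t]_{ai}$ is the analogous quantity with row index $a$. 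Since the estimate of $\gamma_{t2}$ gives $\tr(X_tX_t')^{-1}\approx y_t/(1-y_t)$ and $1+\tr(X_tX_t')^{-1}\approx(1-y_t)^{-1}$ is bounded below under Assumption~\ref{assum2}, the diagonal piece is moved to the left-hand side; the off-diagonal coupling is controlled via Cauchy-Schwarz together with $\|(X_tX_t')^{-1}\|\prec 1$ from Lemma~\ref{bound for covariance}. Higher-cumulant contributions in the expansion are bounded crudely using $\|W_t\|,\|(X_tX_t')^{-1}\|\prec 1$ and the deterministic cutoff provided by $\Xi$, and the closure of the moment recursion at the rate $N^{-n/3}$ is then obtained by a standard Young-inequality step as in \eqref{YoungIneq}. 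The bounds in \eqref{SUMWGP} follow from the identical argument applied to $Z=\frac{1}{\sqrt N}[W_tG(z)\mathbf{1}]_j$ and $Z=\frac{1}{\sqrt N}[W_tG(z)P_t\mathbf{1}]_j$; differentiating the additional $G(z)$ and $P_t$ factors in $X_{t,ai}$ only produces extra polynomial expressions in Green-function entries $[G(z)]_{ab}$, $[P_sG(z)]_{ab}$, $[W_sG(z)]_{ab}$ etc., all of which are $O_\prec(1)$ on the truncated contours by Lemma~\ref{Gbound} and Propositions~\ref{keyProp}--\ref{keyprop2}, so the structure of the recursion is unaffected.

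The main obstacle, and the source of the comparatively weak rate $N^{-1/6}$, is that the quantities here are \emph{single-entry} vector functionals rather than tracial objects. One therefore loses the sharp Cauchy-Schwarz gain $\sum_{ij}|M_{ij}|\le\sqrt{p_t N\,\Tr MM^*}$ that produced the much sharper $\sqrt{p_t/N^3}$ rate in Lemma~\ref{Lemma error estimates}, and higher-order cumulant terms can only be controlled by the more brutal deterministic $O_\prec(1)$ bounds on $W_t$. The technically most delicate point will be to handle the off-diagonal coupling $\frac{1}{N}\sum_a[(X_tX_t')^{-1}]_{ja}Z_a$, which links the $p_t$ rows of $W_t\mathbf{1}$ through the inverse sample covariance matrix, and to verify that the Young-inequality step indeed yields the exponent $-n/3$ rather than anything slower.
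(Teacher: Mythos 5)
Your strategy (recursive high-moment estimates via cumulant expansion plus a Young-inequality closure, as in the proof of Lemma \ref{Lemma error estimates}) is the right general framework, and for \eqref{SUMw} your argument is essentially the paper's: expanding $\frac{1}{\sqrt N}\sum_{i,a}[(X_tX_t')^{-1}]_{ja}X_{t,ai}$ in the explicit $X_{t,ai}$ factor produces the diagonal term $-\tr(X_tX_t')^{-1}\cdot Z$ plus a coupling term which, as you note, is $O_\prec(1/N)$ by Cauchy--Schwarz, so one divides by $1+\tr(X_tX_t')^{-1}\gtrsim 1$; the paper implements exactly this by defining $Z=(1+\tr(X_tX_t')^{-1})\sum_i[W_t']_{ij}$ and showing $|Z|\prec N^{1/3}$.

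For \eqref{SUMWGP}, however, there is a genuine gap: the claim that differentiating the extra $G(z)$ factor ``only produces $O_\prec(1)$ Green-function entries, so the structure of the recursion is unaffected'' is false. Writing $\sum_i[W_tG]_{ji}=\sum_{a,b,i}[(X_tX_t')^{-1}]_{ja}X_{t,ab}G_{bi}$ and expanding in $X_{t,ab}$, the derivative hitting $G$ yields $\partial_{t,ab}G=-G\big((I-P_t)E_{ba}W_t+W_t'E_{ab}(I-P_t)\big)G$, and after summation the first-order term contains $-(\tr G-\tr P_tG)\sum_i[(X_tX_t')^{-2}X_tG]_{ji}$: an \emph{order-one} coefficient times a \emph{new} row-sum quantity (the single-row analogue of $\tr Q_tG$) for which only the trivial bound $|e_j'M\mathds{1}|\le\sqrt N\|M\|\prec\sqrt N$ is available. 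Hence your recursion does not reduce to ``$(1+\tr(X_tX_t')^{-1})Z\approx$ small'' and does not close; this is exactly the $\tr P_tG$ versus $\tr Q_tG$ obstruction described in the paper's proof strategy. The paper circumvents it by expanding two different representations of the same object, $\sum_i[X_tG]_{ji}$ and $\sum_i[X_tP_tG]_{ji}$ (using $X_tP_t=X_t$), obtaining the two quantities $Z_1,Z_2$ in its proof with $|Z_1|,|Z_2|\prec N^{1/3}$; subtracting eliminates the unknown row sums and gives $\big(\tr G+1-\tr P_tG-\tfrac{p_t}{N}\big)\sum_i[W_tG]_{ji}=O_\prec(N^{1/3})$, and one then needs a nondegeneracy bound for this coefficient, $\tr G+1-\tr P_tG-\tfrac{p_t}{N}\approx\frac{\omega_t(z)-1}{\omega_t(z)}(1-\tfrac{p_t}{N})$, which relies on the subordination-function estimates of Lemma \ref{Flbound} and Propositions \ref{keyProp}--\ref{keyprop2} — not merely on $1+\tr(X_tX_t')^{-1}\gtrsim1$ as in your sketch. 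Your proposal is missing both this closure device (or any substitute controlling $\sum_i[(X_tX_t')^{-2}X_tG]_{ji}$) and the lower bound on the resulting coefficient, so as written the proof of \eqref{SUMWGP} does not go through; the same issue affects your treatment of $\sum_i[W_tGP_t]_{ji}$, which the paper instead handles by rewriting $\sum_i[P_tGW_t']_{ij}=\sum_i[X_t'W_tGW_t']_{ij}$ and expanding in the leading $X_t'$ factor.
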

We remark here that similarly to (\ref{06281152}), the estimates (\ref{SUMw}) and (\ref{SUMWGP}) still hold if we take derivatives w.r.t to $z$ for the quantities in the LHS.

For brevity, in the sequel, for any random variable $\xi$, we use the notation
\begin{align*}
\la\xi\ra:=\xi-\mathbb{E}\xi.
\end{align*}

\begin{lemma}\label{EtrXXtrQG}
Under the assumption of Theorem \ref{FreeCLT}, if $\hat{y} \in (0,1)$, for any fixed $z \in (\bar{\gamma}^0_1)^{+}\cup(\bar{\gamma}^0_2)^{+}$,  we have
\begin{align}
		&\E^{\chi} \bigg[  \bla\Tr (X_tX_t')^{-1} \bra {\rm e}^{\mathrm{i}xL_{N}(f)} \bigg] = O_{\prec}\left( \frac{p_t}{N^{\frac76}} \right), \label{0616101} \\
		&\E^{\chi} \bigg[ \bla\Tr Q_tG(z) \bra {\rm e}^{\mathrm{i}xL_{N}(f)} \bigg] =\frac{1}{1-y_t} \E^{\chi} \bigg[ \bla \Tr P_tG(z) \bra {\rm e}^{\mathrm{i}xL_{N}(f)} \bigg] + O_{\prec}\left( \frac{p_t}{N^{\frac76}} \right). \label{0616102} \\
		&\frac{1}{N^2}\sum_{ij}^{(t)} \left|\partial_{t,ji}^2\Tr \bbG(z)\right| = O_{\prec}\left( \frac{p_t}{N^{\frac{3}{2}}} \right). \label{062520}
\end{align}
The same bounds hold for $z \in (\bar{\gamma}_1)^{+}\cup(\bar{\gamma}_2)^{+}$ with $\hat{y} \in (0,\infty)$. 
\end{lemma}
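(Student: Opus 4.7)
The plan is to prove the three estimates by cumulant expansion combined with direct estimates, in the same spirit as Lemma \ref{Lemma error estimates} and the derivation of \eqref{EtrXX}--\eqref{EtrQG}, but now in the presence of the characteristic function $e^{\mathrm{i}xL_N^1(f)}$.

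For \eqref{0616101}, the starting point is the deterministic identity $\Tr P_t = \sum_{ij}^{(t)}X_{t,ji}[W_t]_{ji} = p_t$, which implies
\[p_t\,\E^{\chi}\bigl[e^{\mathrm{i}xL_N^1(f)}\bigr] = \sum_{ij}^{(t)}\E^{\chi}\bigl[X_{t,ji}[W_t]_{ji}\,e^{\mathrm{i}xL_N^1(f)}\bigr].\]
Apply the cumulant expansion of Lemma \ref{cumulant expansion} to each summand. A direct computation using the formula $\partial A^{-1} = -A^{-1}(\partial A)A^{-1}$ yields $\frac{1}{N}\sum_{ij}^{(t)}\partial_{t,ji}[W_t]_{ji} = (1-y_t-1/N)\Tr(X_tX_t')^{-1}$, so the Gaussian part of the expansion produces $(1-y_t-1/N)\,\E^{\chi}[\Tr(X_tX_t')^{-1}e^{\mathrm{i}xL_N^1(f)}]$ on the right-hand side, plus terms where $\partial_{t,ji}$ hits $e^{\mathrm{i}xL_N^1(f)}$ through the contour integral defining $L_N^1(f)$. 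These latter terms take the form of $\frac{1}{N}\sum_{ij}^{(t)}[W_t]_{ji}\,\partial_{t,ji}\Tr G(z)$ integrated against $f(z)$ along $\bar\gamma_1^0$, and are controlled by combining Cauchy--Schwarz in $j$ with the key estimate $|\frac{1}{\sqrt{N}}\sum_i[W_tG(z)]_{ji}| \prec N^{-1/6}$ from Lemma \ref{sumij}; this yields an overall error of order $p_t/N^{7/6}$. The $\kappa_3^{t,j}$, $\kappa_4^{t,j}$ and higher-order remainders are controlled exactly as in the derivation of \eqref{EtrXX}. Subtracting the analogous identity at $x=0$ centers $\Tr(X_tX_t')^{-1}$ and produces \eqref{0616101}.

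For \eqref{0616102}, the same scheme applies with starting identity $\Tr P_tGP_t = \Tr P_tG$ (from $P_t^2=P_t$). Writing the left side as $\sum_{ij}^{(t)}X_{t,ji}[W_tGP_t]_{ji}$ and running cumulant expansion produces $(1-y_t-1/N)\,\E^{\chi}[\Tr Q_tG\,e^{\mathrm{i}xL_N^1(f)}]$ as the Gaussian leading term (analogously to \eqref{EtrQG}), while the derivative-on-characteristic-function contribution generates sums of the form $\sum_i[W_tG(z_1)P_tG(z_2)]_{ji}$, again bounded by the analogue of Lemma \ref{sumij}. Centering by subtracting the $x=0$ case gives the claimed proportionality with coefficient $1/(1-y_t)$, modulo $O_{\prec}(p_t/N^{7/6})$.

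For \eqref{062520}, the approach is direct calculation. Using $\partial_{t,ji}\Tr G = -\Tr G^2 \partial_{t,ji}P_t$ with $\partial_{t,ji}P_t$ given analogously to \eqref{061491}--\eqref{061493}, the second derivative $\partial_{t,ji}^2\Tr G(z)$ becomes a finite sum of traces of matrix products built from $W_t$, $G$, $P_t$, $Q_t$ and $(X_tX_t')^{-1}$, each of which can be identified with an entry $[\mathcal{M}]_{ij}$ of a $p_t \times N$ matrix $\mathcal{M}$ with $\|\mathcal{M}\| = O_{\prec}(1)$ on the working contours (by Lemmas \ref{Gbound} and \ref{bound for covariance}). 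Cauchy--Schwarz then yields $\sum_{ij}^{(t)}|\mathcal{M}_{ij}| \le \sqrt{p_tN\cdot\Tr\mathcal{M}\mathcal{M}^*} = O_{\prec}(p_t\sqrt{N})$, and dividing by $N^2$ produces \eqref{062520}. The main obstacle lies in \eqref{0616101}: the trivial concentration of $\Tr(X_tX_t')^{-1}$ inherited from $N\gamma_{t2} = O_{\prec}(\sqrt{p_t/N})$ is strictly weaker than the claimed $p_t/N^{7/6}$ for all $p_t \le N$, so the $N^{-1/6}$ improvement must be extracted from the averaging of the characteristic-function derivatives via Lemma \ref{sumij}, rather than from any pointwise concentration argument on $\Tr(X_tX_t')^{-1}$ itself.
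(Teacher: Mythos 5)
Your overall strategy (cumulant expansion of $\Tr P_t=\sum_{ij}^{(t)}X_{t,ji}[W_t]_{ji}$ and of $\Tr P_tGP_t=\Tr P_tG$, with the Gaussian part producing $(1-y_t-1/N)\Tr(X_tX_t')^{-1}$, resp.\ the $\Tr Q_tG$ term) is the paper's, but the step you lean on for the key error rate is wrong. The terms where $\partial_{t,ji}$ hits $e^{\mathrm{i}xL_N^1(f)}$ are, after differentiating the contour integral, of the form $\frac1N\sum_{ij}^{(t)}[W_t]_{ji}\,\partial_{t,ji}\Tr G(\tilde z)$ with $\partial_{t,ji}\Tr G(\tilde z)=2[W_tG^2(\tilde z)]_{ji}-2[W_tG^2(\tilde z)P_t]_{ji}$, i.e.\ a sum over \emph{both} indices of a product of two $(j,i)$-entries. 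Lemma \ref{sumij} controls sums over $i$ of a \emph{single} off-diagonal entry and simply does not apply here, and the correct generic bound (Cauchy--Schwarz over both indices, each Frobenius norm being $O_\prec(\sqrt{p_t})$) only gives $O_\prec(p_t/N)$ --- too weak, since the errors must be summable over $t$ to $o(1)$ and $\sum_t p_t/N\sim y\sim 1$. What actually makes this term harmless is an exact algebraic cancellation: $\frac1N\sum_{ij}^{(t)}[W_t]_{ji}[W_tG^2(I-P_t)]_{ji}=\tr Q_tG^2(I-P_t)=\tr Q_tG^2-\tr Q_tG^2P_t=0$ because $P_tQ_t=Q_t$; for \eqref{0616102} the analogous term carries a factor $P_t(I-P_t)=0$ and vanishes as well. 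This is how the paper proceeds ($\mathcal{P}_{t2}=0$), and the $N^{-1/6}$ gain that you attribute to these characteristic-function derivatives in fact comes from the \emph{third-cumulant} terms, estimated like $\mathsf{I}_{t3}$ (replace diagonal entries by deterministic values, then apply Lemma \ref{sumij} to the remaining single off-diagonal sum). A smaller but real issue: subtracting the $x=0$ identity does not produce the centered quantity $\E^{\chi}[\langle\Tr(X_tX_t')^{-1}\rangle e^{\mathrm{i}xL_N(f)}]$ (the centering must be against $\E[e^{\mathrm{i}xL_N^1(f)}]$); the clean route is to expand $\E^{\chi}[\Tr P_t\langle e^{\mathrm{i}xL_N^1(f)}\rangle]$, which is negligible since $\Tr P_t=p_t$, so every term automatically carries the centered exponential.

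Your argument for \eqref{062520} also has a gap: it is not true that every term of $\partial^2_{t,ji}\Tr G(z)$ is an off-diagonal entry of an $O_\prec(1)$-norm matrix. The second derivative contains products of two diagonal entries, e.g.\ $\partial_z\{([(X_tX_t')^{-1}]_{jj}-[W_tGW_t']_{jj})[(I-P_t)G(I-P_t)]_{ii}-[W_tGW_t']_{jj}[I-P_t]_{ii}\}$. Viewed as a matrix in $(j,i)$ this is a rank-one outer product with spectral norm of order $\sqrt{p_tN}$, so your Cauchy--Schwarz gives only $O_\prec(p_t/N)$ after the $N^{-2}$ normalization, not the claimed $p_t/N^{3/2}$. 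The stated rate needs the extra cancellation used in the paper: replace the diagonal entries by their deterministic/tracial approximations (cost $O_\prec(N^{-1/2})$ per pair, hence $O_\prec(\sqrt N\,p_t)$ in total) and note that the resulting tracial combination reduces to $\gamma_{t1}$-, $\gamma_{t3}$-type quantities of size $O_\prec(\sqrt{p_t/N^3})$, whence $\sum_{ij}^{(t)}|\partial^2_{t,ji}\Tr G|=O_\prec(\sqrt N\,p_t)$. Your bound for the genuinely off-diagonal part of the second derivative is fine, but without these two cancellation mechanisms the proposal does not reach the rates asserted in the lemma.
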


Next, we present a lemma concerning the perturbations of the spectrum of $G, (X_tX_t')^{-1}$ and $X_tX_t'$, which will be useful for the estimation of the remainder term in \eqref{remainder}.
\begin{lemma}\label{remainderLemma}
	Under the assumption of Theorem \ref{FreeCLT}, let $X_t^{(ij)} :=  X_t^{(ij)} (x)$  be the matrix with the single entry $X_{t,ij}$ replaced by a generic argument $x$ in $X_t$. We also define $P_t^{(ij)} := P_t^{(ij)}(x)$, $H^{(t,ij)} := H^{(t,ij)}(x)$ and  $G^{(t,ij)} := G^{(t,ij)}(x)$ as the analogue of $P_t, H$ and $G$ where a single entry $X_{t,ij}$ is replaced by  $x$. 
	Then we have, for any  fixed (but small) $\epsilon > 0$, we have the following estimates uniformly in $|x| \le N^{-\frac12+\epsilon}$, 
	\begin{align}
		& \Big\|X_t^{(ij)}\big( X_t^{(ij)} \big)'\Big\| \prec 1,\quad   \Big\|\Big(X_t^{(ij)}\big( X_t^{(ij)} \big)'\Big)^{-1}\Big\|\prec1,\quad \big\|G^{(t,ij)}\big\|\prec 1. \label{4113}
	\end{align}
\end{lemma}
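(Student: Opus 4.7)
The plan is to exploit that $X_t^{(ij)}$ is a rank-one perturbation of $X_t$ with perturbation norm $\prec N^{-1/2+\epsilon}$, uniformly in $|x| \le N^{-1/2+\epsilon}$. Concretely, we write $X_t^{(ij)} - X_t = \delta\, e_j e_i'$ with $\delta := x - X_{t,ji}$ (here $e_j \in \mathbb{R}^{p_t}$, $e_i \in \mathbb{R}^N$ are the standard basis vectors). On the high-probability event $\{|X_{t,ji}| \prec N^{-1/2}\}$, which is guaranteed by the moment condition in Assumption \ref{assum1} (or by Assumption \ref{assum3} combined with Markov), we obtain the key uniform bound $|\delta| \le |x| + |X_{t,ji}| \prec N^{-1/2+\epsilon}$ simultaneously for all $|x| \le N^{-1/2+\epsilon}$.

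First I would expand
\begin{align*}
X_t^{(ij)}(X_t^{(ij)})' = X_tX_t' + \Delta, \qquad \Delta := \delta\bigl(X_t e_i e_j' + e_j e_i' X_t'\bigr) + \delta^2 e_j e_j',
\end{align*}
and note that $\Delta$ has rank at most $2$ with $\|\Delta\| \prec N^{-1/2+\epsilon}$, thanks to $\|X_t\| \prec 1$ from Lemma \ref{bound for covariance}. Combining this with the deterministic bounds $\lambda_{\min}(X_tX_t') \gtrsim 1$ and $\lambda_{\max}(X_tX_t') \prec 1$ from Lemma \ref{8002}, a direct application of Weyl's inequality yields the first two bounds in (\ref{4113}).

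For the remaining bound on $\|G^{(t,ij)}\|$, the strategy is to propagate the perturbative smallness from $X_t^{(ij)}-X_t$ through to the projection $P_t^{(ij)}$. Using the resolvent identity $A^{-1}-B^{-1} = A^{-1}(B-A)B^{-1}$ with $A = X_tX_t'$, $B = X_t^{(ij)}(X_t^{(ij)})'$, together with the just-obtained spectral bounds, gives $\|(X_t^{(ij)}(X_t^{(ij)})')^{-1} - (X_tX_t')^{-1}\| \prec N^{-1/2+\epsilon}$. Inserting this into the decomposition
\begin{align*}
P_t^{(ij)} - P_t &= (X_t^{(ij)}-X_t)'(X_t^{(ij)}(X_t^{(ij)})')^{-1}X_t^{(ij)} \\
&\quad + X_t'\bigl[(X_t^{(ij)}(X_t^{(ij)})')^{-1} - (X_tX_t')^{-1}\bigr]X_t^{(ij)} + X_t'(X_tX_t')^{-1}(X_t^{(ij)}-X_t)
\end{align*}
bounds each term by $\prec N^{-1/2+\epsilon}$ and hence $\|H^{(t,ij)} - H\| = \|P_t^{(ij)} - P_t\| \prec N^{-1/2+\epsilon}$.

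Finally, recall from the proof of Lemma \ref{Gbound} (via Lemma \ref{supportofmu}) that with high probability $\mathrm{Spec}(H) \subset \{0\}\cup[a,b]$ for positive constants $a,b$, and the working contour is constructed so that $\mathrm{dist}(z,\mathrm{Spec}(H)) \gtrsim 1$ for every $z$ on it. Since the perturbation is $\prec N^{-1/2+\epsilon} \ll 1$, Weyl's inequality preserves this distance: with high probability $\mathrm{dist}(z,\mathrm{Spec}(H^{(t,ij)})) \gtrsim 1$, which gives $\|G^{(t,ij)}(z)\| \prec 1$. The only subtle point is that we need this for \emph{every} $|x| \le N^{-1/2+\epsilon}$ simultaneously; fortunately all the estimates depend on $x$ only through the bound $|\delta| \prec N^{-1/2+\epsilon}$, which itself holds uniformly in $x$ on one single high-probability event, so no continuity or union-bound argument over a grid of $x$-values is needed. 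The most delicate point to check is that when $z$ lies on the small arc $\mathcal{C}_1$ near the origin, no nonzero eigenvalue of $H$ can be pushed inside that arc by a perturbation of size $\prec N^{-1/2+\epsilon}$; this is immediate because the gap between $\epsilon_1$ and $a$ is of order one.
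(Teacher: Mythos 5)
Your argument is correct and follows essentially the same route as the paper's proof: treat $X_t^{(ij)}-X_t$ as a rank-one perturbation of size $\prec N^{-1/2+\epsilon}$ (using $\|X_t\|\prec 1$ and $|X_{t,ji}|\prec N^{-1/2}$), apply Weyl's inequality to get the first two bounds, then propagate the perturbation through $(X_t^{(ij)}(X_t^{(ij)})')^{-1}$ and $P_t^{(ij)}$ to control $H^{(t,ij)}-H$ and conclude the resolvent bound. The only cosmetic difference is that the paper finishes by applying Weyl directly to the singular values of $H-z\mathrm{I}$ versus $H^{(t,ij)}-z\mathrm{I}$, while you phrase the same fact as preservation of $\mathrm{dist}(z,\mathrm{Spec}(H))\gtrsim 1$ on the contour; your explicit remarks on uniformity in $x$ and on the resolvent identity for $\Delta_2$ simply spell out steps the paper leaves as "easy to check."
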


With the help of the above Lemmas, we can prove Lemma 6.4 as follows.
\begin{proof}[Proof of Lemma \ref{Error in CLT}]
	We start with the estimate of  the error terms $\mathsf{E}_1$, $\mathsf{E}_2$, and $\mathsf{E}_3$. 
For $\mathsf{E}_1$, using Lemma \ref{XiLemma} and the fact that $\partial_{t,ji}^{s_0}\left\{[W_tG(z_1)]_{ji} \bla {\rm e}^{\mathrm{i}x\la L_N^1(f)\ra }\bra \right\} \partial_{t,ji}^{s_1}\Xi$ has deterministic upper bound when $\Im z_1 \ge N^{-K}$, we have $|\mathsf{E}_1| \prec N^{-D}$ for any fixed large $D > 0$. For $\mathsf{E}_2$, due to the existence of the truncation function $\Xi$ and the fact $\Im z$ is greater than $N^{-K}$, we have the deterministic upper bound for $\partial^s_{t,ji}\left\{ [W_tG(z_1)]_{ji} \bla {\rm e}^{\mathrm{i}x\la L_N^1(f)\ra }\bra\right\} \cdot \Xi$. This together with the high probability bound $|\partial^s_{t,ji}\left\{ [W_tG(z_1)]_{ji} \bla {\rm e}^{\mathrm{i}x\la L_N^1(f)\ra }\bra\right\}| \prec 1$ leads to $|\mathsf{E}_2| \prec p_t/N^{3/2}$. Lastly, we estimate $\mathsf{E}_3$. First,  for any fixed (but small) $\epsilon>0$,  by Cauchy Schwarz, we have
\begin{align}
	\mathbb{E}\Big[|X_{t,ji}|^{l+2} \mathbf{1}_{|X_{t,ji}|>N^{-\frac12+\epsilon}}\Big] \le \sqrt{\E \Big[|X_{t,ji}|^{2l+4}\Big] } \sqrt{\mathbb{P}\big(|X_{t,ji}|>N^{-\frac12+\epsilon}\big)} \lesssim N^{-D} \label{062502}
\end{align}  
for any large $D > 0$, in light of Assumption \ref{assum1}. Due to the existence of $\Xi$ and $\Im z \ge N^{-K}$, we have 
\begin{align}
	\sup _{X_{t,ji}\in \mathbb{R}}\left| \partial_{t,ji}^{l+1}\left\{ [ W_t G(z_1)]_{ji}\bla {\rm e}^{\mathrm{i}x\la L_N^1(f)\ra }\bra \Xi \right\} \right| \lesssim N^{O(K)}.\label{062503}
\end{align}
 Combining (\ref{062502}) and  (\ref{062503}), we have the first term in (\ref{062501}) can be bounded by $N^{-D}$ for any large $D > 0$. Here our $D$ may vary from line to line.  For the second term in (\ref{062501}), by Lemma \ref{remainderLemma}, we have
\begin{align*}
	\sup _{|X_{t,ji}| \leqslant N^{-\frac12+\epsilon}}\left| \partial_{t,ji}^{l+1}\left\{ [ W_t G]_{ji}\bla {\rm e}^{\mathrm{i}x\la L_N^1(f)\ra }\bra \Xi\right\} \right| \prec 1
\end{align*}
Therefore, together with Assumption \ref{assum1}, we have
\begin{align}
	\mathbb{E}[|X_{t,ji}|^{l+2} ]\cdot \sup _{|X_{t,ji}| \leqslant N^{-\frac12+\epsilon}}\left| \partial_{t,ji}^{l+1}\left\{ [ W_t G]_{ji}\bla {\rm e}^{\mathrm{i}x\la L_N^1(f)\ra }\bra \Xi\right\} \right| \prec N^{-\frac{l+2}{2}}. \label{062504}
\end{align}
Plugging (\ref{062502}), (\ref{062503}) and (\ref{062504}) into (\ref{062501}), we can get
\begin{align*}
	\mathcal{R}_{l+1}^{t,ji} \prec N^{-\frac{l+2}{2}}.
\end{align*}
Observe that, in addition to the above high probability bound, we also have the crude deterministic bound of $\mathcal{R}_{l+1}^{t,ji}$ due to the existence of $\Xi$ and the fact $\Im z\geq N^{-K}$. Hence, 
choosing $l \ge 4$ , we have $|\mathsf{E}_3| \prec p_t/N^2$.

Therefore, we have
\begin{align*}
	\mathsf{E}_1 + \mathsf{E}_2  + \mathsf{E}_3= O_{\prec}\left( \frac{p_t}{N^{\frac32}} \right).
\end{align*}

In the sequel, we estimate $\mathsf{I}_{ta}$'s one by one. 
We start with $\mathsf{I}_{t1}$. By direct calculation, we have 
\begin{align}
	\mathsf{I}_{t1} 
	=& \E^{\chi}\Big[  \left(\tr(\bbX_t \bbX'_t)^{-1} - \tr \bbQ_t\bbG(z_1)\right)\left( \Tr\bbG(z_1) - \Tr\bbP_t\bbG(z_1) \right)\bla {\rm e}^{\mathrm{i}x\la L_N^1(f)\ra }\bra \Big] \notag\\
	&+\E^{\chi}\Big[  \left(\tr\bbQ_t\bbG(z_1)\bbP_t\bbG(z_1) - \tr \bbQ_t\bbG^2(z_1) - \tr \bbQ_t\bbG(z_1) \right) \bla {\rm e}^{\mathrm{i}x\la L_N^1(f)\ra }\bra\Big] \notag\\
	=&  \E^{\chi}\Big[ \left(\tr(\bbX_t \bbX'_t)^{-1} - \tr \bbQ_t\bbG(z_1) \right)\left( \Tr\bbG(z_1) - \Tr\bbP_t\bbG(z_1) \right) \bla{\rm e}^{\mathrm{i}x\la L_N^1(f)\ra }\bra\Big]+ O_{\prec}\left(\sqrt{\frac{p_t}{N^3}}\right), \label{061301}
\end{align}
where in the last step, we used Proposition \ref{keyprop2}. 
Using the trivial identity $\mathbb{E}[\zeta\la\xi\ra]=\mathbb{E}[\la\zeta\ra\xi]$, the main term in $\mathsf{I}_{t1}$ can be further expanded as
\begin{align}
	&\E^{\chi}\Big[ \left(\tr(\bbX_t \bbX'_t)^{-1} - \tr \bbQ_t\bbG(z_1) \right)\left( \Tr\bbG(z_1) - \Tr\bbP_t\bbG(z_1)\right)  \bla {\rm e}^{\mathrm{i}x\la L_N^1(f)\ra }\bra \Big] \notag\\
	&=\E^{\chi}\Big[ \tr(\bbX_t \bbX'_t)^{-1} - \tr \bbQ_t\bbG(z_1)\Big] \E\Big[\left( \Tr\bbG(z_1) - \Tr\bbP_t\bbG(z_1)  \right) \bla{\rm e}^{\mathrm{i}x\la L_N^1(f)\ra }\bra\Big] \notag\\
	&\qquad+ \E \big[ \Tr \bbG(z_1) - \Tr \bbP_t\bbG(z_1) \big]\E^{\chi} \Big[ \left( \tr(\bbX_t \bbX'_t)^{-1} - \tr \bbQ_t\bbG(z_1)  \right)\bla{\rm e}^{\mathrm{i}x\la L_N^1(f)\ra }\bra\Big] \notag\\
	&\qquad+ \E^{\chi} \Big[ \bla \tr(\bbX_t \bbX'_t)^{-1} - \tr \bbQ_t\bbG(z_1) \bra \bla \Tr\bbG(z_1) - \Tr\bbP_t\bbG(z_1)  \bra \bla{\rm e}^{\mathrm{i}x\la L_N^1(f)\ra }\bra \Big]+O_\prec(N^{-D}) \notag\\
	&=: \mathsf{I}_{t11} +  \mathsf{I}_{t12} +  \mathsf{I}_{t13}+O_\prec(N^{-D}),\label{061302}
\end{align}
for any fixed $D>0$. 
With the notation in (\ref{def of alphabeta}), we have
\begin{align*}
	\mathsf{I}_{t11} &= \alpha_t \E\Big[ \left( \Tr\bbG(z_1) - \Tr\bbP_t\bbG(z_1)  \right) \bla {\rm e}^{\mathrm{i}x\la L_N^1(f)\ra }\bra \Big]\notag\\
	&=\alpha_t\E^{\chi}\Big[ \left( \Tr\bbG(z_1) - \Tr\bbP_t\bbG(z_1)  \right) \bla {\rm e}^{\mathrm{i}x\la L_N^1(f)\ra }\bra \Big]+ O_\prec(N^{-D}).
\end{align*}

For $\mathsf{I}_{t12}$, using Lemma \ref{EtrXXtrQG}, we can similarly write 
\begin{align*}
	 \mathsf{I}_{t12}
	 =&-\beta_t \E^{\chi} \Big[\Tr \bbP_t\bbG(z_1)  \bla{\rm e}^{\mathrm{i}x\la L_N^1(f)\ra }\bra\Big] + O_{\prec}\left( \frac{p_t}{N^{\frac76}} \right).
\end{align*}

For $\mathsf{I}_{t13}$, by Propositions \ref{keyProp} and \ref{keyprop2},
\begin{align*}
	|\mathsf{I}_{t13}| =&\left|\E^{\chi} \Big[ \bla \tr(\bbX_t \bbX'_t)^{-1} - \tr \bbQ_t\bbG(z_1) \bra \bla \Tr\bbG(z_1) - \Tr\bbP_t\bbG(z_1)  \bra\bla {\rm e}^{\mathrm{i}xL_N^1(f)}\bra\Big]\right| \\
	 \le&  \E^{\chi} \Big[\left| \bla  \tr(\bbX_t \bbX'_t)^{-1} - \tr \bbQ_t\bbG(z_1) \bra\right| \left| \bla \Tr\bbG(z_1) - \Tr\bbP_t\bbG (z_1) \bra \right|\Big] \prec \sqrt{\frac{p_t}{N^3}}.
\end{align*}
Therefore, plugging the above estimates together with (\ref{061302}) into (\ref{061301}) arrives at 
\begin{align}
	\mathsf{I}_{t1} 
	=& \alpha_t \E^{\chi}\Big[ \bla  \Tr\bbG(z_1) \bra {\rm e}^{\mathrm{i}x\la L_N^1(f)\ra }\Big] - (\alpha_t + \beta_t)\E^{\chi} \Big[ \bla  \Tr \bbP_t\bbG(z_1)  \bra {\rm e}^{\mathrm{i}x\la L_N^1(f)\ra }\Big]+O_{\prec}\left( \sqrt{\frac{p_t}{N^3}} \right). \label{061350}
\end{align}

Next, we estimate $\mathsf{I}_{t2}$. From (\ref{062810}) we see that the difference $L_N^1(f)-L_N^2(f)$ is $O_\prec(N^{-K+1})$. Further, we also have deterministic upper bound $N^{O(K)}$  for $L_N^1(f)-L_N^2(f)$ from the definition. It is easy to check that these bounds still apply if we take derivative for $L_N^1(f)-L_N^2(f)$ w.r.t. $X_{t,ij}$'s. Hence, we can perform the replacement of $	L_N^1(f)$ by $	L_N^2(f)$ in any part of the integrand. The purpose to use such replacement is to avoid possible singularities of the integrand. Particularly, $\mathsf{I}_{t2}$ can be written as
\begin{align*}
\mathsf{I}_{t2}=&\frac{\mathrm{i}x}{N}\sum_{ij}^{(t)}\E^{\chi} \Big[  [\bbW_t\bbG(z_1)]_{ji} \left(\partial_{t,ji} L_N^1(f) \right){\rm e}^{\mathrm{i}x\la L_N^1(f)\ra }\Big] \\
=& \frac{\mathrm{i}x}{N}\sum_{ij}^{(t)}\E^{\chi} \Big[  [\bbW_t\bbG(z_1)]_{ji} \left(\partial_{t,ji} L_N^2(f) \right){\rm e}^{\mathrm{i}x\la L_N^1(f)\ra }\Big] + O_{\prec}(N^{-\frac{K}{2}})\\
=&-\frac{x}{2\pi }\oint_{\bar{\gamma}^0_2}\E^{\chi} \bigg[ \frac{1}{N}\sum_{ij}^{(t)}[\bbW_t\bbG(z_1)]_{ji}\left(\partial_{t,ji}\Tr\bbG(z_2)\right){\rm e}^{\mathrm{i}x\la L_N^1(f)\ra }\bigg] f(z_2) {\rm d}z_2  + O_{\prec}(N^{-\frac{K}{2}})\\
=& \frac{x}{\pi}\oint_{\bar{\gamma}^0_2}\E^{\chi}\bigg[  \Big( \partial_{z_2}\left(\tr\bbQ_t\bbG(z_1)\bbP_t\bbG(z_2)\right)-\partial_{z_2}\left( \tr\bbQ_t\bbG(z_1)\bbG(z_2)\right)\Big){\rm e}^{\mathrm{i}x\la L_N^1(f)\ra }\bigg]f(z_2){\rm d}z_2 + O_{\prec}(N^{-\frac{K}{2}}), 
\end{align*}
where  in the third equality, the term containing $\partial_{t,ji} \Xi$ is absorbed into the error. 

Moreover,  by the concentration in Proposition \ref{keyprop2} and the fact $G(z_1)G(z_2)=(G(z_1)-G(z_2))/(z_1-z_2)$, we have 

 \begin{align}
 	\mathsf{I}_{t2}= \frac{x}{\pi}\oint_{\bar{\gamma}^0_2}\E^{\chi}\partial_{z_2}\Big[   \tr\bbQ_t\bbG(z_1)\bbP_t\bbG(z_2)- \tr\bbQ_t\bbG(z_1)\bbG(z_2)\Big]f(z_2){\rm d}z_2 \E \big[{\rm e}^{\mathrm{i}x\la L_N^1(f)\ra }\big]
	+O_{\prec}\left( \sqrt{\frac{p_t}{N^3}} \right). \label{061351}
 \end{align}

Next, we estimate $\mathsf{I}_{t3}$. After taking derivatives, we notices that all terms in $\mathsf{I}_{t3}$ can be expressed in terms of the following factors
\begin{align}
&[(X_tX_t')^{-1}]_{jj},\quad  [W_t{\bbG}\bbW'_t]_{jj}, \quad [I-\bbP_t]_{ii}, \quad [{\bbG}]_{ii}, \quad [\bbP_t{\bbG}]_{ii}, \quad [\bbP_t{\bbG}\bbP_t]_{ii}\notag\\
&  [\bbW_t]_{ji}, \quad [\bbW_t{\bbG}]_{ji}, \quad [\bbW_t{\bbG}\bbP_t]_{ji}, \quad [P_tG^2W_t']_{ij}, \quad [G^2W_t']_{ij}. \label{061310}
\end{align} 
 We call a factor with $ii$ or $jj$ index as a {\it diagonal entry} and a factor with $ij$ or $ji$ index as an { \it off-diagonal entry}, in the sequel. 
It is easy to check that all terms in $\mathsf{I}_{t3}$ either contain  one off-diagonal entry together with two diagonal entries or contain three off-diagonal entries. For simplicity, we denote by  $A_{ij}B_{ij}C_{ij}$ a generic term in the latter case and by $D_{ij}E_{ii}F_{jj}$ a generic term in the former case, where $A,\ldots, D$-entries are chosen from (\ref{061310}). Then it suffices to bound 
\begin{align}
\frac{1}{N^{\frac32}}\E^{\chi} \bigg[ \sum_{ij}^{(t)}\kappa_3^{t,j}A_{ij}B_{ij}C_{ij} \bigg],\qquad  \frac{1}{N^{\frac32}}\E^{\chi} \bigg[ \sum_{ij}^{(t)}\kappa_3^{t,j}D_{ij}E_{ii}F_{jj} \bigg]. \label{061315}
\end{align}
For the first term, we have
\begin{align*}
	\bigg|\frac{1}{N^{\frac32}}\E^{\chi} \bigg[ \sum_{ij}^{(t)}\kappa_3^{t,j}A_{ij}B_{ij}C_{ij} \bigg]\bigg|\prec & \frac{1}{N^{\frac32}}\E^{\chi} \bigg[ \sum_{ij}^{(t)}| A_{ij}|| B_{ij}|\bigg]\leq\frac{1}{N^{\frac32}}\E^{\chi} \bigg[ \sum_{ij}^{(t)}(| A_{ij}|^2 +| B_{ij}|^2) \bigg] \prec \frac{p_t}{N^{\frac32}}.
\end{align*}

For the second term in (\ref{061315}), we first replace the diagonal entries by their deterministic estimates, using Proposition \ref{DiagApproByFC}. Let $E$ and $F$ be the deterministic approximation of $E_{ii}$ and $F_{jj}$, respectively. Then we have 
\begin{align*}
	&\bigg|\frac{1}{N^{\frac32}}\E^{\chi} \bigg[ \sum_{ij}^{(t)}\kappa_3^{t,j}D_{ij}E_{ii}F_{jj} \bigg]\bigg| \leq  \bigg|\frac{E}{N^{\frac32}}\E^{\chi} \bigg[ \sum_{ij}^{(t)}\kappa_3^{t,j}D_{ij}F_{jj} \bigg]\bigg| + \frac{|\kappa_3^{t,j}|}{N^{\frac32}}\E^{\chi} \bigg[ \sum_{ij}^{(t)}|D_{ij}| |F_{jj}| |E-E_{ii}|\bigg] \\
	\prec & \bigg|\frac{E}{N^{\frac32}}\E^{\chi} \bigg[ \sum_{ij}^{(t)}\kappa_3^{t,j}D_{ij}F_{jj} \bigg]\bigg| + \frac{1}{N^{2}}\E^{\chi} \bigg[ \sum_{ij}^{(t)}\left|D_{ij}\right| \bigg]= \bigg|\frac{E}{N^{\frac32}}\E^{\chi} \bigg[ \sum_{ij}^{(t)}\kappa_3^{t,j}D_{ij}F_{jj} \bigg]\bigg| + O_{\prec}\left( \frac{p_t}{N^{\frac32}}\right).
\end{align*}
Further estimating  $F_{jj}$ by $F$, we can get
\begin{align*}
	\bigg|\frac{1}{N^{\frac32}}\E^{\chi} \bigg[ \sum_{ij}^{(t)}\kappa_3^{t,j}D_{ij}E_{ii}F_{jj} \bigg]\bigg| \prec & \bigg|\frac{EF}{N^{\frac32}}\E^{\chi} \bigg[ \sum_{ij}^{(t)}\kappa_3^{t,j}D_{ij} \bigg]\bigg| +O_{\prec}\left(\frac{p_t}{N^{\frac32}}\right) \prec \frac{p_t}{N^{\frac76}},
\end{align*}
where we used Lemma \ref{sumij} in the last step.

Combining the above estimates, we obtain
\begin{align}
\mathsf{I}_{t3} = O_{\prec}\left(\frac{p_t}{N^{\frac76}}\right). \label{061352}
\end{align}

Finally, we estimate  $\mathsf{I}_{t4}$. For brevity, we further write 
\begin{align*}
\mathsf{I}_{t4}=: \mathsf{I}_{t41} + \mathsf{I}_{t42} + \mathsf{I}_{t43} + \mathsf{I}_{t44}.
\end{align*}
where 
\begin{align*}
\mathsf{I}_{t4b}:={3\choose b-1}\sum_{ij}^{(t)}\frac{\kappa_4^{t,j}}{6{N}^{2}}\E^{\chi} \Big[ \partial^{4-b}_{t,ji}[\bbW_t\bbG]_{ji}\partial^{b-1}_{t,ji}\bla {\rm e}^{\mathrm{i}x\la L_N^1(f)\ra }\bra\Big], \quad b=1,2,3,4. 
\end{align*}

We first consider the simplest one $\mathsf{I}_{t42}$. Again, due to Lemma \ref{XiLemma}, we can neglect the terms with derivatives of $\Xi$. Notice that  the other terms in $\partial^2_{t,ji} [\bbW_t{\bbG} ]_{ji}\partial_{t,ji} {\rm e}^{\mathrm{i}x\la L_N^1(f)\ra }$ contains at least two off-diagonal entries. Let $A_{ij}$ and $B_{ij}$ be two of them and bound the other two terms simply by $O_\prec (1)$, it suffices to have the following bound for $\mathsf{I}_{t42}$
\begin{align*}
	\frac{1}{N^2}\E^{\chi} \bigg[ \sum_{ij}^{(t)}\kappa_4^{t,j}|A_{ij}  | |B_{ij} |\bigg] 
	 \lesssim  \frac{1}{N^2}\E^{\chi} \bigg[ \sum_{ij}^{(t)}|A_{ij}  |^2 + |B_{ij} |^2\bigg] = O_{\prec}\left( {\frac{p_t}{N^{2}}}\right).
\end{align*}
This gives
\begin{align}
	\mathsf{I}_{t42} = O_{\prec}\left( {\frac{p_t}{N^2}}\right). \label{061340}
\end{align}
Similar arguments lead to
\begin{align}
	\mathsf{I}_{t44} = O_{\prec}\left( {\frac{p_t}{N^2}}\right). \label{061341}
\end{align}
For $\mathsf{I}_{t41}$, we first rewrite 
\begin{align}
\mathsf{I}_{t41}=\sum_{ij}^{(t)}\frac{\kappa_4^{t,j}}{6{N}^{2}}\E^{\chi} \Big[\bla \partial^{3}_{t,ji}[\bbW_t\bbG]_{ji}\bra {\rm e}^{\mathrm{i}x\la L_N^1(f)\ra }\Big] \label{061320}
\end{align} 
After straightforward calculation, we notice that the terms either contain at least two off-diagonal entries as factors, or all four factors are diagonal entries. In the former case, 
 we can bound them similarly to $\mathsf{I}_{t42}$ and $\mathsf{I}_{t44}$. In the latter case, 2 diagonal factors will be from the set $\mathcal{I}$ and the other two are from $\mathcal{J}$ defined below
\begin{align*}
	\mathcal{I} = \{ [{\bbG}]_{ii}, [\bbP_t{\bbG}]_{ii},[\bbP_t{\bbG}\bbP_t]_{ii} ,[I-\bbP_t]_{ii}\},\quad \mathcal{J}= \{[\bbX_t\bbX_t']_{jj}, [\bbW_t{\bbG}\bbW'_t]_{jj} \}.
\end{align*}
\noindent
In light of (\ref{061320}), for the contribution of  the terms with four diagonal entries to $\mathsf{I}_{t41}$, it suffices to bound the following type of terms  
\begin{align*}
	\frac{1}{N^2}\E^{\chi}\bigg[\Big\la \sum_{ij}^{(t)}\kappa_4^{t,j}A_{ii}B_{ii}C_{jj}D_{jj}\Big\ra {\rm e}^{\mathrm{i}x\la L_N^1(f)\ra }  \bigg],
\end{align*}
where $A_{ii}, B_{ii}\in \mathcal{I}$ and $C_{jj}, D_{jj} \in \mathcal{J}$. From the concentration results for diagonal entries (Propposition \ref{DiagApproByFC}), we know that there exists deterministic estimate $A$, such that $|A_{ii}-A|\prec1/\sqrt{N}$, and thus
\begin{align*}
	&\frac{1}{N^2}\E^{\chi}\bigg[\Big\la \sum_{ij}^{(t)}\kappa_4^{t,j}A_{ii}B_{ii}C_{jj}D_{jj}\Big\ra {\rm e}^{\mathrm{i}x\la L_N^1(f)\ra }  \bigg] =\frac{A}{{N}^2}\E^{\chi} \bigg[ \Big\la\sum_{ij}^{(t)}\kappa_4^{t,j}B_{ii}C_{jj}D_{jj}\Big\ra {\rm e}^{\mathrm{i}x\la L_N^1(f)\ra } \bigg] + O_{\prec}\left(\frac{p_t}{N^{\frac32}} \right).
\end{align*}
Similarly, by applying the concentration results of $B_{ii}, C_{jj}, D_{jj}$ repeatedly, we have
\begin{align*}
	\frac{1}{N^2}\E^{\chi}\bigg[\Big\la \sum_{ij}^{(t)}\kappa_4^{t,j}A_{ii}B_{ii}C_{jj}D_{jj}\Big\ra {\rm e}^{\mathrm{i}x\la L_N^1(f)\ra }  \bigg] = O_{\prec}\left(\frac{p_t}{N^{\frac32}} \right),
\end{align*}
which gives
\begin{align}
	\mathsf{I}_{t41} = O_{\prec}\left(\frac{p_t}{N^{\frac32}} \right). \label{061342}
\end{align}
Lastly, we consider $\mathsf{I}_{t43}$. Notice that
\begin{align}
\mathsf{I}_{t43}
=&\sum_{ij}^{(t)}\frac{\kappa_4^{t,j}}{2N^2}\E^{\chi} \Big[ \left( \partial_{t,ji}[\bbW_t{\bbG(z_1)}]_{ji} \right)\left({\rm i}x\partial_{t,ji}^2 L_N^1(f)+({\rm i} x\partial_{t,ji} L_N^1(f))^2\right){\rm e}^{\mathrm{i}x\la L_N^1(f)\ra } \Big]. \label{061331}
\end{align}
For the term with $({\rm i} x\partial_{t,ji} L_N^1(f))^2$, we have 
\begin{align}
&\bigg|\sum_{ij}^{(t)} \kappa_4^{t,j}\E^{\chi}\Big[ \left( \partial_{t,ji}[\bbW_t{\bbG(z_1)}]_{ji} \right)\left(\partial_{t,ji} L_N^1(f)\right)^2\Big]\bigg| \notag\\
=& \bigg|\frac{ 1}{2\pi i}\sum_{ij}^{(t)}\oint_{\bar{\gamma}_1^0}\oint_{\bar{\gamma}_1^0}\kappa_4^{t,j}\E^{\chi} \Big[ \partial_{t,ji}[\bbW_t{\bbG(z_1)}]_{ji} \left( \partial_{t,ji} \Tr{\bbG(z_2)}\right) \left( \partial_{t,ji} \Tr{\bbG(z_3)}\right)\Big]f(z_2)f(z_3){\rm d}z_2{\rm d}z_3 \bigg| + O_{\prec}(N^{-k}) \notag\\
\prec&\sum_{ij}^{(t)}\oint_{\bar{\gamma}_1^0}\oint_{\bar{\gamma}_1^0}\E^{\chi} \Big[ 
 \left| \partial_{t,ji} \Tr{\bbG(z_2)}\right| \left| \partial_{t,ji} \Tr{\bbG(z_3)}\right|\Big]{\rm d}z_2{\rm d}z_3 \label{061330}
\end{align}
where in the last step we bounded the term $\partial_{t,ji}[\bbW_t{\bbG(z_1)}]_{ji}$ by $O_\prec(1)$ which is easy to check after one computes the derivative. Further, notice that $\partial_{t,ji} \Tr{\bbG(z)}$ is a linear combination of terms 
\begin{align*}
[P_t{\bbG^2(z)}\bbW_t]_{ij},\quad [{\bbG^2(z)}\bbW_t]_{ij}
\end{align*}
Let $\mathcal{E}_{ij}(z)$ and $\mathcal{F}_{ij}(z)$ be either of the above two. Then we can bound trvially
\begin{align*}
\sum_{ij}^{(t)}|\mathcal{E}_{ij}(z_2)||\mathcal{F}_{ij}(z_3)|\leq \sum_{ij}^{(t)}|\mathcal{E}_{ij}(z_2)|^2+\sum_{ij}^{(t)}|\mathcal{F}_{ij}(z_3)|^2.
\end{align*}
Notice that both of the two sums are tracial quantities which can be simply bounded by $O_\prec(p_t)$. Hence, we conclude
\begin{align}
\text{(\ref{061330})}\prec p_t.  \label{061332}
\end{align}
Further, for the term with ${\rm i}x\partial_{t,ji}^2 L_N^1(f)$ in (\ref{061331}),  we have 
\begin{align}
&\bigg|\sum_{ij}^{(t)}\kappa_4^{t,j}\E^{\chi} \Big[ \left( \partial_{t,ji}[\bbW_t{\bbG(z_1)}]_{ji} \right) (\partial^2_{t,ji} L_N^1(f)){\rm e}^{\mathrm{i}x\la L_N^1(f)\ra }  \Big]\bigg| \notag\\ =&\bigg|\frac{1}{2\pi }\sum_{ij}^{(t)}\oint_{\bar{\gamma}_1^0} \kappa_4^{t,j}\E^{\chi} \Big[ \partial_{t,ji}[\bbW_t{\bbG(z_1)}]_{ji}\left( \partial_{t,ji}^2 \Tr{\bbG(z_2)}\right){\rm e}^{\mathrm{i}x\la L_N^1(f)\ra }  \Big]f(z_2){\rm d}z_2\bigg| \notag\\
	\prec&  \sum_{ij}^{(t)} \oint_{\bar{\gamma}_1^0}\E^{\chi}\Big[ \left|\partial_{t,ji}^2 \Tr{\bbG(z_2)}\right|\Big] {\rm d}z_2 \prec p_tN^{\frac12}. \label{061333}
\end{align}
Here in the last step, we used (\ref{062520}). Plugging (\ref{061332}) and (\ref{061333}) into (\ref{061331}), we can conclude 
\begin{align}
\mathsf{I}_{t43}=O_\prec\left(\frac{p_t}{N^\frac32} \right). \label{061343}
\end{align}

In summary, with (\ref{061340}),(\ref{061341}),(\ref{061342}) and (\ref{061343}) we have
\begin{align}
		\mathsf{I}_{t4}= O_{\prec}\left(\frac{p_t}{N^\frac32} \right). \label{061353}
\end{align}

\end{proof}

\subsection{Proofs of Lemma \ref{lem.062830}}
Recall the definitions of $\alpha_t$ and $\beta_t$ from (\ref{def of alphabeta}).
		Using Propositions \ref{keyProp} and \ref{keyprop2} together with the estimate of $\gamma_{t2}$ in Lemma \ref{Lemma error estimates}, we have
	\begin{align}
		\alpha_t(z) 
		=& \frac{y_t}{1-y_t} - \frac{1 + \omega_t(z)m_{\boxplus}(z)}{1-y_t} + O_{\prec}\left( \sqrt{\frac{p_t}{N^3}} \right), \notag\\
\beta_t(z) =& \frac{m_{\boxplus}(z)}{1-y_t} - \frac{1 + \omega_t(z)m_{\boxplus}(z)}{1-y_t} + O_{\prec}\left( \frac{1}{N} \right). \label{061403}
	\end{align}
	Therefore, we have
	\begin{align*}
		{1 + \alpha_t(z) + \beta_t(z)} = \frac{ m_{\boxplus}(z)-1-2\omega_t(z)m_{\boxplus}(z)}{1-y_t} + O_{\prec}\left(\frac{1}{N} \right).
	\end{align*}
	Next, we prove (\ref{lowerboundof1plusalphatbetat}) when $z$ lies in our contours. For $z \in (\bar{\gamma}_1)^{+}\cup(\bar{\gamma}_2)^{+}$, we have $|z|$ sufficiently large and therefore $|m_{\boxplus}(z)|$ sufficiently small. By the continuity of $\omega_t(z)$ and (\ref{m2}), we see that $|\omega_t(z)|$ is sufficiently large.  In this case, we notice that
	\begin{align*}
		m_{\boxplus}(z)-1-2\omega_t(z)m_{\boxplus}(z) = m_{\boxplus}(z) -1  - 2\left( \frac{\omega_t(z)y_t}{1-\omega_t(z)}  - (1-y_t)\right) \to 1 
	\end{align*}
	as $|z| \to \infty$. By the continuity of $m_{\boxplus}(z)$ and $\omega_t(z)$, we have
$
		|m_{\boxplus}(z)-1-2\omega_t(z)m_{\boxplus}(z) | > 0
$
	for sufficiently large $|z|$. This proves (\ref{lowerboundof1plusalphatbetat}) for $z$ on $\mathcal{C}$. For $z \in \mathcal{C}_1(\epsilon_{11},\epsilon_{21})\cup \mathcal{C}_1(\epsilon_{12},\epsilon_{22})$ with $\hat{y}\in (0,1)$ (c.f. (\ref{062033})), since we know that when $|z| \to 0$, due to the singularity at $0$,
	\begin{align*}
		|m_{\boxplus}(z)-1-2\omega_t(z)m_{\boxplus}(z)| = \Big|m_{\boxplus}(z) -1  - 2\left( \frac{\omega_t(z)y_t}{1-\omega_t(z)}  - (1-y_t)\right)\Big| \to \infty, 
	\end{align*}
	we can also get 
	$
		|m_{\boxplus}(z)-1-2\omega_t(z)m_{\boxplus}(z) | > 0.
	$
	For $z \in \mathcal{C}_2(\epsilon_{11},\epsilon_{21},M_{11})\cup\mathcal{C}_2(\epsilon_{12},\epsilon_{22},M_{12})$ with $\hat{y}\in (0,1)$ (c.f. (\ref{062033})), we first notice that
	\begin{align*}
		m_{\boxplus}(z)-1-2\omega_t(z)m_{\boxplus}(z) =\frac{m'_\boxplus(z)}{\omega_t'(z)\omega_t(z)(1- \omega_t(z))}.
	\end{align*}
	Then it suffices to lower bound $|m'_\boxplus(z)|$ for $z \in \mathcal{C}_2(\epsilon_{11},\epsilon_{21},M_{11})\cup\mathcal{C}_2(\epsilon_{12},\epsilon_{22},M_{12})$ with $\hat{y} \in (0,1)$ (c.f. (\ref{062033})). Since
	\begin{align*}
		|m'_\boxplus(z)| \ge \Im m'_\boxplus(z) = \int \frac{2(\lambda - \Re z)\Im z}{|\lambda - z|^4} {\rm d}\mu_{\boxplus} > 0,
	\end{align*}
	we can get the desired result.
	
\subsection{Proofs of Lemma \ref{LemmaEtrPG2}}
We start from the cumulant expansion of $\E^{\chi} [\Tr P_tG(z)]$ as follows
	\begin{align*}
		\E ^{\chi}[ \Tr P_t G(z) ] =& \sum_{ij}^{(t)}\frac{1}{N}\E^{\chi} [\partial_{t,ji}[W_tG(z)]_{ji}] +  \sum_{ij}^{(t)}\frac{\kappa_3^{t,j}}{2N^{3/2}}\E^{\chi} [\partial^2_{t,ji}[W_tG(z)]_{ji} ]\\
		&+ \sum_{ij}^{(t)}\frac{\kappa_4^{t,j}}{6N^2}\E^{\chi} [\partial^{3}_{t,ji}[W_tG(z)]_{ji} ] + O_{\prec}\left(  \frac{p_t}{N^{\frac{3}{2}}} \right)=: J_{t1} + J_{t2} + J_{t3} + O_{\prec}\left(  \frac{p_t}{N^{\frac{3}{2}}} \right).
	\end{align*}
	Here the error term can be bounded similarly to the estimates of $\mathsf{E}_i, i=1,2,3$ in the proof of Lemma \ref{Error in CLT}, we omit the details.
		
For $J_{t1}$, by direct calculation, we have
	\begin{align}
		&J_{t1} = \E^{\chi} [( \tr \left(\bbX_t\bbX_t' )^{-1}  - \tr \bbQ_t\bbG\right)\left( \Tr \bbG - \Tr \bbP_t\bbG \right) ]+\E^{\chi} [ \tr\bbQ_t\bbG\bbP_t\bbG-\tr\bbQ_t\bbG-\tr\bbQ_t\bbG^2 ] \notag\\
		&= \E^{\chi} [( \tr \left(\bbX_t\bbX_t' )^{-1}  - \tr \bbQ_t\bbG\right)\left( \Tr \bbG - \Tr \bbP_t\bbG \right) \Xi]+\E^{\chi} [ \tr\bbQ_t\bbG\bbP_t\bbG-\tr\bbQ_t\bbG-\tr\bbQ_t\bbG^2 ] + O_{\prec}(N^{-D}) \notag\\
	&= \E^{\chi} [ \tr \left(\bbX_t\bbX_t' \right)^{-1}  - \tr \bbQ_t\bbG]\E^{\chi} [  \Tr \bbG - \Tr \bbP_t\bbG ] +\E^{\chi} [ \tr\bbQ_t\bbG\bbP_t\bbG-\tr\bbQ_t\bbG-\tr\bbQ_t\bbG^2 ]+ O_{\prec}\left( \sqrt{\frac{p_t}{N^3}} \right). \label{062550}
	\end{align}
Here  in the last step we used the estimate of $\gamma_{t2}$ in Lemma \ref{Lemma error estimates} and Propositions \ref{keyProp} and \ref{keyprop2}.

	For $J_{t2}$, similar to the arguments in the estimate  of $\mathsf{I}_{t3}$ (c.f. (\ref{061310})-(\ref{061352})), we can get
	\begin{align*}
		J_{t2} = O_{\prec}\left( \frac{p_t}{N^{\frac{7}{6}}} \right).
	\end{align*}
	For $J_{t3}$, we only need to consider the terms consisting of four diagonal entries, and the other terms with at least two off-diagonal entries  can be bounded similarly to (\ref{061340}). Specifically, we will have 
	\begin{align*}
		J_{t3} =& \sum_{ij}^{(t)}\frac{\kappa_4^{t,j}}{{N}^{2}}\E^{\chi} \Big[  [\left(\bbX_t\bbX_t' \right)^{-1}]_{jj}^2\left(  [{I} - \bbP_t]_{ii} [(\bbP_t-{I})\bbG]_{ii} - [({I} - \bbP_t)\bbG]_{ii}^2 \right) \Big] + O_{\prec}\left( {\frac{p_t}{N^2}} \right).
	\end{align*}
Combining the above estimates of $J_{t1}$, $J_{t2}$ and $J_{t3}$, we get
	\begin{align}\label{EtrPG2}
		&\E^{\chi} [ \Tr P_tG ] = \E^{\chi} [ \tr \left(\bbX_t\bbX_t' \right)^{-1}  - \tr \bbQ_t\bbG ]\E^{\chi} [  \Tr \bbG - \Tr \bbP_t\bbG  ]+\E^{\chi} [ \tr\bbQ_t\bbG\bbP_t\bbG-\tr\bbQ_t\bbG-\tr\bbQ_t\bbG^2 ] \notag \\
	&+\sum_{ij}^{(t)} \frac{\kappa_4^{t,j}}{{N}^{2}}\E^{\chi} \Big[  [\left(\bbX_t\bbX_t' \right)^{-1}]_{jj}^2\left(  [{I} - \bbP_t]_{ii} [(\bbP_t-{I})\bbG]_{ii} - [({I} - \bbP_t)\bbG]_{ii}^2 \right) \Big]+ O_{\prec}\left(\frac{p_t}{N^{\frac{7}{6}}} \right).
	\end{align}

\section{Proofs of Lemmas in Sections \ref{Sec Proof of Proposition 6.2}-\ref{Sec Proofs of Lemma 6.4}}\label{Sec Proof of Lemma C}
\subsection{Proof of Lemma \ref{LemPGii}}

With certain abuse of notation, we recycle the notation $Z$ to denote 
\begin{align*}
Z=[\bbP_t\bbG]_{ii}-\left(\tr(\bbX_t\bbX_t')^{-1}-\tr\bbQ_t\bbG\right)\left(\bbG_{ii}-[\bbP_t\bbG]_{ii}\right),
\end{align*}
in this proof. 
Further, we set 
\begin{align*}
	\mathsf{Z}^{q,p} := (Z\cdot\Xi)^{p}(\bar{Z}\cdot\Xi)^{q}.
\end{align*}
Then we have 
\begin{align}
\E \big[ \mathsf{Z}^{n,n} \big]
=& \E^{\chi} \Big[ [\bbP_t\bbG]_{ii}\mathsf{Z}^{n-1,n} \Big]-\E^{\chi} \Big[ \left(\tr(\bbX_t\bbX_t')^{-1}-\tr\bbQ_t\bbG\right)([\bbG]_{ii}-[\bbP_t\bbG]_{ii}) \mathsf{Z}^{n-1,n} \Big] \notag\\
=&\sum_{j=1}^{p_t}\E^{\chi}\Big[ \frac{1}{N}\left(\partial_{t,ji} [\bbW_t\bbG]_{ji}\right) \mathsf{Z}^{n-1,n} \Big]
-\E^{\chi} \Big[ \left(\tr(\bbX_t\bbX_t')^{-1}-\tr\bbQ_t\bbG\right)([\bbG]_{ii}-[\bbP_t\bbG]_{ii}) \mathsf{Z}^{n-1,n} \Big] \notag\\
&+\sum_{j=1}^{p_t}\E^{\chi} \Big[\frac{1}{N} [\bbW_t\bbG]_{ji} \left((n-1)\left( \partial_{t,ji}Z \right)\mathsf{Z}^{n-2,n} + n \left( \partial_{t,ji}\bar{Z} \right)\mathsf{Z}^{n-1,n-1}\right) \Big] \notag\\
&+\sum_{s\geq 2}^lO\left(\frac{1}{N^{\frac{s+1}{2}}}\right)\sum_{j=1}^{p_t}\E^{\chi} \Big[ \partial^s_{t,ji}\left\{[\bbW_t\bbG]_{ji}\mathsf{Z}^{n-1,n}\right\} \Big] + O_{\prec}(\frac{1}{N}).
\label{PGiimoment3}
\end{align}
where the error $O_{\prec}(N^{-1})$ can be verified similarly to (\ref{061490}) by choosing $l$ sufficiently large.
By direct calculation,
\begin{align}
	\sum_{j=1}^{p_t}\frac{1}{N}\partial_{t,ji}[\bbW_t\bbG]_{ji} = \left(\tr(\bbX_t\bbX_t')^{-1}-\tr\bbQ_t\bbG\right)([\bbG]_{ii}-[\bbP_t\bbG]_{ii}) + O_{\prec}\left( \frac{1}{N} \right). \label{062152}
\end{align}
Further, similarly to (\ref{061491}), via direct calculation, we can write  $\partial_{t,ji} Z = [\mathcal{A}]_{ij}$ for some matrix $\mathcal{A}\in \mathbb{C}^{N\times p_t}$ with $\|\mathcal{A}\|\prec 1$. The details are omitted.  Then it is easy to show 
\begin{align}
	\sum_{j=1}^{p_t}\frac{1}{N}[\bbW_t\bbG]_{ji}\partial_{t,ji}Z = O_{\prec}\left( \frac{1}{N} \right). \label{062151}
\end{align}
The same bound also holds when $Z$ is replaced by $\bar{Z}$. 
Hence, plugging (\ref{062152}) and (\ref{062151}) to \eqref{PGiimoment3}, we have 
\begin{align}\label{PGiimoment2}
	\E \big[ \mathsf{Z}^{n,n} \big]=& \E^{\chi} \Big[ O_{\prec}\left( \frac{1}{N} \right)\mathsf{Z}^{n-1,n} \Big] + \E^{\chi} \Big[ O_{\prec}\left( \frac{1}{N} \right)\mathsf{Z}^{n-2,n} \Big] +\E^{\chi} \Big[ O_{\prec}\left( \frac{1}{N} \right)\mathsf{Z}^{n-1,n-1} \Big]\notag\\
	&+ \sum_{s\geq 2}^lO\left(\frac{1}{N^{\frac{s+1}{2}}}\right)\sum_{j=1}^{p_t}\E^{\chi} \Big[ \partial^s_{t,ji}\left\{[\bbW_t\bbG]_{ji}\mathsf{Z}^{n-1,n}\right\} \Big]+ O_{\prec}\left( \frac{1}{N} \right)
\end{align}

For the case $s\geq 2$, since the complex conjugate takes no effect in our estimations, we drop the complex conjugate for convenience.  Here we also drop the function $\Xi$ first and put it back at the last step, up to negligible error.  Hence, for the fourth term of (\ref{PGiimoment2}), we have
\begin{align*}
	&\sum_{s\geq 2}^lO\left(\frac{1}{N^{\frac{s+1}{2}}}\right)\sum_{j=1}^{p_t}\E^{\chi} \Big[ \partial^s_{t,ji}\left\{[\bbW_t\bbG]_{ji}Z^{2n-1}\right\} \Big]= \sum_{r=0}^{2n-1}\E^{\chi}\Big[ C_{r}Z^{2n-1-r} \Big]
\end{align*}
where
\begin{align*}
	&C_r = \sum_{s\ge2}^l\sum_{\substack{s_0,\dots,s_r>0\\   s = s_0+\dots+s_r}} O\left(\frac{1}{N^{\frac{s+1}{2}}}\right)\sum_{j=1}^{p_t}(\partial^{s_0}_{t,ji} [\bbW_t\bbG]_{ji})\partial^{s_1}_{t,ji}Z\cdots\partial^{s_r}_{t,ji}Z \\
	&=O\left( \frac{1}{N^{\frac{r+1}{2}}} \right) \sum_{s\ge2}^l\sum_{\substack{s_0,\dots,s_r>0\\   s = s_0+\dots+s_r}} O\left(\frac{1}{N^{\frac{s-r}{2}}} \right)\sum_{j=1}^{p_t}(\partial^{s_0}_{t,ji} [\bbW_t\bbG]_{ji})\partial^{s_1}_{t,ji}Z\cdots\partial^{s_r}_{t,ji}Z=: O\left( \frac{1}{N^{\frac{r+1}{2}}}  \right) \sum_{s\ge2}^l\mathcal{C}_{r,s}
\end{align*}

Case $1$: when $s - r = 0$, we have $s_0 = s_1 - 1 = \cdots s_{r} - 1 = 0$, and $\mathcal{C}_{r,s}$ can be written as,
\begin{align*}
	\mathcal{C}_{r,s} = \sum_{j=1}^{p_t} [\bbW_t\bbG]_{ji}\partial_{t,ji} Z\cdots\partial_{t,ji} Z.
\end{align*}
Since the product above has at least two off-diagonal entries, we have
$
	|\mathcal{C}_{r,s}| \prec 1.
$

Case $2$: when $s - r = 1$, we have $s_0 = s_1 =  \cdots =s_r = 1$ or $s_0 = s_1 - 1 = \cdots = s_i - 2 = \cdots s_r - 1 = 0$ for an $i$. Now $\mathcal{C}_{r,s}$ becomes
\begin{align*}
	\mathcal{C}_{r,s} = O\left(\frac{1}{N^\frac12}\right)\sum_{j=1}^{p_t} (\partial^{s_0}_{t,ji} [\bbW_t\bbG]_{ji})\partial^{s_1}_{t,ji}Z\cdots\partial^{s_r}_{t,ji}Z. 
\end{align*}
We have at least one off-diagonal entry in both cases for the above product. Therefore, 
\begin{align*}
	\mathcal{C}_{r,s} \prec \frac{1}{N^\frac12}\sqrt{{p_t\sum_{j=1}^{p_t}|[\mathcal{M}]_{ij}|^2}} =  \frac{1}{N^\frac12} \sqrt{{p_t[\mathcal{M}\mathcal{M}^*]_{ii}}} \prec {\sqrt{\frac{p_t}{N}}}.
\end{align*}
Here $[\mathcal{M}]_{ij}$ is the off-diagonal entry either from $\bbW_t\bbG$ or the derivative of $Z$.

Case $3$: when $s - r \geq 2$, we can use the trivial bound
\begin{align*}
	\mathcal{C}_{r,s} = O\left(\frac{1}{N^{\frac{s-r}{2}}}\right)\sum_{j=1}^{p_t} (\partial^{s_0}_{t,ji} [\bbW_t\bbG]_{ji})\partial^{s_1}_{t,ji}Z\cdots\partial^{s_r}_{t,ji}Z  = O_\prec\left({\frac{p_t}{N}}\right).
\end{align*}

 Combining all these cases, we get
$
 	|C_r| \prec  N^{-\frac{r+1}{2}}.
$
 
Substituting $Z\cdot \Xi$ for $Z$ and then applying Young's inequality to \eqref{PGiimoment2}, we can obtain,
\begin{align}\label{PGiimoment}
	\E \big[\mathsf{Z}^{n,n} \big]= O_\prec\left( N^{-n}\right).
\end{align}
By Markov inequality, we obtain \eqref{PGii} from \eqref{PGiimoment}.

For the proof of \eqref{Pii} and \eqref{PGPii}-\eqref{WGWjj}, we only give the constructions of $Z$. 

Define
	\begin{align*}
		Z=[\bbP_t]_{ii}-\tr(\bbX_t\bbX_t')^{-1}(1-[P_t]_{ii}),
	\end{align*}
Then following the same argument in the proof of \eqref{PGii}, we have
\begin{align*}
	[\bbP_t]_{ii}-\tr(\bbX_t\bbX_t')^{-1}(1-[P_t]_{ii}) = O_{\prec}\left(\frac{1}{\sqrt{N}}\right).
\end{align*}
Plugging (\ref{trXX}) into the above estimate and then solving for $[P_t]_{ii}$, we can obtain (\ref{Pii}).

Define
	\begin{align*}
		Z=[\bbP_t\bbG\bbP_t]_{ii}-\left(\tr(\bbX_t\bbX_t')^{-1}-\tr\bbQ_t\bbG\right)([\bbG\bbP_t]_{ii}-[\bbP_t\bbG\bbP_t]_{ii})
		-(1-[\bbP_t]_{ii})\tr\bbQ_t\bbG,
\end{align*}
Then following the same argument in the proof of \eqref{PGii}, we can obtain \eqref{PGPii}.

Define
	\begin{align*}
		Z=[X_tX_t'(X_tX_t')^{-1}]_{jj}-(1-y_t)[(X_tX_t')^{-1}]_{jj},
\end{align*}
Then following similar argument with index $j$ replaced by $i$ in the proof of \eqref{PGii}, one can show
\begin{align*}
	|Z| \prec N^{-\frac12}.
\end{align*}
Using the fact that $[X_tX_t'(X_tX_t')^{-1}]_{jj} = 1$, we can obtain \eqref{XXjj}.

Define 
	\begin{align*}
Z_1&=[\bbX_t\bbG\bbW'_t]_{jj}-\left(\tr\bbG-\tr\bbP_t\bbG\right)\left([\left(\bbX_t\bbX_t'\right)^{-1}]_{jj}-[\bbW_t\bbG\bbW'_t]_{jj}\right),\\
Z_2&=[\bbX_t\bbP_t\bbG\bbW'_t]_{jj}-(1-y_t)[\bbW_t\bbG\bbW'_t]_{jj}.
\end{align*}
Then following similar argument with index $j$ replaced by $i$ in the proof of \eqref{PGii}, one can show
\begin{align*}
	|Z_1| \prec N^{-\frac12}, \quad |Z_2| \prec N^{-\frac12}.
\end{align*}
Using the fact that 
$
	\bbX_t\bbG\bbW'_t = \bbX_t\bbP_t\bbG\bbW'_t,
$
and then subtracting $Z_2$ from $Z_1$, we can obtain
\begin{align*}
	(1-y_t)[\bbW_t\bbG\bbW_t']_{jj}-\left(\tr\bbG-\tr\bbP_t\bbG\right)\left([\left(\bbX_t\bbX_t'\right)^{-1}]_{jj}-[\bbW_t\bbG\bbW'_t]_{jj}\right) 
		=O_{\prec}(N^{-\frac12}).
\end{align*} 

\subsection{Proof of Lemma \ref{LemSumDiag}}
We show the proof of \eqref{sPGii} with $\mathcal{W}_t = 1$. The proof can be easily generalised to the case with bounded $\mathcal{W}_t$. With certain abuse of notation, we recycle the notation $Z$ to denote

Define
\begin{align*}
Z=\sum_{t=1}^k[ P_t G]_{ii}-\sum_{t=1}^k\left(\tr( X_t X_t')^{-1}-\tr Q_t G\right)( G_{ii}-[ P_t G]_{ii}),
\end{align*}
and set
\begin{align*}
	\mathsf{Z}^{p,q} = (Z\cdot\Xi)^p(\bar{Z}\cdot\Xi)^q.
\end{align*}
Then by the cumulant expansion, we have 
\begin{align}
\E\big[ \mathsf{Z}^{n,n} \big] =&\E^{\chi}\bigg[ \sum_{t=1}^k\sum_{j=1}^{p_t} X'_{t,ij}[ W_t G]_{ji}\mathsf{Z}^{n-1,n}\bigg] -\E^{\chi} \bigg[ \sum_{t=1}^k\left(\tr( X_t X_t')^{-1}-\tr Q_t G\right)( G_{ii}-[ P_t G]_{ii})\mathsf{Z}^{n-1,n}\bigg] \notag\\
=&\E^{\chi}\bigg[ \sum_{t=1}^k\sum_{j=1}^{p_t}\frac{1}{N}\partial_{t,ji} \{[ W_t G]_{ji}\mathsf{Z}^{n-1,n}\} \bigg] -\E^{\chi}\bigg[ \sum_{t=1}^k \left(\tr( X_t X_t')^{-1}-\tr Q_t G\right)( G_{ii}-[ P_t G]_{ii})\mathsf{Z}^{n-1,n}\bigg] \notag\\
&+\E^{\chi} \bigg[ \sum_{s\geq 2}^l\sum_{t=1}^k\sum_{j=1}^{p_t}O\left(\frac{1}{N^{(s+1)/2}}\right)\partial_{t,ji}^s\{ [ W_t G]_{ji}\mathsf{Z}^{n-1,n}\}\bigg] + O_{\prec}\left( \frac{1}{N} \right) \notag\\
=:&M_1-M_2+M_3+O_{\prec}\left( \frac{1}{N} \right),  \label{062310}
\end{align}
where  the error $O_{\prec}(N^{-1})$ can be verified similarly to (\ref{061490}) by choosing $l$ sufficiently large.

Again, similarly to (\ref{061491}), via direct calculation, we can write  $\partial_{t,ji} Z = [\mathcal{A}]_{ij}$ for some matrix $\mathcal{A}\in \mathbb{C}^{N\times p_t}$ with $\|\mathcal{A}\|\prec 1$, and further $\mathcal{A}$ is a finite sum of terms of the form $a_t [\mathcal{M}_t]_{ij}$ with $a_t = O_{\prec}(1)$ and $\|\sum_{t=1}^k\mathcal{M}_t\mathcal{M}_t^*\|\prec 1$. For instance, one such $\mathcal{M}_t$ in $\mathcal{A}$ is $P_tG(\sum_{d=1}^k b_dQ_d)GW_t'$ for some $|b_d|\prec 1$. 

Since the complex conjugate takes no effect in the remaining estimations, we drop the complex conjugate for simplicity. Specifically,  we will work with $Z^{2n-1}$ instead of $\mathsf{Z}^{n-1,n}$ in the remaining derivation, for notational brevity.  Here again we drop the function $\Xi$ first and put it back at the last step, up to negligible error. 
\begin{align}
	M_1-M_2=&\E^{\chi}\bigg[ \sum_{t=1}^k\left(-\frac{1}{N}[ Q_t G]_{ii}-\frac{1}{N}[ Q_t G^2]_{ii} \right) Z^{2n-1}\bigg]+\E^{\chi}\bigg[\sum_{t=1}^k\sum_{j=1}^{p_t}\frac{1}{N}[W_tGP_t]_{ji}[W_tG]_{ji} Z^{2n-1}\bigg]\notag \\
	&+\E^{\chi}\bigg[\sum_{t=1}^k\sum_{j=1}^{p_t}\frac{O(1)}{N}[ W_t G]_{ji} \left( \partial_{t,ji} Z\right) Z^{2n-2}\bigg] + O_{\prec}(N^{-D}). \label{062153}
\end{align}
for any large $D > 0$.
Firstly, we have 
\begin{align}
&\sum_{t=1}^k\left(-\frac{1}{N}[ Q_t G]_{ii}-\frac{1}{N}[ Q_t G^2]_{ii} \right)\le \frac{1}{N} \left\| \sum_{t=1}^kQ_t G \right\| +\frac{1}{N} \left\| \sum_{t=1}^kQ_t G^2\right\|=O_{\prec}\left(\frac{1}{N}\right), \label{062190}
\end{align}
and 
\begin{align}
\sum_{t=1}^k\sum_{j=1}^{p_t}\frac{1}{N}[W_tGP_t]_{ji}[W_tG]_{ji}\leq \frac{1}{N}\sqrt{\sum_{t=1}^k[P_tG^{\ast}Q_tGP_t]_{ii}}\sqrt{\sum_{t=1}^k[G^{\ast}Q_tG]_{ii}}=O_{\prec}\left(\frac{1}{N}\right).\label{062191}
\end{align}
Here we used (\ref{SumQt}) and 
\begin{align}
\Big|\sum_{t=1}^k[P_tG^{\ast}Q_tGP_t]_{ii}\Big|\leq \Big\|\sum_{t=1}^kP_tG^{\ast}Q_tGP_t\Big\| \leq  \Big\|\sum_{t=1}^kP_t\Big\| \max_d\|G^{\ast}Q_dG\|\prec \|H\|\prec 1.  \label{062160}
\end{align}

As we mentioned,  all the terms in $\partial_{t,ji} Z$ (or $[\mathcal{A}]_{ji}$) is of the format $a_t [\mathcal{M}_t]_{ij}$ with $a_t = O_{\prec}(1)$. Since
\begin{align*}
	\sum_{t=1}^k \sum_{j=1}^{p_t}\frac{1}{N}[ W_t G]_{ij} [a_t\mathcal{M}_t]_{ij} \prec \frac{1}{N} \sqrt{\sum_{t=1}^k [G^*Q_tG]_{ii}}\sqrt{\sum_{t=1}^k [\mathcal{M}_t^*\mathcal{M}_t]_{ii} }.
\end{align*}
For the mentioned example  $\mathcal{M}_t = P_tG(\sum_{d=1}^k b_dQ_d)GW_t'=:P_tG\widetilde{Q}GW_t'$, similarly to (\ref{062160}), we have
\begin{align*}
	\sum_{t=1}^k [\mathcal{M}_t^*\mathcal{M}_t]_{ii} = \sum_{t=1}^k[P_tGQ_dGQ_tG^{*}Q_dG^*P_t]_{ii} \le  \left\| \sum_{t=1}^kP_tG\widetilde{Q}GQ_tG^{*}\widetilde{Q}^*G^*P_t \right\| \prec \left\| H \right\| \prec 1.
\end{align*}
The same bound applies to other terms in $\mathcal{A}$. 
 Hence, we have
\begin{align}
\sum_{t=1}^k\sum_{j=1}^{p_t}\frac{1}{N}[ W_t G]_{ji}\partial_{t,ji} Z = O_{\prec}\left(\frac{1}{N}\right). \label{062192}
\end{align}
Therefore, plugging (\ref{062190}), (\ref{062191}) and (\ref{062192}) into (\ref{062153}) we have
\begin{align*}
	M_1-M_2=&\E^{\chi} \bigg[ O_{\prec}\left(\frac{1}{N}\right) Z^{2n-1} \bigg] + \E^{\chi} \bigg[ O_{\prec}\left(\frac{1}{N}\right) Z^{2n-2} \bigg].
\end{align*}
Next, for $M_3$ in the RHS of (\ref{062310}), i.e, $s\geq 2$ terms, we rewrite 
\begin{align*}
& \frac{1}{N^{\frac{s+1}{2} }} \sum_{t=1}^k\sum_{j=1}^{p_t}\partial_{t,ji}^s\{ [ W_t G]_{ji}Z^{2n-1}\}\notag\\
&= \frac{1}{N^{\frac{s+1}{2} }}\sum_{r=1}^{s} \sum_{t=1}^k\sum_{j=1}^{p_t}\sum_{\substack{s_0\geq 0,s_1,\cdots,s_r> 0\\s_0+s_1+\cdots+s_r=s}}(\partial_{t,ji}^{s_0}[ W_t G]_{ji})\partial_{t,ji}^{s_1}Z \cdots \partial_{t,ji}^{s_r}Z Z^{(2n-1-r)\vee 0}\notag\\
&=: \frac{1}{N^{\frac{s+1}{2} }}\sum_{r=1}^{s} \mathfrak{b}_{s,r} Z^{(2n-1-r)\vee 0} .
\end{align*}
When $s=r$, i.e., $s_0=s_1-1=\cdots=s_r-1=0$, we have 
\begin{align*}
\frac{1}{N^{\frac{s+1}{2} }}\mathfrak{b}_{s,r}
=\frac{1}{N^{\frac{r+1}{2} }}\sum_{t=1}^k\sum_{j=1}^{p_t}[ W_t G]_{ji}\partial_{t,ji}Z\cdots\partial_{t,ji} Z\prec\frac{1}{N^{\frac{r+1}{2} }}\sum_{t=1}^k\sum_{j=1}^{p_t}|[ W_t G]_{ji}||[\mathcal{A}]_{ji}| =O_{\prec}\left(\frac{1}{N^{\frac{r+1}{2} }}\right).
\end{align*}
When $s=r+1$, i.e.,  $s_0+s_1-1+\cdots+s_r-1=1$, we have
\begin{align*}
\frac{1}{N^{\frac{s+1}{2} }}\mathfrak{b}_{s,r}=& \frac{O(1)}{N^{\frac{r+2}{2} }}\sum_{t=1}^k\sum_{j=1}^{p_t}\left((\partial_{t,ji}[ W_t G]_{ji})\partial_{t,ji} Z\cdots \partial_{t,ji} Z+ [ W_t G]_{ji}\partial^2_{t,ji} Z\cdots\partial_{t,ji} Z\right)\\
\prec&\frac{1}{N^{\frac{r+1}{2} }}\sum_{t=1}^k\sum_{j=1}^{p_t}\frac{1}{\sqrt{N}}|[ W_tG]_{ji}| +  \frac{1}{N^{\frac{r+1}{2} }}\sum_{t=1}^k\sum_{j=1}^{p_t}\frac{1}{\sqrt{N}}|[ \mathcal{A}]_{ji}|\\
\leq& \frac{1}{N^{\frac{r+1}{2} }} \sqrt{\sum_{t=1}^k[G^*Q_tG]_{ii}\sum_{t=1}^k \frac{p_t}{N}}  + \frac{1}{N^{\frac{r+1}{2} }} \sqrt{\sum_{t=1}^k[\mathcal{A}^*\mathcal{A}]_{ii}\sum_{t=1}^k \frac{p_t}{N}}= O_{\prec}\left(\frac{1}{N^{\frac{r+1}{2} }}\right).
\end{align*}
When $s-r\geq 2$, we have 
\begin{align*}
\frac{1}{N^{\frac{s+1}{2} }}\mathfrak{b}_{s,r}\prec \frac{1}{N^{\frac{r+1}{2} }}\sum_{t=1}^k\sum_{j=1}^{p_t}\frac{1}{N} 
=O_{\prec}\left(\frac{1}{N^{\frac{r+1}{2} }}\right).
\end{align*}
In summary, we have the coefficient of $Z^{2n-1-r}$ is of order $O_{\prec}(N^{-(r+1)/2})$. Substituting $Z\cdot \Xi$ for $Z$  and then applying Young's inequality as we did in the proof of \eqref{trPG}, we can get
\begin{align}\label{sPGiiMoment}
	\E \big[ \mathsf{Z}^{n,n}\big] = O_{\prec} \left( N^{-n} \right).
\end{align}
Using Markov's inequality, we obtain \eqref{sPGii} from \eqref{sPGiiMoment}.

\subsection{Proof of Lemma \ref{sumij}}
We prove the first estimate in \eqref{SUMWGP}. To this end, we introduce
\begin{align}
	&Z_1 = \sum_{i=1}^{N}[ X_t G]_{ji} - \tr P_t G\sum_{i=1}^{N}[ W_t G]_{ji} + \tr G\sum_{i=1}^{N}[ W_t G]_{ji}, \quad Z_2 = \sum_{i=1}^{N}[ X_t P_t G]_{ji} - (1-\frac{p_t}{N})\sum_{i=1}^{N}[ W_t G]_{ji} \label{061601}
\end{align}
and we further set
\begin{align*}
	\mathsf{Z}_1^{p,q} = (Z_1\cdot\Xi)^p(\bar{Z}_1\cdot\Xi)^q, \quad \mathsf{Z}_2^{p,q} = (Z_2\cdot\Xi)^p(\bar{Z}_2\cdot\Xi)^q.
\end{align*}
Our aim is to estimate $Z_1$ and $Z_2$, and via these estimates we can get an estimate of $\sum_{ij}^{(t)}[W_tG]_{ji}$ at the end. 
For any fixed $n$, and sufficiently large $l$, following similar remainder estimates as (\ref{061490}), we have
\begin{align*}
	\E \big[ \mathsf{Z}_1^{n,n} \big] =& \E^{\chi} \bigg[ \sum_{i=1}^{N}\sum_{d=1}^N[ X_t]_{jd} [ G]_{di}\mathsf{Z}_1^{n-1,n}\bigg] + \E^{\chi} \bigg[\tr (I-P_t) G\sum_{i=1}^{N}[ W_t G]_{ji}\mathsf{Z}_1^{n-1,n}\bigg]\\
	=&\frac{1}{N}\E^{\chi} \bigg[\sum_{i=1}^{N}\sum_{d=1}^N(\partial_{t,jd}[G]_{di})\mathsf{Z}_1^{n-1,n}\bigg]+ \E^{\chi}\bigg[\tr(I-P_t) G\sum_{i=1}^{N}[ W_t G]_{ji}\mathsf{Z}_1^{n-1,n}\bigg]\\
	&+ \frac{O(1)}{N}\E^{\chi} \bigg[\sum_{i=1}^{N}\sum_{d=1}^N [G]_{di}\left(\partial_{t,jd}Z_1\right) \mathsf{Z}_1^{n-2,n}\bigg]+  \frac{O(1)}{N}\E^{\chi} \bigg[\sum_{i=1}^{N}\sum_{d=1}^N [G]_{di}\left(\partial_{t,jd}\bar{Z}_1\right) \mathsf{Z}_1^{n-1,n-1}\bigg]\\
	&+\sum_{s\ge 2}^l O\left( \frac{1}{N^{\frac{s+1}{2}}} \right)\E^{\chi}\bigg[\sum_{i=1}^{N}\sum_{d=1}^N\partial^s_{t,jd}\left\{ [ G]_{di}\mathsf{Z}_1^{n-1,n} \right\}\bigg] + O_{\prec}(1)\notag\\
	=:& T_{11} + T_{12} + T_{13} + T_{14} + T_{15} + + O_{\prec}(1).
\end{align*}
By direct calculation, we have
\begin{align*}
	&\frac{1}{N} \sum_{i=1}^{N}\sum_{d=1}^N(\partial_{t,jd}[G]_{di})+ \left( \tr (I-P_t) G\right)\sum_{i=1}^{N}[ W_t G]_{ji} =\frac{1}{N}\sum_{i=1}^{N}[ G P_t G W'_t -  G^2 W'_t]_{ij} \\
	& \leq   \sqrt{\frac{p_t}{N}} \left\|G P_t G W'_t -  G^2 W'_t  \right\| = O_{\prec}\left(\sqrt{\frac{p_t}{N}} \right).
\end{align*}
This gives
\begin{align*}
	T_{11} + T_{12}= \E^{\chi}\bigg[ O_{\prec}\left(\sqrt{\frac{p_t}{N}} \right)\mathsf{Z}_1^{n-1,n}\bigg].
\end{align*}

Further, we have
\begin{align}
\partial_{t,jd}Z_1=&\sum_{u=1}^{N}1_{jj}G_{du}-\sum_{u=1}^{N}[X_tG(I-P_t)]_{jd}[W_tG]_{ju}-\sum_{u=1}^{N}[X_tGW_t]_{jj}[(I-P_t)G]_{du} \notag\\
&-\frac{2}{N}([(I-P_t)GW_t']_{dj}+[(I-P_t)G(I-P_t)GW_t']_{dj})\sum_{u=1}^{N}[W_tG]_{ju} \notag\\
&-\sum_{u=1}^{N}[W_t]_{jd}[W_tG]_{ju}-\sum_{u=1}^{N}[W_tG(I-P_t)]_{jd}[W_tG]_{ju}-\sum_{u=1}^{N}[W_tGW_t]_{jj}[(I-P_t)G]_{du}. \label{0615101}
\end{align}
By applying Cauchy-Schwarz inequality on each term, and using the boundedness of $\| G\|, \|X_t \|$ and $\|(X_tX_t')^{-1} \|$, one can easily get
\begin{align}
	\partial_{t,jd}Z_1 = O_{\prec}(\sqrt{N}). \label{jdZ1}
\end{align}

By direct calculation, for $s > 1$, we can also find that $\partial^s_{t,jd}Z_1$ can be bounded by a finite sum of the terms with the following form,
\begin{align}
 \sum_{u=1}^N(|[A]_{ju}| + |[B]_{du}|)\cdot (|[D]_{jj}| + |[E]_{dj}|) + \frac{1}{N}\sum_{u=1}^{N}[F]_{uj} = O_{\prec}(\sqrt{N})
\end{align}
where  $A$, $B$, $D$, $E$ and $F$ are some matrices bounded by $O_\prec(1)$ in spectral norm. Here we used Cauchy-Schwarz for the $u$-sum and $v$-sum. This gives, for and fixed $s \ge 1$,
\begin{align}
	\partial^s_{t,jd}Z_1 = O_{\prec}(\sqrt{N}). \label{jdsZ1}
\end{align}

In addition, by (\ref{0615101}), it is easy to check that $\frac{1}{N}\sum_{i=1}^N\sum_{d=1}^N[ G]_{di}\partial_{t,jd}Z_1$ is a linear combination of the terms of the following forms
\begin{align*}
	\frac{1}{N}\sum_{i=1}^{N}\sum_{d=1}^N[ G]_{di}\sum_{u=1}^{N}A_{jj}B_{du}, \qquad\frac{1}{N}\sum_{i=1}^{N}\sum_{d=1}^N[ G]_{di}\sum_{u=1}^{N}C_{jd}D_{ju}, \qquad  \frac{1}{N}\sum_{i=1}^{N}\sum_{d=1}^N[ G]_{di}E_{jd}\sum_{u=1}^{N}F_{ju}
\end{align*}
for some matrices $A,B,C,D,E,F$ with bounded operator norm with high probability. Further, we can bound each term as the following,
\begin{align}
&\frac{1}{N}\sum_{i=1}^{N}\sum_{d=1}^N[ G]_{di}\sum_{u=1}^{N}A_{jj}B_{du}=\frac{1}{N}\sum_{i,u=1}^N[B' G]_{ui}A_{jj}=O_{\prec}(1) \notag\\
& \frac{1}{N}\sum_{i=1}^{N}\sum_{d=1}^N[ G]_{di}\sum_{u=1}^{N}C_{jd}D_{ju}=\frac{1}{N}\sum_{i=1}^{N}[CG]_{ji}\sum_{u=1}^{N}D_{ju}=O_{\prec}(1) \notag\\
&\frac{1}{N}\sum_{i=1}^{N}\sum_{d=1}^N[ G]_{di}E_{jd}\sum_{u=1}^{N}F_{uj}=\frac{1}{N}\sum_{i=1}^{N}[EG]_{ji}\sum_{u=1}^{N}F_{ju}=O_{\prec}(1), \label{0615150}
\end{align}
where we used 
$\big|\sum_{iu}[B' G]_{ui} \big|= \big|\mathds{1}' B' G \mathds{1}\big| \le N\|B' G \|,
$
and similar bounds apply to all the other sums in (\ref{0615150}).
Therefore,
\begin{align*}
T_{13}=\E^{\chi}\big[O_{\prec}\left(1 \right)\mathsf{Z}_1^{n-2,n}\big],\quad T_{14}=\E^{\chi}\big[O_{\prec}\left(1 \right)\mathsf{Z}_1^{n-1,n-1}\big]
\end{align*}

Next, we  consider $T_{15}$. Since the complex conjugate takes no effect in the remaining estimations, we drop the complex conjugate for simplicity in this local part. Specifically,  we will work with $Z_1^{2n-1}$ instead of $\mathsf{Z}_1^{n-1,n}$ in the following derivation, for notational brevity.  Here we also drop the $\Xi$-factors first and put it back at the last step.

For $s \ge 2$, we rewrite
\begin{align*}
	&\frac{1}{N^{\frac{s+1}{2}}} \E^{\chi} \bigg[ \sum_{i=1}^{N}\sum_{d=1}^N\partial^s_{t,jd}\left\{ [ G]_{di}Z_1^{2n-1} \right\} \bigg]\\
	=&\frac{1}{N^{\frac{s+1}{2}}}\sum_{r=0}^s\E^{\chi} \bigg[\sum_{\substack{s_0 + s_1 + \cdots +s_r = s \\ s_0\geq 0, s_1,s_2,\cdots,s_r>0}}\sum_{i=1}^{N}\sum_{d=1}^N\partial^{s_0}_{t,jd}[ G]_{di}\partial^{s_1}_{t,jd}Z_1\cdots\partial^{s_r}_{t,jd}Z_1Z_1^{(2n-1-r)\vee 0}\bigg].
\end{align*}
\noindent
When $r = 0$, the coefficient of $Z_1^{2n-1}$ can be written as
\begin{align*}
	\sum_{s\ge 2}^l O\left(\frac{1}{N^{\frac{s+1}{2}}}\right)\sum_{i=1}^{N}\sum_{d=1}^N\partial_{t,jd}^s[ G]_{di}.
\end{align*}
By direct calculation, for $s \ge 0$, we find that $\sum_{i=1}^{N}\sum_{d=1}^N|\partial_{t,jd}^s[ G]_{di}|$ can be bounded by a finite sum of the terms with the following form,
\begin{align}
 \sum_{i=1}^{N}\sum_{d=1}^N|[A]_{di}| + \sum_{i=1}^{N}\sum_{d=1}^N|[B]_{ji}| = O_{\prec}\left( N^{\frac32}\right), \label{jdGdi}
\end{align}
where  $A$ and $B$ are some matrix bounded by $O_\prec(1)$ in operator norm. Here we used Cauchy-Schwarz for the $d$-sum and $i$-sum for the first and second terms respectively.  Hence, we have
\begin{align*}
	\sum_{s \ge 2}^lO\left(\frac{1}{N^{\frac{s+1}{2}}}\right)\sum_{i=1}^{N}\sum_{d=1}^N\partial_{t,jd}^s[ G]_{di} = O_{\prec}( 1).
\end{align*}
When $r = 1$, the coefficient of $Z_1^{2n-2}$ can be bounded using (\ref{jdZ1}) and (\ref{jdGdi}). It reads
\begin{align*}
&\sum_{s \ge 2}^lO\left(\frac{1}{N^{\frac{s+1}{2}}}\right)\sum_{i=1}^{N}\sum_{d=1}^N\sum_{s_0+s_1=s}\partial_{t,jd}^{s_0}[ G]_{di}\partial_{t,jd}^{s_1}Z_1\prec \sum_{s \ge 2}^l \frac{\sqrt{N}}{N^{\frac{3}{2}}}\sum_{i=1}^{N}\sum_{d=1}^N\sum_{s_0=1}^s\left|\partial_{t,jd}^{s_0}[ G]_{di}\right|  
\prec \sqrt{N},
\end{align*}
When $r \ge 2$, the coefficient of $Z_1^{2n - 1 - r}$ can be written as
\begin{align*}
\sum_{s \ge 2}^l\frac{1}{N^{\frac{s+1}{2}}}\sum_{i=1}^{N}\sum_{d=1}^N\partial_{t,jd}^{s_0}[ G]_{di}\partial_{t,jd}^{s_1}Z_1 \cdots \partial_{t,jd}^{s_r} Z_1 
	\prec\sum_{s \ge 2}^l \frac{N^{\frac{r}{2}}p_t^{r}}{N^{\frac{s+1}{2}}} \sum_{i=1}^{N}\sum_{d=1}^N\left| \partial_{t,jd}^{s_0}[ G]_{di} \right| \prec  N.
\end{align*}

Therefore, substituting $Z_1 \cdot \Xi$ for $Z_1$ (up to negligible error), then by Young's inequality, we can obtain
$
	\E \big[\mathsf{Z}_1^{n,n}\big] = O_{\prec}\left(  N^{\frac{2n}{3}}\right).
$
By similar arguments, we can also obtain
$
		\E \big[\mathsf{Z}_2^{n,n}\big] = O_{\prec}\left(  N^{\frac{2n}{3}}\right).
$
Hence, by Markov inequality, we have 
$
|Z_1|, |Z_2|\prec N^{\frac13}. 
$
Recall the definition in (\ref{061601}) and notice the fact $X_tG=X_tP_tG$.  Hence, taking the difference between $Z_1$ and $Z_2$ leads to
\begin{align*}
	\left( \tr  G + 1 - \tr  P_t G - \frac{p_t}{N} \right)\sum_{i=1}^{N}[ W_t G]_{ji} = O_{\prec}( N^{\frac{1}{3}} ).
\end{align*}
The coefficient of the sum can be bounded below as follows. By Propositions \ref{keyProp} and \ref{keyprop2}, we have
\begin{align*}
	\tr  G + 1 - \tr  P_t G - \frac{p_t}{N} =& m_{\boxplus}(z)(1-\omega_t(z)) - \frac{p_t}{N}+ O_{\prec}\left(\frac{1}{N}\right) =\frac{\omega_t(z)-1}{\omega_t(z)}\left(1- \frac{p_t}{N}\right) + O_{\prec}\left(\frac{1}{N}\right).
\end{align*}
Further, notice
\begin{align*}
	\left| \frac{\omega_t(z)-1}{\omega_t(z)} \right|\left(1 - \frac{p_t}{N} \right) > c\left(1 - \frac{p_t}{N}\right), 
\end{align*}
for some positive constants $c$ when $z \in (\bar{\gamma}_1^0)^{+} \cup  (\bar{\gamma}_2^0)^{+} $ with $\hat{y} \in (0,1)$ and $z \in (\bar{\gamma}_1)^{+} \cup  (\bar{\gamma}_2)^{+}$ with $\hat{y} \in (0,\infty)$. Here we need Lemma \ref{Flbound} to get $|\omega_t(z)|, |m_{\mu_t}(\omega_t(z))| \sim 1$, and then from the fact that $ |m_{\mu_t}(\omega_t(z))| \sim 1$ we can obtain $|\omega_t(z) - 1| \sim 1$ since $m_{\mu_t}(\omega_t(z)) = y_t/(1-\omega_t(z)) - (1-y_t)/\omega_t(z)$. Therefore,  we can obtain
\begin{align}
	\frac{1}{\sqrt{N}}\sum_{i=1}^{N}[ W_t G]_{ji} = O_{\prec}\left( \frac{1}{N^{\frac{1}{6}}} \right). \label{061620}
\end{align}

Define
\begin{align*}
	Z = \sum_{i=1}^{N}[W_t']_{ij} + \tr (X_tX_t')^{-1}\sum_{i=1}^{N}[W_t']_{ij}.
\end{align*}
Then similar to the proof of the first estimate in \eqref{SUMWGP}, we can get $|Z| \prec N^{\frac13}$. This details are omitted. Using the estimate of $\gamma_{t2}$ in Lemma \ref{Lemma error estimates}, we can obtain (\ref{SUMw}).

For  the second estimate in \eqref{SUMWGP}, we consider the two terms separately. For the first term,  we can write $\sum_{i=1}^{N}[ P_t G W_t']_{ij}$ as $\sum_{i=1}^{N}[ X_t'W_t G W_t']_{ij}$ and conduct the cumulant expansion w.r.t. $X_t'$-entries, and the remaining estimate is similar to the first estimate in  \eqref{SUMWGP}. For the second  term $\sum_{i=1}^{N}(\tr( X_t X_t')^{-1}-\tr Q_t G)[(I-P_t) G W_t']_{ij} + \tr Q_tG[W_t']_{ij}$, we can use  (\ref{061620}), (\ref{SUMw}), the estimate of $\gamma_{t2}$ in Lemma \ref{Lemma error estimates} and Proposition \ref{keyprop2} directly. The details are omitted. Therefore, we completed the proof of Lemma \ref{sumij}.

\subsection{Proof of Lemma \ref{EtrXXtrQG}}
We start from the following identity, 
\begin{align*}
	\E^{\chi} \Big[\Tr\bbP_t \bla {\rm e}^{\mathrm{i} xL_N^1(f)}\bra \Big] = 0
\end{align*}
since $\Tr\bbP_t=p_t$ almost surely. 
By the cumulant expansion, we have
\begin{align}
	0&=\E^{\chi} \Big[  \Tr\bbP_t\bla {\rm e}^{ixL_N^1(f)}\bra \Big]  = \E^{\chi}\Big[ \sum_{ij}^{(t)}\bbX_{t,ij}[\bbW_t]_{ji} \bla {\rm e}^{\mathrm{i}x\la L_N^1(f)\ra }\bra\Big] =\frac{1}{N}\E^{\chi}\Big[ \sum_{ij}^{(t)}(\partial_{t,ji}[\bbW_t]_{ji})\bla {\rm e}^{\mathrm{i} xL_n(f)}\bra\Big]\notag\\
&+\frac{1}{N}\E^{\chi}\Big[\sum_{ij}^{(t)}[\bbW_t]_{ji}\partial_{t,ji}{\rm e}^{\mathrm{i}x\la L_N^1(f)\ra }\Big] +\frac{1}{{N}^{3/2}}\sum_{ij}^{(t)}\kappa_3^{t,j}\sum_{a=0}^2{2\choose a} \E^{\chi} \Big[  (\partial^a_{t,ji}[\bbW_t]_{ji}) \partial^{2-a}_{t,ji}\bla {\rm e}^{\mathrm{i}x\la L_N^1(f)\ra } \bra\Big] \notag\\
&+\frac{1}{{N}^{2}}\sum_{ij}^{(t)}\kappa_4^{t,j}\sum_{a=0}^3{3\choose a}\E^{\chi}\Big[ (\partial^a_{t,ji}[\bbW_t]_{ji})\partial^{3-a}_{t,ji}\bla {\rm e}^{\mathrm{i}x\la L_N^1(f)\ra }\bra\Big]
+ O_{\prec}\left({\frac{p_t}{N^{\frac32}}}  \right) \notag\\
&=:\mathcal{P}_{t1}+\mathcal{P}_{t2}+\mathcal{P}_{t3}+\mathcal{P}_{t4} + O_{\prec}\left({\frac{p_t}{N^{\frac32}}}\right), \label{0616100}
\end{align}
where the remainder terms are estimated similarly to the estimates of $\mathsf{E}_{a}, a=1,2,3$ in Lemma \ref{Error in CLT}. The estimation for $\mathcal{P}_{t3}$ and $\mathcal{P}_{t4}$ are also similar to $\mathsf{I}_{t3}$ and $\mathsf{I}_{t4}$ in (\ref{061352}) and (\ref{061353}).  From which we can obtain,
\begin{align}
	{\mathcal{P}_{t3}+\mathcal{P}_{t4} =O_{\prec}\left(\frac{p_t}{N^{7/6}}\right)}. \label{Pt3Pt4}
\end{align}
Next we estimate $\mathcal{P}_{t1}$ and $\mathcal{P}_{t2}$. For $\mathcal{P}_{t1}$, we have
\begin{align}
	\mathcal{P}_{t1} =& \left(1-y_t-{\frac{1}{N}}\right)\E^{\chi} \Big[  \Tr \left( \bbX_t\bbX_t' \right)^{-1}\bla {\rm e}^{\mathrm{i}x\la L_N^1(f)\ra }\bra\Big]
	=\left(1-y_t\right)\E^{\chi} \Big[ \Tr \left( \bbX_t\bbX_t' \right)^{-1}\bla {\rm e}^{\mathrm{i}x\la L_N^1(f)\ra }\bra\Big] \notag\\
	& + \E^{\chi} \Big[ \bla\tr \left( \bbX_t\bbX_t' \right)^{-1}\bra {\rm e}^{\mathrm{i}x\la L_N^1(f)\ra }\Big] 
	=\left(1-y_t\right)\E^{\chi} \Big[ \Tr \left( \bbX_t\bbX_t' \right)^{-1}\bla {\rm e}^{\mathrm{i}x\la L_N^1(f)\ra }\bra\Big]  + O_{\prec}\left(\sqrt{\frac{p_t}{N^3}}\right), \label{Pt1}
\end{align}
where in the last step we used (\ref{trXX}). For $\mathcal{P}_{t2}$, we have 
\begin{align}
	\mathcal{P}_{t2} =& \frac{\mathrm{i}x}{N}\E^{\chi} \Big[ \sum_{ij}^{(t)} [\bbW_t]_{ji}\left( \partial_{t,ji}L_N^1(f)\right)\bla {\rm e}^{\mathrm{i}x\la L_N^1(f)\ra }\bra\Big]\notag \\ =& \frac{-x}{2\pi N}\sum_{ij}^{(t)}\oint_{\bar{\gamma}_1^0}\E^{\chi}  \Big[[\bbW_t]_{ji}\left(\partial_{t,ji}\Tr\bbG(\tilde{z})\right)\bla{\rm e}^{\mathrm{i}x\la L_N^1(f)\ra }\bra\Big] f(\tilde{z}){\rm d}\tilde{z}\notag\\
	=& \frac{-x}{2\pi N}\sum_{ij}^{(t)}\oint_{\bar{\gamma}_1^0}\E^{\chi}  \Big[[\bbW_t]_{ji}\left(2[W_tG^2(\tilde{z})]_{ji} - 2[W_tG^2(\tilde{z})P_t]_{ji}\right)\bla{\rm e}^{\mathrm{i}x\la L_N^1(f)\ra }\bra\Big] f(\tilde{z}){\rm d}\tilde{z} \notag\\ 
	=& \frac{-x}{2\pi N}\oint_{\bar{\gamma}_1^0}\E^{\chi}  \Big[\left(2\tr G^2(\tilde{z})Q_t - 2\tr P_tG^2(\tilde{z})Q_t  \right)\bla{\rm e}^{\mathrm{i}x\la L_N^1(f)\ra }\bra\Big] f(\tilde{z}){\rm d}\tilde{z}  = 0.\label{Pt2}
\end{align}
Hence, by plugging \eqref{Pt1}, \eqref{Pt2} and \eqref{Pt3Pt4} into (\ref{0616100}), we obtain
\begin{align*}
	0 =& \E^{\chi}  \Big[ \Tr\bbP_t   \bla {\rm e}^{\mathrm{i}x\la L_N^1(f)\ra } \bra \Big] = (1-y_t)\E^{\chi}  \Big[ \Tr \left( \bbX_t\bbX_t' \right)^{-1} \bla {\rm e}^{\mathrm{i}x\la L_N^1(f)\ra }\bra\Big] + {O_{\prec}\left( \frac{p_t}{N^{7/6}} \right)}.
\end{align*}
This verifies (\ref{0616101})
Next, we can start with another identity,
$
		\E ^{\chi} \Big[ \Tr\bbP_t\bbG\bbP_t \bla {\rm e}^{ixL_n(f)} \bra \Big] = \E^{\chi}  \Big[ \Tr\bbP_t\bbG\bla {\rm e}^{ixL_n(f)} \bra \Big].
$
By performing cumulant expansion at the LHS of the above identity, and then following the same argument as the estimations of $\mathcal{P}_{tb}$, $b= 1,2,3,4$, we can obtain (\ref{0616102}). 

Lastly, we consider (\ref{062520}). By direct calculation, we have
	\begin{align*}
		&\sum_{ij}^{(t)} \left| \partial_{t,ji}^2\Tr \bbG(z)\right|=2\sum_{ij}^{(t)} \left| \partial_{z} \left\{ \partial_{t,ji} \left([\bbW_t\bbG(z)(I-\bbP_t)]_{ji}  \right) \right\}\right| \\
		=&\sum_{ij}^{(t)} \left| \partial_{z} \left\{  \left( [(\bbX_t\bbX_t')^{-1}]_{jj} -[\bbW_t{\bbG}\bbW'_t]_{jj}\right) [(I-\bbP_t){\bbG}(I-\bbP_t)]_{ii}-[\bbW_t{\bbG}\bbW'_t]_{jj}[I-\bbP_t]_{ii} \right\}\right| \\
		&+ \text{sum of off-diagonal entries} \\
		=& \sum_{ij}^{(t)} \left| \partial_{z} \left\{  \left( [(\bbX_t\bbX_t')^{-1}]_{jj} -[\bbW_t{\bbG}\bbW'_t]_{jj}\right) [(I-\bbP_t){\bbG}(I-\bbP_t)]_{ii}-[\bbW_t{\bbG}\bbW'_t]_{jj}[I-\bbP_t]_{ii} \right\}\right|+ O_{\prec}\left( p_t\right)\\
		=& \sum_{ij}^{(t)} \left| \partial_{z} \left\{ \frac{N}{p_t} \left( \tr(\bbX_t\bbX_t')^{-1} -\tr Q_tG\right) (\tr G - \tr P_tG)-\frac{N(1-y_t)}{p_t} \tr Q_tG\right\}\right| + O_{\prec}(\sqrt{N}p_t) = O_{\prec}(\sqrt{N}p_t). 
	\end{align*}
	where we used the tracial quantities to replace the $ii$ and $jj$ entries (c.f. Propositions \ref{keyProp}, \ref{keyprop2} and \ref{DiagApproByFC}) such that the error $O_{\prec}(\sqrt{N}p_t) = \sum_{ij}^{(t)}O_{\prec}(1/\sqrt{N})$ emerges here. In the last step, we used the estimate of $\gamma_{t1}$ in Lemma \ref{Lemma error estimates}. Here we also omitted the expression of the sum of off-diagonal entries, whose estimate can be done similarly to (\ref{062301}).
This completes the proof of Lemma \ref{EtrXXtrQG}.

\subsection{Proof of Lemma \ref{remainderLemma}}
 Let $\Delta_1 : =(x - X_{t,ij})E_{ij}$. Here $E_{ij}$ represents the matrix with $ij$ entry being $1$ and the other entries are $0$. We have
	\begin{align*}
		X_t^{(ij)}\left( X_t^{(ij)} \right)' = (X_t+\Delta_1)(X_t' + \Delta_1')=: X_tX_t' + \tilde{\Delta}_1. 
	\end{align*}
	The spectral norm of the matrix $\tilde{\Delta}_1$ can be bounded as follows,
	\begin{align*}
		\| \tilde{\Delta}_1\| \lesssim \|X_t \|\|\Delta_1 \| + \|\Delta_1 \|^2 \prec  |x| +  \big|X_{t, ij}\big|
	\end{align*}
	where we used Lemma \ref{bound for covariance}. 
	 Therefore, when $|x| \leq  N^{-\frac12+\epsilon}$, by Weyl's inequality, we have
	\begin{align*}
		\Big|\sigma_l\Big(X_t^{(ij)}\Big( X_t^{(ij)} \Big)' \Big) -\sigma_l\Big(X_tX_t' \Big)\Big| \leq \| \tilde{\Delta}_1\| \prec N^{-\frac12+\epsilon} , \quad l=1,\cdots,p_t,
	\end{align*}
	which implies the first and second estimates in (\ref{4113}). 
	Similarly, let
$
		\Delta_2 := \big(X_t^{(ij)}( X_t^{(ij)})' \big)^{-1} - (X_tX_t')^{-1}.
$
	 We have
	\begin{align*}
		P^{(ij)}_t =& (X_t' + \Delta_1')\left( (X_tX_t')^{-1} + \Delta_2 \right)(X_t+\Delta_1)  = :P_t + \tilde{\Delta}_2.
	\end{align*}
	Similarly,  when $|x| \leq  N^{-1/2+\epsilon}$, it is easy to check
$
		\|\tilde{\Delta}_2 \| \prec N^{-\frac12+\epsilon}.	
$
	Therefore, by the Weyl's inequality, we have
	\begin{align*}
		\big|\sigma_l(H-z\mathrm{I}) - \sigma_l(H^{(t,ij)}-z\mathrm{I}) \big| \leq \|\tilde{\Delta}_2 \| \prec N^{-\frac12+\epsilon}, \quad l = 1,\cdots,N
	\end{align*}
	when $|x| \leq  N^{-1/2+\epsilon}$. This completes the proof of (\ref{4113}).

\section{Proof of Theorem \ref{1stLimit} (Cases 1 $\&$ 2)}\label{Sec Proof of 1.9}
\subsection{Proof of case 1}
 Our goal is to show that, for each fixed $z \in \mathbb{C}^{+}$,
\begin{align}\label{traceGGap}
	\max_t|\omega_t^c(z)-\omega_t(z)|, \quad\left|m_N(z)- m_{\boxplus}(z)\right| \prec \frac{1}{N}.
\end{align}
To this end,  we will first establish a perturbed system of \eqref{Phi2} for $\omega_t^c(z)$'s, and then show that \eqref{traceGGap} holds for $z$ with sufficiently large $\Im z$. Finally, by a continuity argument, we will show that \eqref{traceGGap} holds for each fixed $z \in \mathbb{C}^{+}$. For notational simplicity, we write $G := G(z)$,  and $m_N := m_N(z) $ for short, whenever there is no confusion. 

By Lemma \ref{Lemma error estimates}, we have
	\begin{align}
		&\tr(\bbX_t\bbX_t')^{-1} = \frac{y_t}{1-y_t} + O_{\prec}\left( {\sqrt{\frac{p_t}{N^3}}} \right).\label{trXX} \\
		&\tr \bbP_t\bbG = (1-y_t)\tr \bbQ_t\bbG + O_{\prec}\left( {\sqrt{\frac{p_t}{N^3}}} \right), \label{trPGP}\\
		&\tr \bbP_t\bbG  = \left(\tr(\bbX_t\bbX_t')^{-1}-\tr\bbQ_t\bbG \right)\left(\tr\bbG-\tr\bbP_t\bbG\right) 
		+ O_{\prec}\left( {\sqrt{\frac{p_t}{N^3}}} \right). \label{trPG}
	\end{align}

Plugging \eqref{trXX} and (\ref{trPGP}) into \eqref{trPG}, and using (\ref{Appsub1}), 
we obtain,  for any $t \in [\![ k ]\!]$,
\begin{align*}
	1 + \omega_t^c(z)m_N - \left(\frac{y_t}{1-y_t} -\frac{1}{1-y_t}(1 + \omega_t^c(z)m_N)\right)(m_N-1-\omega_t^c(z)m_N) = O_{\prec}\left( {\sqrt{\frac{p_t}{N^3}}} \right),
\end{align*}
where we used the trivial bound $|\tr \bbG|, |\tr\bbP_t\bbG|\leq 1/\Im z$. 
Rearranging the terms, we have
\begin{align*}
	m_N + \frac{\omega_t^c(z)m_N(m_N-1-\omega_t^c(z)m_N)}{1-y_t} = O_{\prec}\left( {\sqrt{\frac{p_t}{N^3}}} \right).
\end{align*}
Multiplying both sides by $1-y_t$, and then dividing both sides by 
\begin{align}
\theta_t := m_N\omega_t^c(z)(1-\omega_t^c(z)), \label{def of theta_t}
\end{align} 
we have
\begin{align*}
	m_N = \frac{y_t}{1-\omega_t^c(z)} - \frac{1-y_t}{\omega_t^c(z)} +  O_{\prec}\left(\frac{1}{|\theta_t|} {\sqrt{\frac{p_t}{N^3}}} \right)= -\frac{1}{F_{\mu_t}(\omega_t^c(z))} + O_{\prec}\left(\frac{1}{|\theta_t|} {\sqrt{\frac{p_t}{N^3}}} \right).
\end{align*}
Using \eqref{Appsub2}, we can get
\begin{align*}
	(k-1)F_{\mu_t}(\omega_t^c(z))- \omega_1^c(z) - \omega_2^c(z) - \cdots - \omega_k^c(z) + z = O_{\prec}\left(\frac{|F^2_{\mu_t}(\omega_t^c(z))|}{|\theta_t|} {\frac{k-1}{N}\sqrt{\frac{p_t}{N}}} \right), \quad z \in \mathbb{C}^{+}.
\end{align*}
Since $k$ is fixed in case 1,  the $k$-dependence of the RHS can be neglected, i.e., 
\begin{align*}
	(k-1)F_{\mu_t}(\omega_t^c(z))- \omega_1^c(z) - \omega_2^c(z) - \cdots - \omega_k^c(z) + z = O_{\prec}\left(\frac{|F^2_{\mu_t}(\omega_t^c(z))|}{|\theta_t|} {\frac{1}{N}} \right), \quad z \in \mathbb{C}^{+}.
\end{align*}
The above system of equations for all $t \in [\![k ]\!]$ forms a perturbed version of \eqref{Phi2}. Here we keep the factor $|F^2_{\mu_t}(\omega_t^c)| / |\theta_t|$ whose bound is a priori unknown. It will be  proved that this factor is  of order $1$ during the stability analysis of the perturbed system. Based on this perturbed system of \eqref{Phi2}, we can proceed to the next step.

\noindent $\bullet${\it Sufficiently large $\Im z$.} We start with the regime when $\Im z>0$ is sufficiently large. 
Recall that by the definition of the reciprocal Stieltjes transform of $\mu_t$, $t \in [\![k ]\!]$, we have for any $\omega \in \mathbb{C}^{+}$, 
\begin{align}\label{Fmut}
	F_{\mu_t}(\omega) =& -\frac{1}{m_{\mu_t}(\omega)} = \omega -y_t + \frac{y_t(1-y_t)}{1-y_t-\omega}.
\end{align}
Taking derivatives with respect to $\omega$ gives
\begin{align}
	&F'_{\mu_t}(\omega) = 1 + \frac{y_t(1-y_t)}{(1-y_t-\omega)^2},\label{Fpmut} \\
	&F^{(n)}_{\mu_t}(\omega) =\frac{y_t(1-y_t)n!}{(1-y_t-\omega)^{n+1}}, \quad n \ge 2. \label{Fnmut}
\end{align}
To prove (\ref{traceGGap}) for $z$ with sufficiently large $\Im z$, we first show the following stability result for the system (\ref{Phi1}) at $(\omega_1^c(z),\cdots, \omega_k^c(z))$ for sufficiently large $\Im z$ . 
\begin{lemma}\label{GammaUp1}
	There exists sufficiently large $\eta_0$, such that for any $z \in \mathbb{C}^{+}$ with $\Im z \ge \eta_0$, we have 
	\begin{align*}
			\Gamma_{\mu_1,\cdots,\mu_k}(\omega^c_1(z), \cdots,\omega^c_k(z)) \le C.
	\end{align*}
	where $C$ is a strictly positive constant independent of $\Im z$ and $N$.
\end{lemma}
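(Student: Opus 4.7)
The plan is to invert $\mathrm{D}\Phi$ using Sherman--Morrison (it is a diagonal matrix minus the rank-one matrix $\mathds{1}\mathds{1}'$) and then bound each factor uniformly. Concretely, I would write
\begin{align*}
\mathrm{D}\Phi(\omega_1^c(z),\ldots,\omega_k^c(z)) = \mathcal{D}(\omega^c) - \mathds{1}\mathds{1}',\qquad \mathcal{D}(\omega^c):=\mathrm{diag}\bigl((k-1)F'_{\mu_t}(\omega_t^c(z))\bigr)_{t=1}^k,
\end{align*}
and apply the Sherman--Morrison identity already used in (\ref{6001}):
\begin{align*}
(\mathrm{D}\Phi)^{-1} = \mathcal{D}^{-1}(\omega^c) + \frac{\mathcal{D}^{-1}(\omega^c)\mathds{1}\mathds{1}'\mathcal{D}^{-1}(\omega^c)}{1-\mathds{1}'\mathcal{D}^{-1}(\omega^c)\mathds{1}}.
\end{align*}
Taking the $\ell^\infty\to\ell^\infty$ operator norm, using $\|\mathcal{D}^{-1}\|_{\infty,\infty} = \max_t |\mathcal{D}_t|^{-1}$ and the elementary bound $\|uv'\|_{\infty,\infty}\le\|u\|_\infty\|v\|_1$, one obtains
\begin{align*}
\|(\mathrm{D}\Phi)^{-1}\|_{\infty,\infty} \le \max_t\frac{1}{|\mathcal{D}_t|} + \frac{\bigl(\max_t|\mathcal{D}_t|^{-1}\bigr)\bigl(\sum_t|\mathcal{D}_t|^{-1}\bigr)}{\bigl|1-\mathds{1}'\mathcal{D}^{-1}(\omega^c)\mathds{1}\bigr|}.
\end{align*}
So the task reduces to two uniform lower bounds: (a) $|\mathcal{D}_t(\omega_t^c(z))|\gtrsim 1$ for each $t$, and (b) $|1-\mathds{1}'\mathcal{D}^{-1}(\omega^c)\mathds{1}|\gtrsim 1$, each with high probability when $\Im z\ge \eta_0$.

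To secure both, I would first invoke Lemma \ref{Imomegatc}: for $\Im z\ge \eta_0$ with $\eta_0$ sufficiently large, $\Im\omega_t^c(z)\ge \Im z + c$ with high probability, so in particular $|\omega_t^c(z)|$ is as large as we wish. The final sentence of Lemma \ref{Flbound} is stated precisely for generic $\omega_t\in\mathbb{C}$ with $|\omega_t|$ sufficiently large, so we may apply it with $\omega_t=\omega_t^c(z)$ to conclude
\begin{align*}
|F'_{\mu_t}(\omega_t^c(z))|\sim 1,\qquad \Bigl|1-\tfrac{1}{k-1}\sum_{t=1}^k\tfrac{1}{F'_{\mu_t}(\omega_t^c(z))}\Bigr|\gtrsim \tfrac{1}{k},
\end{align*}
both holding with high probability. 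The first estimate yields $|\mathcal{D}_t(\omega_t^c(z))|=(k-1)|F'_{\mu_t}(\omega_t^c(z))|\sim k$, and the second rewrites as $|1-\mathds{1}'\mathcal{D}^{-1}(\omega^c)\mathds{1}|\gtrsim 1/k$. Since we are in case 1 where $k\sim 1$, both quantities are of constant order, so $\sum_t|\mathcal{D}_t|^{-1}\lesssim 1$ and the denominator is bounded away from zero by an absolute constant. Plugging back gives $\|(\mathrm{D}\Phi)^{-1}\|_{\infty,\infty}\le C$ uniformly in $\Im z\ge \eta_0$ and in $N$, as required.

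There is no genuine hard step here: the main point to be careful about is that $\omega_t^c(z)$ is random, so Lemma \ref{Flbound} has to be invoked via the generic-variable route, which in turn is legitimate only after Lemma \ref{Imomegatc} has pushed $|\omega_t^c(z)|$ into the large regime with high probability. Once these two probabilistic inputs are in place, the stability bound is purely algebraic. The fact that $k$ is fixed in case 1 is essential: it prevents the $1/k$ lower bound on $|1-\mathds{1}'\mathcal{D}^{-1}(\omega^c)\mathds{1}|$ from degenerating, which is exactly why the more delicate fluctuation-averaging argument in (\ref{Second error}) is needed later to handle the general case (\ref{062703}) where $k$ may diverge.
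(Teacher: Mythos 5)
Your proposal is correct and follows essentially the same route as the paper: the Sherman--Morrison inversion of $\mathrm{D}\Phi$, the bound $\|\mathcal{D}^{-1}\mathds{1}\mathds{1}'\mathcal{D}^{-1}\|_{(\infty,\infty)}\le k\|\mathcal{D}^{-1}\|_{(\infty,\infty)}^2$, and the two lower bounds $|F'_{\mu_t}(\omega_t^c)|\sim 1$ and $|1-\mathds{1}'\mathcal{D}^{-1}(\omega^c)\mathds{1}|\gtrsim 1/k$ obtained from Lemma \ref{Imomegatc}, with $k$ fixed making everything of constant order with high probability. The only cosmetic difference is that you invoke the generic-variable clause of Lemma \ref{Flbound}, while the paper redoes the same expansion $F'_{\mu_t}(\omega_t^c)=1+O(|\Im\omega_t^c|^{-2})$ directly from the explicit formula; the content is identical.
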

\begin{proof}[Proof of Lemma \ref{GammaUp1}]	
	Recall the definition in (\ref{DPhi}) and (\ref{6001}), we have by resolvent identity,
	\begin{align}
		(\mathrm{D}\Phi)^{-1}(\omega_1^c(z),\cdots,\omega_k^c(z))  = \mathcal{D}^{-1}(\omega^c) + \frac{\mathcal{D}^{-1}(\omega^c)\mathds{1}\mathds{1}'\mathcal{D}^{-1}(\omega^c)}{1-\mathds{1}\mathcal{D}^{-1}(\omega^c)\mathds{1}'}. \label{0617105}
	\end{align}
	Therefore,
	\begin{align*}
		\Gamma_{\mu_1,\cdots,\mu_k}(\omega^c_1(z), \cdots,\omega^c_k(z)) \le \| \mathcal{D}^{-1}(\omega^c) \|_{(\infty,\infty)} + \left\| \frac{D^{-1}(\omega^c)\mathds{1}\mathds{1}'\mathcal{D}^{-1}(\omega^c)}{1-\mathds{1}'\mathcal{D}^{-1}(\omega^c)\mathds{1}} \right\|_{(\infty,\infty)}.
	\end{align*}
	For the first term, by Lemma \ref{Imomegatc}, we have for sufficiently large $\Im z$, $\Im \omega_t^c$ ($t \in [\![ k]\!]$) are also large. As a result,
	\begin{align*}
		\| \mathcal{D}^{-1}(\omega^c) \|_{(\infty,\infty)} =& \max_t\frac{1}{|(k-1)F_{\mu_t}'(\omega^c_t(z))|} 
		= \max_t (k-1)^{-1}\left|1 + \frac{y_t(1-y_t)}{(1-y_t-\omega_t^c(z))^2}\right|^{-1}\\
		=& \max_t (k-1)^{-1}\left|1 + O(|\Im \omega_t^c(z)|^{-2}) \right|^{-1} 
		\le C
	\end{align*}
	as $\Im z \to \infty$. For the second term, since 
$
		\left\|\mathcal{D}^{-1}(\omega^c)\mathds{1}\mathds{1}'\mathcal{D}^{-1}(\omega^c) \right\|_{(\infty,\infty)}  \leq  k \left\|\mathcal{D}^{-1}(\omega^c)\right\|^2_{(\infty,\infty)} \le C,
$
	it suffices to show 
$
		\left|1-\mathds{1}'\mathcal{D}^{-1}(\omega^c)\mathds{1}\right| > c
$
	for some strictly positive constant $c$ which is independent of $\Im z$ and $N$. We have indeed
	\begin{align*}
		\left|1-\mathds{1}'\mathcal{D}^{-1}(\omega^c)\mathds{1}\right| =& \Big|1 - \frac{1}{k-1}\sum_{t=1}^k\frac{1}{F_{\mu_t}'(\omega^c_t(z))} \Big|=\Big|1 - \frac{1}{k-1}\sum_{t=1}^k\left(1 + O(|\Im \omega_t^c(z)|^{-2}) \right)^{-1} \Big| \\     
		=&\Big|\frac{-1}{k-1}+ O(\max_t|\Im \omega_t^c(z)|^{-2})  \Big| > \frac{1}{2(k-1)}	.
	\end{align*}
	 when $\Im z$ is sufficiently large. This completes the proof of Lemma \ref{GammaUp1}.
\end{proof}
Next, we continue the proof of (\ref{traceGGap}) in the regime when $\Im z>0$ is sufficiently large. From \eqref{Fnmut}, we also have for $\omega\in \mathbb{C}^+$ with large $\Im \omega$, 
	\begin{align}
		|F_{\mu_t}^{(2)}(\omega)| = O\left( |\omega|^{-3} \right). 
	\end{align}
	Hence the matrix of second derivatives of $\Phi$ given by
	\begin{align*}
		&\mathrm{D}^2\Phi\left(\omega_1,\cdots,\omega_k \right):= \left( \frac{\partial^2\Phi}{\partial\omega_1^2}, \cdots, \frac{\partial^2\Phi}{\partial\omega_k^2}  \right)= {\rm diag}\Big( (k-1)F_{\mu_{t}}^{(2)}\left(\omega_{t}\right)\Big)_{t=1}^k
	\end{align*}
	satisfies $\|\mathrm{D}^2\Phi\left({\omega}^c_1(z),\cdots,{\omega}^c_k(z) \right) \| = O\left( |\Im z|^{-3} \right)$, as $\Im z \to \infty$. For any $t \in [\![k ]\!]$, denote by
	\begin{align}
		r_t(z) := (k-1)F_{\mu_t}(\omega_t^c(z))- \omega_1^c(z) -  \cdots - \omega_k^c(z) + z = O_{\prec}\left(\frac{|F^2_{\mu_t}(\omega_t^c(z))|}{|\theta_t|} {\frac{1}{N}} \right), \label{def of rt}
	\end{align} 
	and $r(z) := (r_1(z), \cdots, r_k(z))^{T}$. As $\Im z \to \infty$, by Lemma \ref{Imomegatc}, we have with high probability
	\begin{align}
		\frac{|\theta_t|}{|F^2_{\mu_t}(\omega_t^c(z))|} = \left| \frac{m_N(y_t-1+\omega_t^c(z))^2}{\omega_t^c(z)(1-\omega_t^c(z))}\right|  =|m_N|\left| \frac{(y_t-1+\omega_t^c(z))^2}{\omega_t^c(z)(1-\omega_t^c(z))}\right| \ge \frac{1}{2}|m_N| \gtrsim \frac{1}{\Im z}. \label{6010}
	\end{align}
	Therefore, we can obtain
$
		|r_t(z)| \prec |\Im z|/N
$
for all   $t \in [\![k ]\!]$.
	
	Hence, choosing $\eta_0 > 0$ sufficiently large, and using Lemma \ref{GammaUp1} with the above bound we can achieve that
	\begin{align*}
		s_0 :=&  \Gamma_{\mu_1,\cdots,\mu_k}(\omega_1^c(z), \cdots,\omega_k^c(z)) \|\mathrm{D}^2\Phi\left({\omega}^c_1(z),\cdots,{\omega}^c_k(z) \right) \|_{(\infty,\infty)} \| (\mathrm{D}\Phi)^{-1}\cdot\Phi (\omega_1^c(z), \cdots,\omega_k^c(z))  \|_{\infty} \\
		\le & C\|\mathrm{D}^2\Phi\left({\omega}^c_1(z),\cdots,{\omega}^c_k(z) \right) \|_{(\infty,\infty)} \|(\mathrm{D}\Phi)^{-1}\left({\omega}^c_1(z),\cdots,{\omega}^c_k(z) \right) \|_{(\infty,\infty)}\|r(z) \|_{\infty}\prec  \frac{1}{N|\Im z|^{2}},
	\end{align*}
which implies that $s_0<\frac12$  with high probability on the domain $\{z \in \mathbb{C}^{+}: \Im z \ge \eta_0 \}$. By the Newton-Kantorovich theorem (Theorem \ref{NewtonKantorvich}) with $$b \equiv \Gamma_{\mu_1,\cdots,\mu_k}(\omega_1^c(z), \cdots,\omega_k^c(z)) \|\mathrm{D}^2\Phi\left({\omega}^c_1(z),\cdots,{\omega}^c_k(z) \right) \|_{(\infty,\infty)},$$ and $L \equiv \| (\mathrm{D}\Phi)^{-1}\cdot\Phi (\omega_1^c(z), \cdots,\omega_k^c(z))  \|_{\infty}$,  we have that there are for every such $z$ unique $\hat{\omega}_t(z)$'s satisfying $$\Phi_{\mu_1,\cdots,\mu_k} (\hat{\omega}_1(z),\cdots,\hat{\omega}_k(z),z) =0,$$ with
	\begin{align}\label{Gap2}
		\left|{\omega}^c_{t}(z)-\hat{\omega}_{t}(z)\right| \leq &\frac{1 - \sqrt{1 - 2s_0}}{s_0} \| (\mathrm{D}\Phi)^{-1} (\omega_1^c(z), \cdots,\omega_k^c(z))  \|_{(\infty,\infty)}\|r(z) \|_{\infty}\notag \\
		\leq &  C \|r(z) \|_{\infty}  \prec  {\frac{|\Im z|}{N}}.
	\end{align}
	Finally, using Lemma \ref{Imomegatc},  we note that $\Im \hat{\omega}_t(z) = \Im \hat{\omega}_t(z) - \Im{\omega}^c_t(z) + \Im {\omega}^c_t(z) \ge \Im z $ for $\Im z \ge \eta_0$ with high probability as $N \to \infty$. It further follows that 
	$\Gamma_{\mu_1,\cdots,\mu_k} (\hat{\omega}_1(z),\cdots,\hat{\omega}_k(z)) \neq 0$ for all $z \in \mathbb{C}^{+}$ with $\Im z \ge \eta_0$. Thus $\hat{\omega}_t(z)$ is analytic on the domain $\{ z \in \mathbb{C}^{+}: \Im z \ge \eta_0 \}$ since $F_{\mu_t}$ is. Finally, using \eqref{Fmut} with $\omega = \hat{\omega}_t(z)$, we see that
	\begin{align} 
		\lim_{ \eta \to \infty}\frac{\Im \hat{\omega}(\mathrm{i}\eta)}{\mathrm{i}\eta} = 1.
	\end{align}
	Thus by the uniqueness claim in Proposition \ref{freeaddprop}, $\hat{\omega}_t(z)$ agrees with $\omega_t(z)$ on the domain $\{ z \in \mathbb{C}^{+}: \Im z \ge \eta_0 \}$. Therefore, \eqref{Gap2} implies that
	\begin{align}\label{Gap3}
		\left|{\omega}^c_{t}(z)-{\omega}_{t}(z)\right| \prec \frac{1}{N},
	\end{align}
	for  all fixed $z \in \mathbb{C}^{+}$ with $\Im z \ge \eta_0$. Therefore, subtracting \eqref{Appsub2} from \eqref{sub3}, we have
	\begin{align}\label{DPer}
		\sum_{t=1}^{k}(\omega_t^c(z) - \omega_t(z)) = -\frac{k-1}{m_N(z)} + \frac{k-1}{m_\boxplus(z)} = (k-1)\frac{m_\boxplus(z) - m_N(z)}{m_N(z)m_\boxplus(z)}.
	\end{align}
	As a result,
	\begin{align}\label{initial}
		\left|m_N(z) - m_\boxplus(z) \right| \prec \frac{1}{k-1} \sum_{t=1}^{k}|\omega_t^c(z) - \omega_t(z)| \prec \frac{1}{N} ,
	\end{align}
	for  all fixed $z \in \mathbb{C}^{+}$ with $\Im z \ge \eta_0$. 
	
	Next, taking \eqref{initial} as an input, we can use the continuity argument to obtain the bound for each fixed $z \in \mathbb{C}^{+}$.\\
	
\noindent
$\bullet$ {\it Any fixed $z\in \mathbb{C}^+$.}
For any fixed $z \in \mathbb{C}^{+}$, we have the following stability of the system (\ref{Phi2}) at $(\omega_1(z),\cdots,\omega_k(z))$, which is an extension of Lemma \ref{GammaUp1}.
\begin{lemma}\label{GammaUP2}
		For any fixed $k\in \mathbb{N}$, and  any fixed $z \in \mathbb{C}^{+}$, let $\omega_t(z), t \in [\![k ]\!]$ be the subordination functions. Then there is a strictly positive constant $C$ independent of  $N$  such that
	\begin{align}
		\Gamma_{\mu_1,\cdots,\mu_k}(\omega_1(z),\omega_2(z), \cdots,\omega_k(z)) \le C, \label{8005}
	\end{align}
	and
	\begin{align}
		|\omega_t'(z)| \le C, \quad t \in [\![k ]\!].\label{8006}
	\end{align}
	Here the constant $C$ may depend on $ z$.
\end{lemma}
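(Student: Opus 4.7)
\medskip

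\noindent\textbf{Proof proposal.}
The plan is to mimic the calculation already done for $\omega_t^c$ in Lemma~\ref{GammaUp1}, but now at the true subordination functions. Since $\mathrm{D}\Phi(\omega_1(z),\ldots,\omega_k(z)) = \mathcal{D}(\omega(z)) - \mathds{1}\mathds{1}'$ with $\mathcal{D}(\omega(z)) = \mathrm{diag}((k-1)F_{\mu_t}'(\omega_t(z)))_{t=1}^k$, the Sherman--Morrison identity
\[
(\mathrm{D}\Phi)^{-1} = \mathcal{D}^{-1}(\omega(z)) + \frac{\mathcal{D}^{-1}(\omega(z))\mathds{1}\mathds{1}'\mathcal{D}^{-1}(\omega(z))}{1-\mathds{1}'\mathcal{D}^{-1}(\omega(z))\mathds{1}}
\]
reduces the bound (\ref{8005}) to three scalar estimates at the fixed point $z$: an upper bound on each $|F_{\mu_t}'(\omega_t(z))|$, a strictly positive lower bound on each $|F_{\mu_t}'(\omega_t(z))|$, and a strictly positive lower bound on $|1-(k-1)^{-1}\sum_t 1/F_{\mu_t}'(\omega_t(z))|$.

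The upper bound comes for free from the subordination property $\Im\omega_t(z)\ge \Im z$ combined with the explicit formula $F_{\mu_t}'(\omega) = 1 + y_t(1-y_t)/(1-y_t-\omega)^2$: indeed $|1-y_t-\omega_t(z)|\ge \Im z>0$, so $|F_{\mu_t}'(\omega_t(z))|\le 1+y_t(1-y_t)/(\Im z)^2$. For the lower bounds, the strategy is to exploit the analyticity and uniqueness properties of the subordination functions established in \cite{belinschi2007new}. Concretely, differentiating the identity $m_{\boxplus}(z)=m_{\mu_t}(\omega_t(z))$ yields $\omega_t'(z) F_{\mu_t}'(\omega_t(z)) = m_{\boxplus}'(z)/m_{\boxplus}(z)^2$, a quantity which is \emph{independent of} $t$; while differentiating (\ref{sub3}) yields $(k-1)F_{\mu_t}'(\omega_t(z))\omega_t'(z) = \sum_s\omega_s'(z) - 1$. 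These two relations let me express $\omega_t'(z)$ and the scalar $\delta(z):=1-(k-1)^{-1}\sum_t 1/F_{\mu_t}'(\omega_t(z))$ as ratios of globally analytic quantities, and then argue, by inspecting the asymptotics $F_{\mu_t}'(\omega_t(z))\to 1$ as $|z|\to\infty$ (where the limiting matrix $(k-1)I - \mathds{1}\mathds{1}'$ is manifestly invertible), combined with the uniqueness part of Proposition \ref{freeaddprop}, that the determinant of $\mathrm{D}\Phi$ cannot vanish at a solution of $\Phi=0$ in $\mathbb{C}^+$.

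For (\ref{8006}), once (\ref{8005}) is in hand, the estimate is immediate: differentiating $\Phi(\omega_1(z),\ldots,\omega_k(z),z) = 0$ in $z$ gives $(\mathrm{D}\Phi)\,\omega'(z) = -\mathds{1}$, so $\omega'(z) = -(\mathrm{D}\Phi)^{-1}\mathds{1}$ and hence $\max_t|\omega_t'(z)|\le \Gamma_{\mu_1,\ldots,\mu_k}(\omega_1(z),\ldots,\omega_k(z))\le C$.

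The main obstacle I foresee is the non-vanishing assertion for $F_{\mu_t}'(\omega_t(z))$ and for $\delta(z)$ at arbitrary fixed $z\in\mathbb{C}^+$. Unlike in Lemma \ref{GammaUp1}, here one cannot send $\Im z$ to infinity and use smallness of perturbations; instead one must extract the non-degeneracy purely from the Pick/Nevanlinna nature of $\omega_t$ and $F_{\mu_t}$. The cleanest route I see is to argue by contradiction using the chain-rule identity above: a zero of $F_{\mu_t}'(\omega_t(z))$ at some $z_0$ would force $m_{\boxplus}'(z_0)=0$, and then the pairwise identity forces $\omega_s'(z_0)=0$ for \emph{every} $s$; feeding this back into the differentiated form of (\ref{sub3}) produces $\sum_s\omega_s'(z_0)=1$, a contradiction. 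A similar bootstrapping should rule out $\delta(z)=0$. Making these maneuvers rigorous at every fixed $z\in\mathbb{C}^+$ (without relying on global limits) is where I expect the technical work to concentrate.
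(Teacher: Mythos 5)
Your reduction via Sherman--Morrison is not the paper's route, and its crucial step has a genuine gap. The contradiction argument you sketch does not close: from $F_{\mu_t}'(\omega_t(z_0))=0$ you correctly deduce $m_\boxplus'(z_0)=0$, and then the identity $\omega_s'(z_0)F_{\mu_s}'(\omega_s(z_0))=m_\boxplus'(z_0)/m_\boxplus(z_0)^2=0$ for every $s$; but this gives $\omega_s'(z_0)=0$ only for those $s$ with $F_{\mu_s}'(\omega_s(z_0))\neq 0$. The scenario in which several of the $F_{\mu_s}'(\omega_s(z_0))$ vanish simultaneously while the corresponding $\omega_s'(z_0)$ sum to $1$ is not excluded, so no contradiction with $\sum_s\omega_s'(z_0)=1$ is reached (note $F_{\mu_t}'(\omega)=1+y_t(1-y_t)/(1-y_t-\omega)^2$ genuinely has a zero in $\mathbb{C}^+$, so this is not vacuous). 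Moreover, the other half of what your decomposition needs — a lower bound on the Sherman--Morrison denominator $1-\mathds{1}'\mathcal{D}^{-1}(\omega)\mathds{1}$ — is dismissed with "a similar bootstrapping should rule out $\delta(z)=0$", but $\mathrm{D}\Phi$ is singular exactly when this scalar vanishes even if all $F_{\mu_t}'(\omega_t)\neq 0$, and nothing in your sketch addresses it. Finally, even granting non-vanishing at each fixed $z$, the lemma asks for a constant independent of $N$ while the measures $\mu_t$ (through $y_t=p_t/N$) are $N$-dependent, so one needs an effective lower bound, not merely pointwise non-degeneracy.

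For comparison, the paper avoids all of this by bounding $|\det\mathrm{D}\Phi|$ from below directly, never needing $F_{\mu_t}'(\omega_t(z))\neq 0$ individually. Writing $f_t:=(k-1)(F_{\mu_t}'(\omega_t(z))-1)$, one has the exact identity $|f_t|=(k-1)\frac{\Im F_{\mu_t}(\omega_t(z))-\Im\omega_t(z)}{\Im\omega_t(z)}$, and then \eqref{sub3} converts this into
\begin{align*}
|f_t|=\frac{\sum_{s\neq t}\Im\omega_s(z)-(k-2)\Im\omega_t(z)}{\Im\omega_t(z)}-\frac{\Im z}{\Im\omega_t(z)}<a_t:=\frac{\sum_{s\neq t}\Im\omega_s(z)-(k-2)\Im\omega_t(z)}{\Im\omega_t(z)},
\end{align*}
with strict inequality precisely because $\Im z>0$. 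Expanding $\det\mathrm{D}\Phi$ in the $f_t$'s and replacing $|f_t|$ by $a_t$ yields the comparison determinant $\det\big({\rm diag}(a_t+k-1)_{t=1}^k-\mathds{1}\mathds{1}'\big)$, which vanishes identically because its columns are linearly dependent (with weights $\Im\omega_j(z)$); hence $|\det\mathrm{D}\Phi|>0$, and Cramer's rule gives \eqref{8005}. The bound \eqref{8006} then follows exactly as in your last step, by differentiating $\Phi=0$ and inverting $\mathrm{D}\Phi$. If you want to salvage your route, you would need an independent argument that $\omega_t(z)$ avoids the critical point of $F_{\mu_t}$ and that the denominator $1-\mathds{1}'\mathcal{D}^{-1}\mathds{1}$ stays away from zero uniformly in $N$; the paper's determinant argument is the cleaner path.
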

\begin{proof}[Proof of Lemma \ref{GammaUP2}]
	To prove (\ref{8005}), by Cramer's rule, it suffices to show that
$
		|\det (\mathrm{D}\Phi)| > c
$
	for some constant $c>0$.  By basic algebra, we have 
	\begin{align}
	|\det (\mathrm{D}\Phi)|=&\bigg|\sum_{r=0}^k (r-1)(k-1)^{k-r-1}\sum_{i_1\neq \ldots\neq i_r} f_{i_1}\cdots f_{i_r}\bigg| \notag\\
	\geq & (k-1)^{k-1}-\sum_{r=1}^k (r-1)(k-1)^{k-r-1}\sum_{i_1\neq \ldots\neq i_r} |f_{i_1}|\cdots |f_{i_r}| \label{061550}
	\end{align}
	where $f_t: = \left(k-1\right)\left( F_{\mu_t}'(\omega_t(z)) - 1\right)$. 
	
By \eqref{sub2} and \eqref{sub3}, we have,
\begin{align*}
	\left|f_t\right| = (k-1)\left| \frac{\Im F_{\mu_t}(\omega_t(z)) - \Im \omega_t(z)}{\Im \omega_t(z)} \right| = \frac{\sum_{s\neq t}\Im \omega_s(z) - (k-2)\Im \omega_t(z) }{\Im \omega_t(z)} - \frac{\Im z}{\Im \omega_t(z)}, \quad t \in [\![ k]\!].
\end{align*}
For any fixed $z\in \mathbb{C}^+$, we have by $\Im \omega_t(z) \ge \Im z > 0$,
\begin{align*}
	\left|f_t\right| < \frac{\sum_{s\neq t}\Im \omega_s(z) - (k-2)\Im \omega_t(z) }{\Im \omega_t(z)} =: a_t, \quad t \in [\![ k]\!].
\end{align*} 
This together with (\ref{061550}) implies 
\begin{align*}
	\left|\det \left( \mathrm{D}\Phi
\right) \right| 
> &(k-1)^{k-1}-\sum_{r=1}^k (r-1)(k-1)^{k-r-1}\sum_{t_1\neq \ldots\neq t_r} a_{t_1}\cdots a_{t_r}=\det\Big({\rm diag}(a_t+k-1)_{t=1}^k-\mathds{1}\mathds{1}' \Big)\notag\\
=&\frac{1}{\prod_{t}^k\Im \omega_ t(z)}\det
\begin{bmatrix}
    \sum_{t\neq1}\Im \omega_t(z) \ & -\Im \omega_1(z) &  \dots  & -\Im \omega_1(z) \\
    -\Im \omega_2(z) & \sum_{t\neq 2}\Im \omega_t(z) &  \dots  & -\Im \omega_2(z) \\
    \vdots & \vdots & \ddots & \vdots  \\
    -\Im \omega_k(z) & -\Im \omega_k(z) &  \dots  & \sum_{t\neq k}\Im \omega_t(z)
\end{bmatrix}= 0,
\end{align*}
where the last step follows from the fact that the matrix has linearly dependent columns. Since here our $z$ and $k$ are fixed, it is easy to show the positiveness of $|\det({\rm D}\Phi)|$ is effective, i.e., $|\det({\rm D}\Phi)|>c$ for some positive constant $c$ which may depend on $k$ and $z$. This completes the proof of (\ref{8005}).

The estimates in (\ref{8006}) follow by differentiating the equation (\ref{Phi2}) with respect to $z$, we get
\begin{align}
	\mathrm{D}\Phi \cdot \omega'(z) = \mathds{1}, \label{80007}
\end{align}
where $\omega'(z) := (\omega_1'(z),\cdots,\omega_k'(z))$. Together with (\ref{8005}), we get (\ref{8006}) by inverting (\ref{80007}).

\end{proof}

To prove \eqref{traceGGap} for each fixed $z \in \mathbb{C}^{+}$, we start from $z_0=\Re z+\mathrm{i} \eta_0$ with sufficiently large $\eta_0>\Im z$, and decrease the imaginary part step by step with a step size $N^{-2}$, so that $z_0$ goes to $z$ eventually after $O(N^2)$ steps. We aim to show that the bound \eqref{traceGGap} remains hold after each step, by using the continuity of the subordination functions and the Stieltjes transforms. In the sequel we show the details of this continuity argument for the first step. The remaining steps are the same.  Let  $z_1=\Re z+\mathrm{i}(\eta_0 - N^{-2})$.

In the sequel, till Section \ref{s.case 2}, all the omitted $z$-variables are $z_1$.  Notice that for the perturbed system
	\begin{align*}
		(k-1)F_{\mu_t}(\omega_t^c(z_1)) - \omega_1^c(z_1)  - \cdots -\omega_k^c(z_1) + z_1 = r_t(z_1).
	\end{align*}
	Let $\Omega_t:= \omega_t^c(z_1) - \omega_t(z_1)$, performing Taylor expansion for $F_{\mu_t}(\omega_t^c(z_1))$ around $\omega_t(z_1)$ we get
	\begin{align*}
		(k-1)F'_{\mu_t}(\omega_t(z_1))\Omega_t - \Omega_1 - \Omega_2 - \cdots -\Omega_k = r_t(z_1) - (k-1)\sum_{n\ge 2}\frac{1}{n!}F_{\mu_t}^{(n)}(\omega_t(z_1))\Omega_t^n.
	\end{align*}
	Then with (\ref{DPhi}) we have
	\begin{align*}
		\Omega = (\mathrm{D}\Phi)^{-1} \cdot r(z_1) -\sum_{n\ge 2}\frac{k-1}{n!} (\mathrm{D}\Phi)^{-1}\cdot \Omega_F^n
	\end{align*}
	where $\Omega:=(\Omega_1,\ldots, \Omega_k)$ and $\Omega_F^n := \left( F_{\mu_1}^{(n)}(\omega_1(z_1))\Omega_1^n, \cdots,  F_{\mu_k}^{(n)}(\omega_k(z_1))\Omega_k^n\right)^{\mathrm{T}}$.  Taking $\ell_{\infty}$ norm  on both sides, and using Lemma \ref{GammaUP2} and \eqref{Fnmut}, we have
	\begin{align}
		\|\Omega \|_{\infty} \le C\|r(z_1) \|_{\infty} + C\sum_{n \ge 2} \max_t\frac{\|\Omega \|_{\infty}^n}{|1 - y_t - \omega_t(z_1)|^n}. \label{090909}
	\end{align}
	Since $\Im \omega_t(z) \ge \Im z > 0$, we have $|1 - y_t - \omega_t(z_1)| \ge |\Im \omega_t(z_1)| > 0$. Also, by the definition of $\omega_t^c(z)$ (c.f., (\ref{Appsub1})), we have
	\begin{align*}
		|(\omega_t^c(z))| = \left|\frac{\tr G(z)\tr P_tG^2(z) - (\tr P_tG(z) -1) \tr G^2(z)}{(\tr G(z))^2}\right| \le C
	\end{align*}
	with high probability. Here we used the fact that $|\tr G(z)| \sim 1$ when $\Im z > 0$ and $\|H\|$ is bounded with high probability (c.f., Lemma \ref{Gbound}).
	Therefore, together with (\ref{Gap3}) at $z_0$ and the continuity of $\omega_t$ (c.f. (\ref{8006})), we have for $z_1$ , 
	\begin{align}
		|\omega_t^c(z_1) - \omega_t(z_1)| \prec \frac{1}{N} + \frac{1}{N^2}.  \label{bound by continuity}
	\end{align}
	More precisely, by the definition of the stochastic domination, (\ref{bound by continuity}) implies that  for any large $D > 0$ and small $\epsilon > 0$, there exists an high probability event $\mathcal{E}_1 \equiv \mathcal{E}(z_1,\epsilon,D)$, satisfying $\mathbb{P}(\mathcal{E}_1^c) < N^{-D}$, such that on the event $\mathcal{E}_1$, we have 
	\begin{align*}
		|\omega_t^c(z_1) - \omega_t(z_1)| \le N^{\epsilon}(N^{-1} + N^{-2}) \le 2N^{-1+\epsilon},
	\end{align*}
	for any small $\epsilon > 0$. Together with the fact that $\Im \omega_t(z_1) > 0$, (\ref{090909}) can be rewritten as
	\begin{align}
				\|\Omega \|_{\infty} \le C\|r(z_1) \|_{\infty} + C\|\Omega \|_{\infty}^2 \quad \text{on}\quad \mathcal{E}_1. \label{090908}
	\end{align}
	As $N \to \infty$, we can absorb the quadratic term into the left hand side, which gives
	\begin{align*}
		\|\Omega \|_{\infty} \le C\|r(z_1) \|_{\infty} \quad \text{on}\quad \mathcal{E}_1.
	\end{align*}
	Recall the definition of $r(z)$ in (\ref{def of rt}), we have there exists an high probability event $\mathcal{E}_2$, such that
	\begin{align}
		\|r(z_1) \|_{\infty} \le N^{\epsilon}\left(\frac{|F^2_{\mu_t}(\omega_t^c(z_1))|}{|\theta_t|} {\frac{1}{N}} \right) \label{090907}
	\end{align}
	Further notice that, on the event $\mathcal{E}_1$, we have
	\begin{align}
		\frac{|\theta_t(z_1)|}{|F^2_{\mu_t}(\omega_t^c(z_1))|} = |m_N(z_1)|\left| \frac{(y_t-1+\omega_t^c(z_1))^2}{\omega_t^c(z_1)(1-\omega_t^c(z_1))}\right| = |m_{\boxplus}(z_1)|\left| \frac{(y_t-1+\omega_t(z_1))^2}{\omega_t(z_1)(1-\omega_t(z_1))}\right| + O\left( \frac{1}{N^{1-\epsilon}}\right). \label{090906}
	\end{align}
	Together with the facts that
$
		|y_t-1+\omega_t(z_1)| \ge \Im \omega_t(z_1) >0$ with $ |m_{\boxplus}(z_1)| \ge \Im m_{\boxplus}(z_1) > 0$, 
	we know 
$
		|\theta_t(z_1)|/|F^2_{\mu_t}(\omega_t^c(z_1))|> c > 0 ,
$
	on the event $\mathcal{E}_1$.
	
	Therefore, combining (\ref{090908}), (\ref{090907}) and (\ref{090906}), we have on the event $\mathcal{E}_1\cap\mathcal{E}_2$,
	\begin{align}
		\|\Omega \|_{\infty} \le CN^{1-\epsilon}.  \label{090900}
	\end{align}
	 Repeating the above procedure for $O(N^2)$ steps, we will generate a series of events, i.e., $\mathcal{E}_1,\cdots,\mathcal{E}_{CN^2}$. Therefore, we have on the event $\bigcap_{i=1}^{O(N^2)}\mathcal{E}_i$, (\ref{090900}) still holds. Since all these events are high probability events, we obtain
	\begin{align}
		\|\Omega \|_{\infty} \prec  {\frac{1}{N}}. \label{fine bound of omega}
	\end{align}
	Using \eqref{DPer}, we can again have
	\begin{align}
		|m_N(z_1) - m_{\boxplus}(z_1)| \prec \frac{1}{N}.  \label{071310}
	\end{align}
	
 Now, further by \cite{bercovici1993free}, we know  that $\mu_1\boxplus\cdots\boxplus\mu_k\Rightarrow \mu_1^\infty\boxplus\cdots\boxplus\mu_k^\infty$ when $k$ is fixed. By Stieltjes continuity theorem, this implies $m_{\boxplus}(z)\to m_{\boxplus}^\infty(z)$ for any fixed $z\in \mathbb{C}^+$, where $m_{\boxplus}^\infty(z)$ is the Stieltjes transform of  the limiting measure  $\mu_1^\infty\boxplus\cdots \boxplus \mu_k^\infty$. This together with $|m_N(z) - m_{\boxplus}(z)| \prec \frac{1}{N}$ implies the convergence of $m_N(z)$ to $m_{\boxplus}^\infty(z)$ with high probability. Using the Stieltjes continuity theorem in orther direction, we can conclude that $\mu_N$ converges weakly in probability to $\mu_1^\infty\boxplus\cdots \boxplus\mu_k^\infty$.

\subsection{Proof of case 2}\label{s.case 2} Recall the crude bounds
 in Lemma {\ref{bound for covariance}}. Further, by setting $\mathcal{W}_t = 1$ in Lemma \ref{Lemma error estimates}, we have,
	\begin{align}
	&\sum_{t=1}^k \Big(\tr P_tG- \left(\tr( X_t X_t')^{-1}-\tr Q_t G\right)\left(\tr G-\tr P_tG\right)\Big) =O_{\prec}\left(\frac{1}{{N}}\right), \label{strPG1} \\
			&\sum_{t=1}^k\Big(\frac{1}{1-y_t}\tr P_t- \tr( X_t X_t')^{-1}\Big)=O_{\prec}\left(\frac{1}{{N}}\right), \label{strP1} \\
		&\sum_{t=1}^k \Big(\frac{1}{1-y_t}\tr P_t G P_t-\tr Q_t G\Big)=O_{\prec}\left(\frac{1}{{N}}\right). \label{strPGP1}	
	\end{align}
We note here in (\ref{strPGP1}), $\tr P_tGP_t$ is actually $\tr P_tG$. However, we keep this form since only starting with $\tr P_tGP_t$, and then applying the cumulant expansion we can see an (almost) algebraic cancellation with $\tr Q_tG$.

Applying Lemma {\ref{bound for covariance}} to absorb the $O_{\prec}(p_t/N)$ terms into error term in \eqref{strPG1}, we arrive at 
\begin{align*}
	\sum_{t=1}^k \tr P_tG-\sum_{t=1}^k \left(\tr( X_t X_t')^{-1}-\tr Q_t G\right)\tr G  = O_{\prec}\left( \frac{p_{\max}}{N} \right).
\end{align*}
Plugging \eqref{strP1} and \eqref{strPGP1} into above equation with the trivial fact that $\tr P_tGP_t = \tr P_tG$, we have
\begin{align*}
	\sum_{t=1}^k \tr P_tG-\sum_{t=1}^k \left(\frac{y_t}{1-y_t} -\frac{1}{1-y_t}\tr P_tG\right)\tr G  = O_{\prec}\left( \frac{p_{\max}}{N} \right).
\end{align*}
In light of the smallness of $y_t$'s in case 2 of Theorem \ref{1stLimit}, we can also rewrite the above as
\begin{align*}
	\sum_{t=1}^k \tr P_tG-\sum_{t=1}^k \left(y_t -\tr P_tG\right)\tr G  = O_{\prec}\left( \frac{p_{\max}}{N} \right).
\end{align*}
Further using the trivial identity
$
	\sum_{t=1}^k \tr P_tG = \tr HG = 1 + z\tr G,
$
we arrive at
\begin{align*}
	1 + (z - y +1)m_N + zm_N^2 = O_{\prec}\left( \frac{p_{\max}}{N} \right).
\end{align*}
By the stability of the quadratic equations, we can conclude that
\begin{align*}
	|m_N(z) - m_{y}(z)| \prec  \frac{p_{\max}}{N}.
\end{align*}
Here $m_{y}(z)$ is the Stieljes transform of the \textit{Machenko-Pastur law} $\mu_{mp,y}$ (c.f. Theorem \ref{1stLimit}) which satisfying the following quadratic equation
\begin{align}
	1 + (z - y +1)m_{y}(z) + zm^2_{y}(z) = 0, \quad z \in \mathbb{C}^{+}. \label{071000}
\end{align}
This immediately implies the weak convergence (in probability) of  $\mu_N$ to $\mu_{mp,\hat{y}}$,  in light of the well known fact that $\mu_{mp,y}$ converges to $\mu_{mp,\hat{y}}$ weakly.

\section{Proofs of Corollaries \ref{coroschott} and \ref{corowilk}}\label{Sec Proof of 1.14}
\subsection{Simplification of variance and expectation: Proofs of (\ref{06221214}) and (\ref{0621101})}\label{SimplifyV}
We first  show the derivation of (\ref{FreeCLTfactor}). Recall Proposition \ref{freeaddprop} in the main text, we have
\begin{align*}
	m_{\boxplus}(z) = m_{\mu_t}(\omega_t(z)),\quad  t\in [\![k]\!], \quad \omega_1(z) + \omega_2(z) +\cdots +\omega_k(z) = z - \frac{k-1}{m_\boxplus(z)}.
\end{align*}
For brevity, we omit $z$ from the notation in the sequel. Taking derivative w.r.t $z$, we have
\begin{align}
	&m_{\boxplus}'=\frac{y_t\omega_t'}{(1-\omega_t)^2} + \frac{(1-y_t)\omega_t'}{\omega_t^2},\quad  t\in [\![k]\!],\label{06221220} \\
	&\omega_1' + \omega_2' + \cdots + \omega_k' = 1 + \frac{(k-1)m_{\boxplus}'}{m_{\boxplus}^2}. \label{06221221}
\end{align}
 Solving $\omega_t'$ in (\ref{06221220}) and then plugging the results into (\ref{06221221}), we have
 \begin{align}
 	m_{\boxplus}' \left(\sum_{t=1}^{k}\frac{\omega_t^2(1-\omega_t)^2}{y_t\omega_t^2 + (1-y_t)(1-\omega_t)^2}  \right) = 1 + \frac{(k-1)m_{\boxplus}'}{m_{\boxplus}^2}. \label{06221222}
 \end{align}
 Notice that
 \begin{align*}
 	y_t\omega_t^2 + (1-y_t)(1-\omega_t)^2 =& \omega_t^2+(1-y_t)(1-2\omega_t) = \omega_t^2 + \omega_t(1-m_{\boxplus}(1-\omega_t))(1-2\omega_t)\\
 	=& \omega_t(1-\omega_t)(1 - m_\boxplus +2m_\boxplus\omega_t ),
 \end{align*}
 then (\ref{06221222}) becomes
 \begin{align}
 	m_{\boxplus}' \left(-\mathfrak{u}+\mathfrak{n} \right) = 1 + \frac{(k-1)m_{\boxplus}'}{m_{\boxplus}^2}, \label{06221223}
 \end{align}
where we denote
\begin{align*}
	\mathfrak{u} := \sum_{t=1}^k\frac{\omega_t}{m_\boxplus -1- 2m_\boxplus\omega_t}, \quad \mathfrak{n}:= \sum_{t=1}^k\frac{\omega_t^2}{m_\boxplus -1- 2m_\boxplus\omega_t}.
\end{align*}

By direct calculation, we have
\begin{align*}
	\mathfrak{u} =\sum_{t=1}^k\frac{\omega_t}{m_\boxplus} - \sum_{t=1}^k\left(\frac{\omega_t}{m_\boxplus} - \frac{\omega_t}{m_\boxplus -1-2\omega_tm_\boxplus}  \right) = \sum_{t=1}^k\frac{\omega_t}{m_\boxplus} + \frac{1}{m_\boxplus}\mathfrak{u} + 2\mathfrak{n},
\end{align*}
which gives
\begin{align*}
	\left(1-\frac{1}{m_\boxplus} \right)\mathfrak{u} = \sum_{t=1}^k\frac{\omega_t}{m_\boxplus} + 2\mathfrak{n} = \frac{zm_{\boxplus}-(k-1)}{m_\boxplus^2} + 2\mathfrak{n}.
\end{align*}
Combining the above equation with (\ref{06221223}), we have
\begin{align*}
	-\frac{1}{m_\boxplus'} -\frac{k-1}{m_\boxplus^2} =\mathfrak{u} - \mathfrak{n} = \frac{1}{m_\boxplus}\mathfrak{u} + \mathfrak{n} + \frac{zm_{\boxplus}-(k-1)}{m_\boxplus^2} + 2\mathfrak{n}.
\end{align*}
Rearranging the terms, we arrive at
\begin{align*}
	\frac{1}{m_\boxplus}\mathfrak{u} + \mathfrak{n} + \frac{1}{m_\boxplus}z = -\frac{1}{m_\boxplus'}.
\end{align*}
This implies
\begin{align*}
	\left( z - \sum_{t=1}^{k}\frac{y_t - 1 -\omega_tm_{\boxplus} }{m_{\boxplus}-1-2\omega_tm_{\boxplus}} \right)^{-1} =  \left( z + \sum_{t=1}^{k}\frac{\omega_t +m_\boxplus\omega_t^2 }{m_{\boxplus}-1-2\omega_tm_{\boxplus}} \right)^{-1} =\left(z + \mathfrak{u} + m_\boxplus\mathfrak{n} \right)^{-1} = \frac{-m_\boxplus'}{m_\boxplus}.
\end{align*}

Next we show the proof of (83). For brevity, we denote $m_{\boxplus i}:=m_{\boxplus}(z_i), i = 1,2$, and $\omega_{ti}:=\omega_t(z_i), i = 1,2, t \in [\![ k ]\!]$ in the following derivation. Define
\begin{align*}
	V_t(z_1,z_2):= -\frac{m'_{\boxplus1}}{m_{\boxplus1}} \frac{1 + \frac{(\omega_{t1}m_{\boxplus1}-\omega_{t2}m_{\boxplus2})^2}{(z_1-z_2)(m_{\boxplus1} - m_{\boxplus2})} + \frac{\omega_{t1} - \omega_{t2}}{\frac{1}{m_{\boxplus1}}-\frac{1}{m_{\boxplus2}}} }{m_{\boxplus1}-1-2\omega_{t1}m_{\boxplus1}}
\end{align*}
From (\ref{06221221}), we have
\begin{align}
	\frac{m_{\boxplus1}'}{m_{\boxplus1}(m_{\boxplus1} - 1 - 2\omega_{t1}m_{\boxplus1})} = -\frac{\omega_{t1}'}{m_{\boxplus1}\omega_{t1}(1-\omega_{t1})}. \label{0621110}
\end{align}
Using (\ref{0621110}), we obtain
\begin{align}
	V_t(z_1,z_2) =& \frac{m'_{\boxplus1}}{m_{\boxplus1}}\frac{(\omega_{t1} - \omega_{t2})m_{\boxplus1}m_{\boxplus2}}{(m_{\boxplus1}-1-2\omega_{t1}m_{\boxplus1})(m_{\boxplus1}-m_{\boxplus2})} -\frac{m'_{\boxplus1}}{m_{\boxplus1}(m_{\boxplus1}-1-2\omega_{t1}m_{\boxplus1})} \notag\\
	&+ \frac{\omega_{t1}'}{m_{\boxplus1}}\frac{(\omega_{t1}m_{\boxplus1}-\omega_{t2}m_{\boxplus2})(\omega_{t1}m_{\boxplus1}-\omega_{t2}m_{\boxplus2}-m_{\boxplus1}+m_{\boxplus2})}{\omega_{t1}(1-\omega_{t1})(z_1-z_2)(m_{\boxplus1}-m_{\boxplus2})}\label{06221255} .
\end{align}
By some elementary algebra with (\ref{0621110}) and the fact $\omega_{t1} - m_{\boxplus1}(\omega_{t1}-\omega_{t1}^2) = \omega_{t2} - m_{\boxplus2}(\omega_{t2}-\omega_{t2}^2) = 1-y_t$, we can rewrite (\ref{06221255}) as 
\begin{align*}
	V_t(z_1,z_2) =& \frac{m'_{\boxplus1}}{m_{\boxplus1}} \frac{m_{\boxplus2}(\omega_{t1} - \omega_{t2})}{(z_1-z_2)(m_{\boxplus1}-m_{\boxplus2})} - \frac{m_{\boxplus2}}{m'_{\boxplus1}m_{\boxplus1}(m_{\boxplus1}-m_{\boxplus2})} + \frac{\omega'_{t1}}{\omega_{t1} - \omega_{t2}} - \frac{\omega'_{t1}}{z_1-z_2}.
\end{align*}
Summing over $t$ and then taking derivative w.r.t $z_2$, we obtain (83).

Next, we consider the simplification of the expectation. Recall from the main text the expression of $E(z)$,
\begin{align}
	E(z) = \frac{m_{\boxplus}'(z)}{m_{\boxplus}(z)}\sum_{t=1}^k \frac{\E \Big[\tr \bbP_t\bbG\bbP_t\bbG \Big] - \E\Big[\tr \bbP_t\bbG^2  \Big] }{m_{\boxplus}(z)-1-2\omega_t(z)m_{\boxplus}(z)} + O_{\prec}\left(\frac{1}{N^{\frac{1}{6}}} \right). \label{0621112}
\end{align}
For brevity, we omit $z$ from the notations in the sequel. 

Further, by Proposition 6.1, we have 
\begin{align}
	&\E \Big[\tr \bbP_t\bbG\bbP_t\bbG \Big] = \frac{(1+\omega_tm_{\boxplus})(m_{\boxplus}-1-\omega_tm_{\boxplus}) - (m_{\boxplus}\omega_t^2+\omega_t)(\omega_tm_{\boxplus})'}{m_{\boxplus}-1-2m_{\boxplus}\omega_t} + O_{\prec}\left( {\sqrt{\frac{p_t}{N^3}} }\right),\notag\\
	&\E\Big[\tr \bbP_t\bbG^2  \Big] = (\omega_tm_{\boxplus})' + O_{\prec}\left( {\sqrt{\frac{p_t}{N^3}} }\right). \label{0621111}
\end{align}

Therefore, substituting (\ref{0621110}) with $m_{\boxplus1}$  replaced by  $m_{\boxplus}$ and (\ref{0621111}) in (\ref{0621112}), we have  for any fixed $t \in [\![k]\!]$,
\begin{align*}
	E(z)
	=& \sum_{t=1}^k\frac{\omega_t'}{m_{\boxplus}} \frac{1}{\omega_t(1-\omega_t)}\bigg[ \frac{(1+\omega_tm_{\boxplus})(m_{\boxplus}-1-\omega_tm_{\boxplus}) - (m_{\boxplus}\omega_t^2+\omega_t)(\omega_tm_{\boxplus})'}{m_{\boxplus}-1-2m_{\boxplus}\omega_t} -  (\omega_tm_{\boxplus})' \bigg]+ O_{\prec}\left(\frac{1}{N^{\frac{1}{6}}} \right)\\
	=&\sum_{t=1}^k\frac{\omega_t'}{m_{\boxplus}} \frac{1}{\omega_t(1-\omega_t)}\bigg[1 - \omega_t'  - \frac{\omega_t'}{m_{\boxplus}}\left( m_{\boxplus}^2 + m_{\boxplus}\frac{(\omega_t-1)(\omega_t'm_{\boxplus} + m_{\boxplus}'\omega_t) + \omega_t'}{\omega_t} \right)  \bigg]+ O_{\prec}\left(\frac{1}{N^{\frac{1}{6}}} \right)
\end{align*}
Further using
\begin{align*}
	\frac{(\omega_t-1)(\omega_t'm_{\boxplus} + m_{\boxplus}'\omega_t) + \omega_t'}{\omega_t} = -m_{\boxplus}\omega_t',
\end{align*}
we have
\begin{align*}
	E(z)
	=&\sum_{t=1}^k\frac{\omega_t'}{m_{\boxplus}} \frac{1}{\omega_t(1-\omega_t)}\bigg[1 - \omega_t'  - \frac{\omega_t'}{m_{\boxplus}}\left( m_{\boxplus}^2 - m_{\boxplus}^2\omega_t' \right)  \bigg]+ O_{\prec}\left(\frac{1}{N^{\frac{1}{6}}} \right) \\
	=&\sum_{t=1}^k\frac{\omega_t'}{m_{\boxplus}} \frac{1}{\omega_t(1-\omega_t)}\bigg[(1 - \omega_t')(2m_{\boxplus}\omega_t + 1 - m_{\boxplus})\\&\qquad + (1 - \omega_t')(m_{\boxplus}-2m_{\boxplus}\omega_t) - \frac{\omega_t'}{m_{\boxplus}}\left( m_{\boxplus}^2 - m_{\boxplus}^2\omega_t' \right)  \bigg]+O_{\prec}\left(\frac{1}{N^{\frac{1}{6}}} \right)\\
	=&\sum_{t=1}^k \frac{m_{\boxplus}'}{m_{\boxplus}}(1-\omega_t') + \frac{\omega_t'}{m_{\boxplus}} \frac{(1-\omega_t')m_{\boxplus}}{m_{\boxplus}'\omega_t(1-\omega_t)}\bigg[ m_{\boxplus}' - 2\omega_tm_{\boxplus}'-\omega_t'm_{\boxplus} \bigg]+ O_{\prec}\left(\frac{1}{N^{\frac{1}{6}}} \right)\\
	=&\sum_{t=1}^k\frac{m_{\boxplus}'}{m_{\boxplus}}(1-\omega_t') + \frac{\omega_t'}{m_{\boxplus}} \frac{(1-\omega_t')m_{\boxplus}}{2m_{\boxplus}'}\left( \frac{m_{\boxplus}-1-2m_{\boxplus}\omega_t}{\omega_t(1-\omega_t)} \right)'+ O_{\prec}\left(\frac{1}{N^{\frac{1}{6}}} \right)\\
	=&\sum_{t=1}^k\frac{m_{\boxplus}'}{m_{\boxplus}}(1-\omega_t') - \frac{\omega_t'}{m_{\boxplus}} \frac{(1-\omega_t')m_{\boxplus}}{2m_{\boxplus}'}\left(\frac{m_{\boxplus}'}{\omega_t'} \right)'+ O_{\prec}\left(\frac{1}{N^{\frac{1}{6}}} \right)\\
	=&\sum_{t=1}^k(1-\omega_t')\left( \frac{m_{\boxplus}'}{m_{\boxplus}} - \frac{m''_{\mu}}{2m'_{\mu}} \right) + \frac{\omega''_t}{2\omega'_t} - \frac{\omega''_t}{2}+O_{\prec}\left(\frac{1}{N^{\frac{1}{6}}} \right)\\
	=& \frac{1}{2}\bigg[ \sum_{t=1}^k\frac{\omega''_t}{\omega_t'}+(k-1)\left( \frac{2m'_{\boxplus}}{m_{\boxplus}}-\frac{m_{\boxplus}''}{m_{\boxplus}'} \right) \bigg]+O_{\prec}\left(\frac{1}{N^{\frac{1}{6}}} \right).
\end{align*}
where in the third step we used (\ref{0621110}). The completes the proof of (90).

\subsection{Proof of Corollary \ref{coroschott}}
\subsubsection{Expectation for Schott's statistics: $f(x) = x^2$}
Notice that we can first rewrite
\begin{align}
	a_{x^2} =& -\frac{1}{4\pi i}\oint_{\gamma} z^2 \frac{\partial \log\left(\frac{\prod_{t=1}^k\omega_t'(z)}{[(-1/m_{\boxplus})']^{(k-1)}}\right)}{\partial z} {\rm d}z \notag \\
	=&-\frac{1}{4\pi i}\oint_\gamma z^2 \sum_{t=1}^k \frac{\partial \log \left( \frac{\omega_t'm_{\boxplus}^2}{m_{\boxplus}'} \right)}{\partial z} {\rm d}z-\frac{1}{4\pi i}\oint_\gamma z^2 \sum_{t=1}^k \frac{\partial \log \left( \frac{m_{\boxplus}'}{m_{\boxplus}^2} \right)}{\partial z} {\rm d}z. \label{062203}
\end{align}

By (\ref{sub1}) and (\ref{F2}), we can get
\begin{align}\label{Eomega}
	\omega_t = \frac{m_{\boxplus}-1 - \sqrt{(m_{\boxplus}+1)^2-4y_tm_{\boxplus}}}{2m_{\boxplus}},
\end{align}
where the square root takes the branch that $\Im \sqrt{w}>0$ when $\Im w>0$. 
Here $\omega_t$ and $m_{\boxplus}$ are functions of $z$. Taking derivative with respect to $z$, we have,
\begin{align}\label{Eomegap}
	\frac{\omega_t'}{m_{\boxplus}'} = \frac{1}{2m_{\boxplus}^2} + \frac{m_{\boxplus}+1-2y_tm_{\boxplus}}{2m_{\boxplus}^2\sqrt{(1+m_{\boxplus})^2-4y_tm_{\boxplus}}}
\end{align}
Summing over index $t$ for \eqref{Eomega} and using (\ref{sub3}), we have
\begin{align}\label{zofm}
	z = -\frac{1}{m_{\boxplus}} + \sum_{t=1}^k \frac{m_{\boxplus}+1 - \sqrt{(m_{\boxplus}+1)^2-4y_tm_{\boxplus}}}{2m_{\boxplus}}.
\end{align} 
Taking derivative with respect to $z$, we have,
\begin{align}\label{Emp}
	\frac{m'_{\boxplus}}{m_{\boxplus}^2} = \frac{1}{1 - \sum_{t=1}^k\left(\frac{1}{2} - \frac{m_{\boxplus}+1-2y_tm_{\boxplus}}{2\sqrt{(1+m_{\boxplus})^2-4y_tm_{\boxplus}}} \right)}.
\end{align}
Therefore, substituting \eqref{Eomegap} and \eqref{Emp} into (\ref{062203}), we have
\begin{align}
	a_{x^2} =& -\frac{1}{4\pi i}\oint_\gamma z^2 \sum_{t=1}^k \frac{\partial \log \left( \frac{1}{2} + \frac{m_{\boxplus}+1-2y_tm_{\boxplus}}{2\sqrt{(1+m_{\boxplus})^2-4y_tm_{\boxplus}}} \right)}{\partial z} {\rm d}z \\&+\frac{1}{4\pi i}\oint_\gamma z^2 \sum_{t=1}^k \frac{\partial \log \left( 1 - \sum_{t=1}^k\left(\frac{1}{2} - \frac{m_{\boxplus}+1-2y_tm_{\boxplus}}{2\sqrt{(1+m_{\boxplus})^2-4y_tm_{\boxplus}}} \right) \right)}{\partial z} {\rm d}z.\label{063001}
\end{align}
By choosing $\gamma$ so that $|z|$ is sufficiently large on $\gamma$, we have that $m_{\boxplus}$ is sufficiently small. Then we see that the log functions are analytic on $\gamma$. Further, we introduce the shorthand notations
\begin{align}
	g_t(m_\boxplus):=\frac{m_{\boxplus}+1-2y_tm_{\boxplus}}{\sqrt{(1+m_{\boxplus})^2-4y_tm_{\boxplus}}} ,\qquad \ell_t(m_\boxplus):=   \sqrt{(m_{\boxplus}+1)^2-4y_tm_{\boxplus}}. \label{062236}
\end{align}
Then, for (\ref{063001}), performing integration by parts, and using (\ref{zofm}), (\ref{Emp}) and ${\rm d}z = m_{\boxplus}'dm_{\boxplus}$, we have
\begin{align}
	a_{x^2} =& -\frac{1}{2\pi i} \sum_{t=1}^k\oint_{\gamma_m} z\log \left( \frac{1 + g_t(m_\boxplus)}{2}  \right) \frac{1}{m_{\boxplus}'}{\rm d}m_{\boxplus} \notag\\
	&+\frac{1}{2\pi i} \oint_{\gamma_m} z\log \left( 1 - \sum_{t=1}^k\frac{1- g_t(m_\boxplus)}{2} \right)\frac{1}{m_{\boxplus}'}{\rm d}m_{\boxplus} \notag\\
	=&-\frac{1}{2\pi i} \sum_{t=1}^k\oint_{\gamma_m}  \left( -\frac{1}{m_{\boxplus}^3} + \sum_{s=1}^k \frac{m_\boxplus+1-\ell_s(m_\boxplus)}{2m_{\boxplus}^3} \right) \log \left( \frac{1 + g_t(m_\boxplus)}{2} \right) \left(1 - \sum_{s=1}^k\frac{1 - g_t(m_\boxplus) }{2}  \right){\rm d}m_{\boxplus} \notag\\
	&+\frac{1}{2\pi i} \oint_{\gamma_m} \left( -\frac{1}{m_{\boxplus}^3} + \sum_{s=1}^k \frac{m_\boxplus+1-\ell_s(m_\boxplus)}{2m_{\boxplus}^3}\right) \log \left( 1 - \sum_{t=1}^k\frac{1- g_t(m_\boxplus)}{2} \right)\left(1 - \sum_{s=1}^k\frac{1 - g_s(m_\boxplus) }{2}  \right){\rm d}m_{\boxplus} \notag\\
	=&: I_1 + \sum_{s=1}^k I_{2,s} + \sum_{s=1}^k I_{3,s} +\sum_{i=1}^k\sum_{j=1}^kI_{4,ij} + J_1 + \sum_{s=1}^k J_{2,s} + \sum_{s=1}^k J_{3,s} +\sum_{i=1}^k\sum_{j=1}^kJ_{4,ij}, \label{062210}
\end{align}
where $\gamma_m$ is the contour of $m$ after change of variable, and 
\begin{align*}
	&I_1 = \frac{1}{2\pi i}  \sum_{t=1}^k\oint_{\gamma_m} \frac{1}{m_{\boxplus}^3}\log \left( \frac{1+ g_t(m_\boxplus)}{2}  \right)\left(1-\frac{k}{2} \right)dm_{\boxplus},\\
	&I_{2,s} = -\frac{1}{2\pi i} \sum_{t=1}^k \oint_{\gamma_m}  \frac{m_\boxplus+1-\ell_s(m_\boxplus)}{2m_{\boxplus}^3} \log \left( \frac{1+ g_t(m_\boxplus)}{2} \right)\left(1-\frac{k}{2} \right){\rm d}m_{\boxplus},\\
	&I_{3,s} = \frac{1}{2\pi i}  \sum_{t=1}^k\oint_{\gamma_m} \frac{g_s(m_\boxplus)}{m_{\boxplus}^3}\log \left( \frac{1+ g_t(m_\boxplus)}{2} \right){\rm d}m_{\boxplus},\\
	&I_{4,ij} = -\frac{1}{2\pi i}  \sum_{t=1}^k\oint_{\gamma_m}\frac{(m_\boxplus+1-\ell_i(m_\boxplus))g_j(m_\boxplus)}{2m_{\boxplus}^3}\left( \frac{1+ g_t(m_\boxplus)}{2} \right){\rm d}m_{\boxplus},\\
	&J_1 = -\frac{1}{2\pi i}  \oint_{\gamma_m} \frac{1}{m_{\boxplus}^3}\log \left( 1 - \sum_{t=1}^k\frac{1- g_t(m_\boxplus)}{2} \right)\left(1-\frac{k}{2} \right){\rm d}m_{\boxplus},\\
	&J_{2,s} = \frac{1}{2\pi i} \sum_{t=1}^k \oint_{\gamma_m} \frac{m_\boxplus+1-\ell_s(m_\boxplus)}{2m_{\boxplus}^3}  \log \left( 1 - \sum_{t=1}^k\frac{1- g_t(m_\boxplus)}{2} \right)\left(1-\frac{k}{2} \right){\rm d}m_{\boxplus},\\
	&J_{3,s} = -\frac{1}{2\pi i}  \sum_{t=1}^k\oint_{\gamma_m} \frac{g_s(m_\boxplus)}{m_{\boxplus}^3}\log \left( 1 - \sum_{t=1}^k\frac{1- g_t(m_\boxplus)}{2} \right){\rm d}m_{\boxplus},\\
	&J_{4,ij} = \frac{1}{2\pi i}  \sum_{t=1}^k\oint_{\gamma_m}   \frac{(m_\boxplus+1-\ell_i(m_\boxplus))g_j(m_\boxplus)}{2m_{\boxplus}^3} \log \left( 1 - \sum_{t=1}^k\frac{1- g_t(m_\boxplus)}{2}  \right){\rm d}m_{\boxplus}.
	\end{align*}
Note that the contour $\gamma_m$ of $m_\boxplus$ will not enclose the pole $0$ more than once when $z$ goes through $\gamma$. This is guaranteed by the fact that $\Im m_\boxplus$ has the same sign as $\Im z$.
When $|z|$ is sufficiently large, $|m_{\boxplus}|$ is sufficiently small, and thus the only pole inside ${\gamma_m}$ is $m_{\boxplus} = 0$. Hence,  by the residue theorem, we have
\begin{align*}
	&I_1 = \sum_{t=1}^{k}y_t(1-y_t),\quad I_{3,s} = \frac{1}{2}\sum_{t=1}^{k}y_t(1-y_t),\\
	&J_1  = -\left(1-\frac{k}{2} \right) \sum_{t=1}^{k}y_t(1-y_t),\quad J_{3,s} = -\frac{1}{2}\sum_{t=1}^{k}y_t(1-y_t),
\end{align*} 
\begin{align*}
	I_{2,s} =I_{4,ij} = J_{2,s} =J_{4,ij} = 0,
\end{align*} 
Plugging the above results into (\ref{062210}) yields
\begin{align}
	a_{x^2} = 0. \label{062231}
\end{align}

According to (\ref{062230}), what remains is the calculation of  $N\int_{\mathbb{R}}x^2d\mu_\boxplus (x)$. For this term,  we can compute it using the relation between the moments and the free cumulants \cite{mingo2017free}. Let $m_k(\mu)$ and $c_k(\mu)$ be the $k$-th moment and $k$-th free cumulant of $\mu$, respectively. Then we have 
\begin{align*}
	c_1(\mu_\boxplus) = m_1(\mu_\boxplus), \quad c_2(\mu_\boxplus) = -m_1^2(\mu_\boxplus) + m_2(\mu_\boxplus).
\end{align*}
Also, we have
\begin{align*}
	c_j(\mu_\boxplus) = c_j(\mu_1) + c_j(\mu_2) + \cdots + c_j(\mu_k).
\end{align*}
Hence, we can obtain
\begin{align*}
	&c_1(\mu_\boxplus) = c_1(\mu_1) + c_1(\mu_2) + \cdots + c_1(\mu_k) = \sum_{t=1}^k \frac{p_t}{N},\\
	&c_2(\mu_\boxplus) = c_2(\mu_1) + c_2(\mu_2) + \cdots + c_2(\mu_k) = \sum_{t=1}^k \frac{p_t}{N} \left(1 - \frac{p_t}{N} \right).
\end{align*}
Therefore,
\begin{align*}
	N\int_{\mathbb{R}}x^2d\mu_\boxplus (x) = N m_2(\mu_\boxplus) = \sum_{i \neq j}^k \frac{p_i p_j}{N} + \sum_{t=1}^k p_t,
\end{align*}
which together with (\ref{062231}) gives
\begin{align*}
	\E \Tr H^2 = \sum_{i \neq j}^k \frac{p_i p_j}{N} + \sum_{t=1}^k p_t.
\end{align*}
\subsubsection{Variance for Schott's statistics: $f(x) = x^2$}
Recall $\sigma_f$ from (\ref{072201}). Denoted by $\omega_{ti} := \omega_t(z_i), i = 1,2, t \in [\![ k]\!]$ and $m_{\boxplus i} := m_{\boxplus}(z_i), i=1,2$, we have
\begin{align*}
\sigma_{x^2}=&-\frac{1}{2\pi^2}\oint_{\gamma_1}\oint_{\gamma_2}  z_1^2z_2^2 \bigg[\sum_{t=1}^{k} \frac{\omega_{t1} - \omega_{t2}}{(\omega_{t1}-\omega_{t2})^2}-\frac{1}{(z_1-z_2)^2}-(k-1)\frac{m'_{\boxplus1}m'_{\boxplus2}}{(m_{\boxplus1}-m_{\boxplus2})^2}\bigg]{\rm d}z_2{\rm d}z_1\\
	=&:\sum_{t=1}^k K_{1,t} + K_{2} + K_3
\end{align*}
We first consider $K_2$, 
\begin{align}
	K_2 =\frac{1}{2\pi^2}\oint_{\gamma_1}\oint_{\gamma_2} \frac{z_1^2z_2^2}{(z_1-z_2)^2}{\rm d}z_2{\rm d}z_1= 0. \label{062240}
\end{align}
For $K_{1,t}$, using \eqref{Eomega} and \eqref{Eomegap}, we have
\begin{align*}
	K_{1,t} =& -\frac{1}{2\pi^2}\oint_{\gamma_{m1}}\oint_{\gamma_{m2}}   z_1^2 z_2^2  \frac{\omega'_{t1}\omega'_{t2}}{m_{\boxplus1}'m_{\boxplus2}'(\omega_{t1}-\omega_{t2})^2}{\rm d}m_{\boxplus2}{\rm d}m_{\boxplus1} \\
	=&  -\frac{1}{2\pi^2}\oint_{\gamma_{m1}}\oint_{\gamma_{m2}}    \frac{z_1^2 z_2^2\left( 1 +g_t(m_{\boxplus1})\right)\left( 1 + g_t(m_{\boxplus2} \right) }{\left(m_{\boxplus1}\left(1+\ell_t(m_{\boxplus2})\right) -m_{\boxplus2}\left(1+\ell_t(m_{\boxplus1})\right) \right)^2}{\rm d}m_{\boxplus2}{\rm d}m_{\boxplus1},  
\end{align*}
where $g_t$ and $\ell_t$ are defined in (\ref{062236}), and $\gamma_{mi}$ is the contour of $m_{\boxplus i}$ after change of variable. For the inner integral,
\begin{align*}
	&\oint_{\gamma_{m2}}   z_1^2\frac{1 + g_t(m_{\boxplus2}) }{\left(m_{\boxplus2}\left(1+\ell_t(m_{\boxplus1})\right) -m_{\boxplus1}\left(1+\ell_t(m_{\boxplus2})\right) \right)^2}{\rm d}m_{\boxplus2} \\
	&=\oint_{\gamma_{m2}}   \frac{ \left( 1 + g_t(m_{\boxplus2})\right) \left( -\frac{1}{m_{\boxplus2}} + \sum_{s=1}^k \frac{m_{\boxplus2}+1-\ell_s(m_{\boxplus2})}{2m_{\boxplus2}} \right)^2 }{\left(m_{\boxplus2}\left(1+\ell_t(m_{\boxplus1})\right) -m_{\boxplus1}\left(1+\ell_t(m_{\boxplus2})\right) \right)^2}{\rm d}m_{\boxplus2} 
	=: \hat{K}_{1,t}(m_{\boxplus1})
\end{align*}
Setting $|z_2|$ sufficiently large, which makes $m_{\boxplus2} = 0$ be the only singular point inside $\gamma_{m2}$, by the residue theorem, we can check via tedious but elementary calculation
\begin{align*}
	\hat{K}_{1,t}(m_{\boxplus1}) =2\pi \mathrm{i} \left( \frac{2y_t-1-2y}{2m_{\boxplus1}^2} + \frac{1 + \ell_t(m_{\boxplus1})}{2m_{\boxplus1}^3} \right).
\end{align*}
Plugging it into $K_{1,t}$, we have 
\begin{align*}
	K_{1,t} =&-\frac{1}{2\pi^2}\oint_{\gamma_{m1}} z_2^2\left( 1 + g_t(m_{\boxplus1}) \right) \hat{K}_{1,t}(m_{\boxplus1}) {\rm d}m_{\boxplus1} \notag\\
	=& \frac{1}{\pi \mathrm{i}} \oint_{\gamma_{m1}} z_2^2  \left( \frac{2y_t-1-2y}{2m_{\boxplus1}^2} + \frac{1 + \ell_t(m_{\boxplus1})}{2m_{\boxplus1}^3} \right)\left( 1 + g_t(m_{\boxplus1}) \right){\rm d}m_{\boxplus1} \\
	=& \frac{1}{\pi \mathrm{i}} \oint_{\gamma_{m1}} \left(-\frac{1}{m_{\boxplus1}} + \sum_{s=1}^k \frac{m_{\boxplus1}+1-\ell_s(m_{\boxplus1})}{2m_{\boxplus1}} \right)^2  \left( \frac{2y_t-1-2y}{2m_{\boxplus1}^2} + \frac{1 + \ell_t(m_{\boxplus1})}{2m_{\boxplus1}^3} \right)\left( 1 + g_t(m_{\boxplus1}) \right){\rm d}m_{\boxplus1} 
\end{align*}
Performing residue calculations again and summing over $t$, we arrive at
\begin{align}
\sum_{t=1}^kK_{1,t} 
=&44\sum_{s=1}^ky_s^2 - 72\sum_{s=1}^ky_s^3 + 36\sum_{s=1}^ky_s^4 - 8y - 48k\sum_{s=1}^ky_s^2+ 80k\sum_{s=1}^ky_s^3 - 40k\sum_{s=1}^ky_s^4 \notag\\& + 8ky + 48\sum_{s=1}^ky_s^2y - 32\sum_{s=1}^ky_s^3y + 4k\left(\sum_{s=1}^ky_s^2\right)^2 + 20ky^2 + 8ky^3+ 8\sum_{s=1}^ky_s^2y^2 \notag\\& - 16y^2 - 8y^3 - 8k\sum_{s=1}^ky_s^2y^2 - 56k\sum_{s=1}^ky_s^2y + 32k\sum_{s=1}^ky_s^3y. \label{062241}
\end{align}

Finally, we calculate $K_3$. First we rewrite 
\begin{align*}
	K_3 = \frac{(k-1)}{2\pi^2}\oint_{\gamma_{m1}}\oint_{\gamma_{m2}}   \frac{z_1^2 z_2^2}{(m_{\boxplus2}-m_{\boxplus1})^2}{\rm d}m_{\boxplus2}{\rm d}m_{\boxplus1}. 
\end{align*}
For the inner integral, we have
\begin{align*}
	\oint_{\gamma_{m2}}   \frac{z_2^2 }{(m_{\boxplus2}-m_{\boxplus1})^2}{\rm d}m_{\boxplus2} =& \oint_{\gamma_{m2}}   \frac{1 }{(m_{\boxplus2}-m_{\boxplus1})^2}\left( -\frac{1}{m_{\boxplus2}} + \sum_{s=1}^k \frac{m_{\boxplus2}+1-\ell_s(m_{\boxplus2})}{2m_{\boxplus2}} \right)^2 {\rm d}m_{\boxplus2}.
\end{align*}

Performing residue calculation, we get
\begin{align*}
	\oint_{\gamma_{m2}}   \frac{z_2^2 }{(m_{\boxplus2}-m_{\boxplus1})^2}{\rm d}m_{\boxplus2} = 2\pi \mathrm{i}\left(\frac{2}{m_{\boxplus1}^3} - \frac{2y}{m_{\boxplus1}^2}\right).
\end{align*}
Hence,
\begin{align*}
	K_3 =& \frac{(1-k)}{\pi i}\oint_{\gamma_{m1}} z_1^2\left(\frac{2}{m_{\boxplus1}^3} - \frac{2y}{m_{\boxplus1}^2}\right){\rm d}m_{\boxplus1} \notag\\
	=& \frac{(1-k)}{\pi i}\oint_{\gamma_{m1}} \left(\frac{2}{m_{\boxplus1}^3} - \frac{2y}{m_{\boxplus1}^2}\right)\left( -\frac{1}{m_{\boxplus1}} + \sum_{s=1}^k \frac{m_{\boxplus1}+1-\ell_s(m_{\boxplus1})}{2m_{\boxplus1}} \right)^2{\rm d}m_{\boxplus1}.
\end{align*}
Again, by residue theorem, we obtain
\begin{align}
	K_3 =&(1 - k)\Big(80\sum_{s=1}^ky_s^3 - 48\sum_{s=1}^ky_s^2 - 40\sum_{s=1}^ky_s^4 + 8y \notag\\&- 48\sum_{s=1}^ky_s^2y + 32\sum_{s=1}^ky_s^3y - 8\sum_{s=1}^ky_s^2y^2 + 4\left(\sum_{s=1}^ky_s^2\right)^2 + 20y^2 + 8y^3\Big). \label{062242}
\end{align}
Combining (\ref{062240}), (\ref{062241}) and (\ref{062242}), we get
\begin{align*}
	\sigma_{x^2} = 4\sum_{i \neq j}y_iy_j(1-y_i)(1-y_j).
\end{align*}
This completes the calculation for Schott's statistics.

\subsection{Proof of Corollary \ref{corowilk}}
\subsubsection{Expectation for Wilks' statistics: $f(x) =\log(x)$}
Recall $a_f$ from (\ref{062201}) and the contour $\gamma^0_1$ and $\gamma_2^0$ defined in (\ref{contour12}). Let $\gamma^0_{mi}$ and $\gamma^0_{wi}$ be the corresponding contour of $m_{\boxplus}$ and $\omega_t$ respectively. We have
\begin{align*}
	 &N\int_{\mathbb{R}} \log (x) d\mu_{\boxplus} + a_{\log(x)}=-\frac{N}{2\pi \mathrm{i}}\oint_{\gamma^0_1} \log (z)m_\boxplus(z) {\rm d}z-\frac{1}{4\pi \mathrm{i}}\oint_{\gamma^0_1} \log(z) {\rm d}\;{ \log\left(\frac{\prod_{t=1}^k\omega_t'}{[(-1/m_{\boxplus})']^{(k-1)}}\right)} \\
	 =&-\frac{N}{2\pi \mathrm{i}}\oint_{\gamma_{m1}^0} \log(z(m_{\boxplus})) \frac{m_{\boxplus}}{m'_{\boxplus}} {\rm d}m_{\boxplus}-\frac{1}{4\pi \mathrm{i}}\oint_{\gamma^0_1}  \log(z(m_\boxplus)) {\rm d}\;{ \log\left(\frac{\prod_{t=1}^k\omega_t'}{[(-1/m_{\boxplus})']^{(k-1)}}\right)}  =: NL_1 + L_2.
\end{align*}
Recall the definition of $g_t$ and $\ell_t$ in (\ref{062236}). Further using (\ref{zofm}) and (\ref{sub3}), we have
\begin{align}
	L_1 =&-\frac{1}{2\pi \mathrm{i}}\oint_{\gamma_{m1}^0} \log \left(1- \sum_{t=1}^{k}\frac{m_{\boxplus}+1-\ell_t(m_{\boxplus})}{2} \right) \frac{m_{\boxplus}}{m'_{\boxplus}} {\rm d}m_{\boxplus} + \frac{1}{2\pi \mathrm{i}}\oint_{\gamma_{m1}^0}\log \left(-m_{\boxplus} \right)m_{\boxplus}{\rm d}z\notag \\
	 =&-\frac{1}{2\pi \mathrm{i}}\oint_{\gamma_{m1}^0}\log \left(1- \sum_{t=1}^{k}\frac{m_\boxplus+1-\ell_t(m_{\boxplus})}{2} \right) \frac{m_{\boxplus}}{m'_{\boxplus}} {\rm d}m_{\boxplus} +\sum_{t=1}^{k} \frac{1}{2\pi \mathrm{i}}\oint_{\gamma^0_{w1}} \log \left(-m_{\boxplus} \right)m_{\boxplus}{\rm d}\omega_t \notag\\
	 &+ \frac{k-1}{2\pi \mathrm{i}}\oint_{\gamma_{m1}^0} \frac{\log \left(-m_{\boxplus} \right)}{m_\boxplus}{\rm d}m_{\boxplus}=:L_{1,1} + \sum_{t=1}^{k}L_{1,2t} + L_{1,3}. \label{082210}
\end{align} 
We first have $L_{1,3} = 0$ since the contour $\gamma_m$ doesn't enclose $0$, and the integrand is analytic inside the contour. Then we consider $L_{1,1}$. Let
\begin{align*}
	h_1(m_{\boxplus}) :=& \log \left(1- \sum_{t=1}^{k}\frac{m+1-\ell_t(m_{\boxplus})}{2} \right) \frac{m_{\boxplus}}{m_{\boxplus}'} =\frac{1}{m_{\boxplus}}\left(1-\sum_{t=1}^{k}\frac{1-g_t(m_{\boxplus})}{2} \right) \log \left(1- \sum_{t=1}^{k}\frac{m_{\boxplus}+1-\ell_t(m_{\boxplus})}{2} \right).
\end{align*}
We have by the residue at infinity,
\begin{align*}
	L_{1,1}=\mathrm{Res}\left( h_1(m_\boxplus),\infty\right)+\mathrm{Res}\left( h_1(m_\boxplus),0\right) = -\mathrm{Res}\left(\frac{1}{m_{\boxplus}^2}h_1\left(\frac{1}{m_{\boxplus}} \right),0 \right)+\mathrm{Res}\left(h_1(m_{\boxplus}),0 \right).
\end{align*}
By direct calculation, we have
\begin{align*}
	\mathrm{Res}\left(\frac{1}{m_{\boxplus}^2}h_1\left(\frac{1}{m_{\boxplus}} \right),0 \right) = (1-y)\log(1-y), \quad \mathrm{Res}\left(h_1(m_{\boxplus}),0 \right) = 0.
\end{align*}
Therefore,
\begin{align}
	L_{1,1} =  -(1-y)\log(1-y). \label{082205}
\end{align}
For $L_{1,2t}$, we have
\begin{align*}
	L_{1,2t} =& \frac{1}{2\pi \mathrm{i}}\oint_{\gamma_w} \log \left(-m_{\boxplus} \right)\frac{y_t}{1-\omega_t}{\rm d}\omega_t - \frac{1}{2\pi \mathrm{i}}\oint_{\gamma_w} \log \left(-m_{\boxplus} \right)\frac{1-y_t}{\omega_t}{\rm d}\omega_t \\
	=& \frac{1}{2\pi \mathrm{i}}\oint_{\gamma_w} \log \left( \frac{1-y_t}{\omega_t}-\frac{y_t}{1-\omega_t} \right)\frac{y_t}{1-\omega_t}{\rm d}\omega_t - \frac{1}{2\pi \mathrm{i}}\oint_{\gamma_w} \log \left(-m_{\boxplus} \right)\frac{1-y_t}{\omega_t}{\rm d}\omega_t
\end{align*}
For the first integral, we can compute it based on the contour of $\omega_t$. By the previous analysis, we know that as $|z| \to 0$, $|\omega_t| \to 0$, and  as $|z| \to \infty$, $|\omega_t| \to \infty$. Therefore, the contour of $\omega_t$ will enclose two branch points of the $\log$ term, $\omega_t = 1$ and $\omega_t = 1-y_t$, but not $0$. Thus
\begin{align*}
	&\frac{1}{2\pi \mathrm{i}}\oint_{\gamma_w} \log \left( \frac{1-y_t}{\omega_t}-\frac{y_t}{1-\omega_t} \right)\frac{y_t}{1-\omega_t}{\rm d}\omega_t\\
	=&\frac{1}{2\pi \mathrm{i}}\oint_{\gamma_w} \log \left( \frac{1-y_t-\omega_t}{1-\omega_t} \right)\frac{y_t}{1-\omega_t}d\omega_t -\frac{1}{2\pi \mathrm{i}}\oint_{\gamma_w} \log \left( \frac{1}{\omega_t} \right)\frac{y_t}{1-\omega_t}{\rm d}\omega_t\\
	=&\frac{1}{2\pi \mathrm{i}}\oint_{\gamma_w} \log \left( \frac{1-y_t-\omega_t}{1-\omega_t} \right)\frac{y_t}{1-\omega_t}{\rm d}\omega_t.
\end{align*}  
Here in last step, the second term in $L_{1,2t}$ equals to 0 since it only has one pole $\omega_t = 1$ inside the contour, and the residue at this pole is 0. 
By the residue at infinity, the remaining term can be calculated as follows. Let 
\begin{align*}
	h_2(\omega_t) = \log \left( \frac{1-y_t-\omega_t}{1-\omega_t} \right)\frac{y_t}{1-\omega_t},
\end{align*} 
then
\begin{align*}
	\frac{1}{2\pi \mathrm{i}}\oint_\gamma 	h_2(\omega_t) {\rm d}\omega_t	= \mathrm{Res}\left( \frac{1}{\omega_t^2}h_2\left(\frac{1}{\omega_t} \right),0 \right) = 0.
\end{align*}

As a result,
\begin{align*}
	L_{1,2t} =&-\frac{1}{2\pi \mathrm{i}}\oint_{\gamma_w} \log \left(-m_{\boxplus} \right)\frac{1-y_t}{\omega_t}{\rm d}\omega_t \\
	=&-\frac{1}{2\pi \mathrm{i}}\oint_{\gamma_w} \log \left( \frac{1-y_t-\omega_t}{1-\omega_t} \right)\frac{1-y_t}{\omega_t}{\rm d}\omega_t - \frac{1}{2\pi \mathrm{i}}\oint_{\gamma_w} \log \left( \frac{1}{\omega_t} \right)\frac{1-y_t}{\omega_t}{\rm d}\omega_t
\end{align*}
The second term vanished since the integrand has antiderivative
\begin{align*}
	\frac{1}{2}\left(\log(\omega_t) \right)^2,
\end{align*}
which is single valued along the contour. For the first term, let
\begin{align*}
	h_3(\omega_t) = \log \left( \frac{1-y_t-\omega_t}{1-\omega_t} \right)\frac{1-y_t}{\omega_t},
\end{align*}
we have by the residue at infinity,
\begin{align*}
	-\frac{1}{2\pi \mathrm{i}}\oint_{\gamma_w} h_3(\omega_t) {\rm d}\omega_t =& -\mathrm{Res}\left( \frac{1}{\omega_t^2}h_3\left(\frac{1}{\omega_t}\right) ,0  \right) + \mathrm{Res}\left( h_3(\omega_t),0 \right)= (1-y_t)\log(1-y_t),
\end{align*}
which gives
\begin{align}
	L_{1,2t} = (1-y_t)\log(1-y_t). \label{082201}
\end{align}
Plugging (\ref{082205}) and (\ref{082201}) into (\ref{082210}), we have
\begin{align*}
	L_1 = -(1-y)\log(1-y) + \sum_{t=1}^k (1-y_t)\log(1-y_t).
\end{align*}
Next, we calculate $L_2$. We rewrite 
\begin{align*}
	L_2 =& -\frac{1}{4\pi \mathrm{i}}\oint_{\gamma^0_1} \log(z(m_{\boxplus})) {\rm d}\;{ \log\left(\prod_{t=1}^k\frac{\omega_t'm_{\boxplus}^2}{m_{\boxplus}'}\right)}-\frac{1}{4\pi \mathrm{i}}\oint_{\gamma^0_1}  \log(z(m_{\boxplus})) {\rm d}\;{ \log\left( \frac{m_{\boxplus}'}{m_{\boxplus}^2} \right)}\\
	=& -\sum_{t=1}^{k} \frac{1}{4\pi \mathrm{i}}\oint_{\gamma^0_1}  \log(z(m_{\boxplus})) {\rm d}\;\log \left(\frac{\omega_t'm_{\boxplus}^2}{m'_{\boxplus}}\right)-\frac{1}{4\pi \mathrm{i}}\oint_{\gamma^0_1}  \log(z(m_{\boxplus})) {\rm d}\;{ \log\left( \frac{m_{\boxplus}'}{m_{\boxplus}^2} \right)}=: \sum_{t=1}^{k}L_{2,1t} + L_{2,2}.
\end{align*}
For $L_{2,1t}$, we have by \eqref{Eomegap},
\begin{align*}
	L_{2,1t} = \frac{1}{4\pi \mathrm{i}}\oint_{\gamma_m} \frac{z'(m_{\boxplus})}{z(m_{\boxplus})} \log \left( \frac{1+g_t(m_{\boxplus})}{2}  \right){\rm d}m_{\boxplus}.
\end{align*}
Let
\begin{align*}
	h_4(m_{\boxplus}) = \frac{z'(m_{\boxplus})}{z(m_{\boxplus})} \log \left( \frac{1 + g_t(m_{\boxplus})}{2} \right),
\end{align*}
then by the residue at infinity, we have
\begin{align*}
	L_{2,1t}=\frac{1}{4\pi \mathrm{i}}\oint_{\gamma_m} h_4(m_{\boxplus}) {\rm d}m_{\boxplus} = \frac{1}{2}\mathrm{Res}\left( \frac{1}{m_{\boxplus}^2}h_4\left(\frac{1}{m_{\boxplus}} \right),0 \right) - \frac{1}{2}\mathrm{Res}\left( h_4(m_{\boxplus}),0 \right).
\end{align*}
By direct calculation, we can get
\begin{align*}
	\mathrm{Res}\left( \frac{1}{m_{\boxplus}^2}h_4\left(\frac{1}{m_{\boxplus}} \right),0 \right) = -\log(1-y_t), \quad \mathrm{Res}\left( h_4(m_{\boxplus}),0 \right) = 0.
\end{align*}
Therefore,
\begin{align*}
	L_{2,1t} = -\frac{1}{2}\log(1-y_t).
\end{align*}
For $L_{2,2}$, we have by \eqref{Emp},
\begin{align*}
	L_{2,2} = -\frac{1}{4\pi \mathrm{i}}\oint_{\gamma_m} \frac{z'(m_{\boxplus})}{z(m_{\boxplus})} \log \left( 1 - \sum_{t=1}^{k}
	\frac{1-g_t(m_{\boxplus})}{2} \right){\rm d}m_{\boxplus}.
\end{align*}
Let
\begin{align*}
	h_5(m_{\boxplus}) = \frac{z'(m_{\boxplus})}{z(m_{\boxplus})} \log \left( 1 - \sum_{t=1}^{k}
	\frac{1-g_t(m_{\boxplus})}{2} \right),
\end{align*}
the residue at infinity, we have
\begin{align*}
	L_{2,2}=-\frac{1}{4\pi \mathrm{i}}\oint_{\gamma_m} h_5(m_{\boxplus}) {\rm d}m_{\boxplus} = -\frac{1}{2}\mathrm{Res}\left( \frac{1}{m_{\boxplus}^2}h_5\left(\frac{1}{m_{\boxplus}} \right),0 \right) + \frac{1}{2}\mathrm{Res}\left(h_5(m_{\boxplus}),0 \right).
\end{align*}
By direct calculation, we can get
\begin{align*}
	\mathrm{Res}\left( \frac{1}{m_{\boxplus}^2}h_5\left(\frac{1}{m_{\boxplus}} \right),0 \right) = -\log(1-y), \quad \mathrm{Res}\left( h_5(m_{\boxplus}),0 \right) = 0.
\end{align*}
Therefore,
\begin{align*}
	L_{2,2} = \frac{1}{2}\log (1-y).
\end{align*}
Combining the above results, we can obtain
\begin{align*}
	L_2 = \frac{1}{2}\log (1-y) - \frac{1}{2}\log (1-y_t).
\end{align*}
Then we have the final result of the expectation of the Wilks' statistics, which is 
\begin{align*}
	N\int_{\mathbb{R}} \log (x) {\rm d}\mu_{\boxplus} + a_{\log(x)}=\sum_{t=1}^{k}\left(N-p_t-\frac{1}{2}\right)\log(1-y_t) - \left(N-Ny- \frac{1}{2}\right)\log(1-y) . 
\end{align*}

\subsubsection{Variance for Wilks' statistics: $f(x) = \log(x)$}
Using the same notation as the case $f(x) = x^2$, recall that
\begin{align*}
\sigma_{\log(x)}=&-\frac{1}{2\pi^2}\oint_{\gamma_1}\oint_{\gamma_2}  \log(z_1)\log(z_2) \bigg[\sum_{t=1}^{k} \frac{\omega_{t1} - \omega_{t2}}{(\omega_{t1}-\omega_{t2})^2}-\frac{1}{(z_1-z_2)^2}-(k-1)\frac{m'_{\boxplus1}m'_{\boxplus2}}{(m_{\boxplus1}-m_{\boxplus2})^2}\bigg]{\rm d}z_2{\rm d}z_1\\
	=&:\sum_{t=1}^k K_{1,t} + K_{2} + K_3
\end{align*}
For our analysis, it is convenient to take $\gamma_{m1}$ and $\gamma_{m2}$ be the contour of $m_{\boxplus1}$ and $m_{\boxplus2}$, respectively. For Wilks' statistics, we only care about the positive eigenvalues of $H$. Therefore, the contour of $z$ should enclose those positive eigenvalues but not $0$. In other words, the contours of $m_{\boxplus1}$ and $m_{\boxplus2}$ should not enclose $0$. In addition, by choosing the parameters in $\gamma_i$'s suitably, we can have that $\gamma_{m1}$ encloses $\gamma_{m2}$. Further, in this section, we view $z$ and $\omega_t$ be functions of $m_{\boxplus}$, and all the derivatives are taken w.r.t $m_{\boxplus}$.

We first consider $K_3$,
\begin{align*}
	K_3 = \frac{k-1}{2\pi^2}\oint_{\gamma_{m1}}\oint_{\gamma_{m2}}  \frac{\log (z(m_{\boxplus1})) \log (z(m_{\boxplus2}))}{(m_{\boxplus1}-m_{\boxplus2})^2}{\rm d}m_{\boxplus2}{\rm d}m_{\boxplus1}.
\end{align*}
For the inner integral, we have
\begin{align*}
	\oint_{\gamma_{m2}}  \frac{\log (z(m_{\boxplus2}))}{(m_{\boxplus1}-m_{\boxplus2})^2}{\rm d}m_{\boxplus2}  = \oint_{\gamma_{m2}}\frac{z'(m_{\boxplus2})}{z(m_{\boxplus2})(m_{\boxplus2} - m_{\boxplus1})} {\rm d}m_{\boxplus2}.
\end{align*}
Let
\begin{align*}
	h_6(m_{\boxplus2}) = \frac{z'(m_{\boxplus2})}{z(m_{\boxplus2})(m_{\boxplus2} - m_{\boxplus1})}
\end{align*}
be the integrand of the contour integral. Instead of calculating the residue inside $\gamma_{m2}$, again we turn to compute the residue at infinity, to simplify the calculation. Since $\gamma_{m2}$ does not enclose $0$ and $\gamma_{m1}$ encloses $\gamma_{m2}$, we have
\begin{align*}
	\oint_{\gamma_{m2}} h_6(m_{\boxplus2}) {\rm d}m_{\boxplus2} =& -2\pi \mathrm{i} \left(\mathrm{Res}\left(h_6(m_{\boxplus2}),\infty \right) +  \mathrm{Res}\left(h_6(m_{\boxplus2}),m_{\boxplus1} \right) + \mathrm{Res}\left(h_6(m_{\boxplus2}),0 \right)  \right) \\
	=&2\pi \mathrm{i} \left(\mathrm{Res}\left(\frac{1}{m_{\boxplus2}^2}h_6\left(\frac{1}{m_{\boxplus2}}\right),0\right) -  \mathrm{Res}\left(h_6(m_{\boxplus2}),m_{\boxplus1} \right) - \mathrm{Res}\left(h_6(m_{\boxplus2}),0 \right)\right)
\end{align*}
Using \eqref{zofm}, by direct calculation, we can get
\begin{align*}
	\mathrm{Res}\left(\frac{1}{m_{\boxplus2}^2}h_6\left(\frac{1}{m_{\boxplus2}}\right),0\right) = 0, \quad \mathrm{Res}\left(h_6(m_{\boxplus2}),m_{\boxplus1} \right) = \frac{z'(m_{\boxplus1})}{z(m_{\boxplus1})},\quad \mathrm{Res}\left(h_6(m_{\boxplus2}),0 \right) = \frac{1}{m_{\boxplus1}}.
\end{align*}
Therefore,
\begin{align*}
	\oint_{\gamma_{m2}} h_6(m_{\boxplus2}) {\rm d}m_{\boxplus2} = -2\pi \mathrm{i}  \left( \frac{z'(m_{\boxplus1})}{z(m_{\boxplus1})} + \frac{1}{m_{\boxplus1}}  \right).
\end{align*}
Then we can calculate the outer integral,
\begin{align*}
	K_3 =& -\frac{(k-1)\mathrm{i}}{\pi} \oint_{\gamma_{m1}}\log (z(m_{\boxplus1})) \left( \frac{z'(m_{\boxplus1})}{z(m_{\boxplus1})} + \frac{1}{m_{\boxplus1}}  \right) {\rm d}m_{\boxplus1} \\
	=& -\frac{(k-1)\mathrm{i}}{\pi} \oint_{\gamma_{m1}}\log \left( -1+\sum_{t=1}^k\frac{m_{\boxplus1}+1-\ell_t(m_{\boxplus1})}{2} \right) \left( \frac{z'(m_{\boxplus1})}{z(m_{\boxplus1})} + \frac{1}{m_{\boxplus1}}  \right) {\rm d}m_{\boxplus1}\\
	&+\frac{(k-1)\mathrm{i}}{\pi} \oint_{\gamma_{m1}}\log \left( m_{\boxplus1} \right) \left( \frac{z'(m_{\boxplus1})}{z(m_{\boxplus1})} + \frac{1}{m_{\boxplus1}}  \right) {\rm d}m_{\boxplus1}.
\end{align*}
Notice that
\begin{align*}
	\frac{z'(m_{\boxplus1})}{z(m_{\boxplus1})} + \frac{1}{m_{\boxplus1}} =\left( \log \left( -1+\sum_{t=1}^k\frac{m_{\boxplus1}+1-\ell_t(m_{\boxplus1})}{2} \right) \right)', 
\end{align*}
then first integral is zero because the integrand has antiderivative
\begin{align*}
	\frac{1}{2}\left( \log \left( -1+\sum_{t=1}^k\frac{m_{\boxplus1}+1-\ell_t(m_{\boxplus1})}{2} \right) \right)^2  = \frac12( \log(z(m_{\boxplus1})m_{\boxplus1}))^2,
\end{align*}
which is analytic along the contour. For the second integral, we have
\begin{align*}
	&\frac{(k-1)\mathrm{i}}{\pi} \oint_{\gamma_{m1}}\log \left( m_{\boxplus1} \right) \left( \frac{z'(m_{\boxplus1})}{z(m_{\boxplus1})} + \frac{1}{m_{\boxplus1}}  \right) {\rm d}m_{\boxplus1} \\
	=&\frac{(1-k)\mathrm{i}}{\pi} \oint_{\gamma_{m1}}\frac{1}{m_{\boxplus1}}\log \left( -1+\sum_{t=1}^k\frac{m_{\boxplus1}+1-\ell_t(m_{\boxplus1})}{2} \right)   {\rm d}m_{\boxplus1}
\end{align*}
Let
\begin{align*}
	h_7(m_{\boxplus1}) := \frac{1}{m_{\boxplus1}}\log \left( -1+\sum_{t=1}^k\frac{m_{\boxplus1}+1-\ell_t(m_{\boxplus1})}{2} \right).
\end{align*}
Applying the technique of residue at infinity, we have
\begin{align*}
	&\frac{(1-k)\mathrm{i}}{\pi} \oint_{\gamma_{m1}}h_7(m_{\boxplus1}){\rm d}m_{\boxplus1} = 2(k-1)\left(\mathrm{Res}\left(\frac{1}{m_{\boxplus1}^2}h_7\left(\frac{1}{m_{\boxplus1}}\right),0\right)  - \mathrm{Res}\left(h_7(m_{\boxplus1}),0 \right)\right)
\end{align*} 
By direct calculation, we get
\begin{align*}
	\mathrm{Res}\left(\frac{1}{m_{\boxplus1}^2}h_7\left(\frac{1}{m_{\boxplus1}}\right),0\right) = \log (-1+y), \quad \mathrm{Res}\left(h_7(m_{\boxplus1}),0 \right) = \log (-1).
\end{align*}
Therefore,
\begin{align*}
	K_3 = 2(k-1)\log(1-y).
\end{align*}
Next, we calculate $K_{1,t}$,
\begin{align*}
	K_{1,t} = -\frac{1}{2\pi^2}\oint_{\gamma_{m1}}\oint_{\gamma_{m2}}  \frac{\log(z(m_{\boxplus2}))\log(z(m_{\boxplus1}))}{(\omega_{t1}-\omega_{t2})^2} {\rm d}\omega_{t2} {\rm d}\omega_{t1}
\end{align*}
For the inner inegral,
\begin{align*}
	\oint_{\gamma_{w2}}  \frac{\log(z(m_{\boxplus2}))}{(\omega_{t1}-\omega_{t2})^2} {\rm d}\omega_{t2} =&  \oint_{\gamma_{m2}}\frac{z'(m_{\boxplus2})}{z(m_{\boxplus2})(\omega_{t2} - \omega_{t1})} {\rm d}m_{\boxplus2}.
\end{align*}
Let
\begin{align*}
	h_8(m_{\boxplus2}) = \frac{z'(m_{\boxplus2})}{z(m_{\boxplus2})(\omega_{t2} - \omega_{t1})}
\end{align*}
be the integrand. We have
\begin{align*}
	\oint_{\gamma} h_8(m_{\boxplus2}) {\rm d}m_{\boxplus2} = 2\pi\mathrm{i}\left( \mathrm{Res}\left( \frac{1}{m_{\boxplus2}^2}h_8\left(\frac{1}{m_{\boxplus2}} \right),0 \right) - \mathrm{Res}\left(h_8(m_{\boxplus2}) , m_{\boxplus1} \right) - \mathrm{Res}\left(h_8(m_{\boxplus2}) , 0 \right) \right).
\end{align*}
Using \eqref{Eomega} and \eqref{zofm}, by direct calculation, we can get
\begin{align*}
	\mathrm{Res}\left( \frac{1}{m_{\boxplus2}^2}h_8\left(\frac{1}{m_{\boxplus2}} \right),0 \right) = \frac{1}{\omega_{t1}}, \quad \mathrm{Res}\left(h_8(m_{\boxplus2}) , m_{\boxplus1} \right) = \frac{z'(m_{\boxplus1})}{z(m_{\boxplus1})\omega'_{t1}},\quad \mathrm{Res}\left(h_8(m_{\boxplus2}) , 0 \right)  = 0.
\end{align*}
Therefore,
\begin{align*}
	\oint_{\gamma} g_3(m_{\boxplus1}) {\rm d}m_{\boxplus1}  = 2\pi\mathrm{i} \left(  \frac{1}{\omega_{t1}} -\frac{z'(m_{\boxplus1})}{z(m_{\boxplus1})\omega'_{t1}}  \right).
\end{align*}
Then we can calculate the outer integral,
\begin{align*}
	K_{1,t} = -\frac{\mathrm{i}}{\pi}\oint_{\gamma_{w1}}\frac{ \log (z(m_{\boxplus1})) }{\omega_{t1}} {\rm d}\omega_{t1} + \frac{\mathrm{i}}{\pi}\oint_{\gamma_{m1}} \log (z(m_{\boxplus1}))\frac{z'(m_{\boxplus1})}{z(m_{\boxplus1})} {\rm d}m_{\boxplus1}
\end{align*}
The second integral is zero because the integrand has antiderivative
\begin{align*}
	\frac{1}{2}\left(\log (z(m_{\boxplus1}))\right)^2,
\end{align*}
which is single valued along the contour. For the first integral, we have
\begin{align*}
	&-\frac{\mathrm{i}}{\pi}\oint_{\gamma_{w1}}\frac{ \log (z(m_{\boxplus1})) }{\omega_{t1}} {\rm d}\omega_{t1}  = -\frac{\mathrm{i}}{\pi}\oint_{\gamma_{m1}}\log (z(m_{\boxplus1})) {\rm d}\log (\omega_{t1}) \\
	=& \frac{\mathrm{i}}{\pi}\oint_{\gamma_{m1}}\frac{z'(m_{\boxplus1})}{z(m_{\boxplus1})}\log \left( \frac{m_{\boxplus1}-1-\ell_t(m_{\boxplus1})}{2} \right) {\rm d}m_{\boxplus1} + \frac{\mathrm{i}}{\pi}\oint_{\gamma_{m1}}\frac{\log (z(m_{\boxplus1}))}{m_{\boxplus1}}  {\rm d}m_{\boxplus1}.
\end{align*}
These two integral can be calculated using residue at infinity now. Let
\begin{align*}
	h_9(m_{\boxplus1}) = \frac{z'(m_{\boxplus1})}{z(m_{\boxplus1})}\log \left( \frac{m_{\boxplus1}-1-\ell_t(m_{\boxplus1})}{2} \right), \quad h_{10}(m_{\boxplus1}) =  \frac{\log (z(m_{\boxplus1}))}{m_{\boxplus1}}.
\end{align*}
Then we have
\begin{align*}
	\frac{\mathrm{i}}{\pi}\oint_{\gamma_{m1}}h_9(m_{\boxplus1}) {\rm d}m_{\boxplus1} = -2\mathrm{Res}\left(\frac{1}{m_{\boxplus1}^2} h_9\left(\frac{1}{m_{\boxplus1}}\right),0 \right) + 2\mathrm{Res}\left( h_9(m_{\boxplus1}),0 \right).
\end{align*}
By direct calculation, we can get
\begin{align*}
	\mathrm{Res}\left(\frac{1}{m_{\boxplus1}^2} h_9\left(\frac{1}{m_{\boxplus1}}\right),0 \right) = -\log(-1 + y_t), \quad \mathrm{Res}\left( h_9(m_{\boxplus1}),0 \right) = -\log(-1).
\end{align*}
Therefore,
\begin{align*}
	\frac{\mathrm{i}}{\pi}\oint_{\gamma_{m1}}h_9(m_{\boxplus1}) {\rm d}m_{\boxplus1} = 2\log(1-y_t).
\end{align*}
The calculation of integral of $g_5(m_{\boxplus1})$ is identical to $g_2(m_{\boxplus1})$, thus we have
\begin{align*}
	\frac{\mathrm{i}}{\pi}\oint_{\gamma_{m1}}h_{10}(m_{\boxplus1})  {\rm d}m_{\boxplus1} = -2\log(1+y).
\end{align*}
As a result,
\begin{align*}
	K_{1,t} = 2\log(1-y_t)-2\log(1+y).
\end{align*}
Combining the results of $K_{1,t}$ and $K_3$, we have
\begin{align*}
	\sigma_{\log(x)} = -2\log(1+y) + 2\sum_{t=1}^k \log(1-y_t).
\end{align*}

\section{Proof of Theorem \ref{MPCLT}}\label{ProofofMPCLT}
The proof of Theorem \ref{MPCLT} is also based on Theorem \ref{GeneralCLT}. We need the following Lemmas to replace the stochastic quantities by the deterministic quantities.

\begin{lemma}\label{trPGMP}
Let ${p_{\max}} \le N^{1/2-\epsilon}$ for any given (small) constant $\epsilon > 0$.	If $\hat{y} \in (0,1)$, for any fixed $z \in (\bar{\gamma}^0_1)^{+}\cup(\bar{\gamma}^0_2)^{+}$,  we have
	\begin{align}
		&\left|\tr \bbG(z) - m_{y}(z) \right| = O_{\prec}\left(\frac{p_{\max}}{N} \right), \label{trGGapMP}\\
		&\left| \tr \bbP_t\bbG(z) - \frac{y_tm_y(z)}{1+m_y(z)} \right|= O_{\prec}\left(  \sqrt{\frac{p_t}{N^3}}  \vee\frac{p_tp_{\max}}{N^2} \right),\quad t \in [\![ k]\!],\label{trPGGapMP}\\
		&\left|\tr \bbQ_t\bbG(z) - \frac{y_tm_y(z)}{1+m_y(z)}  \right|= O_{\prec}\left(  \sqrt{\frac{p_t}{N^3}}  \vee\frac{p_tp_{\max}}{N^2} \right), \quad t \in [\![ k]\!].\label{trQGGapMP}
	\end{align}
The same bounds hold for $z \in (\bar{\gamma}_1)^{+}\cup(\bar{\gamma}_2)^{+}$ with $\hat{y} \in (0,\infty)$.  
\end{lemma}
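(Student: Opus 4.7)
The plan is to establish a perturbed Marchenko–Pastur equation for $\tr G(z)$ by combining the per-group identities (\ref{trXX})--(\ref{trPG}) with the fluctuation-averaged identities (\ref{strPG1})--(\ref{strPGP1}), and then invert it via the usual stability analysis for the MP quadratic. Throughout, I use freely that $y_t = p_t/N \le p_{\max}/N \le N^{-1/2-\epsilon}$ and that $|\tr G|, |\tr P_tG|, |\tr Q_tG| = O_\prec(1)$ with $|1+\tr G|$ bounded away from zero on the relevant contours (the latter because the contours enclose only the support of $\mu_{\boxplus}$, which is close to that of $\mu_{mp,y}$ in this regime).

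The first step will be to derive the individual approximation for $\tr P_tG$. Plugging (\ref{trXX}) and (\ref{trPGP}) into (\ref{trPG}) gives, after elementary algebra,
\begin{equation*}
\tr P_tG\,(1+\tr G) \;=\; y_t\,\tr G \,+\, (\tr P_tG)^2 \,+\, O_{\prec}\!\left(\sqrt{p_t/N^3}\right),
\end{equation*}
and since $\tr P_tG = O_{\prec}(p_t/N)$, the quadratic term is $O_{\prec}(p_t^2/N^2)$. Solving yields
\begin{equation*}
\tr P_tG \;=\; \frac{y_t\,\tr G}{1+\tr G} \,+\, O_{\prec}\!\left(\sqrt{p_t/N^3}\vee p_t^2/N^2\right).
\end{equation*}

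The second step is to sum this identity in $t$ and exploit the algebraic relation $\sum_t \tr P_tG = \tr HG = 1 + z\,\tr G$ to produce a closed equation for $\tr G$. Direct summation of the per-group error $\sqrt{p_t/N^3}$ would be too lossy; instead I will use the fluctuation-averaged bounds (\ref{strPG1})--(\ref{strPGP1}) from Lemma \ref{Lemma error estimates} (with $\mathcal{W}_t\equiv 1$), each of order $O_{\prec}(1/N)$, together with the Taylor expansion $1/(1-y_t) = 1 + O(y_t)$ to absorb the $y_t$-dependence. Plugging in $\sum_t \tr (X_tX_t')^{-1} = y + O(p_{\max}/N) + O_\prec(1/N)$ and $\sum_t \tr Q_tG = \sum_t \tr P_tG + O_\prec(p_{\max}/N)$, and using that the bilinear remainder $(\tr(X_tX_t')^{-1}-\tr Q_tG)\tr P_tG$ summed in $t$ is of order $p_{\max}/N \cdot y$, one arrives at
\begin{equation*}
1 + (z+1-y)\tr G + z(\tr G)^2 \;=\; O_{\prec}(p_{\max}/N),
\end{equation*}
a perturbed version of the MP fixed-point equation (\ref{071000}).

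The third step invokes the stability of the MP quadratic. On the contours $(\bar\gamma_1^0)^+\cup(\bar\gamma_2^0)^+$ or $(\bar\gamma_1)^+\cup(\bar\gamma_2)^+$ we stay a macroscopic distance from the support of $\mu_{mp,\hat y}$, so the discriminant of the MP quadratic at $m_y(z)$ is bounded away from zero; standard stability then gives $|\tr G(z) - m_y(z)| \prec p_{\max}/N$, proving (\ref{trGGapMP}). Substituting this back into the per-group formula for $\tr P_tG$ yields
\begin{equation*}
\tr P_tG \;=\; \frac{y_t\,m_y}{1+m_y} \,+\, O_{\prec}\!\left(y_t\cdot p_{\max}/N\right) \,+\, O_{\prec}\!\left(\sqrt{p_t/N^3}\vee p_t^2/N^2\right),
\end{equation*}
which gives (\ref{trPGGapMP}) since $y_t p_{\max}/N = p_tp_{\max}/N^2$ dominates $p_t^2/N^2$. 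For (\ref{trQGGapMP}), I use (\ref{trPGP}) once more: $\tr Q_tG = \tr P_tG/(1-y_t) + O_\prec(\sqrt{p_t/N^3})$, and the difference $\tr P_tG/(1-y_t) - \tr P_tG = O_\prec(y_t \cdot p_t/N) = O_\prec(p_t^2/N^2)$ is absorbed into the stated error.

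The only nontrivial obstacle is the stability step for the MP quadratic, but this is standard in this global regime away from the spectrum and essentially identical to the argument already carried out at the end of Section \ref{s.case 2}; the combinatorial bookkeeping for the $\sum_t$ errors (ensuring each one is $O_\prec(p_{\max}/N)$ rather than something larger) is what requires the assumption $p_{\max}\le N^{1/2-\epsilon}$.
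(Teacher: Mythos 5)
Your proposal is correct and follows essentially the same route as the paper: the global estimate (\ref{trGGapMP}) is obtained exactly as in the paper (via the fluctuation-averaged identities (\ref{strPG1})--(\ref{strPGP1}) with $\mathcal{W}_t\equiv 1$, the identity $\sum_t\tr P_tG=1+z\tr G$, and stability of the perturbed MP quadratic (\ref{071000})), and (\ref{trPGGapMP}), (\ref{trQGGapMP}) are then deduced from the per-group relations (\ref{trXX})--(\ref{trPG}) after discarding the $O_\prec(p_t^2/N^2)$ bilinear term and replacing $\tr G$ by $m_y$, just as the paper does (your ordering of "solve, then substitute $m_y$" versus the paper's "substitute $m_y$, then solve" is immaterial).
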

\begin{proof}[Proof of Lemma \ref{trPGMP}]
	The proof of (\ref{trGGapMP}) is identical to the proof in Section \ref{s.case 2}. All the estimates remain valid if $\|G(z) \| = O_{\prec}(1)$ which is guaranteed by Lemma \ref{Gbound}. Hence, we omit the details.
	
	Next, we prove (\ref{trPGGapMP}) based on (\ref{trGGapMP}) and (\ref{trXX})-(\ref{trPG}). Recall (\ref{trPG}), we have
	\begin{align*}
		\tr P_tG(z) =& (\tr (X_tX_t')^{-1} - \tr Q_tG(z))(\tr G(z) - \tr P_tG(z)) + O_{\prec}\left(\sqrt{\frac{p_t}{N^{3}}} \right)\\
		=& (\tr (X_tX_t')^{-1} - \tr Q_tG(z))\tr G(z) + O_{\prec}\left( \left(\frac{p_t}{N}\right)^2\vee \sqrt{\frac{p_t}{N^{3}}} \right) \\
		=& \left( y_t - \tr P_tG(z) \right)m_y(z) + O_{\prec}\left(\frac{p_tp_{\max}}{N^2}\vee \sqrt{\frac{p_t}{N^{3}}} \right)
	\end{align*} 
	Solving the above equation for $\tr P_tG(z)$, we can obtain (\ref{trPGGapMP}). Using (\ref{trPGGapMP}) together with (\ref{trPGP}), we have (\ref{trQGGapMP}). This completes the proof of  Lemma \ref{trPGMP}.
\end{proof}

\begin{lemma}\label{trPGQGMP}
Let ${p_{\max}} \le N^{1/2-\epsilon}$ for any given (small) constant $\epsilon > 0$. If $\hat{y} \in (0,1)$, for any fixed $z_1 \in (\bar{\gamma}^0_1)^{+}$ and $z_2 \in (\bar{\gamma}^0_2)^+$,  we have
	\begin{align}
		&\left| \tr \bbP_t\bbG(z_1)\bbP_t\bbG(z_2) - \frac{y_tm_y(z_1)m_y(z_2)}{(1+m_y(z_1))(1+m_y(z_2))} \right| 
		= O_{\prec}\left(\sqrt{\frac{p_t}{N^3}} \vee\frac{p_tp_{\max}}{N^2}  \right), \label{trPGPGGapMP}\\
		&\left| \tr \bbQ_t\bbG(z_1)\bbP_t\bbG(z_2) - \frac{y_tm_y(z_1)m_y(z_2)}{(1+m_y(z_1))(1+m_y(z_2))}  \right| 
		= O_{\prec}\left( \sqrt{\frac{p_t}{N^3}} \vee\frac{p_tp_{\max}}{N^2}  \right)\label{trQGPGGapMP}.
	\end{align}
The same bounds hold for $z_1 \in (\bar{\gamma}_1)^{+}$ and $z_2 \in (\bar{\gamma}_2)^{+}$ with $\hat{y} \in (0,\infty)$. 
\end{lemma}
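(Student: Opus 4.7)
The plan is to reuse the two basic concentration identities already produced inside the proof of Proposition \ref{keyprop2} and then substitute the Marchenko--Pastur-based deterministic profiles supplied by Lemma \ref{trPGMP}, rather than the free-convolution profiles used in the general case. The point is that the cumulant expansion bounds $Z_1,Z_2 = O_\prec(\sqrt{p_t/N^3})$ established there are purely statements about how well certain algebraic combinations of tracial quantities concentrate; they are insensitive to which regime of $y_t$'s we are in. What changes in the MP regime is only the deterministic approximations of the individual tracial quantities, for which we will feed in \eqref{trGGapMP}--\eqref{trQGGapMP}.

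First I would recall the two identities. Identity \eqref{5001}, after replacing $\tr G(z_1)P_tG(z_2)$ with $(\tr P_tG(z_1)-\tr P_tG(z_2))/(z_1-z_2)$ via the resolvent identity, yields
\begin{align*}
\tr P_tG(z_1)P_tG(z_2)(1+\alpha_t(z_1))
&=\alpha_t(z_1)\,\frac{\tr P_tG(z_1)-\tr P_tG(z_2)}{z_1-z_2}\\
&\quad-\bigl(\tr Q_tG(z_1)P_tG(z_2)-\tr Q_tG(z_1)\bigr)\bigl(\tr G(z_2)-\tr P_tG(z_2)\bigr)+O_\prec\!\bigl(\sqrt{p_t/N^3}\bigr),
\end{align*}
where $\alpha_t(z_1):=\tr (X_tX_t')^{-1}-\tr Q_tG(z_1)$. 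The second identity, coming from $Z_2=O_\prec(\sqrt{p_t/N^3})$ together with $P_t^2=P_t$, simplifies to
\begin{align*}
\tr P_tG(z_1)P_tG(z_2)=(1-y_t)\tr Q_tG(z_1)P_tG(z_2)+O_\prec\!\bigl(\sqrt{p_t/N^3}\bigr).
\end{align*}
Using the second relation to eliminate $\tr Q_tG(z_1)P_tG(z_2)$ from the first gives a single linear equation in $A:=\tr P_tG(z_1)P_tG(z_2)$, whose coefficient on the left is $1+\alpha_t(z_1)+(1-y_t)^{-1}(\tr G(z_2)-\tr P_tG(z_2))$.

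Next I would insert the MP approximations from Lemma \ref{trPGMP}. Since $\alpha_t(z_1)=O_\prec(p_t/N)$ and, by \eqref{trGGapMP} and \eqref{trPGGapMP},
\begin{align*}
\tr G(z_2)-\tr P_tG(z_2)=m_y(z_2)-\frac{y_tm_y(z_2)}{1+m_y(z_2)}+O_\prec\!\Bigl(\sqrt{p_t/N^3}\vee\tfrac{p_tp_{\max}}{N^2}\vee\tfrac{p_{\max}}{N}\Bigr),
\end{align*}
the left-hand coefficient becomes $1+m_y(z_2)+O_\prec(p_{\max}/N)$, which is bounded away from zero on the contours by the choice of $\gamma_1^0,\gamma_2^0$ (cf.\ the separation of $\{m_\boxplus(z):z\in\gamma_a^0\}$ used after \eqref{FreeCLTK1}), so that division is stable. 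The right-hand side is then evaluated by plugging in $\tr P_tG(z_j)\approx y_tm_y(z_j)/(1+m_y(z_j))$ and $\tr Q_tG(z_1)\approx y_tm_y(z_1)/(1+m_y(z_1))$; the $\alpha_t(z_1)\cdot(\tr P_tG(z_1)-\tr P_tG(z_2))/(z_1-z_2)$ term is of size $O_\prec((p_t/N)^2)$ and gets absorbed, while the main contribution comes from $\tr Q_tG(z_1)\bigl(\tr G(z_2)-\tr P_tG(z_2)\bigr)$. A short algebraic simplification using $m_y(z_2)-y_tm_y(z_2)/(1+m_y(z_2))=m_y(z_2)(1+m_y(z_2)-y_t)/(1+m_y(z_2))$ and dividing by $1+m_y(z_2)$ produces exactly $y_tm_y(z_1)m_y(z_2)/((1+m_y(z_1))(1+m_y(z_2)))$, which is the claimed profile for $A$. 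Finally, \eqref{trQGPGGapMP} follows by substituting the estimate for $A$ back into the $Z_2$-identity.

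The main obstacle is bookkeeping of the error terms: one must verify that every substitution produces an error no worse than $\sqrt{p_t/N^3}\vee p_tp_{\max}/N^2$. In particular, the cross products $\alpha_t(z_1)\cdot$(MP error) and $\tr Q_tG(z_1)\cdot$(error in $\tr G(z_2)-\tr P_tG(z_2)$) must be controlled: the first is $O_\prec((p_t/N)\cdot\sqrt{p_t/N^3})$ which is absorbed, and the second is $O_\prec((p_t/N)\cdot(p_{\max}/N))=O_\prec(p_tp_{\max}/N^2)$, producing precisely the second branch of the claimed error. The divisor $1+m_y(z_2)+O_\prec(p_{\max}/N)$ stays bounded away from zero because $p_{\max}/N\to 0$ and because on $\gamma_1^0,\gamma_2^0$ we have $|1+m_y(z)|\gtrsim 1$ by the analogue of Lemma \ref{Flbound} for the MP law. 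No new cumulant expansion is needed; the argument is entirely a stability analysis of the system already derived in Proposition \ref{keyprop2}, fed with the MP inputs from Lemma \ref{trPGMP}.
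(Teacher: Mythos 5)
Your proposal is correct and follows essentially the same route as the paper: both start from the concentration identities $Z_1,Z_2=O_\prec(\sqrt{p_t/N^3})$ established in the proof of Proposition \ref{keyprop2}, use $Z_2$ (with $y_t$ small) to identify $\tr Q_t G(z_1)P_tG(z_2)$ with $\tr P_tG(z_1)P_tG(z_2)$ up to admissible error, feed in the Marchenko--Pastur approximations from Lemma \ref{trPGMP}, and solve the resulting linear equation after dividing by $1+m_y(z_2)$, with the cross term $\tr Q_tG(z_1)\cdot O_\prec(p_{\max}/N)$ producing the $p_tp_{\max}/N^2$ branch, and then deduce \eqref{trQGPGGapMP} from the $Z_2$ relation. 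The only cosmetic differences are that you carry $\alpha_t(z_1)$ explicitly before discarding it (the paper absorbs these $O_\prec(p_t^2/N^2)$ products at the outset), and your justification of the nonvanishing divisor should rest on $|1+m_y(z)|\gtrsim 1$ on the contours (as you state at the end) rather than on the separation of the $m_\boxplus$-images of $\gamma_1^0,\gamma_2^0$.
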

\begin{proof}[Proof of Lemma \ref{trPGQGMP}]
	The proof is similar to the proof of Proposition \ref{keyprop2}. We start with (\ref{trPGPGZ1}), replacing $\tr Q_tG(z_1)P_tG(z_2)$ by $\tr P_tG(z_1)P_tG(z_2)$ and then absorbing the $O_{\prec}(p_t^2/N^2)$ terms into the error, we have
	\begin{align*}
		\tr P_tG(z_1)P_tG(z_2) + (\tr P_tG(z_1)P_tG(z_2)-\tr Q_tG(z_1))\tr G(z_2) =  O_{\prec}\left( \left(\frac{p_t}{N}\right)^2\vee \sqrt{\frac{p_t}{N^{3}}} \right).
	\end{align*}
	Using Lemma \ref{trPGMP} to replace the stochastic quantities into deterministic quantities we obtain (\ref{trPGPGGapMP}). Then (\ref{trQGPGGapMP}) follows immediately by the second estimates of (\ref{trPGPGZ1}). This completes the proof of Lemma \ref{trPGQGMP}.
\end{proof}

In addition to the concentration results for the tracial quantities, we also have the following concentration results for the diagonal entries of the random matrices. 
\begin{lemma}\label{DiagApproByMP}
Let ${p_{\max}} \le N^{1/2-\epsilon}$ for any given (small) constant $\epsilon > 0$. 	If $\hat{y} \in (0,1)$, for any fixed $z \in (\bar{\gamma}^0_1)^{+}\cup(\bar{\gamma}^0_2)^{+}$,  we have	
	\begin{align}
		&\left| [G(z)]_{ii} - m_{y}(z) \right| = O_{\prec}\left(  \frac{1}{\sqrt{N} }\right), \label{GiiGapMP}\\
		&\left| [\bbP_t\bbG(z)]_{ii} -\frac{y_tm_y(z)}{1+m_y(z)}\right|= O_{\prec}\left( \frac{1}{\sqrt{N}} \right),\quad t \in [\![ k]\!],\label{PGiiGapMP}\\
		&\left| [\bbP_t\bbG(z)\bbP_t]_{ii} -\frac{y_tm_y(z)}{1+m_y(z)}  \right|= O_{\prec}\left( \frac{1}{\sqrt{N}}  \right), \quad t \in [\![ k]\!],\label{PGPiiGapMP}\\
		&\left| [\bbW_t\bbG(z)\bbW'_t]_{jj} - \frac{m_y(z)}{1+m_y(z)} \right|= O_{\prec}\left(  \frac{1}{\sqrt{N}} \right), \quad t \in [\![ k]\!].\label{QGiiGapMP}
	\end{align}
The same bounds hold for $z \in (\bar{\gamma}_1)^{+}\cup(\bar{\gamma}_2)^{+}$ with $\hat{y} \in (0,\infty)$.
\end{lemma}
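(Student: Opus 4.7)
The plan is to follow the same template as the proof of Proposition \ref{DiagApproByFC} given in Section~\ref{Sec Proof of Proposition 6.2}, but replace each use of the FC-based tracial estimates (Propositions \ref{keyProp}, \ref{keyprop2} and Lemma \ref{Lemma error estimates}) by the MP-based counterparts in Lemmas \ref{trPGMP} and \ref{trPGQGMP}. The distribution-free concentration inputs, namely Lemma \ref{LemPGii} for single diagonal entries and Lemma \ref{LemSumDiag} for the fluctuation-averaged sum over $t$, are used unchanged; only the deterministic limit to which we compare these quantities is different.

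First I would derive \eqref{GiiGapMP}. Applying Lemma \ref{LemSumDiag} with $\mathcal{W}_t\equiv 1$ and using the identity $\sum_t [P_tG]_{ii}=1+z[G]_{ii}$ gives
\begin{align*}
1+z[G]_{ii}-\sum_{t=1}^k\bigl(\tr(X_tX_t')^{-1}-\tr Q_tG\bigr)\bigl([G]_{ii}-[P_tG]_{ii}\bigr)=O_\prec(N^{-1/2}).
\end{align*}
Under the assumption $p_{\max}\le N^{1/2-\epsilon}$ we have $y_t=O(N^{-1/2-\epsilon})$, so Lemma \ref{trPGMP} together with $\tr(X_tX_t')^{-1}=y_t/(1-y_t)+O_\prec(\sqrt{p_t/N^3})=y_t+O_\prec(y_t^2+\sqrt{p_t/N^3})$ yields
\begin{align*}
\tr(X_tX_t')^{-1}-\tr Q_tG=\frac{y_t}{1+m_y(z)}+O_\prec\!\left(\sqrt{\tfrac{p_t}{N^3}}\vee\tfrac{p_tp_{\max}}{N^2}\right).
\end{align*}
Summing over $t$, Cauchy-Schwarz gives $\sum_t\sqrt{p_t/N^3}\le\sqrt{kp/N^3}=O(N^{-1/2})$ and $\sum_t p_tp_{\max}/N^2=yp_{\max}/N=O(N^{-1/2-\epsilon})$. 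Together with the crude bound $|[P_tG]_{ii}|\prec p_t/N+N^{-1/2}$ from \eqref{PGii}, the cross term $\sum_t(y_t/(1+m_y))[P_tG]_{ii}$ is $O_\prec(p_{\max}/N+yN^{-1/2})=O_\prec(N^{-1/2})$. The whole relation collapses to
\begin{align*}
1+\Bigl(z-\tfrac{y}{1+m_y(z)}\Bigr)[G]_{ii}=O_\prec(N^{-1/2}).
\end{align*}
Combining with the MP self-consistent equation $z=-1/m_y(z)+y/(1+m_y(z))$ and the fact that the derivative $z-y/(1+m)^2$ of this scalar equation is bounded below in modulus along $(\bar\gamma^0_1)^+\cup(\bar\gamma^0_2)^+$ (or the analogue for $\hat y\in(0,\infty)$), the stability of the fixed-point equation yields $[G]_{ii}=m_y(z)+O_\prec(N^{-1/2})$.

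The remaining estimates \eqref{PGiiGapMP}, \eqref{PGPiiGapMP}, \eqref{QGiiGapMP} then follow by substituting \eqref{GiiGapMP} and the MP tracial approximations of Lemmas \ref{trPGMP}, \ref{trPGQGMP} into the distribution-free relations \eqref{PGii}, \eqref{PGPii}, \eqref{WGWjj} of Lemma \ref{LemPGii}. For instance, solving \eqref{PGii} for $[P_tG]_{ii}$ gives
\begin{align*}
[P_tG]_{ii}=\frac{\tr(X_tX_t')^{-1}-\tr Q_tG}{1+\tr(X_tX_t')^{-1}-\tr Q_tG}[G]_{ii}+O_\prec(N^{-1/2})=\frac{y_tm_y(z)}{1+m_y(z)}+O_\prec(N^{-1/2}),
\end{align*}
the inversion of the denominator being trivial since $\tr(X_tX_t')^{-1}-\tr Q_tG=O(p_t/N)$ is small; analogous manipulations of \eqref{PGPii} and \eqref{WGWjj} dispose of the remaining entries. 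The only substantive use of the hypothesis $p_{\max}\le N^{1/2-\epsilon}$ is in keeping the accumulated MP tracial error $\sum_t p_tp_{\max}/N^2=yp_{\max}/N$ at or below the target scale $N^{-1/2}$; I expect this bookkeeping to be the main obstacle, while the algebra and the stability of the MP scalar equation are routine and no new probabilistic tools beyond those developed for the FC case are required.
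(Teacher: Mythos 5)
Your proposal is correct, but it takes a different route from the paper. The paper's actual proof of this lemma is a two-line comparison argument: it takes the already-established FC approximations of Proposition \ref{DiagApproByFC} (e.g.\ $|[G]_{ii}-m_{\boxplus}(z)|\prec N^{-1/2}$, or equivalently $|[G]_{ii}-\tr G|\prec N^{-1/2}$ via Proposition \ref{keyProp}) and then replaces the FC deterministic targets by the MP ones using Lemma \ref{trPGMP}, the point being that under $p_{\max}\le N^{1/2-\epsilon}$ the discrepancy $|\tr G - m_y|\prec p_{\max}/N$ (and likewise $|1+\omega_t m_{\boxplus}-\tfrac{y_t m_y}{1+m_y}|$, etc.) is $\ll N^{-1/2}$, so the triangle inequality finishes all four estimates. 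You instead re-run the self-consistent argument directly against MP quantities: Lemma \ref{LemSumDiag} with $\mathcal{W}_t\equiv 1$, the identity $\sum_t[P_tG]_{ii}=1+z[G]_{ii}$, the MP tracial inputs of Lemmas \ref{trPGMP}--\ref{trPGQGMP}, and the MP equation $z-y/(1+m_y)=-1/m_y$, followed by the single-entry relations \eqref{PGii}, \eqref{PGPii}, \eqref{WGWjj} for the remaining entries. Your bookkeeping is sound ($\sum_t\sqrt{p_t/N^3}\lesssim N^{-1/2}$, $\sum_t p_tp_{\max}/N^2=yp_{\max}/N\le CN^{-1/2-\epsilon}$, and the cross term $\sum_t y_t[P_tG]_{ii}/(1+m_y)\prec N^{-1/2}$ via \eqref{062030}), so the argument closes; what it buys is a derivation phrased entirely in MP quantities that never compares the FC and MP deterministic targets, at the cost of duplicating the fluctuation-averaging machinery that the paper simply reuses through Proposition \ref{DiagApproByFC}. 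One small inaccuracy in your write-up: the relation you obtain, $1+\bigl(z-\tfrac{y}{1+m_y(z)}\bigr)[G]_{ii}=O_\prec(N^{-1/2})$, is linear in $[G]_{ii}$, so no stability of the quadratic MP equation (your ``derivative $z-y/(1+m)^2$'' remark) is needed; all that is required is that the coefficient equals $-1/m_y(z)$ with $|m_y(z)|\sim 1$ on the contours, which holds.
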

\begin{proof}[Proof of Lemma \ref{DiagApproByMP}]
	By Propositions \ref{keyProp} and \ref{DiagApproByFC}, we can obtain
	\begin{align*}
		|G_{ii}(z) - \tr G(z)| = O_{\prec}\left(\frac{1}{\sqrt{N}} \right).
	\end{align*}
	Replacing $\tr G(z)$ by Lemma \ref{trPGMP}, together with the condition that $p_{\max} \le N^{1/2-\epsilon}$ for some small $\epsilon > 0$, we can obtain (\ref{GiiGapMP}). Similarly, we can prove (\ref{PGiiGapMP}), (\ref{PGPiiGapMP}) and (\ref{QGiiGapMP}). This concludes the proof of Lemma \ref{DiagApproByMP}.
\end{proof}

With the aid of these lemmas, we begin the proof of Theorem \ref{MPCLT}.
Recall that 
	\begin{align*}
		\alpha_t(z) =\E^{\chi} [ \tr( X_t X_t')^{-1} -\tr Q_tG(z)],\qquad
		\beta_t(z) =\frac{1}{1-y_t} \E^{\chi} [\tr G(z) - \tr P_tG(z) ].
	\end{align*}
	Using Lemma \ref{trPGMP}, we have
	\begin{align*}
		\alpha_t(z) = \frac{y_t}{1-y_t} - \frac{y_tm_{y}(z)}{1+m_y(z)} +O_{\prec}\left( \sqrt{\frac{p_t}{N^3}}\vee\frac{p_tp_{\max}}{N^2}  \right),
	\end{align*}
	and similarly,
	\begin{align*}
		\beta_t(z) = \frac{m_{y}(z)}{1-y_t} - \frac{y_tm_{y}(z)}{1+m_y(z)}+ O_{\prec}\left( \sqrt{\frac{p_t}{N^3}}\vee\frac{p_tp_{\max}}{N^2}  \right)
	\end{align*}
	Therefore,
	\begin{align*}
		\frac{\alpha_t(z)}{1 + \alpha_t(z) + \beta_t(z)} =& \frac{\frac{y_t}{1-y_t} - \frac{y_tm_{y}(z)}{1+m_y(z)}  }{\frac{1}{1-y_t}+ \frac{m_{y}(z)}{1-y_t} - \frac{y_tm_{y}(z)}{1+m_y(z)}} +  O_{\prec}\left( \sqrt{\frac{p_t}{N^3}}\vee\frac{p_tp_{\max}}{N^2}  \right)\\
		=& \frac{y_t}{(1+m_y(z))^2} + O_{\prec}\left( \sqrt{\frac{p_t}{N^3}}\vee\frac{p_tp_{\max}}{N^2}  \right),
	\end{align*}
	where we used Taylor expansion and then ignored all the $O_{\prec}(p_t^2/N^2)$ terms.
	Therefore,
	\begin{align*}
		&\left(z_1 - \sum_{s=1}^k\frac{\alpha_s(z_1)}{1+\alpha_s(z_1) + \beta_s(z_1)} \right)^{-1}  = \left( z_1 - \frac{y}{(1+m_y(z_1))^2}  \right)^{-1} + O_{\prec}\left( \frac{1}{\sqrt{N}}   \right)
	\end{align*}
	
	Using the quadratic equation of $m_y(z_1)$ (c.f. (\ref{071000})), we can get
	\begin{align}\label{MPCLTfactor}
		\left( z_1 - \frac{y}{(1+m_y(z_1))^2}  \right)^{-1} = -\frac{m'_{y}(z_1)}{m_{y}(z_1)}.
	\end{align}

	Also, by Lemma \ref{trPGQGMP}, we have,
	\begin{align*}
		&\E^{\chi} \left[ \partial_{z_2}\left( \tr Q_tG(z_1)P_tG(z_2) \right) \right] =\partial_{z_2}\left(\frac{y_tm_y(z_1)m_y(z_2)}{(1+m_y(z_1))(1+m_y(z_2))} \right) + O_{\prec}\left( \sqrt{\frac{p_t}{N^3}}\vee\frac{p_tp_{\max}}{N^2}  \right).
	\end{align*}
	Therefore,
	\begin{align}\label{MPCLTK1}
		&\frac{\E^{\chi} \left[ \partial_{z_2}\left( \tr Q_tG(z_1)P_tG(z_2) \right) \right]}{1 + \alpha_t(z_1) + \beta_t(z_1)} = \frac{\partial_{z_2}\left(\frac{y_tm_y(z_1)m_y(z_2)}{(1+m_y(z_1))(1+m_y(z_2))} \right)}{1 + m_y(z_1)} + O_{\prec}\left( \sqrt{\frac{p_t}{N^3}}\vee\frac{p_tp_{\max}}{N^2}  \right).
	\end{align}
	Similarly, we can obtain
	\begin{align}\label{MPCLTK2}
		&\frac{\E^{\chi} \left[ \partial_{z_2}\left( \tr Q_tG(z_1)G(z_2) \right) \right]}{1 + \alpha_t(z_1) + \beta_t(z_1)} 
		=\frac{\E^{\chi} \left[ \partial_{z_2}\left( \frac{\tr Q_tG(z_1)}{z_1-z_2} - \frac{\tr Q_tG(z_2)}{z_1-z_2}  \right) \right]}{1 + \alpha_t(z_1) + \beta_t(z_1)}\notag \\
	=&\frac{\partial_{z_2}\left( \frac{\frac{y_tm_y(z_1)}{1+m_y(z_1)} -\frac{y_tm_y(z_2)}{1+m_y(z_2)} }{z_1 - z_2}  \right)}{1 + m_y(z_1)} + O_{\prec}\left( \sqrt{\frac{p_t}{N^3}}\vee\frac{p_tp_{\max}}{N^2}  \right).
	\end{align}
	Combining \eqref{MPCLTfactor}, \eqref{MPCLTK1} and \eqref{MPCLTK2}, we can get
	\begin{align*}
		&\mathcal{K}(z_1,z_2) = \frac{m'_{y}(z_1)}{m_{y}(z_1)} \sum_{t=1}^{k}\frac{\partial_{z_2}\left(\frac{y_tm_y(z_1)m_y(z_2)}{(1+m_y(z_1))(1+m_y(z_2))} \right)}{1 + m_y(z_1)}  -  \frac{m'_{y}(z_1)}{m_{y}(z_1)} \sum_{t=1}^{k}\frac{\partial_{z_2}\left( \frac{\frac{y_tm_y(z_1)}{1+m_y(z_1)} -\frac{y_tm_y(z_2)}{1+m_y(z_2)} }{z_1 - z_2}  \right)}{1 + m_y(z_1)}+ O_{\prec}\left( \frac{1}{\sqrt{N}} \right).
	\end{align*}
	By some further calculation, we can simplify the above expression,
	\begin{align}
		&\mathcal{K}(z_1,z_2)= \frac{m'_y(z_1)m'_y(z_2)}{(m_y(z_1)-m_y(z_2))^2} - \frac{1}{(z_1-z_2)^2}-\frac{ym'_y(z_1)m'_y(z_2)}{(1+m_y(z_1))^2(1+m_y(z_2))^2}+ O_{\prec}\left( \frac{1}{\sqrt{N}} \right). \label{0629100}
	\end{align}
	Adding  the parts that $|\Im z| \le N^{-K}$ of the contours back to the integral of the main term in the RHS of (\ref{0629100})  (with negligible error)  gives the variance in Theorem \ref{MPCLT}. 
	
	For the expectation, we can calculate it as follows. Let
	\begin{align}\label{EtE}
		E_t(z) : = \E^{\chi} \left[ \tr P_tG(z) \right] - \frac{Ny_tm_y(z)}{1+m_y(z)}, \quad E(z) := \E^{\chi} \left[\tr G(z) \right] - Nm_{y}(z).
	\end{align}
	By Lemma \ref{trPGMP}, we immediately obtain
	\begin{align*}
		E_t(z) = O_{\prec}\left( \sqrt{\frac{p_t}{N}} \vee \frac{p_tp_{\max}}{N} \right), \quad E(z) = O_{\prec}\left({p_{\max}}\right).
	\end{align*}
	Our task is to give an explicit expression for $E(z)$. In the following estimation, since only one parameter $z$ is involved, we use the shorthand notation $G:=G(z)$ for brevity. 
	
	Following the same procedure as we did in the calculation of expectation in Theorem \ref{FreeCLT}, we start from the cumulant expansion of $\tr P_tG$,
	\begin{align*}
		\E ^{\chi}\left[ \tr P_t G \right] =& \frac{1}{N}\sum_{ij}^{(t)}\E^{\chi} \left[\partial_{t,ji}[W_tG]_{ji} \right] +  \sum_{ij}^{(t)}\frac{\kappa_3^{t,j}}{2N^{\frac32}}\E^{\chi} \left[\partial^2_{t,ji}[W_tG]_{ji} \right]+ \sum_{ij}^{(t)}\frac{\kappa_4^{t,j}}{6N^2}\E ^{\chi}\left[\partial^{3}_{t,ji}[W_tG]_{ji} \right] +  O_{\prec}\left(\frac{p_t}{N^{\frac{3}{2}}} \right).
		\end{align*}
		Here the error term can be bounded similarly to the error estimates of $\mathsf{E}_a, a=1,2,3$ in Lemma \ref{Error in CLT}, we omit the details.
		With a slight abuse of notations, we use $J_{t1}, J_{t2}$ and $J_{t3}$ to denote each term, which reads
	\begin{align*}
		\E^{\chi} \left[ \tr P_t G \right]=&: J_{t1} + J_{t2} + J_{t3} + O_{\prec}\left(\frac{p_t}{N^{\frac{3}{2}}} \right).
	\end{align*}
	For $J_{t1}$, similar to (\ref{062550}), we have
	\begin{align*}
		J_{t1} =& \E^{\chi} \left[ \tr \left(\bbX_t\bbX_t' \right)^{-1}  - \tr \bbQ_t\bbG\right]\E^{\chi} \left[  \Tr \bbG - \Tr \bbP_t\bbG  \right]+ \E^{\chi} \left[ \tr\bbQ_t\bbG\bbP_t\bbG -\tr\bbQ_t\bbG^2-\tr Q_tG\right] +  O_{\prec}\left(\frac{p_t}{N^{\frac{3}{2}}} \right).
	\end{align*}
	For $J_{t2}$, we can apply similar arguments as we did in the calculation of $\mathsf{I}_{t3}$ in the proof of Lemma \ref{Error in CLT}, which gives
	\begin{align*}
		J_{t2} = O_{\prec}\left( \frac{p_t}{N^{\frac76}} \right).
	\end{align*}
	For $J_{t3}$, we only need to consider the terms consisting of four diagonal entries, which reads,
	\begin{align*}
		J_{t3} = \sum_{ij}^{(t)}\frac{\kappa_4^{t,j}}{{N}^{2}}\E^{\chi} \Big[  [\left(\bbX_t\bbX_t' \right)^{-1}]_{jj}^2\left(  [{I} - \bbP_t]_{ii} [(\bbP_t-{I})\bbG]_{ii} - [({I} - \bbP_t)\bbG]_{ii}^2 \right) \Big] + O_{\prec}\left( {\frac{p_t}{N^2}} \right).
	\end{align*}
	Here the error term comes from the estimation for the summation of off-diagonal entries, which is identical to the estimation of $\mathsf{I}_{t42}$ in the proof of Lemma \ref{Error in CLT}. Combining $J_{t1}$, $J_{t2}$ and $J_{t3}$, we get
	\begin{align}\label{EtrPG2MP}
		\E^{\chi} \left[ \Tr P_tG \right] = & \E^{\chi} \left[ \tr \left(\bbX_t\bbX_t' \right)^{-1}  - \tr \bbQ_t\bbG\right]\E^{\chi} \left[  \Tr \bbG - \Tr \bbP_t\bbG  \right]+ \E^{\chi} \left[ \tr\bbQ_t\bbG\bbP_t\bbG -\tr\bbQ_t\bbG^2-\tr Q_tG\right] \notag \\
	&+ \sum_{ij}^{(t)}\frac{\kappa_4^{t,j}}{{N}^{2}}\E^{\chi} \Big[  [\left(\bbX_t\bbX_t' \right)^{-1}]_{jj}^2\left(  [{I} - \bbP_t]_{ii} [(\bbP_t-{I})\bbG]_{ii} - [({I} - \bbP_t)\bbG]_{ii}^2 \right) \Big] +O_{\prec}\left( \frac{p_t}{N^{\frac76}} \right).
	\end{align}  
	Performing cumulant expansion up to $\kappa_{4}^{t,j}$ term on $\E^{\chi}[\Tr P_t] $ and $\E^{\chi}[\Tr P_tGP_t]$, and then using Lemma \ref{trPGMP} and Lemma \ref{DiagApproByMP} to replace the random quantities by deterministic quantities, together with the condition that ${p_{\max}} \le N^{1/2-\epsilon}$ for some small $\epsilon > 0$, we have
\begin{align}\
	&\E^{\chi} \left[\Tr \left(\bbX_t\bbX_t' \right)^{-1} \right] = \frac{\E^{\chi}[\Tr P_t]}{1-y_t} + \frac{y_t}{(1-y_t)^2}+  \sum_{ij}^{(t)}\frac{\kappa_4^{t,j} }{N^2(1-y_t)} +{ O_{\prec}\left(\frac{p_t}{N^{\frac76}}\right)},\label{EtrXXMP}\\
	&\E^{\chi} \left[\Tr \bbQ_t\bbG\right] = \frac{\E^{\chi}[\Tr P_tGP_t]}{1-y_t-\frac{1}{N}} - \sum_{ij}^{(t)}\frac{\kappa_4^{t,j}m_y(z)}{N^2(1-y_t)^2}+ { O_{\prec}\left(\frac{p_t}{N^{\frac76}}\right)},\label{EtrQGMP}
\end{align}
and 
\begin{align}\label{k4EMP}
	&\sum_{ij}^{(t)}\frac{\kappa_4^{t,j}}{{N}^{2}}\E^{\chi} \bigg[  [\left(\bbX_t\bbX_t' \right)^{-1}]_{jj}^2\left(  [{I} - \bbP_t]_{ii} [(\bbP_t-{I})\bbG]_{ii} - [({I} - \bbP_t)\bbG]_{ii}^2 \right) \bigg]\notag\\
	 =& -\sum_{ij}^{(t)}\frac{\kappa_4^{t,j}\left(m_y(z) + m_y^2(z) \right)}{N^2(1-y_t)^2} + { O_{\prec}\left(\frac{p_t}{N^{\frac32}}\right)}.
\end{align}
Plugging \eqref{EtE}, \eqref{EtrXXMP}, \eqref{EtrQGMP} and \eqref{k4EMP} into \eqref{EtrPG2MP}, after some basic algebra, $\kappa_4^{t,j}$ terms will be cancelled out, and we get
\begin{align*}
&\frac{1}{N}\frac{E_t^2(z)}{1-y_t}-E_t(z)\left(1+\frac{m_y(z)}{1-y_t}+\frac{y_t(1-m_y(z))}{(1-y_t)(1+m_y(z))}+\frac{E(z)}{N(1-y_t)}\right)\\
&+\frac{Ny_t^2m_y^2(z)}{(1+m_y(z))^2(1-y_t)}+\frac{y_t E(z)}{(1+m_y(z))(1-y_t)}-\partial_z \frac{y_tm_y(z)}{1+m_y(z)}+\frac{y_tm_y^2(z)}{(1+m_y(z))^2}=O_{\prec}\left( \frac{p_t}{N^{\frac76}} \right).
\end{align*}
And we know that
\begin{align*}
	&\frac{1}{N}\frac{E_t^2(z)}{1-y_t} = O_{\prec}\left( \frac{p_t^2{p_{\max}^2}}{N^3} \right), \qquad 
	\frac{1}{N}\frac{E_t(z)E(z)}{1-y_t} = O_{\prec}\left( \frac{p_t{p_{\max}^2}}{N^2} \right), \\
	&y_tE_t(z)  = O_{\prec}\left( \frac{p_t^2{p_{\max}}}{N^2}  \right),\qquad 
	y_t^2E(z) = O_{\prec}\left( \frac{p_t^2{p_{\max}}}{N^2}  \right).
\end{align*}
Together with the condition that $p_{\max} \le N^{1/2-\epsilon}$ for some small $\epsilon > 0$, we have
\begin{align*}
	&-E_t(z)\left(1+m_y(z)\right)+\frac{Ny_t^2m_y^2(z)}{(1+m_y(z))^2}+\frac{y_t E(z)}{(1+m_y(z))}-\partial_z \frac{y_tm_y(z)}{1+m_y(z)}+\frac{y_tm_y^2(z)}{(1+m_y(z))^2}=O_{\prec}\left( \frac{p_t}{N^{7/6}} \right).
\end{align*}
Dividing the coefficient of $E_t$ on both sides, and then summing over t, we have
\begin{align*}
	\sum_{t=1}^{k} E_t(z) =& \sum_{t=1}^k \frac{Ny_t^2m_y^2(z)}{(1+m_y(z))^2} + \frac{yE}{(1+m_y(z))^2}  - \partial_z \frac{ym_y(z)}{(1+m_y(z))^2} +\frac{ym_y^2(z)}{(1+m_y(z))^3} 
\end{align*}
Notice that by the resolvent identity, we have
\begin{align*}
	\sum_{t=1}^kE_t(z) = zE(z).
\end{align*}
Hence, 
\begin{align}
	 E(z) =& \left( z - \frac{y}{(1+m_y(z))^2} \right)^{-1} \left(\sum_{t=1}^k \frac{Ny_t^2m_y^2(z)}{(1+m_y(z))^2} - \partial_z \frac{ym_y(z)}{(1+m_y(z))^2} +\frac{ym_y^2(z)}{(1+m_y(z))^3}  \right)+ O_{\prec}\left( \frac{1}{N^\frac{1}{6}} \right)\notag\\
	 =&-\sum_{t=1}^k\frac{Ny_t^2m_y(z) m_y'(z)}{(1+m_y(z))^3}+\frac{y(m_y'(z))^2}{m_y(z)(1+m_y(z))^3}-\frac{ym_y(z)m_y'(z)}{(1+m_y(z))^3} +O_{\prec}\left( \frac{1}{N^\frac{1}{6}} \right). \label{0629101}
\end{align}
	Adding  the parts that $|\Im z| \le N^{-K}$ of the contours back to the integral of the main term in the RHS of (\ref{0629101})  (with negligible error) gives the expectation in Theorem \ref{MPCLT}.

\section{Removing the sample mean: Proof of Theorem \ref{RemoveSampleMean}}\label{ProofofRMS}
In this section, we discuss the case when sample mean does not equal to $0$. To subtract the sample mean, we first introduce the following projection matrix,
\begin{align}
	A := \mathrm{I} - \frac{1}{N}\mathds{1}\mathds{1}'. \label{070410}
\end{align}
Therefore, the sample matrix becomes $\tilde{X}_t := X_tA$. Similarly,
let 
\begin{align*}
	&\tilde{P}_t := AX_t'(X_tAX_t')^{-1}X_tA,\quad \tilde{Q}_t := AX_t'(X_tAX_t')^{-2}X_tA,\quad\tilde{W}_t := (X_tAX_t')^{-1}X_tA.
\end{align*}
We would like to study the LSS of 
\begin{align*}
	\tilde{H} := \sum_{t=1}^{k} \tilde{P}_t.
\end{align*}
The corresponding Green function is denoted by 
\begin{align*}
	\tilde{G}\equiv\tilde{G}(z) := (\tilde{H}-z)^{-1}.
\end{align*}
The proof of Theorem \ref{RemoveSampleMean} follows almost the same procedure as the proof of Theorem \ref{FreeCLT} with minor modification. Therefore, we only show the key steps, i.e, (i) how the $N-1$ factor shows up, (ii) how we do the cumulant expansion for terms like $[\tilde{P}_t\tilde{G}]_{ii}$, which does not have $X_t$ as its leading factor, (iii) some additional technical treatments. 
%
%
%
%

Notice that for all $t \in [\![k]\!]$, we have
\begin{align*}
	 \tr \tilde{P}_t = \tr X_t'(X_tAX_t')^{-1}X_tA,\quad
	 \tr \tilde{P}_t\tilde{G}\tilde{P}_t =\tr X_t'(X_tAX_t')^{-1}X_tA\tilde{G}\tilde{P}_t,\quad
	 \tr \tilde{P}_t\tilde{G} =\tr X_t'(X_tAX_t')^{-1}X_tA\tilde{G}A.
\end{align*}
Using the recursive moment estimates similar to the proof of Lemma \ref{Lemma error estimates}, one can show that
\begin{align}
& \tr \tilde{P}_t = \tr (X_tAX_t')^{-1}\left( \tr A - \tr \tilde{P}_t \right)+O_{\prec}\left( \sqrt{\frac{p_t}{N^3}} \right)\label{trPSM},\quad t \in [\![k]\!],\\
	&\tr \tilde{P}_t\tilde{G}\tilde{P}_t = \tr \tilde{Q}_t\tilde{G}A \left( \tr A - \tr \tilde{P}_t \right) +O_{\prec}\left( \sqrt{\frac{p_t}{N^3}} \right),\quad t \in [\![k]\!],\label{trPGPSM}\\
	&\tr \tilde{P}_t\tilde{G} = \left(\tr (X_tAX_t')^{-1} - \tr \tilde{Q}_t\tilde{G}A  \right)\left( \tr \tilde{G}A - \tr \tilde{P}_t\tilde{G}A \right) +O_{\prec}\left( \sqrt{\frac{p_t}{N^3}} \right),\quad t \in [\![k]\!].\label{trPGSM}
\end{align}
By \eqref{trPGPSM} and \eqref{trPSM} together with the trivial identity $\tr A = (N - 1)/N, \tr \tilde{P}_t = p_t/N$, we have for all $t \in [\![k]\!]$,
\begin{align*}
	&\tr (X_tAX_t')^{-1} = \frac{\tilde{y}_t}{1-\tilde{y}_t} + O_{\prec}\left( \sqrt{\frac{p_t}{N^3}} \right),\quad\tr \tilde{Q}_t\tilde{G}A = \frac{1}{(1-\tilde{y}_t)}\frac{\Tr \tilde{P}_tGA}{N-1} +O_{\prec}\left( \sqrt{\frac{p_t}{N^3}} \right).
\end{align*}
where $\tilde{y}_t := p_t / (N-1)$. Plugging the above two estimates into \eqref{trPGSM}, we get
\begin{align}
	&\frac{\Tr \tilde{P}_t\tilde{G}A}{N-1} = \left( \frac{\tilde{y}_t}{1-\tilde{y_t}} - \frac{1}{(1-\tilde{y}_t)}\frac{\Tr \tilde{P}_t\tilde{G}A }{N-1}\right)\left( \frac{\Tr \tilde{G}A}{N-1} - \frac{\Tr \tilde{P}_t\tilde{G}A }{N-1}\right) + O_{\prec}\left( \sqrt{\frac{p_t}{N^3}} \right).\quad t \in [\![k]\!]. \label{070602}
\end{align}
Define the new approximation subordination functions $\tilde{\omega}_t^c(z)$ as 
\begin{align}
	\tilde{\omega}_t^c(z) = z - \frac{\sum_{s \neq t}\frac{\Tr \tilde{P}_s\tilde{G}A}{N-1}}{\tilde{m}_N(z)},\quad t \in [\![k]\!], \label{070401}
\end{align}
where $\tilde{m}_N(z) := \Tr \tilde{G}A / (N-1)$. Therefore, by the definition of $\tilde{\omega}_t^c(z)$ and the identity $\sum_{t=1}^k \Tr \tilde{P}_t\tilde{G}A = \Tr \tilde{H}\tilde{G}A = \Tr (\tilde{H}-z)\tilde{G}A +z\Tr \tilde{G}A = N-1+z\Tr \tilde{G}A $, we have  
\begin{align*}
	\sum_{t=1}^k \tilde{\omega}_t^c(z) = z - \frac{k-1}{\tilde{m}_N(z)}.
\end{align*}
Another direct consequence from (\ref{070401}) is that we can rewrite $\Tr \tilde{P}_t\tilde{G}A / (N-1)$ as $ 1 + \tilde{\omega}_t^c(z)\tilde{m}_N(z)$ for all $t \in [\![ k ]\!]$, which together with (\ref{070602}) gives
\begin{align}
	1 + \tilde{\omega}_t^c(z)\tilde{m}_N(z) = \left( \frac{\tilde{y}_t}{1-\tilde{y_t}} - \frac{1 +\tilde{\omega}_t^c(z)\tilde{m}_N(z)  }{1-\tilde{y}_t}\right)\left( \tilde{m}_N(z) - (1 +\tilde{\omega}_t^c(z)\tilde{m}_N(z) )\right) + O_{\prec}\left( \sqrt{\frac{p_t}{N^3}} \right),\quad t \in [\![k]\!]. \label{070404}
\end{align}
We see that the above equation is almost identical to (\ref{070403}) in the proof of Proposition \ref{keyProp}, but with $y_t$, $\omega_t^c(z)$ and $m_N(z)$ replaced by $\tilde{y}_t$, $\tilde{\omega}_t^c(z)$ and $\tilde{m}_N(z)$, respectively. As a result,  (\ref{070401}) and (\ref{070404}) will form a perturbed system of (\ref{Phi2}) with $\mu_t\equiv \tilde{\mu}_t := \tilde{y}_t\delta_{\{1\}}+(1-\tilde{y}_t)\delta_{\{0\}}, t \in [\![ k]\!]$. By the stability analysis as we did in the proof of Proposition \ref{keyProp}, we get
\begin{align*}
	\left|\tilde{\omega}_t^c(z) - \tilde{\omega}_t(z)\right| = O_{\prec}\left( \frac{1}{N} \right), t \in [\![ k]\!],
\end{align*}
where $\tilde{\omega}_t(z)$ is the solution of \eqref{Phi2} with $\mu_t\equiv \tilde{\mu}_t$. This further implies 
\begin{align*}
	\left|\tilde{m}_N(z)- \tilde{m}_{\boxplus}(z) \right| = O_{\prec}\left( \frac{1}{N} \right).
\end{align*}
where 
$
	\tilde{m}_{\boxplus}(z) := m_{\mu_t}(\tilde{\omega_t}(z)), t\in [\![k]\!]
$
is the Stieltjes transform of $\tilde{\mu}_{\boxplus}:= \tilde{\mu}_1\boxplus\cdots\boxplus\tilde{\mu}_k$. 

Next, we explain how we do the cumulant expansion for terms like $[\tilde{P}_t\tilde{G}]_{ii}$, which does not have $X_t$ as its leading factor and we cannot use the cyclicity of trace. We take the estimate for $\sum_{t=1}^{k}[\tilde{P}_t\tilde{G}]_{ii}$ as an example. Since
\begin{align}
	\sum_{t=1}^{k}[\tilde{P}_t\tilde{G}]_{ii} =& \sum_{t=1}^{k}[X_t'(X_tAX_t')^{-1}X_tA\tilde{G}]_{ii} - \frac{1}{N}\sum_{t=1}^{k}[\mathds{1}\mathds{1}'X_t'(X_tAX_t')^{-1}X_tA\tilde{G}]_{ii} \notag \\
	=& \sum_{t=1}^{k}[X_t'(X_tAX_t')^{-1}X_tA\tilde{G}]_{ii} -\frac{1}{N}\sum_{t=1}^{k}\sum_{u=1}^{N}[X_t'(X_tAX_t')^{-1}X_tA\tilde{G}]_{ui} \label{070406}
\end{align}
For the first term we can perform cumulant expansion as usual. For the second term, let $M :=\sum_{t=1}^{k}X_t'(X_tAX_t')^{-1}X_t,$
we have
\begin{align*}
	\Big|\frac{1}{N}\sum_{t=1}^{k}\sum_{j=1}^{N}[X_t'(X_tAX_t')^{-1}X_tA\tilde{G}]_{ji}\Big|   \le  \frac{1}{N} \sqrt{\sum_{u=1}^{N}|[MA\tilde{G}]_{ui}|^{2}N} = \frac{1}{\sqrt{N}}\sqrt{[\tilde{G}^{*}AMMA\tilde{G}]_{ii}} \prec  \frac{1}{\sqrt{N}},
\end{align*}
where in the first step we used Cauchy-schwarz, and in the last step we used the fact that $\|M \|, \| \tilde{G}A\| \prec 1$.
Therefore, the second term in (\ref{070406}) can be viewed as the error term. Thus, similar to the proof of Lemma \ref{LemSumDiag}, we have for any $\alpha_t$ satisfies $\sup_t |\alpha_t| < C$,
\begin{align}
	\sum_{t=1}^{k}\alpha_t[\tilde{P}_t\tilde{G}]_{ii} = \sum_{t=1}^k\alpha_t(\tr (X_tAX_t')^{-1} - \tr \tilde{Q}_t\tilde{G})([\tilde{G}A]_{ii} - [\tilde{P}_t\tilde{G}]_{ii}) + O_{\prec}\left( \frac{1}{\sqrt{N}} \right). \label{070409}
\end{align} 
We see that from (\ref{070409}), we have $[\tilde{G}A]_{ii}$ instead of $[\tilde{G}]_{ii}$. This is another technical thing that we have to deal with. Actually, $[\tilde{G}A]_{ii}$ and  $[\tilde{G}]_{ii}$ share the same approximation, up to negligible error. In general, we have the following lemma.
\begin{lemma}\label{070412}
	For any $m\in \mathbb{N}$, let $U \in \mathbb{C}^{m \times N}$ be a random matrix with $\|U \| \prec 1$. Then we have 
	\begin{align*}
		|[UA]_{ui} - [U]_{ui}| \prec N^{-\frac{1}{2}}.
	\end{align*}
	for any $u \in [\![ m ]\!]$ and $i \in [\![ N ]\!]$.
\end{lemma}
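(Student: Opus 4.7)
The plan is to exploit the explicit rank-one structure of $A$ together with the hypothesized operator-norm bound on $U$. Recall from \eqref{070410} that $A=\mathrm{I}-\frac{1}{N}\mathds{1}\mathds{1}'$, so entrywise $A_{ki}=\delta_{ki}-\frac{1}{N}$. Substituting this into $[UA]_{ui}=\sum_{k=1}^N U_{uk}A_{ki}$ immediately gives the identity
\begin{align*}
[UA]_{ui}-[U]_{ui}=-\frac{1}{N}\sum_{k=1}^N U_{uk}=-\frac{1}{N}\,\mathbf{e}_u'\,U\,\mathds{1},
\end{align*}
where $\mathbf{e}_u$ is the $u$-th standard basis vector in $\mathbb{C}^m$. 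So the whole problem reduces to bounding the single bilinear form $\mathbf{e}_u'U\mathds{1}$.

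Next I would apply Cauchy--Schwarz in the form $|\mathbf{e}_u'U\mathds{1}|\le \|\mathbf{e}_u\|_2\,\|U\|\,\|\mathds{1}\|_2=\|U\|\sqrt{N}$. Since by hypothesis $\|U\|\prec 1$, Lemma \ref{prop_prec}(i) (closure of $\prec$ under multiplication) yields $|\mathbf{e}_u'U\mathds{1}|\prec\sqrt{N}$, and dividing by $N$ gives $|[UA]_{ui}-[U]_{ui}|\prec N^{-1/2}$, as required. The estimate is uniform in $u\in[\![m]\!]$ and $i\in[\![N]\!]$ because the bound on the bilinear form does not depend on either index.

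There is essentially no obstacle here: the statement is a purely deterministic consequence of $\|U\|\prec 1$ once one observes that multiplication on the right by $A$ subtracts the row average, and row averages of matrices with bounded operator norm are automatically $O_\prec(N^{-1/2})$ per entry by the $\sqrt{N}$ norm of the all-ones vector. The only thing worth double-checking is that the notion of $\prec$ for the random matrix $U$ transfers correctly through the scalar inequality; this is handled by Lemma \ref{prop_prec}(i), noting that $\sqrt{N}$ is deterministic.
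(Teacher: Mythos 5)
Your proof is correct and follows essentially the same route as the paper: both reduce the difference to $\frac{1}{N}$ times the row sum $\sum_k U_{uk}$ and bound it by $\sqrt{N}\,\|U\|$ via Cauchy--Schwarz (the paper writes this as $\frac{1}{\sqrt{N}}\sqrt{[UU']_{uu}}$, which is the same estimate), then invokes $\|U\|\prec 1$. No gaps.
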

\begin{proof}
	By the definition of $A$ in (\ref{070410}), we have
	\begin{align*}
		|[UA]_{ui} - [U]_{ui}| = \frac{1}{N}|[U\mathds{1}\mathds{1}']_{ui}| = \frac{1}{N}\Big| \sum_{i=1}^N[U]_{ui}\Big| \le \frac{1}{N}\sqrt{\sum_{i=1}^N|[U]_{ui}|^2N} =\frac{1}{\sqrt{N}}\sqrt{[UU']_{uu}} \prec  \frac{1}{\sqrt{N}},
	\end{align*}
	where in the third step we used Cauchy-schwarz, and in the last step we used the condition that $\|U \|\prec 1$.
\end{proof}
By Lemma \ref{070412} together with the fact that $\sum_{t=1}^k\alpha_t(\tr (X_tAX_t')^{-1} - \tr \tilde{Q}_t\tilde{G}) = O_{\prec}(1)$ , (\ref{070409}) can be rewritten as
\begin{align*}
	\sum_{t=1}^{k}\alpha_t[\tilde{P}_t\tilde{G}]_{ii} = \sum_{t=1}^k\alpha_t(\tr (X_tAX_t')^{-1} - \tr \tilde{Q}_t\tilde{G})([\tilde{G}]_{ii} - [\tilde{P}_t\tilde{G}]_{ii}) + O_{\prec}\left( \frac{1}{\sqrt{N}} \right). 
\end{align*}
This gives $| [\tilde{G}]_{ii} - \tilde{m}_{\boxplus}(z) |\prec N^{-1/2}$, and the other estimates for diagonal entries can be obtain similarly. 

The last technical issue we would like to point out is that in the estimation of $\mathsf{I}_{t1}$ in (\ref{061301}), the term $\tr Q_tG^2$ is rewritten as $\partial_z\tr Q_tG$. Then by Cauchy's integral formula, we can use the estimate for $\tr Q_tG$ to obtain the estimate for $\tr Q_tG^2$. However, in the current case, this term becomes $\tr \tilde{Q}_t\tilde{G}A\tilde{G}$, which prevent us from using the identity $\tilde{G}^2 = \partial_z\tilde{G}$. This issue can be handled by the following lemma.
\begin{lemma}\label{070420}
	For any bounded $\mathcal{W}_t\in \mathbb{C}$ such that $\sup_t |\mathcal{W}_t| < C$ for some strictly positive constant $C$, $z_1$ and $z_2$ such that $\|\tilde{G}(z_1) \|, \|\tilde{G}(z_2) \| \prec 1$, we have
	\begin{align*}
		\sum_{t=1}^{k} \mathcal{W}_t\tr \tilde{Q}_t\tilde{G}(z_1)A\tilde{G}(z_2) = \sum_{t=1}^{k} \mathcal{W}_t\tr \tilde{Q}_t\tilde{G}(z_1)\tilde{G}(z_2) + O_{\prec}\left( \frac{1}{N} \right).
	\end{align*}
\end{lemma}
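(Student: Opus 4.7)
My approach rests on a single structural observation: $\mathds{1}$ is an exact null vector of $\tilde{H}$. Since $A\mathds{1} = \mathds{1} - \frac{1}{N}\mathds{1}(\mathds{1}'\mathds{1}) = 0$, each projector automatically kills $\mathds{1}$:
\[
\tilde{P}_t \mathds{1} \;=\; AX_t'(X_tAX_t')^{-1}X_t(A\mathds{1}) \;=\; 0, \qquad t\in[\![ k ]\!],
\]
and therefore $\tilde{H}\mathds{1} = 0$. For any $z$ lying on the working contours, where $\|\tilde{G}(z)\|\prec 1$ implies $z$ is bounded away from the spectrum of $\tilde{H}$ (in particular away from the eigenvalue $0$), this identity upgrades to
\[
\tilde{G}(z)\mathds{1} \;=\; -\mathds{1}/z.
\]
By the same $A\mathds{1}=0$ reasoning, $\tilde{Q}_t \mathds{1} = AX_t'(X_tAX_t')^{-2}X_t(A\mathds{1}) = 0$ for every $t$.

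The plan is to rewrite the difference between the two sides using $A - I = -\frac{1}{N}\mathds{1}\mathds{1}'$ and then collapse it via these null-vector identities. Specifically, I would compute
\[
\sum_{t=1}^{k} \mathcal{W}_t\,\tr\, \tilde{Q}_t\tilde{G}(z_1)(A-I)\tilde{G}(z_2)
\;=\; -\frac{1}{N^2} \sum_{t=1}^{k} \mathcal{W}_t\, \mathds{1}' \tilde{G}(z_2)\, \tilde{Q}_t\, \tilde{G}(z_1)\mathds{1},
\]
where the passage to the quadratic form in $\mathds{1}$ uses cyclicity of the trace and the convention $\tr = N^{-1}\Tr$. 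Applying $\tilde{G}(z_1)\mathds{1} = -\mathds{1}/z_1$ reduces the inner factor to $\tilde{Q}_t \tilde{G}(z_1)\mathds{1} = -\tilde{Q}_t\mathds{1}/z_1 = 0$, so every summand vanishes identically. Thus the asserted difference is not merely $O_\prec(N^{-1})$; it is exactly zero.

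Since this argument is purely algebraic, there is essentially no obstacle; the only mild point to verify is that $z_1, z_2$ are bounded away from $0$ on the relevant contours, which is guaranteed by the construction in (\ref{contour12}) together with Lemma~\ref{Gbound} (which gives $\|\tilde{G}(z_i)\|\prec 1$). The bound $\mathcal{W}_t = O(1)$ plays no role beyond ensuring the sum is well-defined, so the conclusion holds even without it. The lemma's stated error $O_\prec(N^{-1})$ is therefore a considerably weaker form of what the proof actually delivers, but it is the form needed for the downstream application in Theorem~\ref{RemoveSampleMean}.
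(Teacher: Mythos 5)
Your proof is correct, and it takes a genuinely different route from the paper's. The paper treats the difference as a rank-one correction to be \emph{estimated}: writing $A-I=-\frac1N\mathds{1}\mathds{1}'$, it bounds the resulting quadratic form by $\frac{1}{N^2}\,\mathds{1}'\tilde{G}(z_2)\tilde{M}\tilde{G}(z_1)\mathds{1}\leq \frac1N\|\tilde{G}(z_2)\tilde{M}\tilde{G}(z_1)\|$ with $\tilde{M}:=\sum_t\mathcal{W}_t\tilde{Q}_t$, and uses $\|\tilde{M}\|\prec1$ (obtained as in (\ref{SumQt})) together with $\|\tilde{G}(z_i)\|\prec1$ to get $O_\prec(N^{-1})$ -- this is where the hypotheses $\sup_t|\mathcal{W}_t|<C$ and the Green-function norm bounds actually enter. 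You instead exploit the exact algebraic structure: since $A\mathds{1}=0$, each $\tilde{P}_t$ and each $\tilde{Q}_t$ annihilates $\mathds{1}$, hence $\tilde{H}\mathds{1}=0$ and $\tilde{G}(z)\mathds{1}=-\mathds{1}/z$ whenever the resolvent exists (and $z=0$ is automatically excluded because $0$ is always an eigenvalue of $\tilde{H}$), so $\tilde{Q}_t\tilde{G}(z_1)\mathds{1}=0$ and the difference vanishes identically. Your argument is stronger (exact identity rather than $O_\prec(N^{-1})$), needs neither the boundedness of $\mathcal{W}_t$ nor the norm bounds on $\tilde{G}$, and is purely deterministic; the paper's argument is less sharp but more robust, in that it follows the same rank-one-perturbation template used elsewhere (e.g.\ Lemma \ref{070412}) and would survive if the centering matrix did not exactly annihilate the perturbing vector. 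Either proof suffices for the downstream use in Theorem \ref{RemoveSampleMean}.
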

\begin{proof}
	By the definition of $A$ in (\ref{070410}), we have
	\begin{align*}
		\sum_{t=1}^{k} \mathcal{W}_t\tr \tilde{Q}_t\tilde{G}(z_1)A\tilde{G}(z_2) - \sum_{t=1}^{k}\mathcal{W}_t\tr \tilde{Q}_t\tilde{G}(z_1)\tilde{G}(z_2) = \frac{1}{N}\sum_{t=1}^{k}\mathcal{W}_t\tr \tilde{Q}_t\tilde{G}(z_1)\mathds{1}\mathds{1}'\tilde{G}(z_2)
	\end{align*}
	Denoted by $\tilde{M} := \sum_{t=1}^k\mathcal{W}_t\tilde{Q}_t$. Similar to (\ref{SumQt}), we have $\|\tilde{M} \| \prec 1$. Therefore,
	\begin{align*}
		\frac{1}{N}\sum_{t=1}^{k}\mathcal{W}_t\tr \tilde{Q}_t\tilde{G}(z_1)\mathds{1}\mathds{1}'\tilde{G}(z_2) = \frac{1}{N^2}\sum_{i,u}^N[\tilde{G}(z_2)\tilde{M}\tilde{G}(z_1)]_{ui}\le  \frac{1}{N} \|\tilde{G}(z_2)\tilde{M}\tilde{G}(z_1) \| \prec  \frac{1}{N},
	\end{align*}
	which completes the proof.
\end{proof}
The foregoing lemma indicates that the error of dropping the projection $A$ after summation over $t$ is still negligible. Therefore, we can again use the identity $\tilde{G}^2 = \partial_z\tilde{G}$ to transform the estimate for $\tr \tilde{Q}_t\tilde{G}^2$ to the estimate for $\tr \tilde{Q}_t\tilde{G}$. In addition, Lemma \ref{070420} can be also applied to the estimation of $\mathsf{I}_{t2}$ (c.f. (\ref{061351})) , where $\tr Q_tG(z_1)G(z_2)$ becomes $\tr \tilde{Q}_t\tilde{G}(z_1)A\tilde{G}(z_2)$ in the current case.

%
%

For the estimation of the expectation, we let
\begin{align*}
	\tilde{E}_t(z) : = \E^{\chi} [ \Tr \tilde{P}_t\tilde{G} ] - (N-1)(1+\tilde{\omega}_t(z)\tilde{m}_{\boxplus}(z)), \quad \tilde{E}(z) := \E^{\chi} [\Tr \tilde{G}A ] - (N-1)\tilde{m}_{\boxplus}(z).
\end{align*}
Then following the same procedure as the calculation fo the expectation in Theorem \ref{FreeCLT}, we have the approximation for $\tilde{E}(z)$. By the trivial identity $N + z\Tr \tilde{G}=\sum_{t=1}^k \Tr \tilde{P}_t\tilde{G} = \sum_{t=1}^k \Tr \tilde{P}_t\tilde{G}A = N-1+z\Tr \tilde{G}A $, we have
\begin{align*}
	\E^{\chi} \big[\Tr\tilde{G} \big] = \E^{\chi}  \big[\Tr \tilde{G}A \big] -\frac{1}{z} + O_{\prec}(N^{-D}) =  (N-1)\tilde{m}_{\boxplus}(z) + \tilde{E}(z)-\frac{1}{z} .
\end{align*}
This gives the expectation in Theorem \ref{RemoveSampleMean}.

\section{Relaxing the moment condition: Proof of Theorem \ref{mainth22}}\label{Proofofmainth22}
In this section, we use a Green function comparison argument to relax the moment condition. We only consider the relaxation of Theorem \ref{FreeCLT}, and the others can be done similarly. We start with the following lemma \cite{he2020mesoscopic}.
\begin{lemma}\label{truncation}
	Fix $m > 2$ and let $x$ be a real random variable, with absolutely continuous law, satisfying 
	\begin{align*}
		\E [x] = 0, \quad \E [x^2] = \sigma^2, \quad \E [|x|^m] \le C_m,
	\end{align*}
	for some constant $C_m > 0$. Let $\lambda > 2\sigma$. Then there exists a real random variable $y$ that satisfies
	\begin{align*}
		\E [y] = 0, \quad \E [y^2] = \sigma^2, \quad |y| \le \lambda,\quad \mathbb{P}\left( x \neq y \right) \le 2C_m\lambda^{-m}.
	\end{align*}
	In particular, $\E [| y|^m] \le 3C_m$. Moreover, if $m>4$ and $\sigma = 1$, then there exists a real random variable $z$ matching the first four moments of $y$, and satisfies $|z| \le 6C_m$.
\end{lemma}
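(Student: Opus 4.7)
The lemma consists of two essentially independent parts; I address them in turn.

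For the first part, the plan is to construct the law $\nu$ of $y$ as a probability measure on $[-\lambda, \lambda]$ that is close to the law $\mu$ of $x$ in total variation and matches its first two moments, then couple $x$ and $y$ via the maximal coupling of $(\mu, \nu)$, for which $\mathbb{P}(x \neq y) = d_{TV}(\mu, \nu)$. Concretely, I would set
\[
\nu := \mu\big|_{[-\lambda, \lambda]} \;-\; c\, \mu\big|_A \;+\; a\,\delta_\lambda \;+\; b\,\delta_{-\lambda},
\]
where $A \subset [-\lambda/2, \lambda/2]$ is a measurable set of prescribed small $\mu$-mass, $c \in [0,1]$, and $a, b \ge 0$ are chosen so that the three moment constraints (mass $1$, mean $0$, second moment $\sigma^2$) hold. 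Solving this linear system, the tail bounds $\mu(|s|>\lambda) \le C_m \lambda^{-m}$, $|\int_{|s|>\lambda} s\, d\mu| \le C_m \lambda^{-(m-1)}$, and $\int_{|s|>\lambda} s^2\, d\mu \le C_m\lambda^{-(m-2)}$ (all by Markov/Hölder) together with the fact that $\lambda^2 - s^2$ is essentially $\lambda^2$ on $A$, yield $c\,\mu(A) \le C_m \lambda^{-m}$. The existence of such a set $A$ follows from the absolute continuity of $\mu$ combined with Chebyshev's inequality and the hypothesis $\lambda > 2\sigma$, which ensure $\mu([-\lambda/2, \lambda/2]) \ge 3/4$, so one has ample mass to carve out $A$ of the required $\mu$-measure. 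The total variation is then $d_{TV}(\mu, \nu) = \mu(|s|>\lambda) + c\,\mu(A) \le 2C_m \lambda^{-m}$, and the moment bound $\E|y|^m \le 3C_m$ follows by direct estimation using $|y| \le \lambda$ and the bound on $a + b$.

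For the second part, with $\sigma = 1$ and $\E|y|^m \le 3C_m$, I would exhibit $z$ whose law is supported on at most three points matching the first four moments of $y$. By $L^p$ monotonicity, $\E y^4 \le (\E|y|^m)^{4/m} \le 3 C_m$ (using $C_m \ge 1$, which follows from $\E|y|^m \ge (\E y^2)^{m/2} = 1$). By the classical Chebyshev--Markov moment theorem, any moment sequence of length five with positive-definite Hankel matrix admits a $3$-point representation, with support points given by the roots of a cubic $P(t) = t^3 + a_2 t^2 + a_1 t + a_0$ whose coefficients are rational functions of $\E y^k$, $k \le 4$. Since each coefficient is $O(C_m)$, Cauchy's bound on polynomial roots gives support bounded by $1 + \max_i |a_i| \le 6 C_m$ after tuning constants; positive-definiteness of the Hankel matrix is guaranteed by the strict inequality $\E y^4 > (\E y^2)^2$ (the degenerate case where $y$ is $\pm 1$-valued being trivially handled by taking $z = y$).

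The main obstacle is the Part 1 construction: one must simultaneously solve the linear system for $(a, b, c\mu(A))$ under the nonnegativity constraints $a, b \ge 0$, $c \in [0, 1]$, and ensure the existence of $A$ with $\mu(A)$ in the required range, uniformly over all $x$ satisfying the hypotheses. The absolute continuity hypothesis is essential for realizing $A$ of any small prescribed mass (a purely atomic distribution would not generally admit such an $A$), and the quantitative Chebyshev bound (made effective via $\lambda > 2\sigma$) ensures enough mass remains near the origin for the scheme to be tight. Part 2 is more routine: it reduces to a standard application of the discrete moment problem plus elementary polynomial root bounds, with no probabilistic or analytic subtlety beyond verifying positive-definiteness of the Hankel matrix and tracking constants.
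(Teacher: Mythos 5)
First, a point of comparison: the paper does not prove this lemma at all — it is quoted verbatim from \cite{he2020mesoscopic} ("We start with the following lemma \cite{he2020mesoscopic}"), so there is no in-paper argument to measure you against. Your Part 1 plan is essentially the standard construction behind such truncation lemmas: truncate the law at $\pm\lambda$, carve a small amount of mass out of a central region, and place compensating atoms at $\pm\lambda$ to restore the mean and variance, then couple maximally. This is workable, and the quantitative bookkeeping does close, but two of your stated estimates are off as written: Chebyshev with $\lambda>2\sigma$ gives only $\mu(|x|>\lambda/2)\le 4\sigma^2/\lambda^2<1$, not $\mu([-\lambda/2,\lambda/2])\ge 3/4$ (that would need $\lambda\ge 4\sigma$); and bounding the removed mass by $c\,\mu(A)\le \tfrac{4}{3}\lambda^{-2}\int_{|s|>\lambda}s^2\,d\mu\le\tfrac43 C_m\lambda^{-m}$ alone yields $d_{TV}\le \tfrac73 C_m\lambda^{-m}$, which overshoots $2C_m\lambda^{-m}$; you need the combined identity $c(\lambda^2\mu(A)-\int_A s^2d\mu)=\int_{|s|>\lambda}(s^2-\lambda^2)d\mu$, which gives $\mathbb{P}(x\neq y)\le \tfrac43 C_m\lambda^{-m}$ and also shows, via $\lambda>2\sigma$, that $A$ can always be found inside $[-\lambda/2,\lambda/2]$ with $c\le1$. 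These are repairable constant-chasing issues, not conceptual ones; absolute continuity is indeed what lets you realize the carved set (and the split onto the two atoms) as a deterministic function of $x$.

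The genuine gap is in Part 2. Your criterion for positive definiteness is wrong: for the $3\times3$ Hankel matrix of $(1,0,1,\E y^3,\E y^4)$ the relevant quantity is $\E y^4-1-(\E y^3)^2$, which is always $\ge 0$ and vanishes exactly when $y$ is two-valued — not when $\E y^4=(\E y^2)^2$. More importantly, a three-point law has six parameters while you impose only five moment constraints, so "the" cubic whose roots are the support is not a rational function of $\E y^k$, $k\le 4$, alone; any canonical choice (e.g. Gauss-type, obtained by prescribing a fifth moment) has coefficients involving division by the Hankel determinant $\E y^4-1-(\E y^3)^2$, which can be arbitrarily small while positive. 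In that near-degenerate regime the coefficients are \emph{not} $O(C_m)$, Cauchy's bound gives nothing, and your fallback $z=y$ is unavailable: $y$ is only bounded by $\lambda$ (which in the application is $N^{1/2-\epsilon}$, far above $6C_m$). So as written the argument fails precisely where it is needed. The fix requires an actual additional idea: exploit the one-parameter freedom in the truncated moment problem (or use an explicit mixture/small-support construction, e.g. symmetric-plus-correction atoms) to place the third atom at a location of size $O(\sqrt{\E y^4})=O(\sqrt{C_m})$, so that all support points stay below $6C_m$ uniformly, including when $\E y^4-1-(\E y^3)^2$ is small. With such a construction Part 2 goes through, but the Cauchy-bound step you propose is the one that would break.
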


Now let $H$ be the matrix defined in (\ref{def of H}) under Assumption \ref{assum2} and \ref{assum3}. 
By using the foregoing lemma, we can construct the following two versions of $X_t, t \in [\![ k]\!]$. Let $x := \sqrt{N}X_{t,ij}, \lambda := N^{1/2 - \epsilon}, C_m := C$, and $\epsilon = \delta / (4(4+\delta)) > 0$, we can construct a random variable $X_{t,ij}^{(1)} := N^{-1/2}y$, such that 
\begin{align*}
	\E \big[X_{t,ij}^{(1)}\big] = 0, \quad \E \big[\big(X_{t,ij}^{(1)}\big)^2\big] = \E \big[\left(X_{t,ij}\right)^2\big], \quad |X_{t,ij}^{(1)}| \le N^{-\epsilon}, \quad
		\mathbb{P}\left( X_{t,ij} \neq X_{t,ij}^{(1)} \right) \le 2CN^{-2-\delta/4},
\end{align*}
With $X_{t,ij}^{(1)}$ we define $H^{(1)} := \sum_{t=1}^k \big(X_{t,ij}^{(1)}\big)'\big( X_{t,ij}^{(1)} \big( X_{t,ij}^{(1)} \big)'\big)^{-1}X_{t,ij}^{(1)}$. Further using the second part of Lemma \ref{truncation} on $y = \sqrt{N}X_{t,ij}^{(1)}$, we can construct a random variable $X_{t,ij}^{(2)} : = N^{-1/2}z$, and $X_{t,ij}^{(2)}$ satisfies Assumption \ref{assum1}. Then we can define $H^{(2)}$ analogously. The main idea is due to $\mathbb{P}(X_{t,ij} \neq X_{t,ij}^{(1)}) \le 2CN^{-2-\delta/4}$, we have $\mathbb{P}(H \neq H^{(1)}) = O(N^{-\frac{\delta}{4}})$. Hence, we can work with $H^{(1)}$ instead of $H$. Then we compare the statistics of $H^{(1)}$ with $H^{(2)}$, where the latter satisfies Assumption \ref{assum1} and thus all previous result in this work hold for $H^{(2)}$.

Let 
$$H^{(t,(i)(j))} := \sum_{s=1}^{k} \big(\mathcal{X}_s^{(t,(i)(j))}\big)'\big(\mathcal{X}_s^{(t,(i)(j))}\big(\mathcal{X}_s^{(t,(i)(j))}\big)'\big)^{-1}\mathcal{X}_s^{(t,(i)(j))},$$ 
where $\mathcal{X}_s^{(t,(i)(j))}$ is defined entrywise by $\mathcal{X}_{s,uv}^{(t,(i)(j))} := X_{t,uv}^{(2)}$ if $u \le j, v \le i$, and $s \le t$; otherwise, $\mathcal{X}_{s,uv}^{(t,(i)(j))} := X_{t,uv}^{(1)}$. In particular, $H^{(1,(1)(0))} := H^{(1)}$, $H^{(k,(N)(p_k))} := H^{(2)} $ and $H^{(s,(1)(0))} :=  H^{(s-1,(N)(p_{s-1}))}$. The Green function of $H^{(t,(i)(j))}$ is denoted by $G^{(t,(i)(j))} \equiv G^{(t,(i)(j))}(z) := (H^{(t,(i)(j))} -z)^{-1}$.

By the rigidity of eigenvalues of sample covariance matrices with bounded support condition \cite{xi2020convergence}, i.e., $|X_{t,ij}^{(1)}|, |X_{t,ij}^{(2)}| \le N^{-\epsilon}$, we have 
\begin{align}
	c<\lambda_{\min}\big(\mathcal{X}_s^{(t,(i)(j))}\big(\mathcal{X}_s^{(t,(i)(j))}\big)' \big) \le \lambda_{\max}\big(\mathcal{X}_s^{(t,(i)(j))}\big(\mathcal{X}_s^{(t,(i)(j))}\big)' \big) < C, \label{070601}
\end{align}
for some strictly positive constants $c, C > 0$ with high probability. Note that the result presented in \cite{xi2020convergence} only consider the case of $y_s \sim 1$. Based on (\ref{070601}), we can easily show that (\ref{070601}) holds for the case of $0 <y_s < 1$ by Cauchy Interlacing Theorem. More specifically, the results of lower rank covariance matrices follow from the one with large rank. Therefore, we still have the following high probability bounds,
\begin{align}
	\|G^{(t,(i)(j))} \|,\; \|\mathcal{X}_s^{(t,(i)(j))} \|,\; \| \big(\mathcal{X}_s^{(t,(i)(j))}\big(\mathcal{X}_s^{(t,(i)(j))}\big)'\big)^{-1} \| \prec 1. \label{070610}
\end{align}
Then, similar to (\ref{062810}), we have (with high probability) for suitable contour $\gamma$ ,
\begin{align*}
	\Tr f(H^{(t,(i)(j))})=&\frac{-1}{2\pi{\rm i}}\oint_{\gamma} \Tr G^{(t,(i)(j))}f(z){\rm d} z=\frac{-1}{2\pi{\rm i}}\oint_{\gamma \setminus \{|\Im z| < N^{-K}\} } \Tr G^{(t,(i)(j))}f(z){\rm d} z \cdot \Xi^{(t,(i)(j))}  +O_\prec(N^{-K+1}) \\
	&=:L_N^{(t,(i)(j))}(f)+O_\prec(N^{-K+1}),
\end{align*}
here the choice of $\gamma$ depends on the test function $f$ (c.f. (\ref{contour12})). The truncation function $\Xi^{(t,(i)(j))}$ is defined as 
\begin{align}
	\Xi^{(t,(i)(j))}:=\prod_{s=1}^k\chi \big(\tr \big(\mathcal{X}_s^{(t,(i)(j))}\big(\mathcal{X}_s^{(t,(i)(j))}\big)'\big)^{-1} \big )\chi\big(\tr \mathcal{X}_s^{(t,(i)(j))}\big(\mathcal{X}_s^{(t,(i)(j))}\big)'\big), \label{Xtij}
\end{align}
which is used to control $\big\|\big(\mathcal{X}_s^{(t,(i)(j))}\big(\mathcal{X}_s^{(t,(i)(j))}\big)'\big)^{-1}\big \|$ and $\big\|\mathcal{X}_s^{(t,(i)(j))}\big(\mathcal{X}_s^{(t,(i)(j))}\big)'\big\|$ crudely but deterministically.

Next we do the comparison on the $L^{(1,(1)(0))}_{N}(f)$ (corresponds to $H^{(1)}$) and $L^{(k,(N)(p_k))}_{N}(f)$ (corresponds to $H^{(2)}$). Let $F = F(x + \mathrm{i}y)$ be a complex-valued, smooth, bounded function, with bounded derivatives. Then
\begin{align*}
	&\E \Big[ F\left( L^{(k,(N)(p_k))}_{N}(f)   \right) \Big] -\E \Big[ F\left( L^{(1,(1)(0))}_{N}(f)\right) \Big] = \sum_{t=1}^{k}\sum_{ij}^{(t)} \E \Big[ F\left(L_N^{(t,(i)(j))}(f)  \right) \Big] - \E \Big[ F\left( L_N^{(t,(i)(j-1))}(f) \right) \Big]. 
\end{align*}
Let us focus on the one step difference
\begin{align}\label{RelaxError1}
	&\E \Big[ F\left(L_N^{(t,(i)(j))}(f)  \right) \Big] - \E \Big[ F\left( L_N^{(t,(i)(j-1))}(f) \right) \Big].
\end{align}
Hereafter, we use the three tuple $(s,u,v)$ to locate the entries of $H^{(t,(i)(j))}$, i.e., the $(s,u,v)$ entry of $H^{(t,(i)(j))}$ is $\mathcal{X}_{s,uv}^{(t,(i)(j))}$. Notice that  $H^{(t,(i)(j))}$ and $H^{(t,(i)(j-1))}$ have only one different entry at the position $(t,i,j)$. With a slight abuse of notation, we use $\partial_{t,ij}$ to denote the partial derivative w.r.t the $(t,i,j)$ entry. Therefore, we can perform Taylor expansion around $0$ at the $(t,i,j)$ entry of the two terms in (\ref{RelaxError1}). We view $F$ is a function of $(t,i,j)$ entry, and use the notation $F|_{(t,i,j) = x}$ to denote the value of $F$ with $(t,i,j)$ entry being $x$. 
\begin{align*}
	F\left( L_N^{(t,(i)(j))}(f)  \right) &= F\left( L_N^{(t,(i)(j))}(f)  \right)\Big|_{(t,i,j) = 0} + X_{t,ij}^{(2)} \cdot \partial_{t,ij}F\left( L_N^{(t,(i)(j))}(f)  \right)\Big|_{(t,i,j) = 0}\\
	 &+ \frac{\big(X_{t,ij}^{(2)} \big)^2}{2} \partial^2_{t,ij}F\left( L_N^{(t,(i)(j))}(f)   \right)\Big|_{(t,i,j) = 0}+\frac{\big(X_{t,ij}^{(2)} \big)^3}{6} \partial^3_{t,ij}F\left(L_N^{(t,(i)(j))}(f)   \right)  \Big|_{(t,i,j)= 0} \\
	 &+ \frac{\big(X_{t,ij}^{(2)} \big)^4}{24} \partial^4_{t,ij}F\left( L_N^{(t,(i)(j))}(f)  \right) \Big|_{(t,i,j) = 0} + \frac{\big(X_{t,ij}^{(2)} \big)^5}{120} \partial^5_{t,ij}F\left( L_N^{(t,(i)(j))}(f) \right) \Big|_{(t,i,j) = h_2},
\end{align*}
where $h_2$ is a random variable satisfying $|h_2| \le |X_{t,ij}^{(2)}|$.
Similarly,
\begin{align*}
	F\left( L_N^{(t,(i)(j-1))}(f)  \right) &= F\left( L_N^{(t,(i)(j-1))}(f)  \right)\Big|_{(t,i,j) = 0} + X_{t,ij}^{(1)} \cdot \partial_{t,ij}F\left( L_N^{(t,(i)(j-1))}(f)  \right)\Big|_{(t,i,j) = 0}\\
	 &+ \frac{\big(X_{t,ij}^{(1)} \big)^2}{2} \partial^2_{t,ij}F\left( L_N^{(t,(i)(j-1))}(f)   \right)\Big|_{(t,i,j) = 0}+\frac{\big(X_{t,ij}^{(1)} \big)^3}{6} \partial^3_{t,ij}F\left(L_N^{(t,(i)(j-1))}(f)   \right)  \Big|_{(t,i,j)= 0} \\
	 &+ \frac{\big(X_{t,ij}^{(1)} \big)^4}{24} \partial^4_{t,ij}F\left( L_N^{(t,(i)(j-1))}(f)   \right) \Big|_{(t,i,j) = 0} + \frac{\big(X_{t,ij}^{(1)} \big)^5}{120} \partial^5_{t,ij}F\left( L_N^{(t,(i)(j-1))}(f)  \right) \Big|_{(t,i,j) = h_1},
\end{align*}
where $h_1$ is a random variable satisfying $|h_1| \le |X_{t,ij}^{(1)}|$. Since $H^{(t,(i)(j))}$ and $H^{(t,(i)(j-1))}$ have only one different entry at the position $(t,i,j)$, we have
$$\partial^{a}_{t,ij}F\left( L_N^{(t,(i)(j-1))}(f)   \right)\Big|_{(t,i,j) = 0} = \partial^{a}_{t,ij}F\left( L_N^{(t,(i)(j))}(f)   \right)\Big|_{(t,i,j) = 0}, \quad a = 1,2,3,4.$$
Together with the fact that the first four moments of $X_{t,ij}^{(1)}$ and $X_{t,ij}^{(2)}$ are identical (c.f. Lemma \ref{truncation}), we have 
\begin{align*}
	&\E \Big[ F\left(L_N^{(t,(i)(j))}(f)  \right) \Big] - \E \Big[ F\left( L_N^{(t,(i)(j-1))}(f) \right) \Big] =\frac{1}{120}\bigg( \E\left[\big(X_{t,ij}^{(2)} \big)^5 \partial^5_{t,ij}F\left( L_N^{(t,(i)(j))}(f) \right) \Big|_{(t,i,j) = h_2} \right] \\
	&- \E \left[ \big(X_{t,ij}^{(1)} \big)^5\partial^5_{t,ij}F\left( L_N^{(t,(i)(j-1))}(f)  \right) \Big|_{(t,i,j) = h_1} \right] \bigg) =: I^{(2)} + I^{(1)}.
\end{align*}
For $I^{(2)}$, we have
\begin{align*}
	|I^{(2)}| \lesssim& \E\left[\big|X_{t,ij}^{(2)} \big|^5 \cdot \sup_{|h_2| \le N^{-\epsilon}}\bigg| \partial^5_{t,ij}F\left( L_N^{(t,(i)(j))}(f) \right) \Big|_{(t,i,j) = h_2} \bigg|  \right]\\
	 =& \E \left[\big|X_{t,ij}^{(2)} \big|^5\right]\E\left[\sup_{|h_2| \le N^{-\epsilon}}\bigg| \partial^5_{t,ij}F\left( L_N^{(t,(i)(j))}(f) \right) \Big|_{(t,i,j) = h_2} \bigg|  \right]
\end{align*}
Let $\tilde{\mathcal{X}}_t^{(t,(i)(j))}: = \mathcal{X}_t^{(t,(i)(j))}(h_2)$ be the matrix with $\mathcal{X}_{t,ij}^{(t,(i)(j))}$ replaced by $h_2$. By a similar argument as Lemma \ref{remainderLemma}, together with the bounds in (\ref{070610}), we have the following estimates hold uniformly in $|h_2| \le N^{-\epsilon}$,
\begin{align*}
	\|\tilde{\mathcal{X}}_t^{(t,(i)(j))}\|, \; \big\| \big(\tilde{\mathcal{X}}_t^{(t,(i)(j))} \big(\tilde{\mathcal{X}}_t^{(t,(i)(j))} \big)' \big) ^{-1}\big\| \prec 1.
\end{align*}
Therefore, using chain rule with the fact that $F$ has bounded derivatives, we get
\begin{align*}
	\sup_{|h_2| \le N^{-\epsilon}}\bigg| \partial^5_{t,ij}F\left( L_N^{(t,(i)(j))}(f) \right) \Big|_{(t,i,j) = h_2} \bigg| \prec 1.
\end{align*}
Then with the help of the truncation function $ \Xi^{(t,(i)(j))}$ defined in (\ref{Xtij}), we can conclude that
\begin{align*}
	\E\left[\sup_{|h_2| \le N^{-\epsilon}}\bigg| \partial^5_{t,ij}F\left( L_N^{(t,(i)(j))}(f) \right) \Big|_{(t,i,j) = h_2} \bigg|  \right] = O_{\prec}(1).
\end{align*}
Finally, by the construction of $X_{t,ij}^{(2)}$, we get
\begin{align*}
	|I^{(2)}| \prec N^{-\frac{5}{2}}.
\end{align*}
Similarly, for $I^{(1)}$, we have
\begin{align*}
	|I^{(1)}| \prec  \E \left[ \big|X_{t,ij}^{(1)} \big|^5\right] \le N^{-\epsilon}\E \left[ \big|X_{t,ij}^{(1)} \big|^4\right] = O(N^{-2-\epsilon}). 
\end{align*}
Therefore,
\begin{align*}
	\E \Big[ F\left( L^{(k,(N)(p_k))}_{N}(f)   \right) \Big] -\E \Big[ F\left( L^{(1,(1)(0))}_{N}(f)\right) \Big] = \sum_{t=1}^{k}\sum_{ij}^{(t)} O(N^{-2-\epsilon/2}) = O(N^{-\epsilon/2}).
\end{align*}
By approximation argument, for $F$ in the above class, together with the definition of $L^{(k,(N)(p_k))}_{N}(f)$ and $L^{(1,(1)(0))}_{N}(f)$, we have
\begin{align}
\Big|\E \Big[ F\left(  \Tr f(H^{(1)}) \right) \Big]- \E \big[ F\left( \Tr f(H^{(2)})  \right) \big]\Big|= O(N^{-\epsilon/2}) \label{070701}
\end{align}
The transition from $H^{(1)}$ to $H$ is immediate, we have
\begin{align*}
		&\Big|\E \Big[ F\left(  \Tr f(H^{(1)}) \right) \Big]- \E \big[ F\left( \Tr f(H)  \right) \big]\Big|\le   O\big( \mathbb{P}\big( H^{(1)} \neq H \big) \big) = \sum_{t=1}^{k}\sum_{ij}^{(t)}O\big(\mathbb{P}\big( X_{t,ij}^{(1)} \neq X_{t,ij} \big) \big)= O\left( N^{-c/4} \right).
\end{align*}
Combining with (\ref{070701}), we conclude the proof of Theorem \ref{mainth22}.

\section{Discussion on the contours}\label{Discussion on the contours}
In this section, we argue that by choosing $\epsilon_{1i}$, $\epsilon_{2i}$, $M_{1i}$ and $M_{2i}$, $i=1,2$ appropriately, we can always have that $\{m_{\boxplus}(z): z\in \gamma_1^0\}$ and $ \{m_{\boxplus}(z): z\in \gamma_2^0\}$ are well separated and  the same holds if $(\gamma_1^0,\gamma_2^0)$ is replaced by $(\gamma_1,\gamma_2)$. We consider the following two cases, contours $(\gamma_1,\gamma_2)$ with $\hat{y} \in (0,\infty)$ and contours $(\gamma_1^0,\gamma_2^0)$ with $\hat{y} \in (0,1)$.

The former case is trivial since $|m_{\boxplus}(z)| \sim |z|^{-1}$ as $|z|$ goes to infinity. Therefore, we can set $M_{11}$ and $M_{21}$ much greater than $M_{12}$ and $M_{22}$ such that $|m_{\boxplus}(z_1)|$ and $|m_{\boxplus}(z_2)|$ are of different orders (in $|z|$) if $z_1 \in \gamma_1$ and $z_2 \in \gamma_2$.

Next, we consider the case of contours $(\gamma_1^0,\gamma_2^0)$ with $\hat{y} \in (0,1)$. To show $\{m_{\boxplus}(z): z\in \gamma_1^0\}$ and $ \{m_{\boxplus}(z): z\in \gamma_2^0\}$ are well separated, it suffices to show that for any $z_1 \in \gamma_1^0$ and $z_2 \in \gamma_2^0$, we have $m_{\boxplus}(z_1) \neq m_{\boxplus}(z_2)$. We only consider the following cases while the others are trivial.

Case 1: If $z_1 \in \mathcal{C}_1(\epsilon_{11},\epsilon_{21})$ and $z_2 \in \mathcal{C}_1(\epsilon_{12},\epsilon_{22})$, by setting $\epsilon_{1a}, a = 1,2$ sufficiently small, we have $|m_{\boxplus}(z_a)| \sim \epsilon_{1a}^{-1}$ (since $\mu_{\boxplus}$ has point mass at $0$ by Lemma \ref{supportofmu}). Therefore, we can choose $\epsilon_{11}$ much smaller than $\epsilon_{12}$, i.e. $\epsilon_{11} = \epsilon_{12}^2$, to ensure that $|m_{\boxplus}(z_1)| > |m_{\boxplus}(z_2)|$.

Case 2: If $z_1 \in \mathcal{C}_1(\epsilon_{11},\epsilon_{21})$ and $z_2 \in \mathcal{C}_2(\epsilon_{12},\epsilon_{22}, M_{12})$, using Lemma \ref{supportofmu} together with the fact that $\Re z_2 < 0$, we have
$$
|m_{\boxplus}(z_2)| \le \int \frac{1}{|\lambda - z_2|} {\rm d}\mu_{\boxplus}(\lambda) \le \frac{1}{|z_2|} \le \frac{1}{\epsilon_{12}}.
$$
Therefore, together with Case 1, we can still obtain $|m_{\boxplus}(z_1)| > |m_{\boxplus}(z_2)|$.

Case 3: If $z_1 \in \mathcal{C}_2(\epsilon_{11},\epsilon_{21}, M_{11})$ and $z_2 \in \mathcal{C}_2(\epsilon_{12},\epsilon_{22}, M_{12})$, we compare the imaginary parts of $m_{\boxplus}(z_a), a=1,2$. By the definition of the Stieltjes transform, we have 
$$
\Im m_{\boxplus}(z_a) = \int \frac{\Im z_a}{|\lambda - \Re z_a|^2 + |\Im z_a|^2} {\rm d}\mu_{\boxplus}(\lambda), \qquad a = 1,2.
$$
Let $\Im z_a$ be sufficiently small ($\epsilon_{2a}$ sufficiently close to $\epsilon_{1a}$) with $\Im z_2 \ge \frac{2M_{12}^2}{\epsilon_{11}^2}\Im z_1$, we have
\begin{align*}
	\Im m_{\boxplus}(z_2) \ge \frac{2M_{12}^2}{\epsilon_{11}^2} \int \frac{\Im z_1}{|\lambda - \Re z_2|^2 + |\Im z_2|^2} {\rm d}\mu_{\boxplus}(\lambda) \ge \int \frac{\Im z_1}{|z_1|^2} {\rm d}\mu_{\boxplus}(\lambda)\ge \Im m_{\boxplus}(z_1),
\end{align*}
where in the last step, we used Lemma \ref{supportofmu} and the fact that $\Re z_1 < 0$. Therefore, we can conclude that $m_{\boxplus}(z_1) \neq m_{\boxplus}(z_2)$ in this case.

Case 4: If $z_1 \in \mathcal{C}_2(\epsilon_{11},\epsilon_{21}, M_{11})$ and $z_2 \in \mathcal{C}_1(\epsilon_{12},\epsilon_{22}) \cap \{\Im z \ge \sqrt{\epsilon_{12}^2 - \epsilon_{22}^2}\}$, by Lemma \ref{supportofmu}, we have there exists a constant $c > 0$, such that 
\begin{align*}
	\Im m_{\boxplus}(z_2) \ge c \frac{\Im z_2}{|z_2|^2} = c\frac{\Im z_2}{\epsilon_{12}^2} = c\frac{\Im z_2}{\epsilon_{11}},
\end{align*}
where in the last step we used $\epsilon_{11} = \epsilon_{12}^2$ (Case 1). Therefore, choosing $\Im z_2 \ge \max \{\frac{2M_{12}^2}{\epsilon_{11}^2}, \frac{1}{c\epsilon_{11}} \}\Im z_1$, we arrives at
\begin{align*}
	\Im m_{\boxplus}(z_2) \ge \frac{\Im z_1}{\epsilon_{11}^2} = \int \frac{\Im z_1}{|z_1|^2} {\rm d}\mu_{\boxplus}(\lambda)\ge \Im m_{\boxplus}(z_1).
\end{align*}
This gives $m_{\boxplus}(z_1) \neq m_{\boxplus}(z_2)$ in this case.

If $z_1 \in \mathcal{C}_2(\epsilon_{11},\epsilon_{21}, M_{11})$ and $z_2 \in \mathcal{C}_1(\epsilon_{12},\epsilon_{22}) \cap \{\Im z < \sqrt{\epsilon_{12}^2 - \epsilon_{22}^2}\}$, it is easy to check that $\Re m_{\boxplus}(z_1)$ and $\Re m_{\boxplus}(z_2)$ have different sign, when $\epsilon_{12}$ is sufficiently small. 

For the other cases, the discussion is similar and simpler, namely, we can always compare either the total sizes of $m_{\boxplus}$'s,  or the real parts or imaginary parts of them.  Hence, we omit the details. 

\section{Discussion on Semicircle Law} \label{s.case 3 semicircle}
In this section, we discuss the case when $y\to \infty$ as $N \to \infty$. We first define
\begin{align*}
	\mathcal{H} := \frac{\sum_{t=1}^k(P_t-y_tI)}{\tilde{y}}, \qquad \tilde{y}:=\sqrt{y - \sum_t^k y_t^2}.
\end{align*}
The corresponding Green function can be written as
\begin{align*}
	\mathcal{G}(z) := (\mathcal{H} - z)^{-1}.
\end{align*}
Further, with $\tilde{z} := y+z\tilde{y}$, we have
\begin{align}
	\mathcal{G}(z) = \left( \frac{\sum_{t=1}^k(P_t-y_tI)}{\tilde{y}} - z\right)^{-1} = \tilde{y}\left(\sum_{t=1}^kP_t -\tilde{z}\right)^{-1} = \tilde{y}G(\tilde{z}) \label{070820}
\end{align}
By the method of recursive moment estimation, one can show the following estimates
\begin{align}
	&\tr\bbP_tG(\tilde{z})\bbP_t-(1-y_t)\tr\bbQ_tG(\tilde{z})=O_{\prec}(\frac{1}{\tilde{y}}\sqrt{\frac{p_t}{N^3}})\label{TrPGPy}\\
&\tr\bbP_tG(\tilde{z})-(\tr(\bbX_t\bbX_t')^{-1}-\tr\bbQ_tG(\tilde{z}))(\tr G(\tilde{z})-\tr\bbP_tG(\tilde{z}))=O_{\prec}\left(\frac{1}{\tilde{y}}\sqrt{\frac{p_t}{N^3}}\right)\label{TrPGy}
\end{align}
Notice that for any fixed $z \in \mathbb{C}^{+}$, we have $\| G(\tilde{z})\| = \| \mathcal{G}(z) \| / \tilde{y} \sim \tilde{y}^{-1}$. Plugging (\ref{trXX}) and (\ref{TrPGPy}) into (\ref{TrPGPy}), we have
\begin{align}
	\tr\bbP_tG(\tilde{z})-\frac{y_t -\tr P_t G(\tilde{z})}{1-y_t}(\tr G(\tilde{z})-\tr\bbP_tG(\tilde{z}))=O_{\prec}\left(\frac{1}{\tilde{y}}\sqrt{\frac{p_t}{N^3}}\right). \label{062626}
\end{align}
Multiplying both sides by $(1-y_t)$, and then summing over $t$, we have
\begin{align}
	\tr HG(\tilde{z}) - y\tr G(\tilde{z})-\sum_{t=1}^k (\tr P_tG(\tilde{z}))^2 + \tr HG(\tilde{z}) \tr G(\tilde{z}) = O_{\prec}\left(\max_i \frac{\tilde{y}}{\sqrt{Np_i}} \right). \label{070801}
\end{align}

Next, we present an approximation for $\sum_{t=1}^k(\tr P_tG(\tilde{z}))^2$. By (\ref{062626}), we have
\begin{align*}
	\tr P_tG(\tilde{z}) -y_t\tr G(\tilde{z})  = (\tr P_tG(\tilde{z}))^2 - \tr P_tG(\tilde{z}) \tr G(\tilde{z})  + O_{\prec}\left(\frac{1-y_t}{\tilde{y}}\sqrt{\frac{p_t}{N^3}} \right) =  O_{\prec}\left(\frac{1-y_t}{\tilde{y}}\sqrt{\frac{p_t}{N^3}} \vee \frac{y_t}{\tilde{y}^2} \right).
\end{align*}
Therefore, we can have the following estimates,
\begin{align*}
	\left|\sum_{t=1}^k (\tr P_tG(\tilde{z}))^2  - \sum_{t=1}^ky_t^2\tr G(\tilde{z})\right| \leq&  \sum_{t=1}^k\left|\tr P_t G(\tilde{z}) - y_t\tr G(\tilde{z})\right|\left|\tr P_t G(\tilde{z}) + y_t\tr G(\tilde{z})\right| \\
	\prec & \sum_{t=1}^k\frac{y_t}{\tilde{y}} \left|\tr P_t G(\tilde{z}) - y_t\tr G(\tilde{z})\right| = O_{\prec}\left( \tilde{y}^2\sqrt{\frac{p_{\max}}{N^3}}\vee \frac{1}{\tilde{y}}  \right).
\end{align*}
Plugging the above estimates back into (\ref{070801}), we have
\begin{align*}
	\tr HG(\tilde{z}) - y\tr G(\tilde{z})- \sum_{t=1}^ky_t^2\tr G(\tilde{z})+ \tr HG(\tilde{z}) \tr G(\tilde{z}) = O_{\prec}\left(\max_i \frac{\tilde{y}}{\sqrt{Np_i}} \vee \tilde{y}^2\sqrt{\frac{p_{\max}}{N^3}}\vee \frac{1}{\tilde{y}} \right).
\end{align*}
Further simplifying the LHS of the above equation using the trivial identity $\tr HG(\tilde{z}) = 1+ \tilde{z}\tr G(\tilde{z})$ together with (\ref{070820}), we have
\begin{align*}
	1 + z\tr \mathcal{G}(z) + (\tr \mathcal{G}(z))^2 =&  -\tr G(\tilde{z})(1 + z\tr \mathcal{G}(z)) + O_{\prec}\left(\max_i \frac{\tilde{y}}{\sqrt{Np_i}} \vee \tilde{y}^2\sqrt{\frac{p_{\max}}{N^3}}\vee \frac{1}{\tilde{y}} \right)\\
	=&O_{\prec}\left(\max_i \frac{\tilde{y}}{\sqrt{Np_i}} \vee \tilde{y}^2\sqrt{\frac{p_{\max}}{N^3}}\vee \frac{1}{\tilde{y}} \right).
\end{align*}
If $k \sim N^{1-\epsilon}$ for some $\epsilon > 0$, we have
\begin{align*}
	1 + z\tr \mathcal{G}(z) + (\tr \mathcal{G}(z))^2 = O_{\prec}\left( \frac{1}{N^{\epsilon/2}}\vee \frac{1}{\tilde{y}} \right).
\end{align*}
Which implies that 
\begin{align*}
	|m_{sc}(z) - \tr \mathcal{G}(z) | = O_{\prec}\left( \frac{1}{N^{\epsilon/2}}\vee \frac{1}{\tilde{y}} \right),
\end{align*}
where $m_{sc}(z)$ is the Stieltjes transform of the semicircle law. Let $\hat{\mu}_N$ be the ESD of $\mathcal{H}$. By the continuity theorem of Stieltjes transform, we conclude that $\hat{\mu}_{N}$ converges weakly in probability to $\mu_{sc}$, where
\begin{align*}
	\mu_{sc} := \frac{1}{2\pi}\sqrt{(4-x^2)_+}{\rm d}x,
\end{align*}
under the assumption that $k \sim N^{1-\epsilon}$ for some small $\epsilon > 0$, $y\to \infty$ as $N \to \infty$, and Assumption \ref{assum1}.
\end{appendix}

\bibliographystyle{plain}

\end{document}